\newtheoremstyle{note}{12pt}{12pt}{}{}{\bfseries}{.}{.5em}{}
\title{\LARGE\textbf{Wild Sets with Collet-Eckmann Points\\ and Infinitely Many Sinks: \\Stability and Coexistence }} 
\author{Marco Martens and Liviana Palmisano}
\newtheorem{theo}[equation]{Theorem}
\newtheorem{prop}[equation]{Proposition}
\numberwithin{equation}{subsection}
\newtheorem{defin}[equation]{Definition}
\newtheorem{rem}[equation]{Remark}
\newtheorem{cor}[equation]{Corollary}
\newtheorem{lem}[equation]{Lemma}
\newcommand{\N}{{\mathbb N}}
\newcommand{\Z}{{\mathbb Z}}
\newcommand{\R}{{\mathbb R}}
\newcommand{\Q}{{\mathbb Q}}
\newcommand{\Cinf}{{{\mathcal C}^\infty}}
\newcommand{\Cd}{{{\mathcal C}^2}}
\newcommand{\Ct}{{{\mathcal C}^3}}
\newcommand{\Cq}{{{\mathcal C}^4}}
\newcommand{\Cuno}{{{\mathcal C}^1}}
\begin{document}
\maketitle

\textcolor{blue}{}\global\long\def\sbr#1{\left[#1\right] }
\textcolor{blue}{}\global\long\def\cbr#1{\left\{  #1\right\}  }
\textcolor{blue}{}\global\long\def\rbr#1{\left(#1\right)}
\textcolor{blue}{}\global\long\def\ev#1{\mathbb{E}{#1}}
\textcolor{blue}{}\global\long\def\R{\mathbb{R}}
\textcolor{blue}{}\global\long\def\E{\mathbb{E}}
\textcolor{blue}{}\global\long\def\norm#1#2#3{\Vert#1\Vert_{#2}^{#3}}
\textcolor{blue}{}\global\long\def\pr#1{\mathbb{P}\rbr{#1}}
\textcolor{blue}{}\global\long\def\qq{\mathbb{Q}}
\textcolor{blue}{}\global\long\def\aa{\mathbb{A}}
\textcolor{blue}{}\global\long\def\ind#1{1_{#1}}
\textcolor{blue}{}\global\long\def\pp{\mathbb{P}}
\textcolor{blue}{}\global\long\def\cleq{\lesssim}
\textcolor{blue}{}\global\long\def\ceq{\eqsim}
\textcolor{blue}{}\global\long\def\Var#1{\text{Var}(#1)}
\textcolor{blue}{}\global\long\def\TDD#1{{\color{red}To\, Do(#1)}}
\textcolor{blue}{}\global\long\def\dd#1{\textnormal{d}#1}
\textcolor{blue}{}\global\long\def\eqdef{:=}
\textcolor{blue}{}\global\long\def\ddp#1#2{\left\langle #1,#2\right\rangle }
\textcolor{blue}{}\global\long\def\En{\mathcal{E}_{n}}
\textcolor{blue}{}\global\long\def\Z{\mathbb{Z}}
\textcolor{blue}{{} }

\textcolor{blue}{}\global\long\def\nC#1{\newconstant{#1}}
\textcolor{blue}{}\global\long\def\C#1{\useconstant{#1}}
\textcolor{blue}{}\global\long\def\nC#1{\newconstant{#1}\text{nC}_{#1}}
\textcolor{blue}{}\global\long\def\C#1{C_{#1}}
\textcolor{blue}{}\global\long\def\meas{\mathcal{M}}
\textcolor{blue}{}\global\long\def\cSpace{\mathcal{C}}
\textcolor{blue}{}\global\long\def\pspace{\mathcal{P}}

\begin{abstract}
%
%
%
In two-dimensional unfoldings of homoclinic tangencies, the parameter space contains codimension-1 laminations whose leaves consist of maps with invariant non-hyperbolic Cantor sets. These Cantor sets are wild both in the sense of Hofbauer–Keller and in the sense of Newhouse, and they contain Collet–Eckmann points with dense orbits.  Hence, wildness and non-uniform chaotic hyperbolicity can coexist on a single invariant set, while persisting along codimension-1 manifolds.

In addition, each leaf of the lamination contains a map with infinitely many sinks accumulating on the Cantor set containing the Collet-Eckmann point. This surprising symbiosis of contraction and expansion may not, in fact, be pathological. Along the way, we introduce a generalized renormalization scheme for two-dimensional systems.

\end{abstract}

\section{Introduction}
This paper addresses two central questions in dynamical systems: the existence and stability of dynamically relevant sets.  Given a dynamical system $( \mathcal M,f)$ where $ \mathcal M$ is a manifold and $f$ a map acting on it, one aims to identify the regions where the dynamics effectively take place, that is, invariant sets capturing the long-term behaviour of typical orbits. Establishing the existence of such sets can already be a challenging task. Once their existence is established, one naturally seeks to understand whether these sets, together with their dynamical properties, persist under perturbations of the system. Are these properties robust and observable across nearby systems? In other words, are they stable? We now introduce a notion of stability tailored to this setting.

Given two dynamical systems \((\mathcal M, f)\) and \((\mathcal M, g)\) with relevant sets \(A(f)\) and \(A(g)\), we say that they are \emph{conjugate} if there exists a homeomorphism \(h : A(f) \to A(g)\) such that $
h \circ f = g \circ h.$ 
One can think of this homeomorphism as a change of coordinates. For instance, if \(A(f)\) and \(A(g)\) are Cantor sets, the conjugacy map ``rearranges'' the pieces of one Cantor set to produce the other. In this sense, the two sets share the same topological structure and are therefore topologically equivalent.

Within the space of dynamical systems, one can therefore consider the class of maps whose relevant sets are topologically equivalent. The structure of this class encodes the stability of the corresponding sets. We say that the set \(A(f)\) is \emph{codimension-\(k\) stable} if the set of systems whose relevant sets are conjugate forms a codimension-\(k\)  submanifold of the space of dynamical systems. This submanifold is called the \emph{manifold of stability}.

The sets exhibiting the strongest form of stability are closed attracting periodic orbits and, more generally, hyperbolic invariant sets. Indeed, such sets are {codimension-\(0\) stable}, meaning that their topology does not change under small perturbations of the system. 

More subtle phenomena already arise in one-dimensional dynamics. For instance, in the class of \(\mathcal{C}^3\) circle maps, the relevant invariant set is the entire circle, but its topological structure is no longer stable under arbitrary perturbations. Instead, it persists only along codimension-1 manifolds in parameter space, and is therefore codimension-1 stable. This phenomenon follows from one of the most profound results in one-dimensional dynamics, namely Herman’s Theorem, see  \cite{Her79}, and for more modern expositions \cite{DMP, KMP}.

The stability of invariant sets in one-dimensional dynamics is by now largely understood, and no fundamentally new phenomena are expected. That said, a number of subtle and interesting questions remain open, mostly concerning finer properties of the dynamics and their dependence on parameters.

In contrast, the situation in two dimensions is much richer and far from completely understood. Several mechanisms of instability may interact in nontrivial ways, making questions of existence and stability more delicate. We now turn to the family of systems at the center of our study, namely \emph{unfoldings of maps with a homoclinic tangency and a transversal intersection}. Such unfoldings provide a natural setting in which complex invariant sets and coexistence phenomena arise. The precise definition is given in Subsection~\ref{familyofunfoldings}.
A central example is provided by the Hénon family (see Section~\ref{sec:Henonfamily} for a precise definition).
What relevant invariant sets arise in this context, and what is known about their stability?

As expected, a finite number of attracting periodic orbits (sinks) may coexist, and these are {codimension-0 stable}. Indeed, sinks are hyperbolic sets. More surprisingly, the existence of infinitely many sinks has also been established (see \cite{Newhouse}), together with initial results concerning their stability (see \cite{GS}).

Beyond periodic behaviour, non periodic dynamics have likewise been shown to occur. In particular, the existence of a \emph{period-doubling Cantor attractor}, an invariant, non-hyperbolic, minimal set carrying a measure with zero Lyapunov exponent, has been proved (see \cite{CLM, GvST, LM1}). This set is known to be {codimension-1 stable}. In addition, maps exhibiting two period-doubling Cantor attractors have been constructed, and these are {codimension-2 stable} (see \cite{Pal}).

Even more intricate invariant sets, known as \emph{strange attractors}, have been shown to exist in H\'enon-like families. Strange attractors are invariant, non-hyperbolic, and chaotic sets, in the sense that they support an invariant measure with positive Lyapunov exponent. Their existence was first established in the seminal work \cite{BC}, which introduced a new and highly nontrivial construction.
Similar constructions and further developments were later given in \cite{BY, BV, BergerAst, Haz, MV, WY, WY1}, and the coexistence of multiple strange attractors was established in \cite{BP}.

In many constructions, the analysis of chaotic dynamics relies on the identification of a point satisfying the \emph{Collet–Eckmann condition}. Roughly speaking, this condition asserts that the derivative of the map grows exponentially fast along the orbit of this point (see Definition~\ref{def:CE} for a precise formulation).

A natural question then arises: what can be said about the stability of strange attractors? Because of their chaotic nature, this question is particularly challenging. It is known that, in parameter space, there exists a positive Lebesgue measure set of maps exhibiting such attractors. However, the structure of these attractors typically changes as the parameter varies. At present, we lack general tools to determine whether, and in what sense, such chaotic sets can be stable. The present paper provides a first answer to this question for a class of non-hyperbolic chaotic invariant sets. Our approach is to identify a precise notion of stability for dynamics that lie well beyond the uniformly hyperbolic setting.

Before summarizing our main results, we explain the viewpoint underlying this work. The invariant sets we construct display several types of dynamical behaviour that are commonly regarded as \emph{wild}. First, they are statistically wild in the sense of Hofbauer and Keller, as stated in Theorem A-(iv), in that a generic orbit in the set is dense, while from a statistical point of view the behavior is degenerate, meaning that typical orbits accumulate on a very singular measure, as described for instance in \cite{BKNvS, HK, Lyub}. Second, they are wild in the sense of Newhouse, see Theorem A-(viii), in the classical terminology of \cite{BDV, Newhouse, PT}: arbitrarily small perturbations exhibit infinitely many sinks. Moreover, all constructed Cantor sets contain a Collet–Eckmann point with dense orbit, see Theorem B. 
Furthermore, we show that these sets persist along the leaves of a codimension-1 lamination. In summary,  wild and non-hyperbolic chaotic behavior can coexist on a single invariant set while persisting along codimension-1 manifolds.

In addition, Theorem C states that each leaf of the lamination contains a map with infinitely many sinks accumulating on the Cantor set containing the Collet-Eckmann point. This surprising symbiosis of contraction and expansion might not even be pathological, and motivates the Conjecture stated below.

The main contribution of this paper is to show that all these phenomena can coexist on a single invariant Cantor set and, moreover, persist along codimension-one laminations in parameter space. Theorems A, B, and C make this precise by progressively incorporating these behaviours and establishing a unified notion of stability for such highly non-hyperbolic dynamics.

We now summarize the main results of the paper.

\paragraph{Theorem A.} Let $\mathcal M$ be a two-dimensional manifold and let $F:\left([-t_0,t_0]\times [-a_0,a_0]\right)\times\mathcal  M\to\mathcal  M$ be an unfolding with $[-t_0,t_0]\times\left\{0\right\}$ as homoclinic tangency locus of a saddle point.  There exists a codimension-$1$ lamination $\mathfrak{L}$ in parameter space such that, each leaf $\ell\subset\mathfrak{L}$ and each map $f\in\ell$ have a set $A(f)$ satisfying the following.
\begin{itemize}
\item[(i)] The set $A(f)$ in an invariant, non-hyperbolic Cantor set.
\item[(ii)] The set $A(f)$ contains a point with dense orbit.
\item[(iii)] The set $A(f)$ contains a saddle point together with a corresponding transversal homoclinic intersection. 
\item[(iv)] The set $A(f)$ has a unique invariant probability measure. This invariant measure is the point mass on the fixed point in $A(f)$ .
\item[(v)] Every leaf of $\mathfrak L$ is the graph of a smooth function $\ell:[-t_0,t_0]\to [-a_0,a_0]$. Moreover if $t\in [-t_0,t_0]$, the closure of the set $L_t=\left\{(t,a)\in\mathfrak L\right\}$ is a Cantor set. 
\item[(vi)] If $f$ and $\tilde {f}$ are in the same leaf $\ell$ of the lamination $\mathfrak{L}$ then they are conjugate on their respective Cantor sets $A(f)$ and $A(\tilde f)$.  
\item[(vii)]  The set $A(f)$ is codimension-$1$ stable and $\ell$ is the smooth manifold of stability. 
\item[(viii)]  Every open neighborhood of $f\in\mathfrak{L}$ contains maps with infinitely many sinks.  

\end{itemize}
Theorem A is the summary of several results proved in the paper. In order: Lemma \ref{lem:ACantorset}, Corollary \ref{Cor:AHyperbolic}, Lemma \ref{lem:Adense}, Lemma \ref{lem:omegaO2},  Theorem \ref{Theo:uniquemeasure}, Proposition \ref{Prop:longleavesandtransversalCantorset}, Theorem \ref{Th:conjonleaves}, Theorem \ref{Theo:Acod1stable}, Remark \ref{rem:NewhouseInstability}.

Theorem A establishes the existence of an invariant, non-hyperbolic Cantor set that is codimension-1 stable. In particular, any two maps lying on the same leaf of the lamination are conjugate on their corresponding Cantor sets, so that the dynamical structure is preserved along the leaf. This stability is striking given the wildness of these Cantor sets. Indeed, they are Newhouse-unstable and support only a trivial invariant probability measure, namely the Dirac mass at a saddle point. In this sense, Theorem~A already reveals a first form of stability for dynamics with wild properties.

These results should be compared with those of \cite{CLM}, where the period-doubling Cantor attractor was shown to be codimension-1 stable. In that case, the topology of the Cantor set likewise persists along a codimension-1 leaf; however, the constructed leaf is very short. Numerical experiments suggest that the period-doubling locus should in fact form a long global curve, but establishing this rigorously remains a challenging open problem. By contrast, our approach exploits the unstable expansion at the saddle point to construct a lamination in parameter space whose leaves have uniform size and act as manifolds of stability.

The underlying idea behind the proof of Theorem~A is a renormalization scheme, which can be understood as an iterative procedure based on induced maps. More precisely, one studies the dynamics by restricting the map to carefully chosen regions of phase space and considering the map induced by the first return to these regions. These induced maps describe the behaviour of the system at progressively smaller scales.
Renormalization is defined by iterating this construction: starting from the original map, one passes to an induced map on a suitable domain, rescales it, and then repeats the same induced-map procedure. This iteration thus defines a renormalization operator acting on the space of maps.

The regions selected at successive steps form a nested sequence that progressively refines the phase space. This nested structure gives rise to what are known as dynamical partitions, which encode the combinatorial and geometric organization of the dynamics across scales.

In our study, we focus primarily on the analysis of these dynamical partitions and rely less on the associated renormalization operator itself. The combinatorics of our renormalization scheme is intrinsically unbounded. In such unbounded settings, a central difficulty is to control the geometry of the partitions and to obtain uniform estimates on how the relevant quantities vary with the parameters.

This difficulty is overcome in Proposition~\ref{speed} and its proof. Roughly speaking, this result provides quantitative control on how certain distinguished points (critical values) move with the parameters, and how the corresponding dynamical partitions are shaped. While the proof of Proposition~\ref{speed} is very technical, it contains the core of our analysis.

The combinatorics of the renormalization scheme and the ideas underlying Proposition~\ref{speed} are inspired by \cite{BC}. A fundamental difference, however, is that we only need to control a linearly growing number of critical points, whereas in \cite{BC} an exponentially growing number of such points must be handled. This reduced combinatorial complexity allows the dynamics to be organized in a more rigid and transparent way, which enables the constructions described below.

The lamination described in Theorem~A consists of infinitely renormalizable maps (see Subsection~\ref{Subsec:renormalization} for a formal definition). As explained above, the renormalization procedure associated with such maps produces a sequence of dynamical partitions at finer and finer scales. The intersection of these nested partitions is an invariant Cantor set.
Rather than studying the dynamics on the invariant Cantor set directly, we encode it using a combinatorial model based on the dynamical partitions. To this end, we associate a directed graph to each level of the renormalization procedure. The vertices of the graph correspond to the elements of the dynamical partition at that level, and the edges record how these elements are mapped onto one another by the dynamics. In this way, each graph provides a finite combinatorial description of the dynamics at a given scale.

The inverse limit of these graphs carries a natural dynamics and is a Cantor set. This dynamical system is conjugate to the dynamics induced by the original map on its invariant Cantor set. As a result, this combinatorial description allows effective control of the topological, geometric, and measure-theoretical properties of the Cantor sets. Sequences of directed graphs of this type were introduced in \cite{GM}. In the present work, we extend this construction to the setting of families of maps exhibiting homoclinic tangencies.

We would like to stress at this point that our approach represents the first successful attempt to formulate a generalized renormalization framework with two branches in the setting of two-dimensional systems with positive entropy. Renormalization schemes for two-dimensional systems using only one branch have been explored in \cite{AKW, BergerZoology, CLM, CEK1, CEK2, CPLY, CPT, EKW1, EKW2, GJ1, GJ2, GJM, GvST, LM1, LM2, Pal}. Our construction differs in that it employs two branches and, in this respect, is closer in spirit to the generalized renormalization frameworks developed for one-dimensional systems.

We next extend the list of properties of the Cantor sets described in Theorem~A. In particular, we show that these non-hyperbolic invariant Cantor sets exhibit chaotic dynamics, in the sense that they contain a Collet–Eckmann point with dense orbit. Moreover, these sets remain codimension-1 stable, and their Collet–Eckmann points persist continuously along the leaves of the lamination. This shows that strong chaotic behaviour, manifested by exponential growth along a dense orbit, is compatible with a robust topological structure in parameter space. Theorem~B thus represents a first step toward the study of the stability of chaotic non-hyperbolic invariant sets. At the present stage, however, since our analysis controls only a linearly growing number of critical points, it would be premature to formulate a general conjecture concerning the stability of strange attractors.

\paragraph{Theorem B.}
There exists a sub-lamination $\mathfrak{L}_{CE}\subset \mathfrak{L}$ satisfying the following property. For each leaf $\ell\subset\mathfrak{L}_{CE}$ and for each map $f\in\ell$, the invariant non-hyperbolic Cantor set $A(f)$ contains a Collet-Eckmann point $c_f$ satisfying the following properties. 
\begin{itemize}
\item The omega-limit set $\omega(c_f)=A(f)$.
\item  The point $c_f$ depends continuously on $f\in\mathfrak{L}_{CE}$.
\item  If $f$ and $\tilde {f}$ are in the same leaf $\ell$ of the lamination $\mathfrak{L}_{CE}$ then there exists a conjugation $h:A(f)\to A(\tilde f)$ such that $h(c_f)=c_{\tilde f}$.
 \end{itemize}
 Moreover, every leaf of $\mathfrak{L}_{CE}$ is the graph of a smooth function $\ell:[-t_0,t_0]\mapsto [-a_0,a_0]$ and if $t\in [-t_0,t_0]$, the closure of the set $L^{CE}_t=\left\{(t,a)\in\mathfrak L_{CE}\right\}$ is a Cantor set.

\bigskip

Theorem B is a reformulation of Theorem \ref{Theo:CE}. In pursuing the stability of chaotic non-hyperbolic sets, we encounter a striking coexistence phenomenon that further illustrates the richness of the dynamics in these systems. We prove that each leaf of the lamination contains a map with infinitely many sinks of increasing period. Moreover these periodic sinks accumulate on the non-hyperbolic Cantor set that contains a Collet–Eckmann point. The coexistence of infinitely many sinks with chaotic dynamics is surprising. The formal statement of the theorem is given below.

\paragraph{Theorem C.} 
There exists a sub-lamination $\mathfrak{L}_{N}\subset\mathfrak{L}_{CE}\subset \mathfrak{L}$ satisfying the following property. Each leaf $\ell\subset \mathfrak{L}_{N}$ contains a map $f_{\ell}$ with infinitely many attracting periodic points $p_g$, of arbitrarily high  period, such that,
$$
\bigcap_n\overline{\bigcup_{g\geq n}\text{Orb}(p_g)}=A(f_{\ell}).
$$ 
Moreover, every leaf of $\mathfrak{L}_{N}$ is the graph of a smooth function $\ell:[-t_0,t_0]\mapsto [-a_0,a_0]$ and if $t\in [-t_0,t_0]$, the closure of the set $L^{N}_t=\left\{(t,a)\in\mathfrak L_{N}\right\}$ is a Cantor set. 
\bigskip

Theorem C is a reformulation of Theorem \ref{Theo:CE+N}. 

For a map with infinitely many sinks, it is natural to expect that each sink arises from a local return map which, after rescaling, is Hénon-like. The creation of further sinks at smaller scales therefore requires that these return maps be parameter unstable, as stability under parameter variations would prevent the appearance of new sinks. Since parameter instability is often associated with dynamical instability of the underlying system, this leads to the following conjecture.

\paragraph{Conjecture.} 
Let $f$ be a diffeomorphism of a compact two-dimensional manifold with infinitely many attracting periodic points $p_g$, of arbitrarily high  period. If the set
$$
A=\bigcap_n\overline{\bigcup_{g\geq n}\text{Orb}(p_g)},
$$ 
is transitive then it contains a Collet-Eckmann point with dense orbit. 

\bigskip

The proof of Theorem~C relies on an intermediate result, Theorem~\ref{Newhousepoints}, which establishes the existence of maps with infinitely many sinks in unfoldings. The coexistence of infinitely many attracting periodic orbits is known as the \emph{Newhouse phenomenon}, first discovered by Newhouse in~\cite{Newhouse}.
Since its discovery, the Newhouse phenomenon has been observed in a wide range of settings. In particular, many constructions yield Baire sets of maps with infinitely many sinks in the space of dynamical systems; see, for instance,
\cite{ABC, Berger, BDP, Bu, CCH, Da, DR, GST, GST1, KS, Newhouse, PV, Ures, VaS}.
These results demonstrate that the phenomenon is abundant from a topological point of view, although its detailed dynamical structure often remains poorly understood.

In Newhouse’s original construction, the coexistence of infinitely many sinks is produced by the persistence of homoclinic tangencies. The key mechanism relies on the interaction of stable and unstable Cantor sets whose thicknesses exceed one, which guarantees that tangencies persist under perturbations of the system. This persistence, in turn, allows the repeated creation of sinks.

Our proof of Theorem~C follows a different, more explicit strategy. The argument proceeds inductively. Starting from a map with a sink and a homoclinic tangency, we show that the sink persists under small parameter variations, while the tangency gives rise to a new, \emph{explicitly} constructed tangency, called secondary tangency, and an additional sink. The construction of the secondary tangency is intrinsically related to the renormalization scheme. Repeating this procedure produces infinitely many sinks of increasing period, which accumulate on a non-hyperbolic Cantor set containing a Collet–Eckmann point.

The main difference between our approach and Newhouse’s original method~\cite{Newhouse} lies precisely in the treatment of the secondary tangency. In the classical construction, the persistence of tangencies guarantees that nearby maps exhibit tangencies somewhere, but the location of these tangencies depends highly discontinuously on the map, making a detailed analysis difficult. By contrast, our method, inspired by~\cite{BC, BP}, replaces persistence arguments with an explicit construction of the next tangency at each step.
This constructive approach provides an alternative route to the Newhouse phenomenon and clarifies how the coexistence of chaotic dynamics and stable periodic behaviour can arise within the same system. Moreover, the explicit nature of the construction allows for a precise geometric analysis in both parameter space and phase space. To the best of our knowledge, this is the first time that the asymptotic dynamics of Newhouse sinks accumulating on a chaotic non-hyperbolic set is described in such detail.

    \paragraph{Acknowledgements.}
 The authors express their gratitude to Michael Benedicks for his continued support throughout the preparation of this paper and for the many invaluable discussions at both the inspirational and technical levels. The authors are also grateful to Michael Lyubich for shedding light on aspects of our results that we had previously overlooked, and for his valuable insights connecting the different parts of this work.  The second author was partially supported by the starting grant SFB 67575 and by the VR grant 67578, Renormalization in dynamics. 
     \newpage
\tableofcontents

\section{Preliminaries}\label{section:preliminaries}
 To define the families of maps that will concern us, we first need a suitable linearization theorem. The following well-known result, due to Sternberg (see \cite{S}), provides exactly this.
\begin{theo}\label{Ctlinearization}
Given $\left(\mu, \lambda\right)\in\R^{2}$, there exists $k\in\N$ such that the following holds. 
Let $\mathcal M$ be a two dimensional $\Cinf$ manifold and let $f:\mathcal M\to\mathcal  M$ be a diffeomorphism with saddle point $p\in\mathcal M$ having unstable eigenvalue $|\mu|>1$ and stable eigenvalue $|\lambda|$. If
\begin{equation}\label{nonresonance}
\lambda\neq\mu^{k_1}\text{ and }\mu\neq \lambda^{k_2}
\end{equation}
for $k=(k_1,k_2)\in\N^2$ and $2\leq |k|=k_1+k_2 \leq k$, then $f$ is $\Cq$ linearizable.
\end{theo}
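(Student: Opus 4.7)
The plan is to reduce to a local statement near the saddle $p$, construct a polynomial normal form by killing non-resonant Taylor coefficients, and then invoke the classical Sternberg fixed-point argument to upgrade the formal conjugacy to a $\mathcal{C}^4$ conjugacy provided the truncation order $k$ is chosen large enough in terms of $4$, $|\mu|$, and $|\lambda|$. Once a local conjugacy is obtained on a neighbourhood $U$ of $p$, one extends it to a semi-global conjugacy on the basin of the local stable/unstable manifolds by pulling back and pushing forward through iterates of $f$.

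First I would choose $\mathcal{C}^\infty$ charts on $\mathcal M$ identifying a neighbourhood of $p$ with a neighbourhood of $0 \in \R^{2}$, and diagonalize $Df(0)$ so that in these coordinates one can write
\begin{equation*}
f(x,y) = \bigl(\mu x + F(x,y),\ \lambda y + G(x,y)\bigr),
\end{equation*}
with $F,G = O(|(x,y)|^{2})$. Next, for each multi-index $(k_1,k_2)$ with $2 \le k_1+k_2 \le k$, a monomial $x^{k_1} y^{k_2}$ in the first component can be removed by a polynomial change of coordinates of the form $x \mapsto x + c\, x^{k_1} y^{k_2}$ precisely when $\mu - \mu^{k_1}\lambda^{k_2} \neq 0$, and analogously for the second component with denominator $\lambda - \mu^{k_1}\lambda^{k_2}$. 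The non-resonance hypothesis (\ref{nonresonance}) ensures that none of these denominators vanish up to order $k$, so by an inductive Poincaré--Dulac scheme one obtains a polynomial $\mathcal{C}^\infty$ change of coordinates after which
\begin{equation*}
f(x,y) = \bigl(\mu x,\ \lambda y\bigr) + R(x,y), \qquad R(x,y) = O\bigl(|(x,y)|^{k+1}\bigr).
\end{equation*}

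The heart of the matter is then the analytic step: solve the cohomological equation $h \circ f = L \circ h$, with $L(x,y) = (\mu x, \lambda y)$, by writing $h = \mathrm{id} + \varphi$ and setting up the fixed-point problem
\begin{equation*}
\varphi = L^{-1}\circ \varphi \circ f + L^{-1}\circ R \circ (\mathrm{id}+\varphi)
\end{equation*}
on a small neighbourhood of $0$, in an appropriate $\mathcal{C}^{4}$-norm with weights reflecting the contraction rates of $L$ and $L^{-1}$ along the stable and unstable directions. This is the main obstacle: because $|\mu|>1$, differentiating the equation introduces factors of $\mu^{j}$ for $j\le 4$ on the right-hand side, and one needs the remainder $R$ to be flat enough at $0$ to absorb these factors and still give a contraction in $\mathcal{C}^{4}$. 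Concretely, the threshold $k$ is determined by requiring that $|\mu|^{4}/|\mu|^{k+1}$ and $|\lambda|^{k+1}/|\lambda|^{4}$ be small together with the analogous cross-terms; this is the quantitative content of Sternberg's theorem and dictates the dependence $k = k(\mu,\lambda)$ stated in the lemma. Once the local $\mathcal{C}^{4}$ conjugacy $h$ is established on $U$, it extends uniquely to a $\mathcal{C}^{4}$ conjugacy on $\bigcup_{n\ge 0} f^{-n}(U)$ along the unstable manifold and $\bigcup_{n\ge 0} f^{n}(U)$ along the stable manifold by the equation $h = L^{-n}\circ h \circ f^{n}$, concluding the proof.
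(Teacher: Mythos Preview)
The paper does not prove this theorem itself; immediately after stating it (and the companion Theorem~\ref{familydependence}) the authors simply write that the proofs can be found in \cite{BrKo, IlaYak}. Your outline is exactly the classical Sternberg scheme contained in those references: formal Poincar\'e--Dulac normalization to kill non-resonant monomials up to order $k$, followed by a contraction/fixed-point argument that uses the $(k{+}1)$-flatness of the remainder to absorb the expansion factors coming from $\mu$ and produce a genuine $\Cq$ conjugacy, with $k=k(\mu,\lambda)$ determined by this balance. So in substance you are reproducing what the cited sources do, not offering an alternative route.

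Two minor remarks. First, the non-resonance hypothesis as literally written in the statement ($\lambda\ne\mu^{k_1}$, $\mu\ne\lambda^{k_2}$) does not match the denominators $\mu-\mu^{k_1}\lambda^{k_2}$ and $\lambda-\mu^{k_1}\lambda^{k_2}$ you actually need in the Poincar\'e--Dulac step; for a saddle the stated conditions are vacuous, while genuine mixed resonances such as $\mu\lambda=1$ are not excluded by them. This is an artifact of the formulation rather than a gap in your argument---the standard Sternberg hypothesis (the one you implicitly use) is what is meant. Second, your final extension paragraph is unnecessary: the theorem is purely local, and the conjugacy is only claimed on a neighbourhood of $p$.
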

\begin{defin}
Let $\mathcal M$ be an two dimensional $\Cinf$ manifold and let $f:\mathcal M\to\mathcal  M$ be a diffeomorphism with saddle point $p\in\mathcal  M$. We say that $p$ satisfies the $\Cq$ non-resonance condition if (\ref{nonresonance}) holds.
\end{defin}

\begin{theo}\label{familydependence}
Let $\mathcal M$ be a two dimensional $\Cinf$ manifold and let $f:\mathcal M\to\mathcal M$ be a diffeomorphism with a saddle point $p\in\mathcal  M$ which satisfies the $\Cq$ non-resonance condition. Let $0\in \mathcal P\subset\R^{n}$ and let $F:\mathcal P\times \mathcal M\to\mathcal M$ be a $\Cinf$ family with $F_0=f$. Then, there exists a neighborhood $U$ of $p$ and a neighborhood $V$ of $0\in\mathcal P$ such that, for every $t\in V$, $F_t$ has a saddle point $p_t\in U$ satisfying the $\Cq$ non-resonance condition. Moreover $p_t$ is $\Cq$ linearizable in the neighborhood $U$ and the linearization depends $\Cq$ on the parameters. 
\end{theo}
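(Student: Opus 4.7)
The plan is to combine the implicit function theorem with a parameterized version of Sternberg's theorem. First, since $Df(p)$ has eigenvalues $\mu,\lambda$ with $|\lambda|<1<|\mu|$, the operator $Df(p)-I$ is invertible, so the implicit function theorem applied to the $\Cinf$ map $(t,x)\mapsto F_t(x)-x$ at $(0,p)$ yields a $\Cinf$ curve of fixed points $t\mapsto p_t$ defined on a neighborhood of $0$, with $p_0=p$. The eigenvalues $\mu_t,\lambda_t$ of $DF_t(p_t)$ depend smoothly on $t$, so on a possibly smaller neighborhood they remain of modulus $>1$ and $<1$ respectively, making each $p_t$ a saddle. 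The $\Cq$ non-resonance condition (\ref{nonresonance}) consists of finitely many strict inequalities of the form $\lambda_t\neq \mu_t^{k_1}$ and $\mu_t\neq\lambda_t^{k_2}$ for $2\le k_1+k_2\le k$; these are open conditions and so persist on a further neighborhood $V$ of $0$, which verifies the first assertion.

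Second, for the linearization, I would view $F$ as the skew product $\tilde F(t,x)=(t,F_t(x))$ on $V\times\mathcal M$. The set $\Sigma=\{(t,p_t):t\in V\}$ is a $\Cinf$ normally hyperbolic invariant submanifold of $\tilde F$, with fiberwise stable and unstable bundles varying smoothly in $t$. I would then run Sternberg's construction in this parameterized setting. The proof naturally splits into two stages. In the first stage, one removes the non-resonant part of the $k$-jet of $F_t-DF_t(p_t)$ by polynomial changes of coordinates whose coefficients solve finite-dimensional linear systems; the determinants of these systems are exactly the nonzero quantities from (\ref{nonresonance}), so the polynomial normal form varies $\Cinf$ in $t$. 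In the second stage, the remaining flat-at-the-origin conjugacy is constructed as the unique fixed point of a graph-transform-type contraction, carried out here on a space of $\Cq$ families over $V$ rather than on a single map.

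The main obstacle is the joint $\Cq$ regularity in $(t,x)$ of this second stage. The contraction operator used in Sternberg's argument has a rate depending on ratios of powers of $\mu_t$ and $\lambda_t$, and one must shrink $V$ so that this rate is controlled uniformly in $t\in V$. With this uniformity, the operator acts as a contraction on a closed subspace of families in $\Cq(V\times U,\mathcal M)$, and either the parameterized contraction mapping principle or the $\Cr$-section theorem produces a fixed point that is $\Cq$ jointly in parameters and space variables. Composing with the polynomial normal form from the first stage yields the required family of linearizing charts $h_t$ defined on a common neighborhood $U$ of $p$ and depending $\Cq$ on $t\in V$, which completes the proof.
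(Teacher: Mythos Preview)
The paper does not actually give its own proof of this theorem: immediately after stating Theorems~\ref{Ctlinearization} and~\ref{familydependence} it writes ``The proofs of Theorem~\ref{Ctlinearization} and Theorem~\ref{familydependence} can be found in \cite{BrKo, IlaYak}.'' So there is no in-paper argument to compare against; the result is quoted as a known fact from the normal-form literature.

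Your outline is a reasonable sketch of the standard argument and is in the spirit of what one finds in those references: persistence of the saddle and of the finitely many non-resonance inequalities via the implicit function theorem, reduction of the $k$-jet to linear form by polynomial conjugacies whose coefficients are smooth in the parameter because the relevant linear systems are nonsingular exactly under (\ref{nonresonance}), and then a contraction/graph-transform step to handle the flat remainder with uniform-in-parameter estimates. You correctly flag the only genuinely delicate point, namely the joint $\Cq$ regularity in $(t,x)$ of the second stage, and your proposed fix (shrink $V$ to make the contraction rate uniform and apply a parameterized fixed-point or $\Cr$-section argument) is exactly how the cited sources proceed. As a write-up this is a sketch rather than a proof, but since the paper itself defers entirely to \cite{BrKo, IlaYak}, your proposal is at least as detailed as what the paper provides.
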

The proofs of Theorem \ref{Ctlinearization} and Theorem \ref{familydependence} can be found in \cite{BrKo, IlaYak}.

\subsection{Unfoldings} \label{familyofunfoldings}
In the sequel we define a map with a strong homoclinic tangency by listing several conditions.  To summarize, a map with a strong homoclinic tangency has a non degenerate homoclinic tangency $q_1$ and a transversal homoclinic intersection $q_2$.  Conditions $(f6)$, $(f7)$ and $(f8)$ ensure that $q_1$ and the unstable local manifolds of $q_1$ and $q_2$ accumulate on the leg of the unstable manifold of the saddle point containing the transversal homoclinic intersection $q_2$. These conditions are redundant if $\mu<-1$. Moreover to implement the construction, a condition on the eigenvalues is also required, see $(f2)$. The basic properties of a map with a strong homoclinic tangency are illustrated in Figure \ref{Fig1}. 
\begin{defin}\label{stronghomtang}
Let $\mathcal M$ be a two dimensional $\Cinf$ manifold and let $f:\mathcal M\to\mathcal  M$ be a local diffeomorphism satisfying the following conditions:
\begin{itemize}
\item[$(f1)$] $f$ has a saddle point $p\in\mathcal M$ with unstable eigenvalue $|\mu|>1$ stable eigenvalue $\lambda$,
\item[$(f2)$] $|\lambda||\mu|^3<1$,
\item[$(f3)$] $p$ satisfies the $\Cq$ non-resonance condition,
\item[$(f4)$] $f$ has a non degenerate homoclinic tangency,  $q_1\in W^u(p)\cap W^s(p)$,
\item[$(f5)$] $f$ has a transversal homoclinic intersection,  $q_2\in W^u(p)\pitchfork W^s(p)$,
\item[$(f6)$] let $[p,q_2]^u\subset W^u(p)$ be the arc connecting $p$ to $q_2$, then there exist arcs  $W^u_{\text{\rm loc},n}(q_2)=[q_2, u_n]^u\subset W^u(q_2)$ such that $[p,q_2]^u\cap [q_2,u_n]^u=\left\{q_2\right\}$ and 
$$
\lim_{n\to\infty}f^n\left(W^u_{\text{\rm loc},n}(q_2)\right)=[p,q_2]^u,
$$
\item[$(f7)$] there exist neighborhoods $W^u_{\text{\rm loc},n}(q_1)\subset W^u(q_1)$ such that 
$$
\lim_{n\to\infty}f^n\left(W^u_{\text{\rm loc},n}(q_1)\right)=[p,q_2]^u,
$$
\item[$(f8)$] there exists $N,M\in\N$ such that 
$$
f^{-N}(q_1)\in [p,q_2]^u
$$
and 
$$
f^{M}(q_2)\in W^s_{\text{\rm loc}}(p).
$$
\end{itemize}
A map $f$ with these properties is called a map with a \emph{strong homoclinic tangency}, see Figure \ref{Fig1}.
\end{defin}

\begin{figure}[h]
\centering
\includegraphics[width=0.5\textwidth]{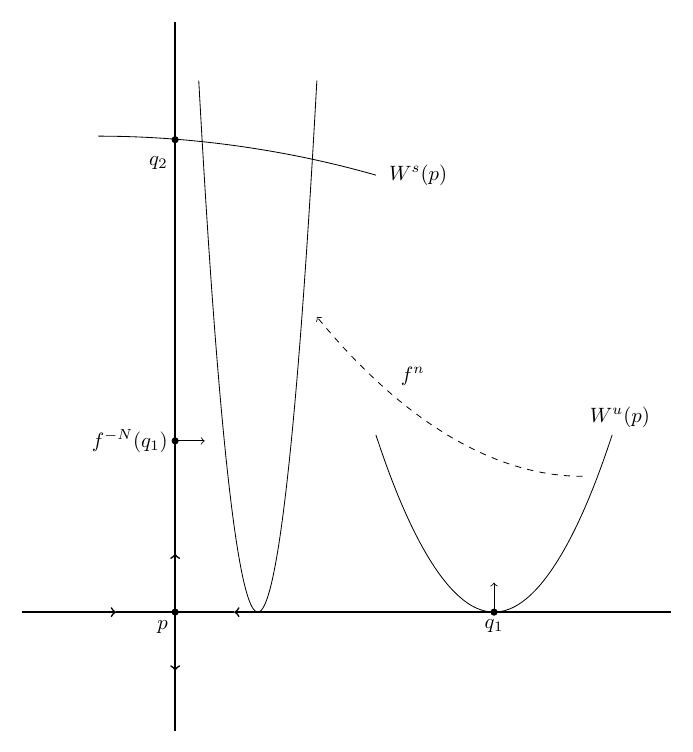}
\caption{A map with a strong homoclinic tangency}
\label{Fig1}
\end{figure}

\begin{rem} The conditions defining a map with a strong homoclinic tangency are natural, except $(f2)$. This condition requires that the contraction at the saddle is strong enough. It plays a crucial role in many fundamental places towards the paper.
\end{rem}

\begin{rem}
Observe that all conditions are open in the space of maps with an homoclinic and transversal tangency.
\end{rem}

\begin{rem} 
If the unstable eigenvalue is negative, $\mu<-1$, then $(f6)$, $(f7)$, and $(f8)$ are redundant.
\end{rem}

Following \cite{PT}, we consider now an unfolding of a map $f$ with a homoclinic tangency and a transversal intersection.
Let  $\mathcal P=[-r,r]\times [-r,r]$ with $r>0$.
Given a map $f$ with a strong homoclinic tangency, we consider a $\Cinf$ family $F:\mathcal P\times\mathcal M\to\mathcal  M$ through $f$ with the following properties:
\begin{itemize}
\item[$(F1)$] $F_{0,0}=f$,
\item[$(F2)$] $F_{t, a}$ has a saddle point $p(t, a)$ with unstable eigenvalue $|\mu(t, a)|>1$, with stable eigenvalue $\lambda(t,a)$, and
$$
\frac{\partial \mu}{\partial t}\ne 0,
$$
\item[$(F3)$] let 
$\mu_{\text{max}}=\max_{(t,a)}|\mu(t,a)|$,
  $\lambda_{\text{max}}=\max_{(t,a)}|\lambda(t,a)|$ and assume 
  \begin{equation}\label{ourcond}
  \lambda_{\text{max}}\mu_{\text{max}}^3<1,
  \end{equation}
\item[$(F4)$] there exists a $\Cd$ function $[-r,r]\ni t\mapsto q_1(t)\in W^u(p(t,0))\cap W^s(p(t,0))$ such that $q_1(t)$ is a non degenerate homoclinic tangency of $F_{t,0}$,
\item[$(F5)$] there exists a $\Cd$ function $[-r,r]^2\ni (t,a)\mapsto q_2(t,a)\in W^u(p(t,a))\cap W^s(p(t,a))$ such that $q_2(t,a)$ is a transversal homoclinic intersection of $F_{t,a}$.
\end{itemize}
\begin{rem}\label{rem:dmudtpositive}
Without loss of generality, we may assume that 
$$
\frac{\partial \mu}{\partial t}>0, \mu(t,a)>0 \text{ and } \lambda(t,a)>0.
$$
\end{rem}
Observe that, by $(F3)$ and by continuity, for $t_0$ small enough, \begin{equation}\label{thetacond0}
0<\frac{\log\mu_{\text{max}}}{\log{\frac{1}{\lambda_{\text{max}}}}}<\frac{3}{2}\frac{\log\mu_{\text{min}}}{\log{\frac{1}{\lambda_{\text{min}}}}}<\frac{1}{2}.
\end{equation}
\subsubsection{Rescaled Families}\label{RescaledFamilies}
According to Theorem \ref{familydependence} we may make a change of coordinates to ensure  that the family $F$ is $\Cq$ and for all $(t,a)\in [-t_0,t_0]\times [-a_0,a_0]$ with $0<t_0,a_0<r$ and by rescaling we can assume that $F_{t,a}$ is linear on the ball $[-2,2]^2$, namely 
\begin{equation}
\label{Flinear}
F_{t,a}=\left(\begin{matrix}
\lambda(t,a)&0\\
0&\mu(t,a)\\
\end{matrix}\right).
\end{equation}

Moreover the saddle point $p(t,a)=(0,0)$ and the local stable and unstable manifolds satisfy:
\begin{itemize}
\item[-] $W^s_{\text{loc}}(0)=[-2,2]\times \left\{0\right\}$,
\item[-] $W^u_{\text{loc}}(0)=\left\{0\right\}\times [-2,2]$ ,
\item[-] $q_1(t)\in W^s_{\text{loc}}(0)$,
\item[-] $q_2(t,a)\in \left\{0\right\}\times \left(\frac{1}{\mu},2\right)\subset W^u_{\text{loc}}(0)$,
\item[-] there exists $N$ such that $f^{N}(q_3(t))=q_1(t)$ where $q_3(t)=(0,1)$.

\end{itemize}
\subsubsection{Critical Points and Critical Values}
In this subsection we aim to introduce the concept of critical points and critical values in our setting.  This will lead to the formal definition of unfoldings. 

In the next lemma we prove that $q_3$ is contained in a curve of points whose vertical expanding tangent vectors are mapped by $DF^{N}$ to horizontal contracting ones. Let $(x,y)$ be in a neighborhood of $q_3$ and consider the point $$(X_{t,a}(x,y),Y_{t,a}(x,y))=F^N_{t,a}(x,y).$$
The following lemma says that $F^N_{t,a}(x,y)$ produces an unfolding in the sense of \cite{PT}. The formal definition of unfoldings is given in Definition \ref{unfolding} below.
\begin{lem}\label{functionc}

There exist $x_0, a_0>0$, a $\Cd$ function $c:[-x_0,x_0]\times [-t_0,t_0]\times  [-a_0,a_0]\to\R $ and a positive constant $Q$ such that 
\begin{equation}\label{partialYpartialt}
\frac{\partial Y_{t,a}}{\partial y}\left(x, c(x,t,a)\right)=0,
\end{equation}
and 
$$\frac{\partial^2 Y_{t,a}}{\partial y^2}\left(x, c(x,t,a)\right)\geq Q.$$
Moreover, 
\begin{eqnarray}\label{gamma}
|c(x,t,a)-c(0,t,a)|=O\left(|x|\right).
\end{eqnarray}
\end{lem}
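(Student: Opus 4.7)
The plan is to apply the implicit function theorem to the equation $\partial Y_{t,a}/\partial y=0$ near the base point $(x,y,t,a)=(0,1,0,0)$, after translating the geometric information about the non-degenerate homoclinic tangency into the two pointwise conditions
\[
\frac{\partial Y_{0,0}}{\partial y}(0,1)=0,\qquad \frac{\partial^2 Y_{0,0}}{\partial y^2}(0,1)>0.
\]

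First I would verify the vanishing of the first derivative. In the rescaled coordinates of Subsection~\ref{RescaledFamilies}, the vertical segment $\{0\}\times[-2,2]$ is contained in $W^u_{\text{loc}}(0)$, so its image under $F^N_{0,0}$ is an arc of $W^u(p)$ passing through $q_1(0)=F^N_{0,0}(0,1)$. The tangent vector $\partial/\partial y$ at $(0,1)$ is tangent to $W^u_{\text{loc}}(0)$, hence $DF^N_{0,0}(0,1)(\partial/\partial y)$ is tangent to $W^u(p)$ at $q_1(0)$. Since $q_1(0)$ is a homoclinic tangency and $W^s_{\text{loc}}(0)=[-2,2]\times\{0\}$ is horizontal, this image vector is horizontal, i.e.\ its $y$-component vanishes, which is exactly $(\partial Y_{0,0}/\partial y)(0,1)=0$. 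For the second-derivative condition, the arc $W^u(p)$ near $q_1(0)$ is parameterized by $y\mapsto (X_{0,0}(0,y),Y_{0,0}(0,y))$; the non-degeneracy of the tangency at $q_1(0)$ means precisely that the $y$-coordinate of this curve has a quadratic critical point at $y=1$, so $(\partial^2 Y_{0,0}/\partial y^2)(0,1)\neq 0$, and up to reversing the orientation we may take it positive.

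Now, by Theorem~\ref{familydependence} together with the rescaling, $F$ is $\mathcal{C}^4$ jointly in $(x,y,t,a)$, so the function $G(x,y,t,a):=(\partial Y_{t,a}/\partial y)(x,y)$ is $\mathcal{C}^3$. Since $G(0,1,0,0)=0$ and $(\partial G/\partial y)(0,1,0,0)>0$, the implicit function theorem produces $x_0,t_0,a_0>0$ and a $\mathcal{C}^3$ (hence $\mathcal{C}^2$) function $c$ on $[-x_0,x_0]\times[-t_0,t_0]\times[-a_0,a_0]$ with $c(0,0,0)=1$ solving $G(x,c(x,t,a),t,a)=0$; this is \eqref{partialYpartialt}. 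Shrinking the neighborhoods further and using continuity of $\partial^2 Y_{t,a}/\partial y^2$ at $(0,1,0,0)$, we obtain a uniform lower bound $Q>0$ along the graph of $c$. Finally, \eqref{gamma} is the mean value theorem applied to $x\mapsto c(x,t,a)$ on a compact domain, using that $c$ is $\mathcal{C}^1$ and hence Lipschitz in $x$ uniformly in $(t,a)$.

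The main obstacle is the first step, namely transporting the geometric hypotheses $(f4)$ and $(F4)$ through the local coordinates of Subsection~\ref{RescaledFamilies} into the analytic equalities $\partial Y/\partial y=0$ and $\partial^2 Y/\partial y^2>0$ at $(0,1,0,0)$; once these are in place, the rest of the argument is a standard application of the implicit function theorem together with uniform continuity of $\mathcal{C}^3$ data.
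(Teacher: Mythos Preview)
Your approach is essentially the paper's: define $G(x,y,t,a)=\partial Y_{t,a}/\partial y$, translate the non-degenerate tangency into $G=0$ and $\partial G/\partial y>0$ at the base point, then invoke the implicit function theorem. The verification of the two pointwise conditions and the derivation of \eqref{gamma} via Lipschitz continuity are fine.

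There is, however, a gap in the domain you obtain. The lemma asserts the existence of $x_0,a_0>0$ but takes the $t$-interval $[-t_0,t_0]$ as \emph{given}; you are not free to shrink it. Your single application of the implicit function theorem at $(0,1,0,0)$ only yields $c$ on a small $t$-neighborhood of $0$, so the sentence ``the implicit function theorem produces $x_0,t_0,a_0>0$'' does not match the statement. The paper fixes this as follows: by $(F4)$ the map $F_{t,0}$ has a non-degenerate homoclinic tangency $q_1(t)$ for \emph{every} $t\in[-t_0,t_0]$, and after the normalization of Subsection~\ref{RescaledFamilies} one has $q_3(t)=(0,1)$ for all $t$. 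Hence the hypotheses $G(0,1,t,0)=0$ and $\partial G/\partial y(0,1,t,0)>0$ hold at every $t$, so the implicit function theorem applies locally around each $(0,1,t,0)$; compactness of $[-t_0,t_0]$ together with local uniqueness then patches these into a single $\mathcal{C}^2$ function on $[-x_0,x_0]\times[-t_0,t_0]\times[-a_0,a_0]$. You should add this compactness step.
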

\begin{proof}
Let $\Phi: [-\frac{1}{2},\frac{3}{2}]\times\left( [-1,1]\times [-t_0,t_0]\times  [-a_0,a_0]\right)\to\R$ be defined by 
$$\Phi(y, x, t,a)=\frac{\partial Y_{t,a}}{\partial y}\left(x, y\right).$$ Observe that $\Phi$ is $\Cd$. Let $q_3(t)=F^{-N}_{t,0}(q_1(t))$. Because $q_1(t)$ is an homoclinic tangency, see $(F4)$, we have $$\Phi (q_3(t),0, t,0)=0,$$ and because $q_1(t)$ is a non degenerate tangency, we get $$\frac{\partial\Phi}{\partial y}(q_3(t),0, t,0)>0.$$ For every $t\in[-t_0,t_0]$ there exist, by the implicit function theorem, $\epsilon >0$ and a unique $\Cd$ function $c:[-\epsilon ,\epsilon]\times [-t_0-\epsilon,t_0+\epsilon]\times  [-\epsilon,\epsilon]\to\R $ locally satisfying the requirements of the lemma. These local functions extend to global ones because of the compactness of the interval $[-t_0,t_0]$ and the local uniqueness.
\end{proof}
\begin{rem}\label{c0=1}
Without lose of generality, by a smooth change of coordinates in the $y$ direction, we may assume that $c(0,t,a)=1$.
\end{rem}

\begin{defin}\label{criticalpoints}
Let $\left(t,a\right)\in [-t_0,t_0]\times  [-a_0,a_0]$. We call the point $$c_{t,a}=\left(0, c(0,t,a)\right),$$ the {primary critical point} and 
 $$z_{t,a}=F_{t,a}^N\left(c_{t,a}\right)=(z_x(t,a),z_y(t,a)),$$ the {primary critical value} of $F_{t,a}$.
\end{defin}
Observe that, near the saddle point, vertical vectors are expanding and horizontal ones are contracting. The critical points are defined to have the property that the expanding vertical vector is sent to the contracting horizontal one under $DF^N$. Let us briefly discuss the concept of critical points for dissipative maps in order to compare our definition with the ones previously used. 

Let us start by recalling that critical points are fundamental in the study of one-dimensional dynamics and they are easy to detect: those are the points where the map is not locally a diffeomorphism. This definition has no meaning for diffeomorphism of higher dimensional manifolds. The formal definition in this setting is given in \cite{PRH}, where the authors define the critical points as homoclinic tangencies, i.e. a tangency between the stable and unstable manifold of a saddle point. One can also, which is the basis of the corresponding construction in \cite{BC}, define a critical point as a tangency between a local stable manifold and an unstable manifold of a periodic point.

Homoclinic tangencies play a crucial topological role in general. However, they are difficult to detect. That is why in varies studies, starting with \cite{BC}, there are notions of approximate critical points which share with homoclinic tangencies the property that expanding vectors are mapped into contracting ones.  In particular, in \cite{BC}, critical points are rather tangencies between the unstable manifold of the saddle point and an approximate local stable manifold, not necessarily of the saddle point.
In our situation, similarly the tangent vector of the unstable manifold at the critical point $c_{t,a}$ is mapped into the contracting horizontal vector at the critical value $z_{t,a}$.

The crucial fact, which comes from the fundamental property of the approximate critical points (and homoclinic tangencies), is that  in a neighborhood, the return map is an H\'enon-like map. The local 
H\'enon behavior is what  is important and allows the analysis. From a technical point of view, the notion of critical point itself is less important; the local H\'enon behavior is all what is needed.

Another instance where critical points occur but play only a secondary role is in the context of strongly dissipative H\'enon maps at the boundary of chaos. These maps have a period doubling Cantor attractor, see \cite{CLM}. The Cantor attractors are studied using renormalization zooming in to the so-called {\it tip} of the Cantor set as in \cite{CLM}. Indeed, return maps to neighborhoods of the tip are 
H\'enon-like maps. A posterior one shows that the stable manifold at the tip is tangent to the direction of the neutral Lyapunov exponent. The tip plays the role of a critical point. However, this fact did not play any role during the renormalization analysis. This is another instance where, from a technical point of view, the notion of critical point is not that important. A similar definition of critical point in a more general setting is given in \cite{CPLY}. 

%
%
%
%
%
%



\begin{defin}\label{unfolding}
A family $F_{t,a}$ is called an {unfoldings} of $f$ if it can be reparametrized such that
\begin{itemize}
\item[$(P1)$] $z_y(t,0)=0$,
\item[$(P2)$] $\frac{\partial z_y(t,0)}{\partial a}\neq 0.$
\end{itemize}
\end{defin}

\begin{figure}[h]
\centering
\includegraphics[width=.9\textwidth]{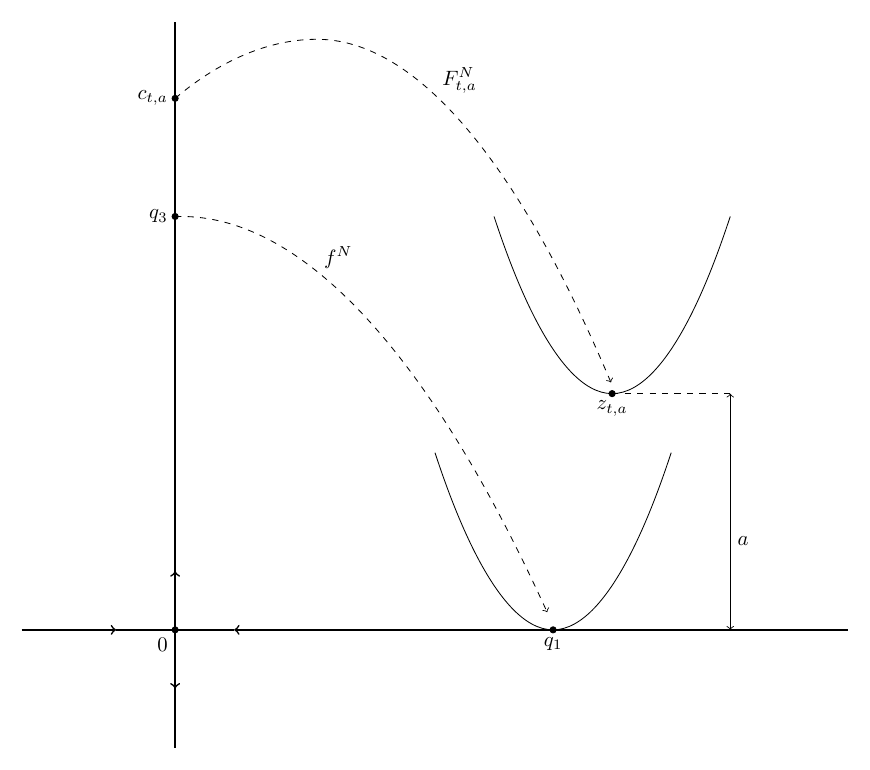}
\caption{Unfolding}
\label{Fig2}
\end{figure}
\begin{rem}\label{highzya}
Without lose of generality by a suitable coordinate change we may assume that if $F$ is an unfolding then $z_y(t, a)=a$, the primary critical value is  at height $a$, $z_x(t, a)=1$ and the primary critical point $c_{t,a}=(0,1)$, see Figure \ref{Fig2}. 
\end{rem}
\begin{rem}
A generic two-dimensional family trough f can locally be reparametrized to become an unfoldings.
\end{rem}
\begin{rem}
The results and the techniques presented in this paper can be generalized to higher dimensional families, both in parameter and phase space.
\end{rem}

\subsection{Properties of unfoldings}

In this section, we outline several fundamental properties of unfoldings.

\paragraph{Estimates of the stable manifold at $q_2$.} 
Consider the transversal homoclinic intersection $q_2$ defined in $(f5)$. Let $W=W^s_{\text{loc}}(q_2)$ and for all $n\in\N$ let $W_n=F^{-n}_{t,a}(W)$. Because $W\pitchfork W^u(p)$ we can apply the $\lambda$-Lemma which implies that $W_n$ converges to $W^s_{\text{loc}}(0)$. In particular $W_n$ is the graph of a function which will also be denoted by $W_n$.  Moreover, because 
\begin{equation}\label{wnformule}
W_n(x)=\mu^{-n}W\left(\lambda^nx\right),
\end{equation}
 then
\begin{equation}\label{wn}
\frac{1}{2\mu}\mu^{-n}\leq |W_n|\leq 2 \mu^{-n},
\end{equation}
\begin{equation}\label{dwn}
\left|\frac{\partial W_n}{\partial x}\right|=O\left(\frac{\lambda}{\mu}\right)^{n},
\end{equation}
and
\begin{equation}\label{ddwn}
\left|\frac{\partial^2 W_n}{\partial x^2}\right|=O\left(\frac{\lambda^2}{\mu}\right)^{n}.
\end{equation}
We estimate now the speed of the stable manifold $W_n$ as the graph of a function  in the phase space. 
\begin{lem}\label{partialwn}
$$\frac{\partial W_n}{\partial t}=-\frac{n}{\mu}\frac{\partial\mu}{\partial t}W_n+\left(\frac{\lambda}{\mu}\right)^n\frac{n}{\lambda}\frac{\partial\lambda}{\partial t}\frac{\partial W}{\partial x}x+\frac{1}{\mu^n}\frac{\partial W}{\partial t}=O\left(\frac{n}{\mu^n}\right),$$
$$\frac{\partial W_n}{\partial a}=-\frac{n}{\mu}\frac{\partial\mu}{\partial a}W_n+\left(\frac{\lambda}{\mu}\right)^n\frac{n}{\lambda}\frac{\partial\lambda}{\partial a}\frac{\partial W}{\partial x}x+\frac{1}{\mu^n}\frac{\partial W}{\partial a}=O\left(\frac{n}{\mu^n}\right).$$
\end{lem}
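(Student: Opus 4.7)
The plan is to differentiate the explicit formula (\ref{wnformule}) for $W_n$ via the chain rule, and then bound each resulting term with the help of (\ref{wn}), (\ref{dwn}) together with the smooth parameter-dependence of $W=W^s_{\mathrm{loc}}(q_2)$.

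First I would apply the chain rule to
$$W_n(x)=\mu(t,a)^{-n}\,W_{t,a}\bigl(\lambda(t,a)^n x\bigr),$$
regarding $\mu$, $\lambda$ and $W$ as $\Cq$ functions of the parameters while keeping $x$ fixed. Differentiating in $t$ produces three contributions. The first comes from differentiating $\mu^{-n}$; using the identity $W(\lambda^n x)=\mu^n W_n(x)$ it rewrites as $-\frac{n}{\mu}\,\partial_t\mu\, W_n$. The second comes from differentiating the argument $\lambda^n x$ inside $W$ and rearranges to $(\lambda/\mu)^n\,\frac{n}{\lambda}\,\partial_t\lambda\,\partial_x W\cdot x$. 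The third is the direct partial $\mu^{-n}\,\partial_t W(\lambda^n x)$. Adding these gives exactly the displayed identity, and the computation for $\partial_a W_n$ is word-for-word the same.

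Next I would bound the three summands separately to obtain the claimed $O(n/\mu^n)$. The first is dominated by $(n/\mu)|\partial_t\mu|\,|W_n|=O(n\mu^{-n})$ by (\ref{wn}). The second is $O(n(\lambda/\mu)^n)$, which is much smaller than $n\mu^{-n}$ since $\lambda<1$; here one uses that $\partial_x W$ is bounded on the compact piece $W^s_{\mathrm{loc}}(q_2)$ and $x$ lies in a fixed interval. The third is $O(\mu^{-n})\leq O(n\mu^{-n})$ because $\partial_t W$ is uniformly bounded on $W^s_{\mathrm{loc}}(q_2)$.

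The only delicate point, which I would address first, is the justification that the base curve $W=W^s_{\mathrm{loc}}\bigl(q_2(t,a)\bigr)$, viewed as a parametrized family of graphs, depends $\Cq$ on $(t,a)$ with derivatives bounded uniformly on the relevant compact arc. This is a standard consequence of the parameter-dependent stable manifold theorem applied at the hyperbolic intersection $q_2(t,a)$, combined with the $\Cq$ family dependence supplied by Theorem \ref{familydependence} and condition $(F5)$. Once this regularity is secured, the rest of the lemma is simply the chain-rule computation and the three-term bound sketched above, with no further obstacle.
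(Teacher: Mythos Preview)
Your proposal is correct and takes essentially the same approach as the paper: both proofs differentiate the explicit formula (\ref{wnformule}) with respect to the parameters and then bound the resulting three terms using (\ref{wn}) and the uniform boundedness of $\partial_x W$, $\partial_t W$, $\partial_a W$. The paper presents the computation in increment form (writing $\mu+\partial_a\mu\,\Delta a$, etc.) rather than directly via the chain rule, but the content is identical; your explicit remark on the $\Cq$ parameter-dependence of $W$ is a useful clarification that the paper leaves implicit.
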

\begin{proof}
Fix a point $x\in [-2,2]$ and a parameter $(t,a)\in [-r_0,r_0]^2$. We denote by $W_n$ the manifold corresponding to $(t,a)$ and by $W_n+\Delta W_n$ the manifold to $(t,a+\Delta a)$. Then, since the maps $F^n_{t,a}$ are linear, by differentiating (\ref{wnformule}) we obtain
$$
\left(\mu+\frac{\partial\mu}{\partial a}\Delta a\right)^n\left(W_n+\Delta W_n\right)(x)=\mu^n W_n(x)+\frac{\partial W}{\partial x}\left(\left(\left(\lambda+\frac{\partial\lambda}{\partial a}\Delta a\right)^n-\lambda^n\right)x\right)+\frac{\partial W}{\partial a}\Delta a.
$$

Similarly, one gets the same bound for ${\partial W_n}/{\partial t}$.
\end{proof}

\bigskip 
\paragraph{The $\Gamma$ curve and the $a_n$ curve.}
Fix an unfolding $F$ and for each $\left(t, a\right)\in  [-t_0,t_0]\times  [-a_0,a_0]$ let $$\Gamma_{t, a}=\left\{(x,c(x,t, a))| x\in[-x_0,x_0]\right\}.$$ 
In the next lemma we build a curve $a_n$ of points, in the parameter space, whose corresponding critical values are mapped after $n$ steps  into $\Gamma_{t, a}$.

\begin{lem}\label{Imtang}
For $n$ large enough, there exists a $\Cd$ function $a_n:[-t_0,t_0]\to (0,a_0]$ such that $$F^n_{t, a_n\left(t\right)}\left(z_{\left(t, a_n\left(t\right)\right)}\right)\in\Gamma_{\left(t, a_n\left(t\right)\right)}.$$ Moreover
\begin{equation}\label{dandt}
\frac{d a_n}{d t}=-n\frac{\partial\mu}{\partial t}\frac{1}{\mu^{n+1}}\left[1+O\left(\lambda^n\right)\right].
\end{equation}
\end{lem}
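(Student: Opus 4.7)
The plan is to rewrite the membership condition $F^n_{t,a}(z_{t,a})\in\Gamma_{t,a}$ as the zero set of an explicit scalar equation and then apply the implicit function theorem. In the linearizing chart of Subsection \ref{RescaledFamilies}, Remark \ref{highzya} gives $z_{t,a}=(z_x(t,a),a)$, so as long as $\mu^n a\le 2$ the iterates stay in the ball $[-2,2]^2$ and by \eqref{Flinear}
$$
F^n_{t,a}(z_{t,a})\;=\;\bigl(\lambda(t,a)^n z_x(t,a),\,\mu(t,a)^n a\bigr).
$$
The image lies on $\Gamma_{t,a}=\{(x,c(x,t,a))\}$ if and only if
$$
G_n(t,a)\;:=\;\mu(t,a)^n a\;-\;c\bigl(\lambda(t,a)^n z_x(t,a),\,t,\,a\bigr)\;=\;0.
$$

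First I would show that a simple root exists near $a\approx\mu^{-n}$. Using Remark \ref{c0=1} ($c(0,t,a)\equiv 1$), one gets $G_n(t,0)=-1+O(\lambda^n)$ and $G_n(t,2\mu^{-n})\ge 1+O(\lambda^n)$ for $n$ large, yielding a sign change. Moreover
$$
\frac{\partial G_n}{\partial a}\;=\;\mu^n\bigl(1+O(n\mu^{-n})\bigr),
$$
where the $\mu^n$ term dominates the $O(n)$, $O(n\lambda^{n-1})$, and $O(\lambda^n)$ contributions from $n\mu^{n-1}(\partial_a\mu)a$, $\partial_x c\cdot(n\lambda^{n-1}(\partial_a\lambda)z_x+\lambda^n\partial_a z_x)$, and $\partial_a c$ respectively (the last is $O(\lambda^n)$ because $\partial_a c$ vanishes on the $x=0$ slice). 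The implicit function theorem then produces a unique $\Cd$ function $a_n:[-t_0,t_0]\to(0,a_0]$ satisfying $G_n(t,a_n(t))=0$; local uniqueness together with compactness of $[-t_0,t_0]$ extends the construction globally, and inserting back into $G_n=0$ gives $a_n(t)=\mu(t,a_n(t))^{-n}\bigl(1+O(\lambda^n)\bigr)$.

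Finally, \eqref{dandt} follows from implicit differentiation. Because $c(0,t,a)\equiv 1$, the partials $\partial_t c$ and $\partial_a c$ vanish on $\{x=0\}$ and are therefore $O(\lambda^n)$ at the evaluation point $(\lambda^n z_x,t,a_n)$. Hence the only non-negligible term in $\partial_t G_n$ is $n\mu^{n-1}(\partial_t\mu)a_n=n\mu^{-1}(\partial_t\mu)\bigl(1+O(\lambda^n)\bigr)$, while the remaining pieces are $O(n\lambda^{n-1})+O(\lambda^n)$. Dividing by $\partial_a G_n=\mu^n(1+O(n\mu^{-n}))$ yields
$$
\frac{da_n}{dt}\;=\;-\,\frac{n}{\mu^{n+1}}\,\frac{\partial\mu}{\partial t}\,\bigl[1+O(\lambda^n)\bigr],
$$
which is \eqref{dandt}. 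No single step is technically difficult; the main issue is simply bookkeeping — ensuring all error terms are uniform in $t\in[-t_0,t_0]$. This rests on the boundedness of the $\Cd$ data $c,z_x,\mu,\lambda$ and on the spectral condition $(F3)$, which in particular forces $\lambda\mu<1$ and makes all the cross-terms decay rapidly.
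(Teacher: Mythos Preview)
Your argument is correct and lands on exactly the same implicit equation the paper uses, namely $\mu^n a_n=c(\lambda^n z_x,t,a_n)$, followed by the same differentiation; the paper simply packages the existence step more geometrically, by introducing the pulled-back manifolds $\Gamma_n=\{(x,y,t,a):F^n_{t,a}(x,y)\in\Gamma\}$, showing $\Gamma_n\to\Gamma_\infty=\{y=0\}$ in $\Cd$, and then invoking transversality of the critical-value surface $Z=\{(z_x(t,a),a,t,a)\}$ with $\Gamma_\infty$ (hence with $\Gamma_n$) to conclude that $A_n=z^{-1}(\Gamma_n)$ is a $\Cd$ graph. Your scalar formulation $G_n(t,a)=\mu^n a-c(\lambda^n z_x,t,a)$ together with $\partial_aG_n\asymp\mu^n$ is precisely that transversality condition written out, and your sign-change argument replaces the convergence $\Gamma_n\to\Gamma_\infty$; so the two routes are the same proof in different clothing, yours being the more elementary and self-contained, the paper's being better suited to the later geometric estimates on $\Gamma_n$ (Lemma~\ref{partialgamman}).
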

\begin{proof}
Let $\Gamma=\text{graph}(c)$, namely, $$\Gamma=\left\{\left(x, c(x,t,a), t,a\right)| \left( x,t,a\right)\in [-x_0,x_0]\times [-t_0,t_0]\times [-a_0,a_0] \right\}.$$ 
Then $\Gamma$ is a $\Cd$ codimension-$1$ manifold transversal to $W^u(0)$. For $n\geq 0$ let
\begin{equation}\label{defgamma}
\Gamma_n=\left\{\left(x,y,t,a\right)| F^n_{t,a}\left( x,y\right)\in\Gamma\right\},
\end{equation}
and the limit of the $\Gamma_n$:s as $n\to\infty$ is, $$\Gamma_{\infty}=\left\{\left(x,0,t,a\right)| \left( x,t,a\right)\in [-x_0,x_0]\times [-t_0,t_0]\times [-a_0,a_0]  \right\}.$$
This follows since, for each $(t,a)\in [-t_0,t_0]\times [-a_0,a_0] $, $F_{t,a}$ is linear, $\Gamma_n$ converges to $\Gamma_{\infty}$ in the $\Cd$ topology. Namely for large $n$, $\Gamma_n$ is a graph of a function also denoted by $\Gamma_n$ and 
$$
\left\|\Gamma_n-\Gamma_{\infty}\right\|_{\Cd}\to 0.
$$
Let $z:[-t_0,t_0]\times [-a_0,a_0]\to\R^2\times [-t_0,t_0]\times [-a_0,a_0]$ be the $\Cd$ critical value function defined as
$$z(t,a)=\left(z_x(t,a), z_y(t,a),t,a\right)=\left(z_x(t,a), a,t,a\right),$$
where $z_{t,a}=\left(z_x(t,a),z_y(t,a)\right)$.
Observe that 

\begin{equation}\label{dz}
\frac{\partial z_y}{\partial a}=1\text{ and }z_y(t,0)=0. 
\end{equation}
Let $Z=\text{Image}(z)$. Because of (\ref{dz}), $Z$ is a manifold transversal to $\Gamma_{\infty}$. Hence, there exists $n_1>0$ such that, for all $n\geq n_1$, $Z$ is transversal to $\Gamma_n$. As consequence, for all $n\geq n_1$,
$$A_n=z^{-1}\left(\Gamma_n\right)$$ is a $\Cd$ codimension-$1$ manifold. We define $a_n: t\mapsto a_n(t)$ as a function whose graph is $A_n$. 
Observe that, by Lemma \ref{functionc} and Remark \ref{c0=1}, the $\Cd$ function $c$ satisfies, $c(0,t,a)=1$. Hence $\partial c/\partial t=O(x)$ and   $\partial c/\partial a=O(x)$. 
Recall that
\begin{equation}\label{munan}
\mu^n a_n=c\left(\lambda^n z_x, t, a_n\right).
\end{equation}
By differentiating (\ref{munan}) and using $\partial c/\partial a, \partial c/\partial t=O(x)$ we have
$$
\mu^n\frac{d a_n}{dt}+na_n\mu^{n-1}\frac{\partial\mu}{\partial t}=O\left(\lambda^n\right).
$$
The lemma follows.
\end{proof}

A proof similar to that of Lemma \ref{wnformule} gives the following.
\begin{lem}\label{partialgamman}
Let $\Gamma_n$ be as in (\ref{defgamma}), then 
$$\frac{\partial\Gamma_n}{\partial t}=-\frac{n}{\mu}\frac{\partial\mu}{\partial t}\Gamma_n+\left(\frac{\lambda}{\mu}\right)^n\frac{n}{\lambda}\frac{\partial\lambda}{\partial t}\frac{\partial \Gamma}{\partial x}x+\frac{1}{\mu^n}\frac{\partial \Gamma}{\partial t}=O\left(\frac{n}{\mu^n}\right),$$
$$\frac{\partial \Gamma_n}{\partial a}=-\frac{n}{\mu}\frac{\partial\mu}{\partial a}\Gamma_n+\left(\frac{\lambda}{\mu}\right)^n\frac{n}{\lambda}\frac{\partial\lambda}{\partial a}\frac{\partial \Gamma}{\partial x}x+\frac{1}{\mu^n}\frac{\partial \Gamma}{\partial a}=O\left(\frac{n}{\mu^n}\right).$$
\end{lem}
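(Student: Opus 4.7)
The plan is to mirror the strategy used for Lemma~\ref{partialwn}, exploiting the fact that $F^n_{t,a}$ acts on $[-2,2]^2$ as the diagonal linear map with eigenvalues $\lambda^n$ (horizontal) and $\mu^n$ (vertical). First I would rewrite the defining condition~(\ref{defgamma}) of $\Gamma_n$ as a graph equation: since $\Gamma$ is the graph $y=c(x,t,a)$, the condition $F^n_{t,a}(x,\Gamma_n(x))\in\Gamma$ becomes the implicit relation
$$
\mu^{n}\,\Gamma_n(x)\;=\;c\!\left(\lambda^{n}x,\,t,\,a\right).
$$
This is the exact analogue of~(\ref{wnformule}) for $W_n$ and serves as the starting identity for all subsequent estimates.

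Next I would differentiate this identity with respect to $t$, treating $\mu(t,a)$ and $\lambda(t,a)$ as smooth functions of the parameters. The chain rule yields
$$
n\mu^{n-1}\frac{\partial\mu}{\partial t}\,\Gamma_n+\mu^{n}\frac{\partial\Gamma_n}{\partial t}=\frac{\partial c}{\partial x}\!\left(\lambda^{n}x,t,a\right)\cdot n\lambda^{n-1}\frac{\partial\lambda}{\partial t}\,x+\frac{\partial c}{\partial t}\!\left(\lambda^{n}x,t,a\right).
$$
Solving for $\partial\Gamma_n/\partial t$ and dividing by $\mu^{n}$ produces exactly the three terms displayed in the lemma (with $\partial\Gamma/\partial x$ and $\partial\Gamma/\partial t$ interpreted as the corresponding partial derivatives of the graphing function $c$). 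The computation for $\partial\Gamma_n/\partial a$ is identical, with $t$ replaced by $a$. Alternatively, exactly as in the proof of Lemma~\ref{partialwn}, one may run the finite-difference version by writing the relation for parameters $(t,a+\Delta a)$ and subtracting.

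To derive the $O(n/\mu^{n})$ bound I would combine three observations: (i)~$|\Gamma_n|=O(\mu^{-n})$, which follows from the boundedness of $c$ on the compact neighborhood provided by Lemma~\ref{functionc} together with the $\mu^{-n}$ prefactor in the defining identity; (ii)~the middle coefficient $(\lambda/\mu)^{n}\cdot(n/\lambda)=n\lambda^{n-1}\mu^{-n}$ is dominated by $n\mu^{-n}$ since $\lambda<1$ by~(\ref{thetacond0}); and (iii)~the remainder $\mu^{-n}\,\partial c/\partial t$ is $o(n\mu^{-n})$. The first summand $-(n/\mu)\mu_t\Gamma_n$ is then of size $O(n\mu^{-n-1})$, which is absorbed into $O(n\mu^{-n})$.

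The main obstacle, such as it is, amounts to bookkeeping: one must carefully track the dependence of $c$ on both $x$ and $(t,a)$ when applying the chain rule, and verify that the $O$-bounds are uniform in $x\in[-x_0,x_0]$ and $(t,a)\in[-t_0,t_0]\times[-a_0,a_0]$. This uniformity is guaranteed by the $\mathcal{C}^2$-regularity of $c$ established in Lemma~\ref{functionc} together with the compactness of the parameter domain. Beyond this, the argument is a direct parameter differentiation with no essential new content relative to Lemma~\ref{partialwn}.
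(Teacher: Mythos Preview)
Your proposal is correct and follows precisely the approach the paper indicates: the paper states only that the proof is similar to that of Lemma~\ref{partialwn}, and your argument carries this out by writing the graph identity $\mu^{n}\Gamma_n(x)=c(\lambda^{n}x,t,a)$ (the analogue of~(\ref{wnformule})), differentiating in the parameters, and bounding the three resulting terms exactly as done for $W_n$. There is nothing to add.
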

\paragraph{Form of the map $F^N$.}
For unfoldings the following holds.
\begin{lem}\label{Nderivatives}
There exist $x'_0<x_0$, $a'_0<a_0$, $b>0$ and $Q>0$, such that, for all $(t,a)\in [-t_0,t_0]\times [-a'_0,a'_0] $ and for every $(x,y)\in\Gamma_{t,a}$ with $|x|<| x'_0|$ the following holds. There exist $\Ct$ functions  $A_{x,y}$, $B_{x,y}\neq 0$ and $C_{x,y}$ such that 
$F^N_{t,a}$ in coordinates centered in $(x,y)$ and $F^N_{t,a}(x,y)$ has the form
\begin{eqnarray}\label{NstepsTaylorformula}
\nonumber
F^N_{t,a}\left(\begin{matrix}
\Delta x\\\Delta y
\end{matrix}\right)&=&\left(\begin{matrix}
A_{x,y}& B_{x,y}\\C_{x,y} & 0
\end{matrix}\right)\left(\begin{matrix}
\Delta x\\\Delta y
\end{matrix}\right)\\&+&\left(\begin{matrix}
O_{1,1}\Delta x^2+O_{1,2}\Delta x\Delta y+O_{1,3}\Delta y^2\\ Q_{x,y}\Delta y^2+O_{2,1}\Delta x^2+O_{2,2}\Delta x\Delta y+O_{2,3} \Delta y^3
\end{matrix}\right),
\end{eqnarray} 
where $Q_{x,y}>Q$, $|B_{x,y}|>b>0$ and the $\Cd$ functions $O_{i,j}$ are uniformly bounded.
\end{lem}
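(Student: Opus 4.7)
The plan is to expand $F^N_{t,a} = (X_{t,a}, Y_{t,a})$ by Taylor's formula around a generic base point $(x,y) = (x, c(x,t,a))$ of $\Gamma_{t,a}$, exploiting the \Cq{} regularity granted by the rescaling step in Subsection \ref{RescaledFamilies}. The zero entry in the bottom-right of the linear matrix is precisely the defining identity (\ref{partialYpartialt}): along $\Gamma_{t,a}$ one has $\partial_y Y_{t,a}(x,y) = 0$. I set $A_{x,y} = \partial_x X_{t,a}(x,y)$, $B_{x,y} = \partial_y X_{t,a}(x,y)$, $C_{x,y} = \partial_x Y_{t,a}(x,y)$ and $Q_{x,y} = \tfrac{1}{2}\partial_{yy}Y_{t,a}(x,y)$; Lemma \ref{functionc} then furnishes the uniform lower bound $Q_{x,y} \geq Q/2$, which is the stated positivity (with $Q/2$ in the role of the new constant).

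For the uniform lower bound $|B_{x,y}| \geq b > 0$, I use that $F^N_{t,a}$ is a local diffeomorphism, so the Jacobian determinant at $(x,y) \in \Gamma_{t,a}$ is
\begin{equation*}
\det DF^N_{t,a}(x,y) = A_{x,y}\cdot 0 - B_{x,y}C_{x,y} = -B_{x,y}C_{x,y} \neq 0,
\end{equation*}
forcing $B_{x,y} \neq 0$. A uniform positive lower bound $b$ follows by continuity of $\partial_y X_{t,a}$ together with compactness of the restricted parameter region $|x|\leq x'_0 < x_0$, $(t,a)\in [-t_0,t_0]\times[-a'_0,a'_0]$, for appropriately small $x'_0$ and $a'_0$. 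The \Cd{} dependence of $A_{x,y}$, $B_{x,y}$, $C_{x,y}$, $Q_{x,y}$ on $(x,y)$ is inherited from the \Cq{} smoothness of $F^N_{t,a}$ and the \Cd{} smoothness of $c$.

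The coefficients $O_{1,j}$ of the $X$-component are, up to factors of $1/2$, the pure second partials $\partial_{xx}X_{t,a}$, $\partial_{xy}X_{t,a}$, $\partial_{yy}X_{t,a}$, all \Cd{} in $(x,y)$. For the $Y$-component I Taylor-expand one order further: the quadratic part contributes $Q_{x,y}\Delta y^2 + \tfrac{1}{2}\partial_{xx}Y_{t,a}\Delta x^2 + \partial_{xy}Y_{t,a}\Delta x \Delta y$, and the third-order remainder contains terms in $\Delta x^3$, $\Delta x^2\Delta y$, $\Delta x\Delta y^2$, and $\Delta y^3$. The first three are rewritten as $\Delta x \cdot \Delta x^2$, $\Delta x \cdot (\Delta x\Delta y)$ and $\Delta y \cdot (\Delta x\Delta y)$ respectively and, after shrinking the working neighborhood, absorbed into the $O_{2,1}\Delta x^2$ and $O_{2,2}\Delta x\Delta y$ contributions with uniformly bounded, \Cd{} coefficients; only the irreducible $\Delta y^3$ term survives and produces $O_{2,3}\Delta y^3$.

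The main technical point is simply the bookkeeping that no higher-order term other than $\Delta y^3$ needs to be singled out in the $Y$-component; once the two identities from Lemma \ref{functionc} and the diffeomorphism property are in place, everything else is a direct application of Taylor's theorem on a compact piece of $\Gamma_{t,a}$. I do not anticipate a substantive obstacle — the only non-computational step is the uniform lower bound on $|B_{x,y}|$, which follows at once from the invertibility of $DF^N_{t,a}$ combined with the fact that the bottom-right entry of this Jacobian vanishes on $\Gamma_{t,a}$.
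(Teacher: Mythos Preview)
Your proposal is correct and follows essentially the same route as the paper: both arguments are a direct Taylor expansion of $F^N_{t,a}$ at a point of $\Gamma_{t,a}$, with the zero entry and the lower bound on $Q_{x,y}$ coming straight from Lemma~\ref{functionc}, and the uniform bounds obtained by continuity after shrinking $x_0$ and $a_0$. The one minor difference is your justification of $B_{x,y}\neq 0$: the paper deduces it from the geometry of the non-degenerate homoclinic tangency at $q_1(t)=F^N_{t,0}(q_3(t))$ (the image of the vertical tangent at $q_3$ is a nonzero horizontal vector), whereas you use the cleaner algebraic observation that on $\Gamma_{t,a}$ the Jacobian determinant reduces to $-B_{x,y}C_{x,y}$, which cannot vanish since $F^N_{t,a}$ is a local diffeomorphism; both arguments are valid and yield the same uniform bound by compactness.
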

\begin{proof}
The lemma gives the Taylor expansions of $F^N_{t,a}$ when $(x,y)\in\Gamma_{t,a}$. It follows immediately from Lemma \ref{functionc} where we state that a vertical curve trough $(x,y)$ in $\Gamma_{t,a}$ is mapped to a curve with a non degenerate horizontal tangency. In particular $\partial Y_{t,a}/\partial y=0$ by (\ref{partialYpartialt}) and the horizontal tangency, $$DF^N_{t,a}(x,y)\left(\begin{matrix}
0\\1
\end{matrix}\right)=\left(\begin{matrix}
B_{x,y}\\0
\end{matrix}\right)$$ is not degenerate for all $(x,y)\in\Gamma_{t,a}$. This is a consequence of the following argument. Let $t\in[-t_0,t_0]$. Because $F^N_{t,0}(q_3(t))=q_1(t)$ is a non degenerate homoclinic tangency, we know that $B_{q_{3}(t)}\neq 0$ and $Q_{q_3(t)}>0$. By taking $|a|<|a'_0|$, $|x|<|x'_0|$ small enough, the lower bounds on $Q_{x,y}$ and $B_{x,y}$ follow.
\end{proof}

\subsection{Choice of $\theta$}
In this subsection, we introduce a number $\theta$ that is going to play a fundamental role in the construction of our combinatorics. 

The condition $\lambda_{\text{max}}\mu_{\text{max}}^3<1$, see $(F3)$, allows us to choose $\theta\in(0,\frac{1}{2})$ such that 
\begin{equation}\label{thetacond}
1<\lambda_{\text{min}}^{2\theta}\mu_{\text{min}}^3 \text{ and } \lambda_{\text{max}}^{3\theta}\mu^4_{\text{max}}< 1.
\end{equation}
We choose any $\theta$ satisfying 
\begin{equation}\label{thetacond1}
0<\theta_0=\frac{4}{3}\frac{\log\mu_{\text{max}}}{\log{\frac{1}{\lambda_{\text{max}}}}}<\theta<\frac{3}{2}\frac{\log\mu_{\text{min}}}{\log{\frac{1}{\lambda_{\text{min}}}}}=\theta_1<\frac{1}{2},
\end{equation}
where we used $(F3)$, the initial condition $\lambda_{\text{max}}\mu_{\text{max}}^3<1$ and (\ref{thetacond0}).

Moreover, by adjusting the parameter domain, we can choose $\theta$ such that
\begin{equation}\label{eq:thetaconddlambdadmu}
  \left|\frac{\theta}{\lambda}\frac{\partial\lambda}{\partial t}+\frac{1}{\mu}\frac{\partial\mu}{\partial t}\right|>w> 0.
\end{equation}
\section{The H\'enon family} \label{sec:Henonfamily}
Consider the real H\' enon family $F:\mathbb{R}^2\times\mathbb{R}^2\to\mathbb{R}^2$, 
$$
F_{a,b}\left(\begin{matrix}
x\\y
\end{matrix}
\right)=\left(\begin{matrix}
a-x^2-by\\x
\end{matrix}
\right),
$$
a two parameter family. In this section we are going to prove that the H\' enon family is an unfolding of a map with a strong homoclinic tangency and a transversal intersection. 
\begin{prop}
The real H\' enon family is an unfolding. 
\end{prop}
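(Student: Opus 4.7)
The strategy is to identify a base parameter pair $(\hat a,\hat b)$ at which the Hénon map $F_{\hat a,\hat b}$ is a map with a strong homoclinic tangency in the sense of Definition \ref{stronghomtang}, and then to reparametrize the Hénon family locally near $(\hat a,\hat b)$ into coordinates $(t,a)$ so that conditions (F1)--(F5) together with the unfolding conditions (P1)--(P2) hold.

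First, I would produce the base map and verify (f1)--(f8). Choose $|\hat b|$ small. The Hénon map has a saddle fixed point whose eigenvalues $\mu,\lambda$ satisfy $\mu+\lambda=-2x_+$ and $\mu\lambda=\hat b$; the contraction rate therefore obeys $|\lambda||\mu|^3=|\hat b||\mu|^2$, so taking $|\hat b|$ small enough yields (f2). The non-resonance condition (f3) is satisfied on a full-measure set of parameters and can be enforced by slightly adjusting $\hat a$. For (f4)--(f5), along the slice $b=\hat b$ classical results (Devaney--Nitecki; see also \cite{PT}) show that, as $a$ decreases from large values, a first non-degenerate quadratic homoclinic tangency $q_1$ appears while transversal homoclinic intersections $q_2$ already present at large $a$ persist; choosing $\hat a$ at this first tangency yields (f4) and (f5). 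Conditions (f6)--(f8) are automatic in the regime $\mu<-1$ which is the Hénon case of interest (since $\mu+\lambda=-2x_+<0$ and $|\mu|>1$); if one prefers $\mu>0$, they follow from the $\lambda$-lemma applied to the arcs of $W^u(q_1)$ and $W^u(q_2)$, together with the choice of $\hat a$ so that $F^{-N}(q_1)$ lies on the correct leg of $W^u(p)$.

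Second, I would verify the family conditions (F1)--(F5). The saddle $p(a,b)$ persists smoothly near $(\hat a,\hat b)$ by the implicit function theorem, yielding (F1); differentiating the characteristic equation $\mu^2+2x_+(a,b)\mu+b=0$ at the saddle shows $\partial\mu/\partial a\ne 0$ (for the unstable root, the coefficient $1-\lambda/\mu$ is bounded away from zero), so (F2) holds. The spectral condition (F3) is open and is inherited from (f2). For (F4), the tangency locus $\Sigma=\{(a,b):z_y(a,b)=0\}$ is a $\Cd$ curve through $(\hat a,\hat b)$: smoothness of $z_y$ follows from Lemma \ref{Nderivatives}, and the non-vanishing of $\nabla z_y$ follows from computing the first-order motion of the critical value $F^N(c_{t,a})$ in the $y$-direction as the Hénon parameters vary. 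Condition (F5) is open and therefore persists from (f5).

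Finally, I would reparametrize to obtain (P1)--(P2). Choose a smooth parametrization of $\Sigma$ by $t\mapsto(a(t),b(t))$ such that $\partial\mu/\partial t\ne 0$ along $\Sigma$, and use $z_y$ itself as the transverse coordinate $a$ (in the paper's notation). Then (P1) is immediate and $\partial z_y/\partial a=1$ gives (P2). The main technical obstacle is the independence of $\mu$ and $z_y$ as functions on parameter space near $(\hat a,\hat b)$: one must show that $\nabla\mu$ and $\nabla z_y$ are linearly independent, for otherwise the reparametrization is singular. This amounts to the genericity of the two-parameter unfolding, and it is established by an explicit computation using the polynomial form of $F^N$ for the Hénon map, exploiting that varying $b$ while keeping $\mu$ fixed displaces the critical value transversally to $W^s_{\text{loc}}$. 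Once this independence is checked, all conditions of Definition \ref{unfolding} are met and the proposition follows.
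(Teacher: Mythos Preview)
Your approach is essentially the same as the paper's: locate the first homoclinic tangency for small Jacobian $|b|$, verify (f1)--(f8) at that base point, and then check that the two-parameter family unfolds it in the sense of (F1)--(F5), (P1)--(P2). The paper's own proof is much terser than yours---it simply observes that for $f(x)=2-x^2$ the critical orbit $f^2(0)=-2$ lands on the repelling fixed point, uses this to identify the first-tangency curve $b\mapsto a(b)$ emanating from $(a,b)=(2,0)$, and asserts that for small $b>0$ the restriction of the family to a neighborhood of $(a(b),b)$ is an unfolding, without spelling out the reparametrization or the transversality $\nabla\mu\wedge\nabla z_y\ne 0$ that you (correctly) flag as the key technical point.

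One small discrepancy worth noting: the paper works with the saddle that continues $p(0)=(-2,-2)$, whose unstable eigenvalue is \emph{positive} ($\mu\approx 4$ for $b$ near $0$), not with the other fixed point where $\mu<-1$. So your primary claim that ``$\mu<-1$ is the H\'enon case of interest'' does not match the paper's choice of saddle, and (f6)--(f8) are not automatic there; the $\lambda$-lemma argument you offer as an alternative is the one actually needed.
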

\begin{proof}
Consider the map $f(x)=2-x^2$. Then $x=-2$ is an expanding fixed point and $f^2(0)=-2$ where $0$ is the critical point. For given $b>0$ and $a$ large enough, the H\' enon map $F_{a,b}$ has an horse-shoe. By decreasing $a$ to $a(b)$, one arrives at the first homoclinic tangency. Hence, there exists an analytic curve $b\mapsto a(b)$ with $a(0)=2$ such that the parameter $(a(b),b)$ corresponds to a H\' enon map with an homoclinic tangency of the saddle point $p(b)$  which is a continuation of $p(0)=(-2,-2)$. For all $b$ positive and small enough $F_{a(b),b}$ has a strong homoclinic tangency and the H\' enon family $F_{a,b}$ restricted to a small neighborhood of $(a(b),b)$ is an unfoldings. 
 \end{proof}

\section{Existence of secondary tangencies}
Given an unfolding we create in this section curves corresponding to maps with a so-called secondary tangency. These curves are graphs over the $t$-axes accumulating on the primary tangency curve at $a=0$. Moreover they are fully contained in a well defined strip in parameter space which is defined below. The restriction of the initial unfolding to a neighborhood of any of these curves is again an unfolding of the new secondary tangency.

Remember the choice of $\theta$ in (\ref{thetacond}) and (\ref{thetacond1}).
 Denote by $C$ the entry of the matrix in Lemma \ref{Nderivatives} corresponding to the critical point $c$, see Definition \ref{criticalpoints}. Observe that $C$, $c$, $\lambda$ and $\mu$ all depend on the parameters $(t,a)$. For simplicity of notation we omit this dependence. Let 
 \begin{equation}\label{eq:n0*def}
 n_0^*=\left[\frac{\log\left(\lambda^{2\theta}\mu^3\right)}{2\log \mu}n\right].
 \end{equation}
 
  Choose $\epsilon>0$, $T$ a positive large constant  and define, for large $n$,  
\begin{equation}\label{eq:Bn}
\mathcal{B}_n=\left\{(t,a)\in [-t_0,t_0]\times [-a_0,a_0] \left|\right. \left(\frac{\epsilon}{\mu^{\frac{n_0^*}{2}}}-C\right)\lambda^{\theta n}\leq(a-a_n(t))\leq \frac{T}{\mu^{n+\frac{n_0^*}{2}}}\right\}.
\end{equation}

The strip $\mathcal{B}_n$ is built to contain the curves of secondary tangencies. The exact choice of $n_0$ is made in \eqref{nminusn0}. Denote by $E=C-\frac{\epsilon}{\mu^{{n_0^*}/{2}}}$. 
\begin{rem}\label{muzeroothers}
Observe that, if $(t,a)\in\mathcal B_n$, then
$$
\left(\frac{\mu(t,a)}{\mu(t,a_n(t))}\right)^n=1+O\left(n(a-a_n(t))\right),
$$
and 
$$
\left(\frac{\lambda(t,a)}{\lambda(t,a_n(t))}\right)^n=1+O\left(n(a-a_n(t))\right).
$$
\end{rem}
\subsection{The combinatorics}\label{subsection:combinatorics}
Fix $(t,a)\in\mathcal B_n$ and in the sequel when there is no possibility of confusion, we will suppress this choice in the notation, for example, $z=z_{t,a}$, $c=c_{t,a}$ (see Definition \ref{criticalpoints}), $\lambda=\lambda(t,a)$ and $\mu=\mu(t,a)$. 
\paragraph{The first loop: $N+\theta n+N$ iterates.}\mbox{} \\

By the initial conditions on the family, for $n$ large enough and $W^u_{\text{loc}}(z)$ small enough, $F^{\theta n}_{t,a}\left(W^u_{\text{loc}}(z)\right)$ intersects $\Gamma$ in exactly two points.  Choose one of these points
$$c'\in F^{\theta n}_{t,a}\left(W^u_{\text{loc}}(z)\right)\cap \Gamma,$$ and a local unstable manifold $W^u_{\text{loc}}(c')$ of vertical size $L^{\frac{1}{2}}\lambda^{\frac{\theta n}{2}}$ where $L$ is a large constant that will be chosen later.   The curve $W^u_{\text{\rm loc}}(c')$, is a graph over the $y$-axes. In the following lemma we give an expression for the curve $W^u_{\text{\rm loc}}(c')$.

 \begin{lem}\label{shapeatcprime}
In a coordinate system centered in $c'$ the $\Cq$ curve $W^u_{\text{\rm loc}}(c')$ is given by 
$$
\Delta x=O\left(\left(\frac{\lambda^2}{\mu}\right)^{\frac{\theta n}{2}}\right)\Delta y+O\left(\left(\frac{\lambda^2}{\mu}\right)^{\frac{\theta n}{2}}\right)(\Delta y)^2,
$$
where $0\leq\Delta y<L^{\frac{1}{2}}\lambda^{\frac{\theta n}{2}}$, and $\Delta x=O\left(\left({\lambda^3}/{\mu}\right)^{\frac{\theta n}{2}}\right)$. 
 \end{lem}
\begin{proof}
Use coordinates centered at the critical point $c'$  of the parameter $(t,a)$ and let $(\Delta x_{c'},\Delta y_{c'})\in W^u_{\text{loc}}(c')$.  Observe that $\Delta x_{c'}$ is a function of $\Delta y_{c'}$. 
First we are going to prove that 
\begin{eqnarray}\label{eq:deltaxest1}
\Delta x_{c'}=O\left(\left(\frac{\lambda^2}{\mu}\right)^{\frac{\theta n}{2}}\right),
\end{eqnarray}
 
\begin{eqnarray}\label{slopeespr}
\frac{d\Delta x_{c'}}{d\Delta y_{c'}}=O\left(\left(\frac{\lambda^2}{\mu}\right)^{\frac{\theta n}{2}}\right),
\end{eqnarray}
and

\begin{eqnarray}\label{slopeesprsecondderivative}
\frac{d^2\Delta x_{c'}}{d \left(\Delta y_{c'}\right)^2}=O\left(\left(\frac{\lambda^2}{\mu}\right)^{\frac{\theta n}{2}}\right).
\end{eqnarray}

Because the first coordinate of $z$ is equal to $1$, see Remark \ref{highzya} , we have that 
\begin{equation}\label{eq:yccordinatec'}
c'_y=1+O\left(\lambda^{\theta n}\right) 
\end{equation}
and since $(t,a)\in\mathcal B_n$, then
$$
z_y\leq a_n(t)+\frac{T}{\mu^{n+n_0^*/2}}\leq\frac{1}{\mu^n}\left(1+O\left(\lambda^n\right)\right)+\frac{T}{\mu^{n+n_0^*/2}},
$$
where we used \eqref{munan}. Let $m$ be the highest point of $W^u_{\text{loc}}(c')$. Then 
$$
m_y=1+\left(L\lambda^{\theta n}\right)^{1/2}+O\left(\lambda^{\theta n}\right)=1+O\left(\lambda^{\theta n/2}\right).
$$
Let $Y$ be the vertical distance from $F^{-\theta n}(m)$ and $z$ and $X$ the horizontal one. Then, since $(t,a)\in\mathcal B_n$,
\begin{eqnarray}\label{eq:estthetaover2}\nonumber
Y&=&\frac{1}{\mu^{\theta n}}\left(1+O\left(\lambda^{\theta n/2}\right)\right)-z_y\\
&\leq&\frac{1}{\mu^{\theta n}}\left(1+O\left(\lambda^{\theta n/2}\right)\right)-\left[\frac{1}{\mu^{n}}\left(1+O\left(\lambda^{\theta n}\right)\right)-E\lambda^{\theta n}\right]=O\left(\frac{1}{\mu^{\theta n}}\right),
\end{eqnarray}
and since $W^u_{\text{loc}}(z)$ is essentially a parabola, $X\leq O\left({1}/{\mu^{\theta n/2}}\right)$. From this and from the fact that $z_x=1$, see Remark \ref{highzya}, formula \eqref{eq:deltaxest1} follows. Observe that, the estimate on $X$ also gives that 
\begin{equation}\label{eq:xcoordinatec'}
c'_x=\lambda^{\theta n}+O\left(\left(\frac{\lambda^2}{\mu}\right)^{\frac{\theta n}{2}}\right).
\end{equation}

To prove \eqref{slopeespr}, use coordinates $\left(\Delta x_{z},\Delta y_{z}\right)\in W^u_{\text{\rm loc}}(z)$ centered around $z$ and coordinates $\left(\Delta x_{F^{\theta n}(z)},\Delta y_{F^{\theta n}(z)}\right)\in W^u_{\text{\rm loc}}(c')$ centered around $F^{\theta n}(z)$, i.e. $F^{\theta n}\left(\Delta x_{z},\Delta y_{z}\right)=\left(\Delta x_{F^{\theta n}(z)},\Delta y_{F^{\theta n}(z)}\right)$. 
Let $(u, v)$ be a tangent vector to $W^u_{\text{loc}}(z)$ at the point $\left(\Delta x_{z}, \Delta y_{z}\right)$. Then, using the same estimates as in \eqref{eq:estthetaover2}, there exist uniform constants $K_1,K_2$ such that 
\begin{equation}\label{eq:compdeltaz1}
{K_1^{-1}}{\mu^{-\theta n}}\leq \Delta y_{z}\leq K_1{\mu^{-\theta n}},
\end{equation} 
and because of the quadratic behavior of $W^u_{\text{loc}}(z)$,  $$K_2^{-1}{\mu^{-\frac{\theta n}{2}}}\leq |\Delta x_{z}|\leq K_2{\mu^{-\frac{\theta n}{2}}}.$$ In particular $$K_2^{-1}{\mu^{-\frac{\theta n}{2}}}|u|\leq |v|\leq K_2{\mu^{-\frac{\theta n}{2}}}|u|.$$ Observe that $$\frac{d\Delta x_{c'}}{d\Delta y_{c'}}=O\left(\frac{\lambda^{\theta n}}{\mu^{\theta n}}\frac{u}{v}\right).$$ The estimate for the slope follows. 
This estimate also give a better bound for $\Delta x$, namely $\Delta x=O\left(\left({\lambda^3}/{\mu}\right)^{\frac{\theta n}{2}}\right)$.

We are left to prove \eqref{slopeesprsecondderivative}. Since $W^u_{\text{\rm loc}}(z)$ is the image of a straight line segment, there is a non zero uniformly bounded $\Cd$ function $R$ such that
$$\Delta x_{z}=R\left(\Delta y_{z}\right)\left(\Delta y_{z}\right)^{\frac{1}{2}},$$ and by \eqref{eq:compdeltaz1}, there exists a uniform constant $K_1$, such that
$${K_1^{-1}}{\mu^{-\theta n}}\leq\Delta y_{z}.$$
In particular,
$$
\frac{d^2\Delta x_{z}}{d \left(\Delta y_{z}\right)^2}=O\left(\left(\Delta y_{z}\right)^{-\frac{3}{2}}\right)=O\left(\mu^{\frac{3}{2}\theta n}\right).
$$
By applying the linear map $F^{\theta n}$ we have
$$
\frac{d^2\Delta x_{c'}}{d \left(\Delta y_{c'}\right)^2}=\frac{d^2\Delta x_{F^{\theta n}(z)}}{d \left(\Delta y_{F^{\theta n}(z)}\right)^2}=O\left(\mu^{\frac{3}{2}\theta n}\left(\frac{\lambda}{\mu^2}\right)^{\theta n}\right)=O\left(\left(\frac{\lambda^2}{\mu}\right)^{\frac{\theta n}{2}}\right).
$$
Finally, using \eqref{slopeespr} and \eqref{slopeesprsecondderivative} the curve $W^u_{\text{\rm loc}}(c')$ in coordinates centered around the point $c'$ is essentially a straight line given by
\begin{equation}\label{straightline}
\Delta x_{c'}=O\left(\left(\frac{\lambda^2}{\mu}\right)^{\frac{\theta n}{2}}\right)\Delta y_{c'}+O\left(\left(\frac{\lambda^2}{\mu}\right)^{\frac{\theta n}{2}}\right)(\Delta y_{c'})^2.
 \end{equation}

\end{proof}
Let $z^{(1)}=(z_{x}^{(1)},z_{y}^{(1)})$ be the lowest point of $ F^N\left (W^u_{\text{loc}}(c')\right)$, see Figure \ref{Fig4a}. 
\begin{figure}
\centering
\includegraphics[width=0.9\textwidth]{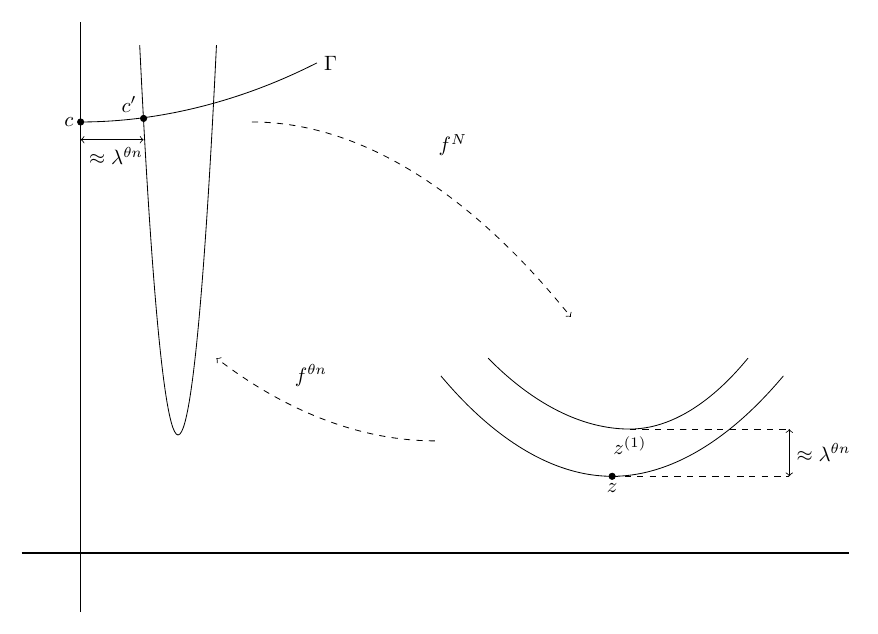}
\caption{$N+\theta n+N$ iterates}
\label{Fig4a}
\end{figure}
\begin{lem}\label{z1distz}
For all $(t,a)\in\mathcal B_n$, 
\begin{eqnarray*}
    z_{y}^{(1)}(t,a)-z_{y}(t,a)= C\lambda^{\theta n}+O\left(\left(\frac{\lambda^2}{\mu}\right)^{\frac{\theta n}{2}}\right).
    \end{eqnarray*}
    In particular, the limit $\lim_{n\to\infty} { |z_{y}^{(1)}(t,a)-z_{y}(t,a)|}/{\lambda^{\theta n}}=C$. Moreover if $(x,y)\in W^u_{\text{loc}}(z^{(1)})$, 
\begin{eqnarray*}
C\lambda^{\theta n}+O\left(\left(\frac{\lambda^2}{\mu}\right)^{\frac{\theta n}{2}}\right)\leq y-z_{y}(t,a)\leq (C+QL)\lambda^{\theta n}+O\left(\left(\frac{\lambda^2}{\mu}\right)^{\frac{\theta n}{2}}\right).
    \end{eqnarray*}

\end{lem}
\begin{proof}
We need to determine the the point $c^{(1)}$ in $ W^u_{\text{loc}}(c')$ which is mapped to $z^{(1)}$.
Use coordinates centered at $c'$  of the parameter $(t,a)$ and denote by $v'=F^N(c')$. Let $(\Delta x_{c'},\Delta y_{c'})\in W^u_{\text{loc}}(c')$ with $(\Delta  x_{v'},\Delta y_{v'})\in F_{t,a}^N\left(W^u_{\text{loc}}(c')\right)$ centered in $v'$. By Lemma \ref{Nderivatives} we have
\begin{eqnarray}\label{eq:fNonycoordinate}
\Delta y_{v'}&=&C_{c'}\Delta x_{c'}+Q_{c'}(\Delta y_{c'})^2+O\left(\Delta x_{c'}^2+\Delta x_{c'} \Delta y_{c'}+\Delta y_{c'}^3\right).
\end{eqnarray}

The point $c^{(1)}$ is given by 
$$
0=\frac{d\Delta y_{v'}}{d\Delta y_{c'}}.
$$
By taking the derivate of \eqref{eq:fNonycoordinate}, we obtain
\begin{eqnarray*}
0=\frac{d\Delta y_{v'}}{d\Delta y_{c'}}&=&C_{c'}\frac{d\Delta x_{c'}}{d\Delta y_{c'}}+2Q_{c'}\Delta y_{c'}+O\left(2\Delta x_{c'}\frac{d\Delta x_{c'}}{d\Delta y_{c'}}+\frac{d\Delta x_{c'}}{d\Delta y_{c'}}\Delta y_{c'}+\Delta x_{c'}+3\Delta y_{c'}^2\right)\\&=&C_{c'}\frac{d\Delta x_{c'}}{d\Delta y_{c'}}\left(1+o(1)\right)+2Q_{c'}\Delta y_{c'}\left(1+o(1)\right)+O\left(\Delta x_{c'}\right),
\end{eqnarray*}
where we used Lemma \ref{shapeatcprime}. 
Using \eqref{eq:deltaxest1}, \eqref{slopeespr} we get 
\begin{equation}\label{eq:orderofdeltayc'}
\Delta y_{c'}=O\left(\left(\frac{\lambda^2}{\mu}\right)^{\frac{\theta n}{2}}\right).
\end{equation}
Use now the point $c^{(1)}$ with coordinates centered at $c$. Let $c^{(1)}=(\Delta x_{c},\Delta y_{c})\in W^u_{\text{loc}}(c')$ with $z^{(1)}=(\Delta  x_{z},\Delta y_{z})\in F_{t,a}^N\left(W^u_{\text{loc}}(c')\right)$ centered in $z$. 
Observe that $\Delta x_{c}=\Delta x_{c'}+c'_x$ and $\Delta y_{c}=\Delta y_{c'}+c'_y-1$. Hence, by \eqref{eq:deltaxest1}, \eqref{eq:xcoordinatec'} and  \eqref{eq:yccordinatec'}, 
\begin{eqnarray}\label{eq:deltaxest}
\Delta x_c=\lambda^{\theta n}+O\left(\left(\frac{\lambda^2}{\mu}\right)^{\frac{\theta n}{2}}\right) \text{ and } \Delta y_c=O\left(\lambda^{{\theta n}}\right).
\end{eqnarray} 
Moreover, by Lemma \ref{Nderivatives} we have
\begin{eqnarray}\label{eq:fNonycoordinate1}
\Delta y_{z}&=&C\Delta x_{c}+Q(\Delta y_{c})^2+O\left(\Delta x_{c}^2+\Delta x_{c} \Delta y_{c}+\Delta y_{c}^3\right).
\end{eqnarray}
Using \eqref{eq:deltaxest} in \eqref{eq:fNonycoordinate1} we have 
$$
  z_{y}^{(1)}(t,a)-z_{y}(t,a)= \Delta y_z=C\lambda^{\theta n}+O\left(\left(\frac{\lambda^2}{\mu}\right)^{\frac{\theta n}{2}}\right).
$$
If $(x,y)\in W^u_{\text{loc}}(z^{(1)})$, using again \eqref{eq:fNonycoordinate1}, \eqref{eq:deltaxest} and the fact that $0\leq |\Delta y_{c}|\leq (L\lambda^{\theta n})^{1/2}+O\left(\lambda^{\theta n}\right)$, we get
\begin{eqnarray*}
C\lambda^{\theta n}+O\left(\left(\frac{\lambda^2}{\mu}\right)^{\frac{\theta n}{2}}\right)\leq y-z_{y}(t,a)\leq (C+QL)\lambda^{\theta n}+O\left(\left(\frac{\lambda^2}{\mu}\right)^{\frac{\theta n}{2}}\right).
    \end{eqnarray*}
Observe that, in the last two inequalities, for the estimate of the order terms we used $(F3)$.
\end{proof}

The curve $W^u_{\text{\rm loc}}(z^{(1)})$ is a graph over the $x$-axes. In the following lemma we study the expression for the curve $W^u_{\text{\rm loc}}(z^{(1)})$.
 \begin{lem}\label{shapeofthecurveatz1}
In a coordinate system centered in $z^{(1)}$ the $\Cq$ curve $W^u_{\text{\rm loc}}(z^{(1)})$ is given by 
$$
\Delta y=\frac{Q}{B^2}\left(1+O\left(\lambda^{\theta n}\right)\right)\Delta x^2+O\left(\Delta x^3\right),
$$
where $0\leq\Delta y<L\lambda^{\theta n}$, and $\Delta x=O\left(\lambda^{\frac{\theta n}{2}}\right)$. 
 \end{lem}
 \begin{proof} 
 Denote by $v'=(v'_x,v'_y)=F^N(c')$ and consider coordinates $(\Delta x_{c'},\Delta y_{c'})\in W^u_{\text{loc}}(c')$ centered at $c'$ with $(\Delta  x_{v'},\Delta y_{v'})\in F_{t,a}^N\left(W^u_{\text{loc}}(c')\right)$ centered in $v'$. Consider also coordinates $(\Delta x_{c^{(1)}},\Delta y_{c^{(1)}})\in W^u_{\text{loc}}(c')$ centered at $c^{(1)}=F^{-N}\left(z^{(1)}\right)$ with $(\Delta  x_{z^{(1)}},\Delta y_{z^{(1)}})\in\left(W^u_{\text{loc}}(z^{(1)})\right)$ centered at $z^{(1)}=(z^{(1)}_x, z^{(1)}_y)$. Denote by  $s=\left({\lambda^2}/{\mu}\right)^{\frac{\theta n}{2}}$ and observe that by Lemma \ref{shapeatcprime}, 
  \begin{eqnarray}\label{eq:deltaxc'anddeltayc'}
 \Delta  x_{c'}&=&O\left(s\right) \Delta  y_{c'}\left[1+ \Delta  y_{c'}\right],
 \end{eqnarray}
 and by construction, 
 \begin{eqnarray}\label{eq:deltaxv'deltaz1}
 \Delta  x_{v'}&=&z^{(1)}_x-v'_x+\Delta x_{z^{(1)}}=\Delta x_{z^{(1)}}+O\left(s\right),\\\label{eq:deltayv'deltaz1}
 \Delta  y_{v'}&=&z^{(1)}_y-v'_y+\Delta y_{z^{(1)}}=\Delta y_{z^{(1)}}+O\left(s^2\right),
 \end{eqnarray}
  where we also used \eqref{eq:orderofdeltayc'}.
 Applyng in order \eqref{eq:deltaxv'deltaz1}, Lemma \ref{Nderivatives} and \eqref{eq:deltaxc'anddeltayc'} we get the following series of equalities,
  \begin{eqnarray*}
\Delta x_{z^{(1)}} &=&\Delta  x_{v'}+O\left(s\right)\\
&=&A_{c'}\Delta  x_{c'}+B_{c'}\Delta  y_{c'}+O\left(\Delta  x_{c'}^2+\Delta  x_{c'}\Delta  y_{c'}+\Delta  y_{c'}^2\right)+O\left(s\right)\\
&=&B_{c'}\Delta  y_{c'}\left[1+O\left(s\right)+O\left(\Delta  y_{c'}\right)\right]+O(s).
 \end{eqnarray*}
 In particular,
 \begin{eqnarray}\label{eq:deltayc'deltaxz1plusorder}
\Delta y_{c'} &=&\frac{\Delta x_{z^{(1)}}+O(s)+O\left(\Delta  y_{c'}^2\right)}{B_{c'}\left(1+O(s)\right)}\\
&=&\frac{\Delta x_{z^{(1)}}}{B_{c'}\left(1+O(s)\right)}+O\left(\lambda^{{\theta n}}\right),
 \end{eqnarray}
where we used that $\Delta  y_{c'}=O\left(\lambda^{\frac{\theta n}{2}}\right)$, see Lemma \ref{shapeatcprime}. Applying in order \eqref{eq:deltayv'deltaz1}, Lemma \ref{Nderivatives}, \eqref{eq:deltaxc'anddeltayc'} and \eqref{eq:deltayc'deltaxz1plusorder} we get 
 \begin{eqnarray*}
\Delta y_{z^{(1)}} &=&\Delta  y_{v'}+O\left(s^2\right)\\
&=&C_{c'}\Delta  x_{c'}+Q_{c'}\Delta  y_{c'}^2+O\left(\Delta  x_{c'}^2+\Delta  x_{c'}\Delta  y_{c'}+\Delta  y_{c'}^3\right)+O\left(s^2\right)\\
&=&C_{c'}O(s)\Delta  y_{c'}\left(1+O\left(\lambda^{\frac{\theta n}{2}}\right)\right)+Q_{c'}\Delta  y_{c'}^2+O\left(s\Delta  y_{c'}^2+\Delta  y_{c'}^3\right)+O\left(s^2\right)\\
&=&\left[O\left(s^2\right)+O\left(s\lambda^{{\theta n}}\right)+O\left(\lambda^{{2\theta n}}\right)\right]+\left[O\left(s\right)+O\left(\lambda^{{\theta n}}\right)\right]\Delta x_{z^{(1)}}\\&+&\left[\frac{Q_{c'}\left(1+O(s)\right)}{B_{c'}^2}+O\left(\lambda^{{\theta n}}\right)\right]\Delta x_{z^{(1)}}^2+\left[O\left(1\right)\right]\Delta x_{z^{(1)}}^3.
 \end{eqnarray*}
 Observe, that this polynomial expression has a special form, the constant and linear term are zero.  This is because $z^(1)$ is the lowest point of the curve $W^u_{\text{loc}}(z^{(1)})$. Hence, 
  \begin{eqnarray*}
\Delta y_{z^{(1)}} &=&\frac{Q_{c'}}{B_{c'}^2}\left(1+O\left(\lambda^{{\theta n}}\right)\right)\Delta x_{z^{(1)}}^2+O\left(\Delta x_{z^{(1)}}^3\right)\\
&=&\frac{Q}{B^2}\left(1+O\left(\lambda^{{\theta n}}\right)\right)\Delta x_{z^{(1)}}^2+O\left(\Delta x_{z^{(1)}}^3\right),
 \end{eqnarray*}
 where we used that, by construction, $c$ and $c'$ are at distance to each other proportional to $\lambda^{{\theta n}}$.
 \end{proof}

\begin{figure}
\centering
\includegraphics[width=0.9\textwidth]{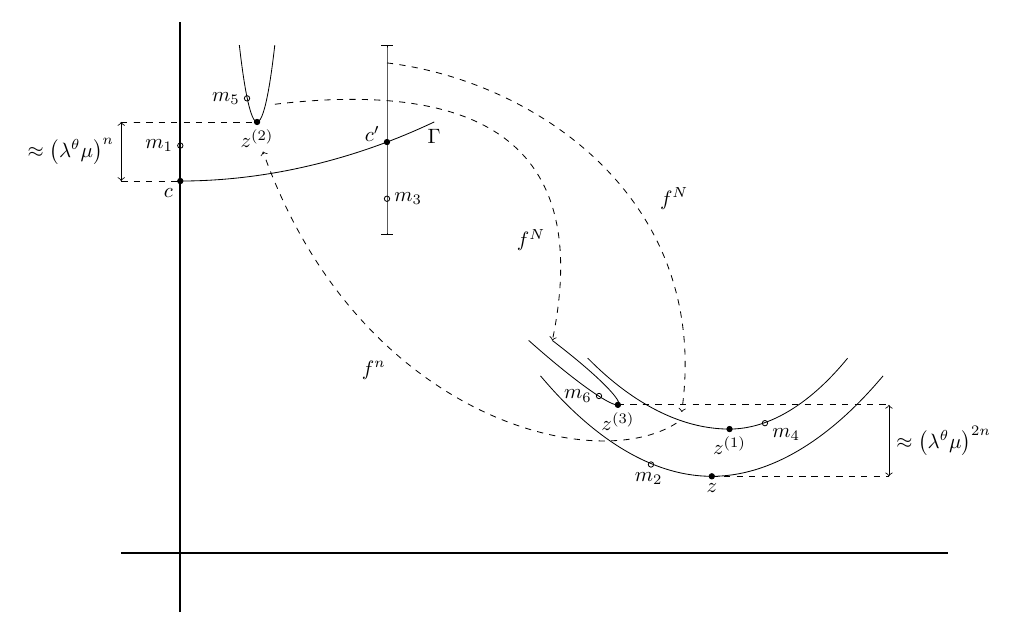}
\caption{$2N+n$ iterates}
\label{Fig4b}
\end{figure}
\paragraph{The second loop: $N+\theta n+N+n+N$ iterates.} \mbox{} \\

Let $(t,a)\in\mathcal B_n$ and let $z^{(2)}=(z_{x}^{(2)},z_{y}^{(2)})=F_{t,a}^n(z^{(1)})$ and take a local unstable manifold $W^u_{\text{loc}}(z^{(2)})$ of vertical size $L\left(\lambda^{\theta}\mu\right)^n$ with $L$ a large constant which is going to be chosen later, see Figure \ref{Fig4b}. The following lemma gives information on the size and the position of $W^u_{\text{\rm loc}}(z^{(2)})$. 

 \begin{lem}\label{ymaxymin}
 For all $(t,a)\in\mathcal B_n$, 
\begin{eqnarray*}
    z_{y}^{(2)}(t,a)-\mu^n_0 z_{n,y}= C\left(\lambda^{\theta}\mu\right)^n +\left(\mu^n+O(n)\right)\left(a-a_n(t)\right)+O\left(\left({\lambda^{\theta}}\mu^{1-\frac{\theta}{2}}\right)^n\right),
    \end{eqnarray*}
    where $z_n=z(t,a_n(t))$ and $\mu_0=\mu(t,a_n(t))$.
  Moreover if $(x,y)\in W^u_{\text{loc}}(z^{(2)})$, 
  \begin{eqnarray*}
& C\left(\lambda^{\theta}\mu\right)^n+\left(\mu^n+O(n)\right)\left(a-a_n(t)\right)+O\left(\left({\lambda^{\theta}}\mu^{1-\frac{\theta}{2}}\right)^n\right)\\&\leq y- \mu_0^n z_{n,y}\leq \\&(C+QL)\left(\lambda^{\theta}\mu\right)^n+\left(\mu^n+O(n)\right)\left(a-a_n(t)\right)+O\left(\left({\lambda^{\theta}}\mu^{1-\frac{\theta}{2}}\right)^n\right),  \end{eqnarray*} 
and $$x=\lambda^n+o\left(\lambda^n\right).$$
 \end{lem}
 \begin{proof}
From Lemma \ref{z1distz}, \eqref{dz}, \eqref{munan}, Remark \ref{muzeroothers} and from the fact that we are in the domain of linearization,
 \begin{eqnarray*}
    z_{y}^{(2)}(t,a)-\mu^n_0 z_{n,y}&=& z_{y}^{(2)}(t,a)-\mu^n z_{y}+\mu^n z_{y}-\mu^n_0 z_{n,y}\\&=&C\left(\lambda^{\theta}\mu\right)^n +O\left(\left({\lambda^{\theta}}\mu^{1-\frac{\theta}{2}}\right)^n\right)+z_{n,y}\left(\mu^n-\mu^n_0\right)+\mu^n(a-a_n(t))\\
    &=&C\left(\lambda^{\theta}\mu\right)^n +O\left(\left({\lambda^{\theta}}\mu^{1-\frac{\theta}{2}}\right)^n\right)+O\left(\left(\frac{\mu}{\mu_0}\right)^n-1\right)+\mu^n(a-a_n(t))\\&=& C\left(\lambda^{\theta}\mu\right)^n +\left(\mu^n+O(n)\right)\left(a-a_n(t)\right)+O\left(\left({\lambda^{\theta}}\mu^{1-\frac{\theta}{2}}\right)^n\right).
    \end{eqnarray*}
Repeat the same estimates using the second inequalities from Lemma \ref{z1distz} to get the second inequalities in this lemma. For the last one, it is enough to use again Lemma \ref{z1distz} and the fact that we are in the linearization domain. 
 \end{proof}
The curve $W^u_{\text{\rm loc}}(z^{(2)})$, is a graph over the $y$-axes. In the following lemma we give an expression for the curve $W^u_{\text{\rm loc}}(z^{(2)})$. \begin{lem}
\label{shape}
In a coordinate system centered in $z^{(2)}$ the $\Cq$ curve $W^u_{\text{\rm loc}}(z^{(2)})$ is given by 
$$
\Delta y=\frac{Q}{B^2}\left(1+O\left(\lambda^{\theta n}\right)\right)\left(\frac{\mu}{\lambda^2}\right)^n\Delta x^2+O\left(\Delta x^3\left(\frac{\mu}{\lambda^3}\right)^n\right),
$$
where $0\leq\Delta y<L\left(\lambda^{\theta}\mu\right)^n$ and $\Delta x=O\left(\lambda^{2+\theta}\right)^{\frac{n}{2}}$. 
 \end{lem}
 \begin{proof}
The lemma follows from the fact that the curve $W^u_{\text{loc}}(z^{(1)})$ is a graph over the $x$-axes and $W^u_{\text{loc}}(z^{(2)})$ is obtained by applying the linear map $F^n_{t,a}$ to the inequality defining the shape of $W^u_{\text{loc}}(z^{(1)})$ in Lemma \ref{shapeofthecurveatz1}.
   \end{proof}

Let $z^{(3)}=(z_{x}^{(3)},z_{y}^{(3)})$ be the lowest point of $F^N_{t,a}\left(W^u_{\text{loc}}(z^{(2)})\right)$. At this moment we have three critical values, $z$, $z^{(1)}$ and $z^{(3)}$. The critical value $z^{(1)}$ is too closely connected to $z$ to use it to create a secondary tangency. In the next lemma we prove that the new critical value $z^{(3)}$ is far enough and independent enough from the critical value $z$ that we can use it to create the secondary tangency.  This independent behavior of $z^{(3)}$ is the key of our constructions. 
 \begin{lem}\label{hnbounds}
  For all $(t,a)\in\mathcal B_n$, 
  \begin{eqnarray*}
z_{y}^{(3)}(t,a)-z_{y}(t,a)= QH^2\left[1+O\left(H\right)\right].
    \end{eqnarray*}
where $H=C\left(\lambda^{\theta}\mu\right)^n +\left(\mu^n+O(n)\right)\left(a-a_n(t)\right)$. In particular,
 \begin{eqnarray*}
  z_{y}^{(3)}(t,a_n(t))-z_{y}(t,a_n(t))= QC^2\left(\lambda^{\theta}\mu\right)^{2n}\left[1+O\left(\left(\lambda^{\theta}\mu\right)^{n}\right)\right].
    \end{eqnarray*}
\end{lem}
\begin{proof}
Use coordinates $(\Delta x_c,\Delta y_c)\in W^u_{\text{loc}}(z^{(2)})$ centered at $c$. We are interested in the special point $m_5=(\Delta x_c, \Delta y_c)\in W^u_{\text{loc}}(z^{(2)})$ such that $F^N(m_5)=z^{(3)}$. In particular, if $F^N(m_5)=(\Delta x_z,\Delta y_z)\in W^u_{\text{loc}}(z^{(3)})$ centered at $z$. Along the proof we will need as well coordinates $(\Delta x_{z^{(2)}},\Delta y_{z^{(2)}})\in W^u_{\text{loc}}(z^{(2)})$ centered at $z^{(2)}$. By construction and by Lemma \ref{shape} we get 
\begin{eqnarray}\label{eqfordeltaxcdeltaxz2}
\Delta x_c=z^{(2)}_x-m_{5,x}+\Delta x_{z^{(2)}}=O\left(\left(\lambda^{2+\theta}\right)^{\frac{n}{2}}\right)+\Delta x_{z^{(2)}},
\end{eqnarray}
and
\begin{eqnarray}\label{eqfordeltaycdeltavz2}
\Delta y_c=z_{y}^{(2)}(t,a)-\mu^n_0 z_{n,y}+\Delta y_{z^{(2)}}=H+\Delta y_{z^{(2)}}.
\end{eqnarray}
Observe that, by Lemma \ref{ymaxymin} and by the definition of the strip $\mathcal B_n$, $H> \left(C-E\right)\left(\lambda^{\theta}\mu\right)^n+O\left(\left({\lambda^{\theta}}\mu^{1-\frac{\theta}{2}}\right)^n\right)>O\left(\lambda^{2+\theta}\right)^{\frac{n}{2}}$.
Use in order, Lemma \ref{Nderivatives}, \eqref{eqfordeltaxcdeltaxz2}, \eqref{eqfordeltaycdeltavz2}, \eqref{thetacond}, \eqref{thetacond1} and the fact that $\Delta x=O\left(\lambda^{2+\theta}\right)^{\frac{n}{2}}$ (see Lemma \ref{shape}) to get
\begin{eqnarray}\label{eq:deltayzindeltaz2}\nonumber
\Delta y_z&=&C\Delta x_c+Q\Delta y_c^2+O\left(\Delta x_c^2+ \Delta x_c\Delta y_c+\Delta y_c^3\right)\\
&=&C\Delta x_{z^{(2)}}+Q\Delta y_{z^{(2)}}^2+2QH\Delta y_{z^{(2)}}+QH^2+O\left(\lambda^{2+\theta}\right)^{\frac{n}{2}}\\\nonumber&+&O\left(\Delta x_{z^{(2)}}^2+\Delta x_{z^{(2)}}H+\Delta x_{z^{(2)}}\Delta y_{z^{(2)}}+\Delta y_{z^{(2)}}^3+\Delta y_{z^{(2)}}^2H+\Delta y_{z^{(2)}}H^2+H^3\right)\\\label{eq:deltayzindeltaz21}&=&C\Delta x_{z^{(2)}}\left[1+O(H)\right]+Q\Delta y_{z^{(2)}}^2\left[1+O(H)\right]+2QH\Delta y_{z^{(2)}}\left[1+O(H)\right]\\\nonumber &+&QH^2\left[1+O(H)\right].
\end{eqnarray}
Observe that the special point $m_5$ we are looking for will satisfy the equation $d\Delta y_z/d\Delta x_{z^{(2)}}=0$. Hence, by differentiating \eqref{eq:deltayzindeltaz2},
\begin{eqnarray}\label{eq:ddeltatildeyddeltax}
0=\frac{d\Delta y_z}{d\Delta x_{z^{(2)}}}&=&C+2Q\Delta y_{z^{(2)}}\frac{d\Delta y_{z^{(2)}}}{d\Delta x_{z^{(2)}}}+2QH\frac{d\Delta y_{z^{(2)}}}{d\Delta x_{z^{(2)}}}\\\nonumber&+&O\left(\Delta x_{z^{(2)}}+ \Delta y_{z^{(2)}}+\Delta x_{z^{(2)}}\frac{d\Delta y_{z^{(2)}}}{\Delta x_{z^{(2)}}}+\Delta y_{z^{(2)}}^2\frac{d\Delta y_{z^{(2)}}}{d\Delta x_{z^{(2)}}}\right.\\\nonumber&+&\left.\Delta y_{z^{(2)}}\frac{d\Delta y_{z^{(2)}}}{d\Delta x_{z^{(2)}}}H+\frac{d\Delta y_{z^{(2)}}}{d\Delta x_{z^{(2)}}}H^2+H\right).
\end{eqnarray}

By Lemma \ref{shape}, 
\begin{eqnarray}\label{eqfordeltay}\nonumber
\Delta y_{z^{(2)}}&=&\frac{Q}{B^2}\left(1+O\left(\lambda^{\theta n}\right)\right)\left(\frac{\mu}{\lambda^2}\right)^n\Delta x_{z^{(2)}}^2+O\left(\Delta x_{z^{(2)}}^3\left(\frac{\mu}{\lambda^3}\right)^n\right)\\&=&\frac{Q}{B^2}\left(\frac{\mu}{\lambda^2}\right)^n\Delta x_{z^{(2)}}^2\left[1+O\left(\lambda^{\frac{\theta n}{2}}\right)\right]
\end{eqnarray}
and 
\begin{eqnarray*}\label{eqfordeltayderivative}\nonumber
\frac{d\Delta y_{z^{(2)}}}{d\Delta x_{z^{(2)}}}&=&2\frac{Q}{B^2}\left(1+O\left(\lambda^{\theta n}\right)\right)\left(\frac{\mu}{\lambda^2}\right)^n\Delta x_{z^{(2)}}+O\left(\Delta x_{z^{(2)}}^2\left(\frac{\mu}{\lambda^3}\right)^n\right)\\&=&2\frac{Q}{B^2}\left(\frac{\mu}{\lambda^2}\right)^n\Delta x_{z^{(2)}}\left[1+O\left(\lambda^{\frac{\theta n}{2}}\right)\right].
\end{eqnarray*}
Using the last two inequalities in \eqref{eq:ddeltatildeyddeltax} we get
\begin{eqnarray}\label{eq:ddeltatildeyddeltax1}\nonumber
0=\frac{d\Delta y_z}{d\Delta x_{z^{(2)}}}&=&2\frac{Q^3}{B^4}\left(\frac{\mu}{\lambda^2}\right)^{2n}\Delta x_{z^{(2)}}^3\left[1+O\left(H\right)\right]+4\frac{Q^2}{B^2}H\left(\frac{\mu}{\lambda^2}\right)^{n}\Delta x_{z^{(2)}}\left[1+O\left(H\right)\right]\\&+&C\left[1+O(H)\right]
\end{eqnarray}
where we also use Lemma \ref{shape} and \eqref{thetacond}. By \eqref{eq:ddeltatildeyddeltax1} it follows that 
\begin{equation}\label{eq:deltaz2inorder}
\Delta x_{z^{(2)}} =O\left(\left(\frac{\lambda^{2-\theta}}{\mu^2}\right)^n\right)
\end{equation}
and by \eqref{eqfordeltay},
\begin{equation}\label{eq:deltay2inorder}
\Delta y_{z^{(2)}}=O\left(\left(\frac{\lambda^{2-2\theta}}{\mu^3}\right)^n\right).
\end{equation}
Use the last two inequalities in \eqref{eq:deltayzindeltaz21} to get
$$
\Delta y_z=QH^2\left[1+O(H)\right],
$$
where we also used the lower bound for $H$, \eqref{thetacond} and \eqref{thetacond1}. The lemma is proved.
Moreover by using \eqref{eq:deltaz2inorder} in \eqref{eq:ddeltatildeyddeltax1} we get an exact estimate for $\Delta x_{z^{(2)}}$. Namely,
\begin{eqnarray*}
0&=&4\frac{Q^2}{B^2}H\left(\frac{\mu}{\lambda^2}\right)^{n}\Delta x_{z^{(2)}}\left[1+O\left(\left(\frac{\mu}{\lambda^2}\right)^{n}\frac{\Delta x_{z^{(2)}}^2}{H}\right)+O(H)\right]+C\left[1+O(H)\right]
\\&=&4\frac{Q^2}{B^2}H\left(\frac{\mu}{\lambda^2}\right)^{2n}\Delta x_{z^{(2)}}\left[1+O(H)\right]+C\left[1+O(H)\right]
\end{eqnarray*}
and 
\begin{equation}\label{eq:deltaxz2exact}
\Delta x_{z^{(2)}}=-\frac{CB^2}{4Q^2H}\left(\frac{\lambda^2}{\mu}\right)^{n}\left[1+O(H)\right] .
\end{equation}
\end{proof}

The curve $W^u_{\text{\rm loc}}(z^{(3)})$, is a graph over the $y$-axes. In the following lemma we give an expression for the curve $W^u_{\text{\rm loc}}(z^{(3)})$. \begin{lem}
\label{curvaturez3} 
In a coordinate system centered in $z^{(3)}$ the $\Cq$ curve $W^u_{\text{\rm loc}}(z^{(3)})$ is given by 
$$
\Delta y=\frac{8Q^4}{C^2B^4}{H^3}\left(\frac{\mu}{\lambda^2}\right)^n\Delta x^2+O\left(\Delta x^3\right),
$$
where $H=C\left(\lambda^{\theta}\mu\right)^n +\left(\mu^n+O(n)\right)\left(a-a_n(t)\right)$. 
 \end{lem}
\begin{proof}
In order to be able to use Lemma \ref{Nderivatives}, use coordinates $(\Delta x_c,\Delta y_c)\in W^u_{\text{loc}}(z^{(2)})$ centered at $c$ and coordinates $(\Delta x_z,\Delta y_z)\in W^u_{\text{loc}}(z^{(3)})$ centered at $z$. 
By Lemma \ref{Nderivatives}, we have
\begin{eqnarray}\label{eq:deltaxzafterFN}
\Delta x_z&=&F^N\left(\Delta x_c\right)=A\Delta x_c+B\Delta y_c+O\left(\Delta x_c^2+\Delta x_c\Delta y_c+\Delta y_c^2\right)\\\label{eq:deltayzafterFN}
\Delta y_z&=&F^N\left(\Delta y_c\right)=C\Delta x_c+Q\Delta y_c^2+O\left(\Delta x_c^2+\Delta x_c\Delta y_c+\Delta y_c^3\right).
\end{eqnarray}
For the aim of proving the lemma we are interested in coordinates $(\Delta x_{z^{(2)}},\Delta y_{z^{(2)}})\in W^u_{\text{loc}}(z^{(2)})$ centered at $z^{(2)}$ and coordinates $(\Delta x_{z^{(3)}},\Delta y_{z^{(3)}})\in W^u_{\text{loc}}(z^{(3)})$ centered at $z^{(3)}$. By construction, by Lemma \ref{ymaxymin}, Lemma \ref{hnbounds} and Lemma \ref{shape} we have,
\begin{eqnarray*}
\Delta x_z&=&\Delta x_{z^{(3)}}+1-{z_x^{(3)}}=\Delta x_{z^{(3)}}+O\left(H\right),\\
\Delta y_z&=&\Delta y_{z^{(3)}}+{z_y^{(3)}}-z_y=\Delta x_{z^{(3)}}+O\left(H^2\right),\\
\Delta x_c&=&\Delta x_{z^{(2)}}+O\left(\lambda^{n}\right),\\
\Delta y_c&=&\Delta y_{z^{(2)}}+H= \frac{Q}{B^2}\left(1+O\left(\lambda^{\theta n}\right)\right)\left(\frac{\mu}{\lambda^2}\right)^n\Delta x_{z^{(2)}}^2+H+O\left(\left(\lambda^{\frac{3}{2}\theta}\mu\right)^n\right).
\end{eqnarray*}
Using the equalities above in \eqref{eq:deltaxzafterFN}, \eqref{eq:deltayzafterFN} and after several order cancellations, we get
\begin{eqnarray}\label{eq:deltaxz3indeltaxz2}
\Delta x_{z^{(3)}}&=&H\left[B+O(1)\right]+ A\Delta x_{z^{(2)}}\left[1+O(H)\right]+\frac{Q}{B}\left(\frac{\mu}{\lambda^2}\right)^n\Delta x_{z^{(2)}}^2\left[1+O(H)\right]\\&+&O\left(\left(\frac{\mu}{\lambda^2}\right)^n\Delta x_{z^{(2)}}^3+\left(\frac{\mu^2}{\lambda^4}\right)^n\Delta x_{z^{(2)}}^4\right)=f\left(\Delta x_{z^{(2)}}\right),\\\label{eq:deltayz3indeltaxz2}
\Delta y_{z^{(3)}}&=&H^2\left[Q+O(1)\right]+ C\Delta x_{z^{(2)}}\left[1+O(H)\right]+2H\frac{Q^2}{B^2}\left(\frac{\mu}{\lambda^2}\right)^n\Delta x_{z^{(2)}}^2\left[1+O(H)\right]\\&+&O\left(\left(\frac{\mu}{\lambda^2}\right)^n\Delta x_{z^{(2)}}^3+\left(\frac{\mu^2}{\lambda^4}\right)^nH\Delta x_{z^{(2)}}^4+\left(\frac{\mu^3}{\lambda^6}\right)^n\Delta x_{z^{(2)}}^6\right)=g\left(\Delta x_{z^{(2)}}\right).
\end{eqnarray}
By Lemma \ref{shape}, 
\begin{equation}\label{eq:exactfromulafordeltayz3indeltaxz3}
\Delta y_{z^{(3)}}=\frac{C_2}{2}\Delta x_{z^{(3)}}^2+O\left(\Delta x_{z^{(3)}}^3\right)
\end{equation}
with $C_2\neq 0$. We need then to calculate the coefficient $C_2$.
Observe that 
\begin{eqnarray}\label{eq:C2formula}
\nonumber C_2=\frac{d^2\Delta y_{z^{(3)}}}{d\Delta x_{z^{(3)}}^2}&=&g''\left(\Delta x_{z^{(2)}}\right)\left(\frac{d\Delta x_{z^{(2)}}}{d\Delta x_{z^{(3)}}}\right)^2+g'\left(\Delta x_{z^{(2)}}\right)\frac{d^2\Delta x_{z^{(2)}}}{d\Delta x_{z^{(3)}}^2}\\
&=&g''\left(\Delta x_{z^{(2)}}\right)\left(\frac{d\Delta x_{z^{(2)}}}{d\Delta x_{z^{(3)}}}\right)^2,
\end{eqnarray}
where we used that $z^{(3)}$ is the lowest point of the $W^u_{\text{\rm loc}}(z^{(3)})$ and hence $g'\left(\Delta x_{z^{(2)}}\right)=0$.
By \eqref{eq:deltayz3indeltaxz2},
\begin{eqnarray*}
g'=\frac{d\Delta y_{z^{(3)}}}{d\Delta x_{z^{(2)}}}&=&C\left[1+O(H)\right]+4H\frac{Q^2}{B^2}\left(\frac{\mu}{\lambda^2}\right)^n\Delta x_{z^{(2)}}\left[1+O(H)\right]\\&+&O\left(\left(\frac{\mu}{\lambda^2}\right)^n\Delta x_{z^{(2)}}^2+\left(\frac{\mu^2}{\lambda^4}\right)^nH\Delta x_{z^{(2)}}^3+\left(\frac{\mu^3}{\lambda^6}\right)^n\Delta x_{z^{(2)}}^5\right),
\end{eqnarray*}
and 
\begin{eqnarray*}
g''=\frac{d^2\Delta y_{z^{(3)}}}{d\Delta x_{z^{(2)}}^2}&=&4H\frac{Q^2}{B^2}\left(\frac{\mu}{\lambda^2}\right)^n\left[1+O(H)\right]\\&+&O\left(\left(\frac{\mu}{\lambda^2}\right)^n\Delta x_{z^{(2)}}+\left(\frac{\mu^2}{\lambda^4}\right)^nH\Delta x_{z^{(2)}}^2+\left(\frac{\mu^3}{\lambda^6}\right)^n\Delta x_{z^{(2)}}^4\right).
\end{eqnarray*}
By the previous inequality and by \eqref{eq:deltaz2inorder} after cleaning the order terms we get 
\begin{eqnarray}\label{eq:gsecondderivativeindetlaxz2}
g''\left(\Delta x_{z^{(2)}}\right)=4H\frac{Q^2}{B^2}\left(\frac{\mu}{\lambda^2}\right)^n\left[1+O(H)\right].
\end{eqnarray}
Observe that 
\begin{equation}\label{eq:formulaforddeltaxz2indeltaxz3}
\frac{d\Delta x_{z^{(2)}}}{d\Delta x_{z^{(3)}}}=\frac{1}{f'\left(\Delta x_{z^{(2)}}\right)}
\end{equation}
and by \eqref{eq:deltaxz3indeltaxz2}, 
\begin{eqnarray*}
f'=\frac{d\Delta x_{z^{(3)}}}{d\Delta x_{z^{(2)}}}&=&A\left[1+O(H)\right]+2\frac{Q}{B}\left(\frac{\mu}{\lambda^2}\right)^n\Delta x_{z^{(2)}}\left[1+O(H)\right]\\&+&O\left(\left(\frac{\mu}{\lambda^2}\right)^n\Delta x_{z^{(2)}}^2+\left(\frac{\mu^2}{\lambda^4}\right)^n\Delta x_{z^{(2)}}^3\right).
\end{eqnarray*}
In particular, by \eqref{eq:deltaxz2exact}
\begin{eqnarray*}
f'\left(\Delta x_{z^{(2)}}\right)=-\frac{CB}{2Q}\frac{1}{H}\left[1+O(H)\right],
\end{eqnarray*}
and by \eqref{eq:formulaforddeltaxz2indeltaxz3}
\begin{equation}\label{eq:formulaforddeltaxz2indeltaxz3final}
\frac{d\Delta x_{z^{(2)}}}{d\Delta x_{z^{(3)}}}=-\frac{2Q}{CB}{H}\left[1+O(H)\right].
\end{equation}
Use \eqref{eq:gsecondderivativeindetlaxz2} and \eqref{eq:formulaforddeltaxz2indeltaxz3} in \eqref{eq:C2formula} to get 
\begin{eqnarray*}
C_2&=&\frac{16Q^4}{C^2B^4}{H^3}\left(\frac{\mu}{\lambda^2}\right)^n\left[1+O(H)\right],
\end{eqnarray*}
and by \eqref{eq:exactfromulafordeltayz3indeltaxz3},
\begin{equation*}
\Delta y_{z^{(3)}}=\frac{8Q^4}{C^2B^4}{H^3}\left(\frac{\mu}{\lambda^2}\right)^n\Delta x_{z^{(3)}}^2+O\left(\Delta x_{z^{(3)}}^3\right).
\end{equation*}
\end{proof}

\paragraph{The full cycle: $N+\theta n+N+n+N+n_0$ iterates.} 
For all $(t,a)\in\mathcal B_n$, let $n_0$ be the maximal $n\in\N$ such that 
\begin{equation}\label{nminusn0}
z^{(3)}\subset\left\{(x,y)\in [-2,2]^2| y\geq W_{n-n_0}(x)\right\}.
\end{equation}
Observe that the funtion $(t,a)\mapsto n_0$ gives information on the position of $W^u_{\text{loc}}(z^{(3)}(t,a_n(t)))$ with respect to the pull-backs of the stable manifold, see Figure \ref{Fig5}. A more precise estimate on the size of $n_0$ comes from the following lemma.
Let 
\begin{equation}\label{alphaofuse1}
\alpha=\frac{\log\left(\lambda^{2\theta}\mu^3\right)}{\log \mu}
\end{equation}
 and recall the definition of $n_0^*$ in \eqref{eq:n0*def}. By (\ref{thetacond1}), $\alpha\in \left(0,{1}\right)$. 
\begin{lem}\label{n0} For every $(t,a_n(t))\in\mathcal B_n$, the integer $n_0$ satisfies the following:
$$
n_0=[n\alpha]+O(1)>n_0^*.
$$
In particular there exists a uniform positive constant $K$ such tha
\begin{equation}\label{alphaofuse}
\frac{1}{K}\left(\lambda^{2\theta}\mu^3\right)^{n}\leq \mu^{n_0}\leq K \left(\lambda^{2\theta}\mu^3\right)^{n}.
\end{equation}
\end{lem} 
\begin{proof}
Let  $(t,a_n(t))\in\mathcal B_n$. From Lemma \ref{hnbounds}, (\ref{wn}) and the fact that $W_{n+1}(1)< z_y<W_{n}(1)$, there exists a uniform constant $K_1>0$ such that 
$$
 \frac{1}{K_1}\left(\frac{1}{\mu^n}+\left(\lambda^{\theta}\mu\right)^{2n}\right)\leq z_{y}^{(3)}\leq K_1\left(\frac{1}{\mu^n}+\left(\lambda^{\theta}\mu\right)^{2n}\right).
$$
Moreover the definition of $n_0$ implies the existence of a uniform constant $K_2>0$ such that
$$
\frac{1}{K_2}\frac{1}{\mu^{n-n_0}}\leq  z_{y}^{(3)}\leq {K_2}\frac{1}{\mu^{n-n_0}}.
$$
The two previous inequalities imply the existence of a positive constant $K$ such that
\begin{equation*}
\frac{1}{K}\leq\frac{1}{\mu^{n_0}}\left(1+\left(\lambda^{2\theta}\mu^3\right)^{n}\right)\leq K.
\end{equation*}
The lemma follows from (\ref{thetacond}).
\end{proof}
 \begin{rem}\label{rem:thetalpha}
 Observe that by combining \eqref{alphaofuse1} and \eqref{thetacond}, one gets
 \begin{equation}\label{eq:bounderyrelation}
    \lambda^{2\theta}\mu^2<\frac{1}{\mu^{\alpha}}. 
 \end{equation}
\end{rem}
\subsection{The secondary tangencies}
In this subsection we locate secondary tangencies along the curve $a_n$. We start by giving the formal definition of secondary tangency.
\begin{defin}
A tangency between $W^u_{\text{loc}}(z^{(3)})$ and $ W_{n-n_0}$ is called a \emph{secondary tangency of type} $n_0$. 
\end{defin}

In order to prove the existence of secondary tangencies, we need to estimate the variation of $z_{y}^{(3)}$ with respect to the parameters $a$ and $t$. This gives the speed of $W^u_{\text{\rm loc}}(z^{(3)})$ in the phase space. Comparing this speed with the one of the pullbacks of the stable manifold at $q_2$ allows finding the new tangencies. 

We take a point $m_1\in W^u_{\text{\rm loc}}\left(c\right)$ and consider its orbit at crucial moments, step by step, until we get to the point $F^{3N+\theta n+n}(m_1)\in W^u_{\text{\rm loc}}(z^{(3)})$.  In particular we are interested in the derivative of these points with respect to $m_1, t$ and $a$. The crucial moments are the ones defined by the combinatorics as presented in the previous section. The reader can refer to Figure \ref{Fig4b}. 

The reader might be surprised by the smaller order terms we need to keep track of in Proposition \ref{speed}. Unfortunately the leading terms will cancel and the speed of $W^u_{\text{\rm loc}}(z^{(3)})$ when varying the parameters along the curve $a_n$ will be determined by smaller order terms, see (\ref{upperineq}). Smaller order terms also turned out to be crucial in the proof of Proposition \ref{angle}.


\begin{prop}\label{speed}
For $n\ge 1$ large enough and $(t,a)\in\mathcal B_n$,
\begin{eqnarray*}
&DS_{C-E} \left\{  \frac{n}{\mu}\frac{\partial\mu}{\partial t}+  n \left(\lambda^{\theta}\mu\right)^{n}\left[C\frac{\theta }{\lambda} \frac{\partial \lambda}{\partial t}+\frac{(C-E) }{\mu}\frac{\partial\mu}{\partial t}\right] +O\left(\left(\lambda^{\theta}\mu\right)^{n}\right)\right\}\\
&\leq\frac{\partial z_{y}^{(3)}}{\partial t}\leq\\& \frac{DT}{\mu^{n_0^*/2}}\left\{\frac{n}{\mu}\frac{\partial\mu}{\partial t}+\frac{n}{\mu}\frac{\partial\mu}{\partial t}\frac{T}{\mu^{n_0^*/2}}+O\left(\frac{1}{\mu^{n_0^*/2}}\right)\right\},
\end{eqnarray*}
and 
\begin{eqnarray*}
&DS_{C-E} \left\{ \mu^n+\frac{1}{DS_{C_E}}+ \frac{n}{\mu}\frac{\partial\mu}{\partial a}+  n \left(\lambda^{\theta}\mu\right)^{n}\left[C\frac{\theta }{\lambda} \frac{\partial \lambda}{\partial a}+\frac{(C-E) }{\mu}\frac{\partial\mu}{\partial a}\right] +O\left(\left(\lambda^{\theta}\mu\right)^{n}\right)\right\}\\&\leq\frac{\partial z_{y}^{(3)}}{\partial a}\leq\\&
 DT\mu^{n-n_0^*/2}+1+\frac{DT}{\mu^{n_0^*/2}}\left\{\frac{n}{\mu}\frac{\partial\mu}{\partial a}+\frac{n}{\mu}\frac{\partial\mu}{\partial a}\frac{T}{\mu^{n_0^*/2}} +O\left(\frac{1}{\mu^{n_0^*/2}}\right)\right\},
\end{eqnarray*}
where $S_{C-E}=(C-E)\left(\lambda^{\theta}\mu\right)^n+O\left(\left(\lambda^{\theta}\mu^{1-\frac{\theta}{2}}\right)^n\right)$.
Moreover $ C, C-E,D$  are positive smooth functions in $t,a$ and depending also on $n$. These functions converge to a non zero function in $t$ as $n\to\infty$. The constant $T$ is the same as in the definition of $\mathcal B_n$. In particular, when $a=a_n(t)$,
\begin{eqnarray*}
\frac{\partial z_{y}^{(3)}}{\partial t}=
D S_C  \left\{\frac{n}{\mu}\frac{\partial\mu}{\partial t}+ C n \left(\lambda^{\theta}\mu\right)^{n}\left[\frac{\theta }{\lambda} \frac{\partial \lambda}{\partial t}+\frac{1 }{\mu}\frac{\partial\mu}{\partial t}\right] +O\left(\left(\lambda^{\theta}\mu\right)^{n}\right) \right\},
\end{eqnarray*}
and 
\begin{eqnarray*}
\frac{\partial z_{y}^{(3)}}{\partial a}&=&
 DS_{C} \left\{ \mu^n+\frac{1}{DS_{C}}+ \frac{n}{\mu}\frac{\partial\mu}{\partial a}+  Cn \left(\lambda^{\theta}\mu\right)^{n}\left[\frac{\theta }{\lambda} \frac{\partial \lambda}{\partial a}+\frac{1 }{\mu}\frac{\partial\mu}{\partial a}\right] +O\left(\left(\lambda^{\theta}\mu\right)^{n}\right)\right\},
\end{eqnarray*}
 where $S_{C}=C\left(\lambda^{\theta}\mu\right)^n+O\left(\left(\lambda^{\theta}\mu^{1-\frac{\theta}{2}}\right)^n\right)$.

\end{prop}
\begin{proof}
Choose $ t_1\in(-t_0,t_0)$. By Lemma \ref{Nderivatives} and by Remark \ref{highzya}, we get the following expression for $F_{t,a}^N(x, y)$, in coordinates $(x,y, t, a)$ centered around $(0,1, t_1, 0)=(c,t_1,0)$ and coordinates centered around $z=F_{t_1,0}^N(0, 1)$,
\begin{eqnarray}\label{FNexpr}
F_{t,a}^N(x, y)&=&\left(
\begin{matrix}
Ax + By+tE+ aF\\
Cx+Q y^2+a\left[1+x\varphi_x +y^2\varphi_y\right] 
\end{matrix}
\right),
\end{eqnarray}
where $A(x,y,t,a), B(x,y,t,a), C(x,y,t,a ), E(x,y,t,a),F(x,y,t,a), \varphi_x(x,y,t,a)$ are $\Ct$ functions and  $Q(x,y,t,a), \varphi_y(x,y,t,a)$ are $\Cd$ function over $B,Q\neq 0$. The fact that $B\neq 0$ follows from $(f4)$. For the $y$ component of $F_{t,a}^{N}(x,y)$ observe that the linear terms in $t$ and $y$ are absent because in $a=0$, the point $F^N_{t,0}(0,1)$ is a non-degenerate homoclinic tangency, see Remark \ref{highzya}. The $a$-dependence of the second component of $F^N_{t,a}$ follows from Remark \ref{highzya}.
\\
Recall that $c'\in F^{\theta n}_{t,a}\left(W^u_{\text{loc}}(z)\right)\cap\Gamma$, $z^{(1)}$ is defined as the lowest point of $F^N_{t,a}\left(W^u_{\text{loc}}(c')\right)$, $z^{(2)}$ as the lowest point of $F^n_{t,a}\left(W^u_{\text{loc}}(z^{(1)})\right)$ and $z^{(3)}$ as the lowest point of $F^N_{t,a}\left(W^u_{\text{loc}}(z^{(2)})\right)$.
We fix the points $m_6, m_5,m_4, m_3, m_2, m_1$ so that they satisfy the following, see Figure \ref{Fig4b}. 
\begin{itemize}
\item[-] $m_6\in W^u_{\text{loc}}(z^{(3)})$ sufficiently close to $z^{(3)}$.
 \item[-] $m_5=(m_{5,x},m_{5,y})\in W^u_{\text{loc}}(z^{(2)})$, $F_{t,a}^{-N}(m_6)=m_5$. By Lemma \ref{ymaxymin} and \eqref{eq:n0*def}, 

\begin{eqnarray}\label{m5} \nonumber
 (C-E)\left(\lambda^{\theta}\mu\right)^n+O\left(\left(\lambda^{\theta}\mu^{1-\frac{\theta}{2}}\right)^n\right)&\leq&\\ m_{5,y}- c_{n,y}&\leq &\\\nonumber\frac{T}{\mu^{n_0^*/2}}+O\left(\frac{n}{\mu^{n+{n_0^*}/{2}}}\right),
  \end{eqnarray}

and by (\ref{eq:deltaz2inorder})
\begin{equation}\label{m5x1}
m_{5,x}-z^{(2)}_x=O\left(\left(\frac{\lambda^{2-\theta}}{\mu^2}\right)^n\right).
\end{equation}

In particular, when $a=a_n(t)$ by Lemma \ref{ymaxymin} and \eqref{eq:deltay2inorder},
\begin{eqnarray}\label{m5an}
 m_{5,y}- c_{n,y}=C\left(\lambda^{\theta}\mu\right)^n+O\left(\left(\lambda^{\theta}\mu^{1-\frac{\theta}{2}}\right)^n\right).
  \end{eqnarray} 

\item[-] $m_4=(m_{4,x},m_{4,y})\in W^u_{\text{loc}}(z^{(1)})$, $F_{t,a}^{-n}(m_5)=m_4$. By Lemma \ref{z1distz} and the fact that $z_{n,y}$ can be estimated by $\mu^{-n}$, see Remark \ref{muzeroothers},
\begin{equation}\label{m4}
m_{4,y}=O\left(\frac{1}{\mu^n}\right),
\end{equation}
and by (\ref{m5x1}), using the linear map backward we get, 
\begin{equation}\label{m41}
m_{4,x}-z^{(1)}_x=O\left(\left(\frac{\lambda^{1-\theta}}{\mu^2}\right)^n\right).
\end{equation}
\item[-] $m_3=(m_{3,x},m_{3,y})\in W^u_{\text{loc}}(c')$, $F_{t,a}^{-N}(m_4)=m_3$. By \eqref{eq:xcoordinatec'} and (\ref{m41}),
\begin{eqnarray}\label{m3} 
m_{3}-c&=&O\left({\lambda^{\theta{n}}}\right),\\
m_{3}-c'&=&O\left(\left(\frac{\lambda^{1-\theta}}{\mu^2}\right)^n\right).
\end{eqnarray}
\item[-] $m_2=(m_{2,x},m_{2,y})\in W^u_{\text{loc}}(z)$, $F_{t,a}^{-\theta n}(m_3)=m_2$. Therefore, there exists a uniform constant $K_2>0$ such that
\begin{equation}\label{m2}
\frac{1}{K_2}\frac{1}{\mu^{\theta{n}}}\leq m_{2,y}\leq K_2\frac{1}{\mu^{\theta{n}}}.
\end{equation}
\item[-] $m_1=(m_{1,x},m_{1,y})\in W^u_{\text{loc}}(c)$, $F_{t,a}^{-N}(m_2)=m_1$. By (\ref{m2}), \eqref{munan} and Remark \ref{muzeroothers},
\begin{equation*}
\frac{1}{K_2}\frac{1}{\mu^{\theta{n}}}\leq m_{2,y}-z_y\leq K_2\frac{1}{\mu^{\theta{n}}}.
\end{equation*}
and because of the quadratic behavior, see Lemma \ref{shapeofthecurveatz1}, 
\begin{equation*}
\frac{1}{K_2}\frac{1}{\mu^{\theta{n}/2}}\leq m_{2,x}-1\leq K_2\frac{1}{\mu^{\theta{n}/2}}
\end{equation*}
where we adjusted the value of the constant $K_2$.
Hence, there exists a uniform constant $K_1>0$ such that
\begin{equation}\label{m1}
\frac{1}{K_1}\frac{1}{\mu^{\theta\frac{n}{2}}}\leq m_{1,y}- c_{y}\leq K_1\frac{1}{\mu^{\theta\frac{n}{2}}}.
\end{equation}
\end{itemize}
Let us recall that, $m_2=F^N(m_1)$, $m_3=F^{\theta n}(m_2)$,  $m_4=F^N(m_3)$,  $m_5=F^n(m_4)$, and  $m_6=F^N(m_5)$. 
Moreover, $t_1\in[-t_0,t_0]$ was chosen arbitrarly. Take $(t_1+\Delta t,a)\in\mathcal B_n$ and consider the $\Cq$ map $$(\Delta t, \Delta a, \Delta y)\longmapsto F^{3N+(\theta+1)n}_{t+\Delta t, a+\Delta a}{( m_1+(0,\Delta y))}=m_6+(\Delta m_{6,x}, \Delta m_{6,y}).$$  We are interested in the partial derivatives of $m_6$. 
Observe that for $i=1,3,5$,
\begin{eqnarray}\label{mi+1deltami+1}
d m_{i+1}&=&DF^N_{t,a}\left(m_{i}\right)d m_i+\frac{\partial F^N_{t,a}\left(m_{i}\right)}{\partial t}dt+\frac{\partial F^N_{t,a}\left(m_{i}\right)}{\partial a}da,
\end{eqnarray}
for $i=4$,
\begin{eqnarray}\label{mi+1deltami+1fn}
d m_{i+1}&=&DF^n_{t,a}\left(m_{i}\right) dm_i+\frac{\partial F^n_{t,a}\left(m_{i}\right)}{\partial t}dt+\frac{\partial F^n_{t,a}\left(m_{i}\right)}{\partial a}da,
\end{eqnarray}
and for $i=2$,
\begin{eqnarray}\label{mi+1deltami+1fthetan}
dm_{i+1}&=&DF^{\theta n}_{t,a}\left(m_{i}\right)dm_i+\frac{\partial F^{\theta n}_{t,a}\left(m_{i}\right)}{\partial t}dt+\frac{\partial F^{\theta n}_{t,a}\left(m_{i}\right)}{\partial a}da.
\end{eqnarray}
All partial derivatives in (\ref{mi+1deltami+1}) are uniformly bounded. However we will need more careful estimate for $\Delta m_{i+1,y}$. Namely, for $i=1,3,5$,
\begin{eqnarray*}
DF^N_{t,a}\left(m_{i}\right)&=&\left(\begin{matrix}
A_i&B_i\\
C_i&O\left(m_{i,x}\right)+D_i (m_{i,y}-1)
\end{matrix}\right),
\end{eqnarray*}
which follows from (\ref{FNexpr}). Similarly from (\ref{FNexpr}) one obtains
\begin{eqnarray*}
\frac{\partial m_{i+1,y}}{\partial t}&=&\frac{\partial C}{\partial t}m_{i,x}+\frac{\partial Q}{\partial t}\left(m_{i,x}-1\right)^2+a m_{i,x}\frac{\partial\varphi_x}{\partial t}+a \left(m_{i,y}-1\right)^2\frac{\partial\varphi_y}{\partial t}\\
&=&d_i \left( m_{i,y}-1\right )^2+O\left(m_{i,x}\right),
\end{eqnarray*}
where $d_i\neq 0$ is uniformly bounded away from zero. By differentiation of (\ref{FNexpr}) with respect to $\Delta a$ we obtain
\begin{eqnarray*}
\frac{\partial m_{i+1,y}}{\partial a}&=&1+O\left(\left( m_{i,y}-1\right )^2\right)+O\left(m_{i,x}\right).
\end{eqnarray*}
For $i=3$ we refine the estimate 
\begin{eqnarray*}
DF^N_{t,a}\left(m_{3}\right)&=&\left(\begin{matrix}
A_3&B_3\\
C_3&O\left(m_{3}-c'\right)
\end{matrix}\right),
\end{eqnarray*}
where we estimate $DF^N(m_3)$ with $DF^N(c')$ evaluated at a point at distance $|m_{3}-c'|$. For $i=1,5$ we get in an analogous way

\begin{eqnarray}\label{DFN}
d m_{i+1}&=&\left(\begin{matrix}
A_i&B_i\\
C_i&O\left(m_{i,x}\right)+D_i (m_{i,y}-1)
\end{matrix}\right)d m_i\\&+&\left(\begin{matrix}
O\left(d t\right)+O\left(d a\right)\\
\left[d_i (m_{i,y}-1)^2+O\left(m_{i,x}\right)\right]d t+\left[1+O\left((m_{i,y}-1)^2+m_{i,x}\right)\right]d a
\end{matrix}\right),\nonumber
\end{eqnarray}
and  
\begin{eqnarray}\label{DFN3}
d m_{4}&=&\left(\begin{matrix}
A_3&B_3\\
C_3& O\left(m_{3}-c'\right)
\end{matrix}\right)d m_3\\&+&\left(\begin{matrix}
O\left(d t\right)+O\left(d a\right)\\
\left[d_3 (m_{3,y}-1)^2+O\left(m_{3,x}\right)\right]d t+\left[1+O\left((m_{3,y}-1)^2+m_{3,x}\right)\right]d a
\end{matrix}\right),\nonumber
\end{eqnarray}
where $D_i,B_i,C_i\neq 0$ (because $q_1$ is a non degenerate tangency in general direction and $DF^N(0,1)$ is non singular). Moreover, by construction,
\begin{equation}\label{eq:C3andC}
 C_3= C(1+O\left(\lambda^{\theta_n}\right)).   
\end{equation}
From $d m_4$ to $d m_5$ we use the linear map $F^n$. From (\ref{mi+1deltami+1fn}) and using that $m_{5,y}=\mu^n m_{4,y}$ we get 
\begin{eqnarray}\label{DFn}
d m_{5}&=&\left(\begin{matrix}
\lambda^{n}&0\\
0&\mu^{n}
\end{matrix}\right)d m_4+\left(\begin{matrix}
X_4\frac{n\lambda^n}{\lambda}\left[ \frac{\partial \lambda}{\partial t} d t+\frac{\partial \lambda}{\partial a}d a\right]\\
m_{5,y}\frac{n}{\mu}\left[\frac{\partial \mu}{\partial t}d t+\frac{\partial \mu}{\partial a}d a \right]
\end{matrix}\right),
\end{eqnarray}
with $0\neq X_4=m_{4,x}\approx 1$. 
A similar formula holds going from $d m_2$ to $d m_3$ where we use the linear map $F^{\theta n}$, (\ref{mi+1deltami+1fthetan}) and the fact that $m_{3,y}=\mu^{\theta n} m_{2,y}$. Namely,
\begin{eqnarray}\label{DFthetan}
d m_{3}&=&\left(\begin{matrix}
\lambda^{\theta n}&0\\
0&\mu^{\theta n}
\end{matrix}\right)d m_2+\left(\begin{matrix}
X_2\frac{\theta n\lambda^{\theta n}}{\lambda}\left[ \frac{\partial \lambda}{\partial t} d t+\frac{\partial \lambda}{\partial a}d a\right]\\
m_{3,y}\frac{\theta n}{\mu}\left[\frac{\partial \mu}{\partial t}d t+\frac{\partial \mu}{\partial a}d a \right]
\end{matrix}\right),
\end{eqnarray}
where 
\begin{equation}\label{eq:X2}
0\neq X_2=m_{2,x}\approx 1.
\end{equation}
Observe that $m_{1,x}=0$ and recall that $\left(d m_{1,x},d m_{1,y}\right)=\left(0,d y\right)$. By (\ref{DFN}) and by (\ref{m1}),
\begin{eqnarray*}
\left(
\begin{matrix}
d m_{2,x}\\
d m_{2,y}
\end{matrix}\right)=\left(\begin{matrix}
A_1&B_1\\
C_1&D_1\tilde K_1{\mu^{\frac{-\theta{n}}{2}}}
\end{matrix}\right)\left(
\begin{matrix}
0\\
d y
\end{matrix}\right)
+\left(\begin{matrix}
O\left(d t\right)+O\left(d a\right)\\
\left[d_1 \tilde K_1^2{\mu^{{-\theta{n}}}}\right]d t+\left[1+O\left({\mu^{{-\theta{n}}}}\right)\right]d a
\end{matrix}\right),
\end{eqnarray*}
where $\tilde K_1$ is the constant which gives an equality in (\ref{m1}). As a consequence 
\begin{eqnarray*}
d m_{2,x}&=&O\left(d t\right) +O\left(d a\right)+O\left(d y\right),\\
d m_{2,y}&=&\frac{\tilde d_1}{\mu^{\theta {n}}}d t+\left[1+O\left(\frac{1}{\mu^{\theta{n}}}\right)\right]d a+\frac{\tilde D_1}{\mu^{\theta\frac{n}{2}}}d y,
\end{eqnarray*}
where $\tilde d_1=d_1 \tilde K_1^2$ and $\tilde D_1=D_1\tilde K_1$. By using the fact that $F^{\theta n}_{t,a}$ is linear and using (\ref{DFthetan}),
\begin{eqnarray*}
\left(\begin{matrix}
d m_{3,x}\\ d m_{3,y}
\end{matrix}
\right)&=&\left(\begin{matrix}
\lambda^{\theta n}&0\\
0&\mu^{\theta n}
\end{matrix}\right)\left(\begin{matrix}
d m_{2,x}\\ d m_{2,y}
\end{matrix}
\right)+\left(\begin{matrix}
X_2\frac{\theta n\lambda^{\theta n}}{\lambda}\left[ \frac{\partial \lambda}{\partial t} d t+\frac{\partial \lambda}{\partial a}d a\right]\\
\left(\left(m_{3,y}-c_{n,y}\right)+c_{n,y}\right)\frac{\theta n}{\mu}\left[\frac{\partial \mu}{\partial t}d t+\frac{\partial \mu}{\partial a}d a \right]
\end{matrix}\right),
\end{eqnarray*}
 and using $(\ref{m3})$ and Remark \ref{highzya} (recall that $c$ is fixed at height $1$, i.e. $c_{n,y}=1$)
\begin{eqnarray*} 
d m_{3,x}&=&X_2\frac{\theta n\lambda^{\theta n}}{\lambda}\left[ \frac{\partial \lambda}{\partial t} d t+\frac{\partial \lambda}{\partial a}d a\right]+O\left(\lambda^{\theta n}d y\right)\\
&+&O\left(\lambda^{\theta n}d t\right)+O\left(\lambda^{\theta n}d a\right),\\
d m_{3,y}&=&\left[\frac{\theta n}{\mu}\frac{\partial\mu}{\partial t}+\tilde d_1+O\left(n\lambda^{\theta n}\right)\right]d t\\&+&\left[\mu^{\theta n}+\frac{\theta n}{\mu}\frac{\partial\mu}{\partial a}+O\left(1\right)\right]d a+\tilde D_1{\mu^{\theta\frac{n}{2}}}d y,
\end{eqnarray*}
where $X_2\neq 0$. By (\ref{DFN3}) and (\ref{m3}) (observe that $m_{3,y}-c_{n,y},m_{3,x}=O\left(\lambda^{\theta n}\right)$), we get
\begin{eqnarray*}
\left(
\begin{matrix}
d m_{4,x}\\
d m_{4,y}
\end{matrix}\right)=\left(\begin{matrix}
A_3&B_3\\
C_3&O\left(\left(\frac{\lambda^{1-\theta}}{\mu^2}\right)^n\right)
\end{matrix}\right)\left(
\begin{matrix}
d m_{3,x}\\
d m_{3,y}
\end{matrix}\right)
+\left(\begin{matrix}
O\left(d t\right)+O\left(d a\right)\\
O\left(\lambda^{\theta n}\right)d t+\left[1+O\left(\lambda^{\theta n}\right)\right]d a
\end{matrix}\right).
\end{eqnarray*}
As consequence, using also \eqref{eq:X2} and \eqref{eq:C3andC},
\begin{eqnarray*}
d m_{4,x}&=&\left[B_{3}\frac{\theta n}{\mu}\frac{\partial\mu}{\partial t}+O\left(1\right)\right]d t\\&+&\left[B_{3}\mu^{\theta n}+B_{3}\frac{\theta n}{\mu}\frac{\partial\mu}{\partial a}+O\left(1\right)\right]d a\\&+&\left[B_{3}\tilde D_1{\mu^{\theta\frac{n}{2}}}+O\left({\lambda^{\theta{n}}}\right)\right]d y,\\
d m_{4,y}&=&\left[C\frac{\theta n\lambda^{\theta n}}{\lambda} \frac{\partial \lambda}{\partial t} + O\left(\lambda^{\theta n}\right)\right]d t \\&+&\left[1+C\frac{\theta n\lambda^{\theta n}}{\lambda} \frac{\partial \lambda}{\partial a} + O\left(\lambda^{\theta n}\right)\right]d a\\&+&O\left(\lambda ^{\theta{n}}d y \right).
\end{eqnarray*}
 By (\ref{DFn}) we get 
\begin{eqnarray*}
d m_{5,x}&=&\left[B_{3}\frac{\theta n \lambda^n}{\mu}\frac{\partial\mu}{\partial t}+X_4\frac{n\lambda^{n}}{\lambda} \frac{\partial \lambda}{\partial t}+O\left(\lambda^n\right)\right]d t\\&+&\left[B_3\left(\lambda\mu^{\theta}\right)^n+B_{3}\frac{\theta n \lambda^n}{\mu}\frac{\partial\mu}{\partial a}+X_4\frac{n\lambda^{n}}{\lambda} \frac{\partial \lambda}{\partial a}+O\left(\lambda^n\right)\right]d a\\&+&\left[B_{3}\tilde D_1\left(\lambda\mu^{\frac{\theta}{2}}\right)^n+O\left(\left({\lambda^{\theta+1}}\right)^n\right)\right]d y,\\
d m_{5,y}&\geq & \left[\frac{n}{\mu}\frac{\partial\mu}{\partial t}+n\left(\lambda^{\theta}\mu\right)^n\left[C\frac{\theta }{\lambda} \frac{\partial \lambda}{\partial t}+\frac{(C-E)}{\mu}\frac{\partial\mu}{\partial t}\right] +O\left(\left(\lambda^{\theta}\mu\right)^n\right)\right] d t\\&+&
\left[\mu^n+\frac{n}{\mu}\frac{\partial\mu}{\partial a}+n\left(\lambda^{\theta}\mu\right)^n\left[C\frac{\theta }{\lambda} \frac{\partial \lambda}{\partial a}+\frac{(C-E)}{\mu}\frac{\partial\mu}{\partial a}\right] +O\left(\left(\lambda^{\theta}\mu\right)^n\right)\right] d a\\&+&
O\left(\left(\lambda ^{\theta}\mu\right)^{{n}}d y \right),
\end{eqnarray*}
where we used that $m_{5,y}=(m_{5,y}-c_{n,y})+c_{n,y}$, the fact that the point $c$ is at height $1$ and (\ref{m5}). Similarly,
\begin{eqnarray*}
d m_{5,y}&\leq & \left[\frac{n}{\mu}\frac{\partial\mu}{\partial t}+\frac{n}{\mu}\frac{\partial\mu}{\partial t}\frac{T}{\mu^{n_0*/2}}+n\left(\lambda^{\theta}\mu\right)^n C\frac{\theta }{\lambda} \frac{\partial \lambda}{\partial t} +O\left(\left(\lambda^{\theta}\mu\right)^n\right)\right] d t\\&+&
\left[\mu^n+\frac{n}{\mu}\frac{\partial\mu}{\partial a}+\frac{n}{\mu}\frac{\partial\mu}{\partial a}\frac{T}{\mu^{n_0^*/2}}+n\left(\lambda^{\theta}\mu\right)^n C\frac{\theta }{\lambda} \frac{\partial \lambda}{\partial a}  +O\left(\left(\lambda^{\theta}\mu\right)^n\right)\right] d a\\&+&
O\left(\left(\lambda ^{\theta}\mu\right)^{{n}}d y \right).
\end{eqnarray*}
In particular, when $a=a_n$, using the same equalities as before and \ref{m5an}, 
\begin{eqnarray}\label{eq:dm5an}
d m_{5,y}&=& \left[\frac{n}{\mu}\frac{\partial\mu}{\partial t}+n\left(\lambda^{\theta}\mu\right)^nC\left[\frac{\theta }{\lambda} \frac{\partial \lambda}{\partial t}+\frac{1}{\mu}\frac{\partial\mu}{\partial t}\right] +O\left(\left(\lambda^{\theta}\mu\right)^n\right)\right] d t\\\nonumber &+&
\left[\mu^n+\frac{n}{\mu}\frac{\partial\mu}{\partial a}+n\left(\lambda^{\theta}\mu\right)^nC\left[\frac{\theta }{\lambda} \frac{\partial \lambda}{\partial a}+\frac{1}{\mu}\frac{\partial\mu}{\partial a}\right] +O\left(\left(\lambda^{\theta}\mu\right)^n\right)\right] d a\\\nonumber &+&
O\left(\left(\lambda ^{\theta}\mu\right)^{{n}}d y \right),
\end{eqnarray}
 By (\ref{DFN}), the lower bound in (\ref{m5}) and (\ref{m5x1})  we get 
 \begin{eqnarray*}
\left(
\begin{matrix}
d m_{6,x}\\
d m_{6,y}
\end{matrix}\right)&\geq&\left(\begin{matrix}
A_5&B_5\\
C_5&D_5S_{C-E}
\end{matrix}\right)\left(
\begin{matrix}
d m_{5,x}\\
d m_{5,y}
\end{matrix}\right)\\
&+&\left(\begin{matrix}
O\left(d t\right)+O\left(d a\right)\\
\left[d_5S_{C-E}^2\right]d t+\left[1+O\left(S_{C-E}^2\right)\right]d a
\end{matrix}\right),
\end{eqnarray*}
 where $S_{C-E}=(C-E)\left(\lambda^{\theta}\mu\right)^n+O\left(\left(\lambda^{\theta}\mu^{1-\frac{\theta}{2}}\right)^n\right)$.
As consequence,
 
\begin{eqnarray}\label{dm6x1}
d m_{6,x}&\geq& \left [B_5\frac{n}{\mu}\frac{\partial\mu}{\partial t} +O\left(1\right)\right] d t+
\left[B_5\mu^n+B_5\frac{n}{\mu}\frac{\partial\mu}{\partial a}+O\left(1\right)\right]d a\\&+&O\left(\left(\lambda ^{\theta}\mu\right)^{{n}}d y \right),\nonumber
\end{eqnarray}
\begin{eqnarray}\label{dm6y1}
d m_{6,y}&\geq&\nonumber\left[D_5S_{C-E}  \frac{n}{\mu}\frac{\partial\mu}{\partial t}+ D_5S_{C-E}n \left(\lambda^{\theta}\mu\right)^{n}\left[C\frac{\theta }{\lambda} \frac{\partial \lambda}{\partial t}+\frac{(C-E)}{\mu}\frac{\partial\mu}{\partial t}\right]\right.
\\ &+& 
\left.O\left(S_{C-E}\left(\lambda^{\theta}\mu\right)^{n}\right)\right]d t\\&+& 
\nonumber\left[D_5S_{C-E}\mu^n+1+D_5S_{C-E}  \frac{n}{\mu}\frac{\partial\mu}{\partial a}+ D_5S_{C-E}n \left(\lambda^{\theta}\mu\right)^{n}\left[C\frac{\theta }{\lambda} \frac{\partial \lambda}{\partial a}+\frac{(C-E)}{\mu}\frac{\partial\mu}{\partial a}\right]\right.
\\ \label{dm6yda1}&+& 
\left.O\left(S_{C-E}\left(\lambda^{\theta}\mu\right)^{n}\right)\right]d a + O\left(\left(\lambda ^{\theta}\mu\right)^{{2n}} \right)d y
\end{eqnarray}
where $D_5, B_{5}, (C-E)\ne 0$ and $S_{C-E}\mu^n>1$ (see (\ref{thetacond})). Similarly,  
 By (\ref{DFN}), the upper bound in (\ref{m5}) and (\ref{m5x1})  we get 
 \begin{eqnarray*}
\left(
\begin{matrix}
d m_{6,x}\\
d m_{6,y}
\end{matrix}\right)&\leq&\left(\begin{matrix}
A_5&B_5\\
C_5&{D_5T}/{\mu^{n_0^*/2}}+O\left({n}/{\mu^{n+n_0^*/2}}\right)\end{matrix}\right)\left(
\begin{matrix}
d m_{5,x}\\
d m_{5,y}
\end{matrix}\right)\\
&+&\left(\begin{matrix}
O\left(d t\right)+O\left(d a\right)\\
\left[{d_5T^2}/{\mu^{n_0^*}}+O\left({n}/{\mu^{n+n_0^*}}\right)\right]dt+\left[1+O\left({1}/{\mu^{n_0^*}}\right)\right]d a
\end{matrix}\right)
\end{eqnarray*}
As consequence, if $(t,a)\in\mathcal B_n$
 \begin{eqnarray}\label{dm6x1}
d m_{6,x}&\leq& \left [B_5\frac{n}{\mu}\frac{\partial\mu}{\partial t} +O\left(1\right)\right] d t+
\left[B_5\mu^n+B_5\frac{n}{\mu}\frac{\partial\mu}{\partial a}+O\left(1\right)\right]d a\\&+&O\left(\left(\lambda ^{\theta}\mu\right)^{{n}}d y \right),\nonumber
\end{eqnarray}

 \begin{eqnarray}\label{dm6y2}
d m_{6,y}&\leq&\left[\frac{D_5T}{\mu^{n_0^*/2}}\frac{n}{\mu}\frac{\partial\mu}{\partial t}+\frac{n}{\mu}\frac{\partial\mu}{\partial t}\frac{D_5T^2}{\mu^{n_0^*}}+O\left(\frac{1}{\mu^{n_0^*}}\right)\right] d t\\
\label{dm6yda2}&+&\left[D_5T\mu^{n-n_0^*/2}+1+\frac{D_5T}{\mu^{n_0^*/2}}\frac{n}{\mu}\frac{\partial\mu}{\partial a}+\frac{n}{\mu}\frac{\partial\mu}{\partial a}\frac{D_5T^2}{\mu^{n_0^*}} +O\left(\frac{1}{\mu^{n_0^*}}\right)\right] d a\\\nonumber &+& O\left(\left(\lambda ^{\theta}\mu\right)^{{2n}} \right)d y,
\end{eqnarray}
where $D_5, B_{5}, T\ne 0$ and $\mu^{n-n_0^*/2}>1$ (see \eqref{eq:n0*def} and (\ref{thetacond})).

In particular, when $a=a_n$, using the same equalities as before, \eqref{m5an} and \eqref{eq:dm5an} we get,

 \begin{eqnarray}\label{dm6xan}
d m_{6,x}&=& \left [B_5\frac{n}{\mu}\frac{\partial\mu}{\partial t} +O\left(1\right)\right] d t+
\left[B_5\mu^n+B_5\frac{n}{\mu}\frac{\partial\mu}{\partial a}+O\left(1\right)\right]d a\\&+&O\left(\left(\lambda ^{\theta}\mu\right)^{{n}}d y \right),\nonumber
\end{eqnarray}

and 

\begin{eqnarray}\label{dm6yan}
d m_{6,y}&=&\nonumber\left[D_5S_{C}  \frac{n}{\mu}\frac{\partial\mu}{\partial t}+ D_5S_{C}Cn \left(\lambda^{\theta}\mu\right)^{n}\left[\frac{\theta }{\lambda} \frac{\partial \lambda}{\partial t}+\frac{1}{\mu}\frac{\partial\mu}{\partial t}\right]\right.
\\ &+& 
\left.O\left(S_{C}\left(\lambda^{\theta}\mu\right)^{n}\right)\right]d t\\&+& 
\nonumber\left[D_5S_{C}\mu^n+1+D_5S_{C}  \frac{n}{\mu}\frac{\partial\mu}{\partial a}+ D_5S_{C}Cn \left(\lambda^{\theta}\mu\right)^{n}\left[\frac{\theta }{\lambda} \frac{\partial \lambda}{\partial a}+\frac{1}{\mu}\frac{\partial\mu}{\partial a}\right]\right.
\\ \label{dm6ydan}&+& 
\left.O\left(S_{C}\left(\lambda^{\theta}\mu\right)^{n}\right)\right]d a + O\left(\left(\lambda ^{\theta}\mu\right)^{{2n}} \right)d y
\end{eqnarray}

 where $S_{C}=C\left(\lambda^{\theta}\mu\right)^n+O\left(\left(\lambda^{\theta}\mu^{1-\frac{\theta}{2}}\right)^n\right)$, and by \eqref{eq:thetaconddlambdadmu} 
 $$
  \left|\frac{\theta}{\lambda}\frac{\partial\lambda}{\partial t}+\frac{1}{\mu}\frac{\partial\mu}{\partial t}\right|>w>0.
$$
The formula for $m_{6,y}$ holds in general for all $\Delta t,\Delta a$ and $\Delta y$. However at the point $(t_1,a)$ when $\Delta t=\Delta a=0$ we have ${\partial m_{6,y}}/{\partial y}=0$. Hence, the Taylor polynomial of second order for $\Delta m_{6,y}$ does not contain a linear term in $\Delta y$. As consequence ${\partial z^{(3)}_{y}}/{\partial t}={\partial m_{6,y}}/{\partial t}$ and ${\partial z^{(3)}_{y}}/{\partial a}={\partial m_{6,y}}/{\partial a}$. 
It is left to prove that $C,D_5$ converge. This is achieved since they are part of the derivative $DF^N$ which converges to $DF^N(0,1)$.  
The proposition follows.
\end{proof}

In the next proposition we are going to prove that secondary tangencies exist for certain parameters in $a_n$ and they are at distance of order $1/n$ to each other. The result is achieved by comparing the rate of speed of $W^u_{\text{loc}}(z^{(3)})$ and $ W_{n-n_0}$ when changing parameters in the phase space and it relies on the cancellation of the main term.

\begin{prop}\label{newtangency}
 For all $t\in\left(-t_0,t_0\right)$ and for $n$ large enough there exists $t_n\in\left(-t_0,t_0\right)$ such $t- t_n=O\left({1}/{n}\right)$ and $F_{t_n,  a_n(t_n)}$ has a secondary tangency. 

\end{prop}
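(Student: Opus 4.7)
The plan is to reduce the existence of the secondary tangency to a one-dimensional root-finding problem along the curve $a=a_n(t)$. First, I will exploit that both $W^u_{\text{loc}}(z^{(3)})$ and $W_{n-n_0}$ are graphs of smooth functions of $x$: by Lemma \ref{curvaturez3} the former is strongly convex, with curvature bounded below by $T(\mu^4/\lambda^{2-3\theta})^n$, and by Lemma \ref{hnbounds} its minimum is $z^{(3)}$, with horizontal tangent; by (\ref{wn}) and (\ref{dwn}) the latter is nearly horizontal, with slope $O((\lambda/\mu)^{n-n_0})$. Matching heights and slopes at the candidate tangent point shows that a tangency exists if and only if
\[
g(t) \;=\; z^{(3)}_y(t,a_n(t))-W_{n-n_0}\bigl(z^{(3)}_x(t,a_n(t))\bigr)
\]
attains the small value $\tau(t)=O\bigl((\lambda/\mu)^{2(n-n_0)}(\lambda^{2-3\theta}/\mu^4)^n\bigr)$.

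Second, I will sandwich $g(t)$. By the maximality in (\ref{nminusn0}) one has $W^u_{\text{loc}}(z^{(3)}(t,a_n(t)))\subset S_{n-n_0}\setminus S_{n-n_0-1}$, giving $0\le g(t)\le c_1/\mu^{n-n_0}$ for a uniform $c_1>0$; Lemma \ref{n0} and (\ref{eq:bounderyrelation}) then yield $g(t)=O((\lambda^\theta\mu)^{2n})$, a range that dwarfs $\tau(t)$.

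Third, I will compute $g'(t)$ along $a=a_n(t)$ using the chain rule, Proposition \ref{speed} and Lemma \ref{Imtang}. The heart of the argument is a delicate cancellation: the term $DC(\lambda^\theta\mu^2)^n$ of $\partial z^{(3)}_y/\partial a$, multiplied by $a_n'(t)=-(n/\mu^{n+1})\,\partial\mu/\partial t\,(1+O(\lambda^n))$, exactly cancels the leading term $DC(\lambda^\theta\mu)^n(n/\mu)\,\partial\mu/\partial t$ of $\partial z^{(3)}_y/\partial t$. The surviving residual equals
\[
C^2D\,n\,(\lambda^\theta\mu)^{2n}\!\left[\frac{\theta}{\lambda}\frac{\partial\lambda}{\partial t}+\frac{1}{\mu}\frac{\partial\mu}{\partial t}\right]+O\bigl((\lambda^\theta\mu)^{2n}\bigr),
\]
which by the design hypothesis (\ref{eq:thetaconddlambdadmu}) is of order $n(\lambda^\theta\mu)^{2n}$ with definite sign. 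The contributions of $\partial W_{n-n_0}/\partial t$ (from Lemma \ref{partialwn}) and $(\partial W_{n-n_0}/\partial x)(dz^{(3)}_x/dt)$ (using the horizontal-derivative bounds in the proof of Proposition \ref{speed}) are of comparable or smaller order and absorb into a uniform $K>0$, so that $|g'(t)|\ge Kn(\lambda^\theta\mu)^{2n}$.

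Combining $|g(t)|\le c_1(\lambda^\theta\mu)^{2n}$ with $|g'(t)|\ge Kn(\lambda^\theta\mu)^{2n}$, the mean value theorem produces $t_n$ with $|t-t_n|\le c_1/(Kn)=O(1/n)$ and $g(t_n)=\tau(t_n)$, which is precisely the tangency condition. The main obstacle is the bookkeeping of the leading-order cancellation in $g'(t)$: one has to retain all terms of order $n(\lambda^\theta\mu)^{2n}$ arising in Proposition \ref{speed}, Lemma \ref{Imtang} and Lemma \ref{partialwn}, and then rely on (\ref{eq:thetaconddlambdadmu}) to ensure that their algebraic sum retains a definite sign.
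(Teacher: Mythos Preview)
Your proposal is correct and follows essentially the same route as the paper: both arguments hinge on the cancellation, along $a=a_n(t)$, of the leading term $DC(\lambda^\theta\mu)^n(n/\mu)\,\partial\mu/\partial t$ in $\partial z^{(3)}_y/\partial t$ against $a_n'(t)\cdot\partial z^{(3)}_y/\partial a$ coming from Proposition~\ref{speed} and Lemma~\ref{Imtang}, leaving a residual of order $n(\lambda^\theta\mu)^{2n}$ whose sign is fixed by \eqref{eq:thetaconddlambdadmu}, and both then combine this with the height bound $O((\lambda^\theta\mu)^{2n})$ supplied by the definition of $n_0$ and Lemma~\ref{n0}. The only cosmetic difference is that the paper packages the conclusion as a geometric crossing (showing $W^u_{\text{loc}}(z^{(3)})$ moves from below $W_{n-n_0^*-1}$ to above $W_{n-n_0^*-\kappa}$), while you reduce to a scalar root $g(t)=\tau(t)$; these are equivalent, and your estimate for $\tau$ is the right order.
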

\begin{proof}
Choose $t\in\left(-t_0,t_0\right)$. 
 Use the notation of Proposition \ref{speed} and notice that, by the choice of $\theta$, see \eqref{eq:thetaconddlambdadmu},
 $$\left|\frac{\theta }{\lambda} \frac{\partial \lambda}{\partial t}+\frac{1}{\mu}\frac{\partial\mu}{\partial t}\right|>w>0.
 $$
 Suppose that, 
 $$DC^2\left[\frac{\theta }{\lambda} \frac{\partial \lambda}{\partial t}+\frac{1}{\mu}\frac{\partial\mu}{\partial t}\right]=v>0,
 $$
 the negative case is handled similarly by reversing the time. Take $n$ large enough and a point $(s, a_n(s))\in A_n$ near $(t,a_n(t))\in A_n$. By Proposition \ref{speed} and Lemma \ref{Imtang}  
\begin{eqnarray}\label{upperineq}\nonumber
z^{(3)}_y\left(s,a_n(s)\right)&=&z^{(3)}_y\left(t,a_n(t)\right)+\int_0^{(s- t)}\left[\frac{\partial z^{(3)}}{\partial t}+\frac{\partial z^{(3)}}{\partial a}\frac{d a_n}{d t}\right]dt=z^{(3)}_y\left(t,a_n(t)\right)\\\nonumber &+&DS_C\left\{  \frac{n}{\mu}\frac{\partial\mu}{\partial t}+ Cn \left(\lambda^{\theta}\mu\right)^{n}\left[\frac{\theta }{\lambda} \frac{\partial \lambda}{\partial t}+\frac{1}{\mu}\frac{\partial\mu}{\partial t}\right]- \frac{n}{\mu}\frac{\partial\mu}{\partial t} +O\left(\left(\lambda^{\theta}\mu\right)^{n}\right)\right\}(s-t)
\\\nonumber &=&z^{(3)}_y\left(t,a_n(t)\right)+\left\{DC^2n \left(\lambda^{\theta}\mu\right)^{2n}\left[\frac{\theta }{\lambda} \frac{\partial \lambda}{\partial t}+\frac{1}{\mu}\frac{\partial\mu}{\partial t}\right]+O\left(\left(\lambda^{\theta}\mu\right)^{2n}\right)\right\}(s-t)
\\&\geq &z^{(3)}_y\left(t,a_n(t)\right)+n\left(\lambda^{\theta}\mu\right)^{2n}\frac{v}{2}(s-t),
\end{eqnarray}
where we used that $\lambda^{2\theta}\mu^3>1$ and $\lambda^{\theta}\mu^2>1$  see (\ref{thetacond}). Observe that we have a cancellation of the dominant terms in the partial derivatives obtained by combining Proposition \ref{speed} and Lemma \ref{Imtang}.
Let $n_0=n_0(t,a_n(t))$ be as in \eqref{nminusn0}. Hence, by (\ref{wn}), 
\begin{equation}\label{ztildeheight}
z^{(3)}_y\left(t,a_n(t)\right)\geq \frac{2}{\mu}\frac{1}{\mu^{n-n_0}}.
\end{equation}
By (\ref{upperineq}) and (\ref{ztildeheight})
\begin{eqnarray}\label{upperineq1}
z^{(3)}_y\left(s,a_n(s)\right)\geq \frac{2}{\mu}\frac{1}{\mu^{n-n_0}}+n\left(\lambda^{\theta}\mu\right)^{2n}\frac{v}{2}(s-t).
\end{eqnarray}
Choose $\kappa\geq 2$ such that $\left[\mu^{\kappa}-{2}/{\mu}\right]\geq v/4$. Then by (\ref{wn}), in a neighborhood of $(z^{(3)}\left(t,a_n(t)\right),t,a_n(t))$ 
\begin{eqnarray*}
\max W_{n-n_0-\kappa}&\leq &\frac{2}{\mu^{n-n_0-\kappa}}.
\end{eqnarray*}
From the previous inequality, (\ref{upperineq1}), we get that $W^u_{\text{loc}}(z^{(3)}( s, a_n(s)))$ is above $W_{n-n_0^*-\kappa}$ if
\begin{eqnarray*}
 n\left(\lambda^{\theta}\mu\right)^{2n}\frac{v}{2}(s-t)\geq \frac{2}{\mu^{n-n_0}}\left[\mu^{\kappa}-\frac{2}{\mu}\right]\geq 2C'\left(\lambda^{\theta}\mu\right)^{2n}\left[\mu^{\kappa}-\frac{2}{\mu}\right]\geq C'\left(\lambda^{\theta}\mu\right)^{2n}\frac{v}{2},
\end{eqnarray*}
where we also used (\ref{alphaofuse}) and $C'$ is a uniform constant.
As consequence, if 
$$
s-t\geq \frac{C'}{n},$$
then, $W^u_{\text{loc}}(z^{(3)}( s, a_n(s)))$ is above $W_{n-n_0-\kappa}$. Because $W^u_{\text{loc}}( z^{(3)}( t, a_n(t)))$ contains a point below $ W_{n-n_0-1}$, there exists a parameter between $( s, a_n(s))$ and $( t, a_n(t))$ for which a secondary homoclinic tangency of type $n_0+1$ occurs and $t-s=O\left({1}/{n}\right)$.
\end{proof}

\begin{figure}
\centering
\includegraphics[width=0.6\textwidth]{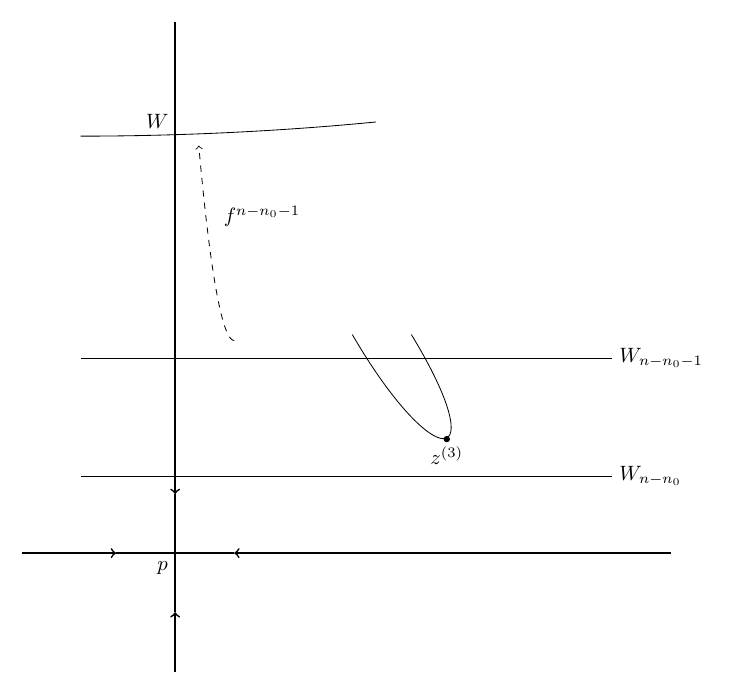}
\caption{Position of $z^{(3)}$}
\label{Fig5}
\end{figure}

The next lemma estimates the curvature of any secondary tangency point of type $n_0$ in $\mathcal{B}_n$ satisfies the following estimate. 
\begin{lem}\label{curvature}
Let $(t,a)\in\mathcal{B}_n$ be such that $W^u_{\text{\rm loc}}(z^{(3)}(t,a))$ has a secondary tangency $q_{1,n,n_0}$ of type $n_0$, then $q_{1,n,n_0}$ is non degenerate.  Namely, $W^u_{\text{\rm loc}}(q_{1,n,n_0})$ is the graph of a function and its curvature satisfies
$$
\text{\rm curv}\left(W^u_{\text{\rm loc}}( q_{1,n,n_0})\right)\geq K\left(\frac{\mu^4}{\lambda^{2-3\theta}}\right)^n,
$$
with $K>0$ a uniform positive constant.
\end{lem}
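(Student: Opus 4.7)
The plan is to mirror the argument at the end of the proof of Lemma~\ref{curvaturez3}, but applied at the preimage point $q = F_{t,a}^{-N}(q_{1,n,n_0}) \in W^u_{\text{loc}}(z^{(2)})$ in place of $m_5$. The only structural difference from Lemma~\ref{curvaturez3} is that the slope of $W^u_{\text{loc}}(z^{(3)})$ at the tangency point $q_{1,n,n_0}$ is not zero but equals that of $W_{n-n_0}$, which by \eqref{dwn} is $O((\lambda/\mu)^{n-n_0})$ and hence still exponentially small.

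First I would determine the slope $s$ of $W^u_{\text{loc}}(z^{(2)})$ at $q$. Writing $DF^N_{t,a}(q)=\begin{pmatrix}A & B \\ C & D\end{pmatrix}$ as in Lemma~\ref{Nderivatives}, Lemma~\ref{ymaxymin} gives $|q_y - c_{n,y}|$ of order $(\lambda^\theta\mu)^n$, so $D \sim (\lambda^\theta\mu)^n$ uniformly along $W^u_{\text{loc}}(z^{(2)})$. The tangency condition translates into $(C+Ds)/(A+Bs) = O((\lambda/\mu)^{n-n_0})$, which forces $s \approx -C/D \sim 1/(\lambda^\theta\mu)^n$, of the same order as at $m_5$. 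Combining with the uniform estimate $d^2y/dx^2 \sim (\mu/\lambda^2)^n$ from Lemma~\ref{shape} and the inverse function identity $d^2x/dy^2 = -y''/(y')^3$, I obtain at $q$ the preimage bound
$$
\frac{d^2 x}{dy^2}\bigg|_{W^u_{\text{loc}}(z^{(2)}),\,q} \geq K_0\left(\frac{\mu^4}{\lambda^{2-3\theta}}\right)^n,
$$
exactly as in Lemma~\ref{curvaturez3}.

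The final step is to transport this estimate to the image curve $W^u_{\text{loc}}(z^{(3)})$ at $q_{1,n,n_0}$ via a chain-rule computation. The conversion factor produced by $F^N$ is essentially $\det(DF^N)\cdot(s/\tilde s)^3/(A+Bs)^3$, where $\tilde s \sim (\lambda/\mu)^{n-n_0}$ is the image slope; the determinant is uniformly bounded, $(A+Bs)^{-3}$ is bounded because $|s|$ is small, and the factor $(s/\tilde s)^3$ is handled by substituting $n_0 = n\alpha + O(1)$ from Lemma~\ref{n0} and invoking the bounds on $\theta$ in \eqref{thetacond}--\eqref{thetacond1}. Non-degeneracy of $q_{1,n,n_0}$ follows from positivity of the $D$-entry used throughout.

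The main obstacle is this last step: the tangency point $q_{1,n,n_0}$, unlike $z^{(3)}$, is not the minimum of the image curve, so a nonzero image slope $\tilde s$ enters the inverse-function formula. The key observation is that $\tilde s$ is still exponentially small in $n$ and that the eigenvalue balance encoded in \eqref{thetacond} keeps the ratio $(s/\tilde s)^3$ from spoiling the exponential lower bound, yielding the stated curvature estimate $\text{curv}(W^u_{\text{loc}}(q_{1,n,n_0})) \geq K (\mu^4/\lambda^{2-3\theta})^n$.
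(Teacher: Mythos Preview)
Your first two paragraphs match the paper's proof: the only change from Lemma~\ref{curvaturez3} is that the relation $Cv_1+Dv_2=0$ at the minimum is replaced by the tangency relation $Cv_1+Dv_2=O\bigl((\lambda/\mu)^{n-n_0}(|v_1|+|v_2|)\bigr)$ coming from \eqref{dwn}, and since this right-hand side is negligible compared to $D|v_2|\asymp(\lambda^\theta\mu)^n|v_2|$, the derivation of $s$, $|\Delta x|$ and the preimage bound $d^2x/dy^2\geq K_0(\mu^4/\lambda^{2-3\theta})^n$ goes through unchanged. The paper then simply says the proof is completed by following Lemma~\ref{curvaturez3}, whose last step is the one-line remark that the fixed diffeomorphism $F^N_{t,a}$ preserves the order of curvature.

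The gap is in your elaboration of this last step. First, $|s|\sim(\lambda^\theta\mu)^{-n}$ is large, not small (recall $\lambda^\theta\mu<1$ by Remark~\ref{rem:thetalpha}; this is exactly what makes the preimage curve nearly vertical and makes your own formula $d^2x/dy^2=-y''/s^3$ yield the stated exponent), so your claim that $(A+Bs)^{-3}$ is bounded ``because $|s|$ is small'' is false: rather $A+Bs\approx Bs$. Second, the factor $\det(DF^N)\,(s/\tilde s)^3/(A+Bs)^3$ is the ratio of $d^2\tilde x/d\tilde y^2$ at the image to $d^2x/dy^2$ at the preimage, not the ratio of curvatures. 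The image curve is nearly horizontal, so its curvature is $\approx|d^2\tilde y/d\tilde x^2|$, which differs from $|d^2\tilde x/d\tilde y^2|$ by a factor $|\tilde s|^3$. Carrying out the chain rule for $d^2\tilde y/d\tilde x^2$ gives the dominant contribution $y''\det(DF^N)/(A+Bs)^3$, hence the correct conversion factor for the curvature is $|s|^3|\det DF^N|/|A+Bs|^3\asymp|\det DF^N|/|B|^3$, a uniform constant independent of $\tilde s$. There is therefore no obstacle at this stage and no need to invoke \eqref{thetacond}; the nonzero image slope has already been fully absorbed in the modified tangency relation above.
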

\begin{proof} Use coordinates centered at $z^{(3)}$ and let $(\Delta x_{z^{(3)}},\Delta y_{z^{(3)}})\in W^u_{\text{loc}}(z^{(3)})$ corresponding to the tangency point $q_{1,n,n_0}$. Then by \eqref{dwn}, 
$$
\frac{d\Delta y_{z^{(3)}}}{d\Delta x_{z^{(3)}}}=O\left(\left(\frac{\lambda}{\mu}\right)^{n-n_0}\right)
$$
and by differentiating the equation in Lemma \ref{curvaturez3},
\begin{equation}\label{eq:derivativeattangency}
\frac{d\Delta y_{z^{(3)}}}{d\Delta x_{z^{(3)}}}=O\left(\left(\frac{\lambda}{\mu}\right)^{n-n_0}\right)=\frac{16Q^4}{C^2B^4}{H^3}\left(\frac{\mu}{\lambda^2}\right)^n\Delta x_{z^{(3)}}+O\left(\Delta x_{z^{(3)}}^2\right).
\end{equation}
In particular,
$$
{\Delta x_{z^{(3)}}}=O\left(\left(\frac{\lambda}{\mu}\right)^{n-n_0}\left(\frac{\lambda^2}{\mu}\right)^{n}\frac{1}{H^3}\right).
$$
The curvature at the tangency point is found by differentiating \eqref{eq:derivativeattangency}. Hence,
\begin{eqnarray*}
\frac{d^2\Delta y_{z^{(3)}}}{d\Delta x_{z^{(3)}}^2}&=&\frac{16Q^4}{C^2B^4}{H^3}\left(\frac{\mu}{\lambda^2}\right)^n+O\left(\Delta x_{z^{(3)}}\right)\\&\geq& \frac{16Q^4}{C^2B^4}{H^3}\left(\frac{\mu}{\lambda^2}\right)^n\left[1-O\left(\left(\frac{\lambda}{\mu}\right)^{n-n_0}\left(\frac{\lambda^4}{\mu^2}\right)^{n}\frac{1}{H^6}\right)\right]\\&\geq& \frac{8Q^4}{C^2B^4}{H^3}\left(\frac{\mu}{\lambda^2}\right)^n.
\end{eqnarray*}
 where we used the expression of $H$ given in Lemma \ref{hnbounds} and the lower bound of the strip $\mathcal{B}_n$. The lower bound for $H$ gives the statement of the Lemma.
\end{proof}

\section{Curves of secondary tangencies}
In the previous section, we proved the existence of maps with secondary tangency points on the curve $a_n$. In the sequel, we show that secondary tangencies persists along curves of uniform size. In particular these curves are graphs of functions defined on all of the $t$ domain, see Figure \ref{Fig6}. Moreover, we prove that the curves of secondary tangencies are actually long uniform curves that are actually contained in the strip $\mathcal{B}_n$.

\begin{figure}
\centering
\includegraphics[width=1\textwidth]{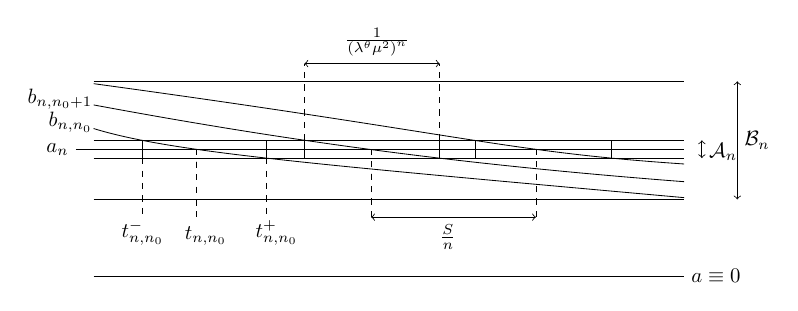}
\caption{Curves of Secondary Tangencies}
\label{Fig6}
\end{figure}

\subsection{Persistence of secondary tangencies}
The next lemma is the first step in proving the existence of local tangency curves. In particular, we prove that each parameter in $\mathcal{B}_n$ corresponding to a map having a secondary tangency of type $n_0$ is contained in a local curve. Each point on this curve corresponds to a map with a $n_0$ secondary tangency. 
\begin{lem}\label{bnlocal}
        Let $(t_{n_0}, a_{n_0})\in {\mathcal{B}_n}$ be a parameter corresponding to a map with a secondary tangency of type $n_0$. Then there exist $\epsilon>0$ and a smooth curve $$b_{n,n_0}:[t_{n_0}-\epsilon,  t_{n_0}+\epsilon]\mapsto\mathbb R$$ such that, for all $t\in[t_{n_0}-\epsilon, t_{n_0}+\epsilon]$ the parameter $(t,b_{n,n_0}(t))$ corresponds to a map with a secondary tangency of type $n_0$. 
Moreover, the slopes of the curves $b_{n,n_0}$ tend exponentially fast to zero, namely
\begin{equation*}
\left|\frac{db_{n,n_0}}{dt}\right|=O\left(\frac{n}{\left(\lambda^{\theta}\mu^2\right)^n\mu^{n_0^*/2}}\right).
\end{equation*}
In particular,
$$\frac{db_{n,n_0}}{dt}=\frac{da_n}{dt}+{n \lambda^{\theta n}}C\left[\frac{\theta }{\lambda} \frac{\partial \lambda}{\partial t}+\frac{1}{\mu}\frac{\partial\mu}{\partial t}\right]+O\left(\lambda^{\theta n}\right),$$
for all $t\in [ t_{n_0}-\epsilon,  t_{n_0}+\epsilon]$, with $( t, b_{n,n_0}( t))\in a_n$.
    \end{lem}
    \begin{proof} First we consider the case when the given point  $(t_{n_0}, a_{n_0})\in\mathcal{B}_n$ is on the curve $a_n$, i.e. $a_{n_0}=a_n(t_{n_0})$. In particular this point corresponds to a map with a secondary tangency of type $n_0$, $q_{1,n,n_0}$. 
      We construct a local function whose graph contains the point  $(t_{n_0}, a_{n_0})$. Let $m_1\in W^u_{\text{loc}}(c)$ such that $F_{t,a}^{3N+(1+\theta)n}(m_1)=q_{1,n,n_0}$. 
To describe the perturbation of $W^u_{\text{loc}}(q_{1,n,n_0})$ we define the following function by choosing coordinates centered in $q_{1,n,n_0}$.
Take some $\epsilon >0$ and consider the $\Cq$ function $
(\tilde x,\tilde y):(-\epsilon,\epsilon)^3\mapsto\R\times\R $ defined by 
$$
\left(\tilde x(\Delta y,\Delta t, \Delta a),\tilde y(\Delta y,\Delta t, \Delta a)\right)=F_{t_{n_0}+\Delta t, a_{n_0}+\Delta a}^{3N+(1+\theta) n}\left(m_1+(0,\Delta y)\right),
$$
which describes $W^u_{\text{loc}}\left(F_{t_{n_0}+\Delta t, a_{n_0}+\Delta a}^{3N+(1+\theta)n}(m_1)\right)$. Observe that the manifolds $W_{n-n_0}$ of $F_{t_{n_0}+\Delta t, a_{n_0}+\Delta a}$ are described locally, near $q_{1,n,n_0}$, as the graph of a $\Cq$ function 
$$
w_{n-n_0}:(\Delta x, \Delta t,\Delta a)\longmapsto w_{n-n_0}(\Delta x, \Delta t,\Delta a)\in\R.
$$
The curves of secondary tangencies will be constructed using the implicit function theorem. For this aim define the $\Cd$ function $\Psi:[-\epsilon,\epsilon]^3\to\R^2$ as 
$$
\Psi\left(\Delta y, \Delta t,\Delta a\right)=\left(
\begin{matrix}
\tilde y(\Delta y,\Delta t, \Delta a)-w_{n-n_0}\left(\tilde x(\Delta y, \Delta t, \Delta a),\Delta t, \Delta a\right)\\ \frac{d\tilde y}{d\tilde x}\left(\Delta y, \Delta t,\Delta a\right)-\frac{dw_{n-n_0}}{d\tilde x}\left(\tilde x(\Delta y, \Delta t, \Delta a),\Delta t, \Delta a\right)\end{matrix}\right)=\left(\begin{matrix}
\psi_1\\ \psi_2\end{matrix}\right).
$$
Observe that $\Psi^{-1}(0)$ describes locally the perturbation of the secondary tangency $q_{1,n,n_0}$ and that
\begin{equation}\label{psimap}
D\Psi({0,0,0})=\left(\begin{matrix}
0&\left(\lambda^{\theta}\mu\right)^n  \Psi_{1,2}& \left(\lambda^{\theta}\mu^2\right)^n \Psi_{1,3}\\
  \left(\frac{\mu^3}{\lambda^{2-2\theta}}\right)^n\Psi_{2,1}&  \Psi_{2,2} &  \Psi_{2,3}
\end{matrix}\right).
\end{equation}
In particular, by (\ref{dm6yan}), (\ref{dwn}), (\ref{dm6xan}), Lemma \ref{partialwn}, (\ref{alphaofuse}) (in the estimation of the order term)
\begin{eqnarray}\label{psi12}
\nonumber
\Psi_{1,2}&=&\frac{1}{\left(\lambda^{\theta}\mu\right)^n}\left[\frac{\partial m_{6,y}}{\partial t}-\frac{\partial W_{n-n_0}}{\partial x}\frac{\partial m_{6,x}}{\partial t}-\frac{\partial W_{n-n_0}}{\partial t}\right]
\\\nonumber &=&\frac{DS_C}{ \left(\lambda^{\theta}\mu\right)^{n}}\left\{\frac{n}{\mu}\frac{\partial\mu}{\partial t}+ Cn \left(\lambda^{\theta}\mu\right)^{n}\left[\frac{\theta }{\lambda} \frac{\partial \lambda}{\partial t}+\frac{1}{\mu}\frac{\partial\mu}{\partial t}\right]+ O\left(\lambda^{\theta}\mu\right)^{n}\right\}
\\\nonumber & +& 
O\left(n\left(\frac{\lambda}{\mu}\right)^{n-n_0}\frac{1}{\left(\lambda^{\theta}\mu\right)^n }\right)-\frac{1}{\left(\lambda^{\theta}\mu\right)^n }\frac{\partial W_{n-n_0}}{\partial t}
\\&=&\frac{DS_C}{ \left(\lambda^{\theta}\mu\right)^{n}}\left\{ \frac{n}{\mu}\frac{\partial\mu}{\partial t}+ Cn \left(\lambda^{\theta}\mu\right)^{n}\left[\frac{\theta }{\lambda} \frac{\partial \lambda}{\partial t}+\frac{1}{\mu}\frac{\partial\mu}{\partial t}\right]+ O\left(\lambda^{\theta}\mu\right)^{n}\right\},
\end{eqnarray}
 and by (\ref{dm6ydan}), (\ref{dwn}), (\ref{dm6xan}), Lemma \ref{partialwn} and (\ref{alphaofuse}) (in the estimation of the order term)
\begin{eqnarray}\label{psi13}
\nonumber
\Psi_{1,3}&=&\frac{1}{\left(\lambda^{\theta}\mu^2\right)^n}\left[\frac{\partial m_{6,y}}{\partial a}-\frac{\partial W_{n-n_0}}{\partial x}\frac{\partial m_{6,x}}{\partial a}-\frac{\partial W_{n-n_0}}{\partial a}\right]
\\\nonumber &=&\frac{DS_C}{\left(\lambda^{\theta}\mu^2\right)^n}\left\{\mu^n +\frac{1}{DS_C}+  \frac{n}{\mu}\frac{\partial\mu}{\partial a}+ Cn \left(\lambda^{\theta}\mu\right)^{n}\left[\frac{\theta }{\lambda} \frac{\partial \lambda}{\partial a}+\frac{1}{\mu}\frac{\partial\mu}{\partial a}\right] +O\left(\left(\lambda^{\theta}\mu\right)^{n}\right)\right\}\\\nonumber&+&O\left(\frac{\mu^n}{\left(\lambda^{\theta}\mu^2\right)^n}\left(\frac{\lambda}{\mu}\right)^{n-n_0}\right)+O\left(\frac{1}{\left(\lambda^{\theta}\mu^2\right)^n}\frac{n-n_0}{\mu^{n-n_0}}\right)
\\\nonumber&=&\frac{DS_C}{\left(\lambda^{\theta}\mu\right)^n}\left[1+O\left(\frac{1}{\left(\lambda^{\theta}\mu^2\right)^{n}}\right)\right]+O\left(\left(\lambda^{1-\alpha+\theta}\mu\right)^n\right)
\\\nonumber&=&\frac{DS_C}{\left(\lambda^{\theta}\mu\right)^n}\left[1+O\left(\frac{1}{\left(\lambda^{\theta}\mu^2\right)^{n}}\right)\right]+O\left(\left(\lambda^{\theta}\mu\right)^n\right)
\\&=&\frac{DS_C}{\left(\lambda^{\theta}\mu\right)^n}\left[1+O\left(\left(\lambda^{\theta}\mu\right)^n\right)\right],
\end{eqnarray}
where we used that $\alpha<1$ and $\lambda^{1-\alpha+\theta}\mu<\lambda^{\theta}\mu$.
Hence, by the expression for $S_C$, see Proposition \ref{speed}, $ \Psi_{1,2},\Psi_{1,3}> 0$. Moreover, by Lemma \ref{curvature}, (\ref{dm6xan}) and (\ref{ddwn}) we have $ \Psi_{2,1}\ne 0$. This implies that $D\Psi({0,0,0})$ is onto and we get that the set of secondary tangencies $\Psi^{-1}(0)$ is locally the graph of a $\Cd$ function $b$. 
Moreover the tangent space of $\Psi$ in zero satisfies $T_{(0,0,0)}\Psi^{-1}(0)=\text{Ker} D\Psi$. Hence,
$$
\left(\lambda^{\theta}\mu\right)^n  \Psi_{1,2}\Delta t+\left(\lambda^{\theta}\mu^2\right)^n\Psi_{1,3}\Delta b=0,
$$
and
\begin{equation}\label{eq:angle1}
    \frac{db}{dt}=-\frac{\Psi_{1,2}}{\Psi_{1,3}}\frac{1}{\mu^n}=-\frac{n}{\mu^{n+1}}\frac{\partial\mu}{\partial t} -{n \lambda^{\theta n}}C\left[\frac{\theta }{\lambda} \frac{\partial \lambda}{\partial t}+\frac{1}{\mu}\frac{\partial\mu}{\partial t}\right] +O\left(\lambda^{\theta n}\right).
    \end{equation}
    
    In particular $db/dt$ is a negative number and $b$ is the graph of a smooth function. 
Furthermore, by the previous estimate on the slope of $b$, and by (\ref{dandt}), the curve $b$ have a angle satisfying the following,
\begin{equation}\label{dbminusda}
\frac{db}{dt}=\frac{da_n}{dt}-{n \lambda^{\theta n}}C\left[\frac{\theta }{\lambda} \frac{\partial \lambda}{\partial t}+\frac{1}{\mu}\frac{\partial\mu}{\partial t}\right] +O\left(\lambda^{\theta n}\right), 
\end{equation}
where $\left[\frac{\theta }{\lambda} \frac{\partial \lambda}{\partial t}+\frac{1}{\mu}\frac{\partial\mu}{\partial t}\right]$ is away from zero, see \eqref{eq:thetaconddlambdadmu}. 

 Let $(t_{n_0},a_{n_0})$ be any parameter in $\mathcal{B}_n$ corresponding to a map with a secondary tangency of type $n_0$, $q_{1,n,n_0}$. The proof is exactly the same as the previous case. In particular, by (\ref{dm6y1}), (\ref{dwn}), (\ref{dm6x1}), (\ref{alphaofuse}) (in the estimation of the order term)
\begin{eqnarray}\label{psi121}
\nonumber
\Psi_{1,2}&\geq&D(C-E)  \frac{n}{\mu}\frac{\partial\mu}{\partial t}\left[1+O\left(\frac{1}{n\mu^{\theta n/2}}\right)\right],
\end{eqnarray}
 and by (\ref{dm6yda1}), (\ref{dwn}), (\ref{dm6x1}), Lemma \ref{partialwn} and (\ref{alphaofuse}) (in the estimation of the order term)
\begin{eqnarray}
\nonumber
\Psi_{1,3}&\geq&D(C-E)\left[1+O\left(\frac{1}{\mu^{\theta n/2}}\right)\right].
\end{eqnarray}  
Moreover, by (\ref{dm6y2}), (\ref{dwn}), (\ref{dm6x1}), 
\begin{eqnarray}\label{psi122}
\nonumber
\Psi_{1,2}&\leq& \frac{DT}{\left(\lambda^{\theta}\mu\right)^n\mu^{n_0^*/2}}\frac{n}{\mu}\frac{\partial\mu}{\partial t}\left[1+O\left(\frac{1}{\mu^{n_0^*/2}}\right)\right].
\end{eqnarray}
Since, by Proposition \ref{speed} and Remark \ref{rem:dmudtpositive}, $D,C-E, T, {\partial\mu}/{\partial t}>0$, then $\Psi_{1,2}>0$, $\Psi_{1,3}>0$ and $\Psi_{2,1}> 0$, see Lemma \ref{curvature}. As before, because $D\Psi({0,0,0})$ is onto we get that the set of secondary tangencies $\Psi^{-1}(0)$ is locally the graph of a $\Cd$ function $b$.
Moreover,
\begin{equation}\label{eq:angle2}
\left|\frac{db}{dt}\right|=\left|\frac{\Psi_{1,2}}{\Psi_{1,3}}\frac{1}{\mu^n}\right|=O\left(\frac{n}{\left(\lambda^{\theta}\mu^2\right)^n\mu^{n_0^*/2}}\right),
\end{equation}
which goes to zero exponentially fast, see \eqref{thetacond}. The proof is complete.
 \end{proof}
\subsection{Properties of the loci of secondary tangencies}
We prove now that for any $n_0$ by moving in the direction of the $a$ parameter one can find at the most one tangency of type $n_0$. 
\begin{lem}\label{tangencyunicity}
 For $n$ large enough,
 $$
 \frac{\partial z_{3,y}}{\partial a}-\frac{\partial W_{n-n_0}}{\partial a}>0.
 $$
\end{lem}
\begin{proof}
Using the estimates in Proposition \ref{speed}, one notices that the leading term of $\partial z_{3,y}/{\partial a}$ is proportional to $\left(\lambda^{\theta}\mu^2\right)^n$ which is strictly larger than one, while $\partial W_{n-n_0}/{\partial a}$ is strictly smaller than one, see Lemma \ref{partialwn}, for $n$ large enough. The inequality follows.
\end{proof}

The following lemma shows that one cannot find a secondary tangency of type $n_0$ on the line defining the lower bound of the $\mathcal{B}_n$ strip. 
\begin{lem}\label{tangencylowerbound}
 For all $\epsilon>0$ and for $n$ large enough, there are no secondary tangencies of type $n_0$ along the line $a_n-E{\lambda^{\theta n}}$ where $E=C-\frac{\epsilon}{\mu^{n_0^*/2}}$.    
\end{lem}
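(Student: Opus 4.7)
The strategy is to show that on the line $a = a_n(t) - E\lambda^{\theta n}$ the curve $W^u_{\text{loc}}(z^{(3)})$ lies strictly below the stable pullback $W_{n-n_0}$ with a vertical gap bounded away from zero. Any intersection is then forced to be transversal, which rules out a tangency.

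The main step is to compute $z^{(2)}_y - 1$ on this line. Using the linearity of $F^n_{t,a}$, the estimate $z^{(1)}_y - z_y = C\lambda^{\theta n} + O(\lambda^{2\theta n})$ from Lemma \ref{z1distz}, and $\mu^n a_n = 1 + O(\lambda^n)$, one gets
\[
z^{(2)}_y - 1 \;=\; \mu^n(a - a_n) + C(\mu\lambda^\theta)^n + O(\lambda^n).
\]
On the lower boundary the term $\mu^n(a-a_n) = -E(\mu\lambda^\theta)^n$ produces a cancellation, and substituting $E = C - \epsilon/\mu^{n_0/2}$ yields the much smaller value
\[
z^{(2)}_y - 1 \;=\; \frac{\epsilon}{\mu^{n_0/2}}(\mu\lambda^\theta)^n + O(\lambda^n).
\]
Squaring and inserting into the Taylor expansion of $F^N$ at $c$ (Lemma \ref{Nderivatives}), and using $(\mu\lambda^\theta)^{2n}/\mu^{n_0} = \mu^{-n}$ (which follows from Lemma \ref{n0} together with $\mu^\alpha = \lambda^{2\theta}\mu^3$), produces
\[
z^{(3)}_y - z_y \;=\; Q\bigl(z^{(2)}_y - 1\bigr)^2 + O \;=\; \frac{Q\epsilon^2}{\mu^n} + O\bigl((\mu\lambda^\theta)^{3n}\bigr).
\]

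Since $z_y = a$ and $a_n = 1/\mu^n + O(\lambda^n/\mu^n)$, this yields $z^{(3)}_y = O(\mu^{-n})$. In contrast, using $W_k(x) = \mu^{-k}W(\lambda^k x)$ together with $W(0) = \eta \in (1/\mu, 2)$, one has $W_{n-n_0}(z^{(3)}_x) = \eta\mu^{n_0-n}(1 + o(1))$, which is of order $\mu^{n_0-n}$ and thus vastly exceeds $z^{(3)}_y$. Hence
\[
z^{(3)}_y - W_{n-n_0}(z^{(3)}_x) \;=\; -\eta\mu^{n_0-n} + O(\mu^{-n}) \;<\; 0,
\]
with absolute value of order $\mu^{n_0-n}$, uniformly bounded away from zero. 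To rule out tangency, observe that the curvature of $W^u_{\text{loc}}(z^{(3)})$ is at least of order $(\mu^4/\lambda^{2-3\theta})^n$ by Lemma \ref{curvaturez3}, while that of $W_{n-n_0}$ is only $O\bigl((\lambda^2/\mu)^{n-n_0}\bigr)$ by (\ref{ddwn}). Hence the difference $g(x) = W^u_{\text{loc}}(z^{(3)})(x) - W_{n-n_0}(x)$ is smooth and strictly concave up, with a unique global minimum near $z^{(3)}_x$ whose value is strictly negative and bounded away from zero. A tangency would correspond to a point $x$ where $g(x) = 0$ and $g'(x) = 0$ simultaneously; but the only critical point of $g$ is its global minimum, at which $g$ is strictly negative.

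The only nontrivial step is the cancellation producing the middle display above. The definition $E = C - \epsilon/\mu^{n_0/2}$ is engineered precisely so that on the lower boundary $z^{(2)}_y - 1$ collapses from its generic size $(\mu\lambda^\theta)^n$ down to $(\epsilon/\mu^{n_0/2})(\mu\lambda^\theta)^n$, hence $z^{(3)}_y - z_y$ becomes of order $1/\mu^n$ rather than $\mu^{n_0-n}$. This produces the macroscopic gap with $W_{n-n_0}$ that forbids tangency; without this cancellation the geometric comparison of $z^{(3)}_y$ and $W_{n-n_0}$ would be much more delicate.
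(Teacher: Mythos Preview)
Your proof is correct and follows essentially the same approach as the paper: compute $z^{(2)}_y - 1$ on the lower boundary, exploit the cancellation built into $E = C - \epsilon/\mu^{n_0/2}$, square through $F^N$ to get $z^{(3)}_y - z_y = Q\epsilon^2/\mu^n + \text{lower order}$, and conclude that $z^{(3)}$ sits far below $W_{n-n_0}$. The paper compresses the last step into the (somewhat imprecise) assertion that a tangency would force $z_{3,y}\mu^{n_0}$ to equal a fixed constant, whereas you supply the cleaner curvature argument showing the convex difference $g = W^u_{\text{loc}}(z^{(3)}) - W_{n-n_0}$ has its unique critical point at a strictly negative value; this makes the exclusion of tangency more transparent but is not a genuinely different route.
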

\begin{proof}
By contradiction, there exists a $n_0$ tangency on the line $a_n-E{\lambda^{\theta n}}$ with $E=C-\frac{\epsilon}{\mu^{n_0^*/2}}$. This means that $z_{y}^{(3)}\mu^{n_0}=2+O(\lambda^{n_0})$. By Lemma \ref{hnbounds},
 \begin{eqnarray*}
z_{y}^{(3)}(t,a)= QH^2\left[1+O\left(H\right)\right]+z_{y}(t,a)=QH^2\left[1+O\left(H\right)\right]+a.
    \end{eqnarray*}
where $H=C\left(\lambda^{\theta}\mu\right)^n +\left(\mu^n+O(n)\right)\left(a-a_n(t)\right)=(C-E)\left(\lambda^{\theta}\mu\right)^n\left(1+O\left({n}/{\mu^n}\right)\right)$.
Hence, by \eqref{thetacond}, \eqref{eq:n0*def} and Lemma \ref{n0}
\begin{eqnarray}\label{eq:lowerboundarystrip}
z_{y}^{(3)}\mu^{n_0}&=&\nonumber Q(C-E)^2\left(\lambda^{\theta}\mu\right)^{2n}\mu^{n_0}+O\left(\frac{1}{\mu^{n-n_0}}\right)\\
&=&\nonumber Q\frac{\epsilon^2}{\mu^{n_0^*}}\left(\lambda^{\theta}\mu\right)^{2n}\mu^{n_0}+O\left(\frac{1}{\mu^{n-n_0}}\right)\\\nonumber
&=&O\left(\left(\lambda^{\theta}\mu\right)^{2n}\mu^{\alpha n-\frac{1}{2}\alpha n}\right)+O\left(\frac{1}{\mu^{n-n_0}}\right)
\\&=&O\left(\left(\lambda^{3\theta}\mu^{7/4}\right)^{2n}\right).
\end{eqnarray}
Since $\lambda^{3\theta}\mu^{7/4}<1$, see \eqref{thetacond}, this last quantity cannot be close to $2$. Contraddiction.The proof is complete.
\end{proof}
The following lemma shows that one cannot find a $n_0$ tangency on the line $a_n+\frac{T}{\mu^{n+n_0^*/2}}$ with $T$ a large constant.
\begin{lem}\label{tangencyupperbound}
 For $T$ large enough, there are no secondary tangencies of type $n_0$ along the line  $a_n+\frac{T}{\mu^{n+n_0^*/2}}$.    
\end{lem}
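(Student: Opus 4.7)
The plan is to argue by contradiction in exact parallel with Lemma \ref{tangencylowerbound}: assume there is a secondary tangency of type $n_0$ at a parameter on the upper line $a = a_n(t) + T/\mu^{n+n_0/2}$, and derive that $z^{(3)}_y$ is too large to meet $W_{n-n_0}$. Since $W^u_{\text{loc}}(z^{(3)})$ is a $\Cq$ curve with horizontal tangent at its minimum $z^{(3)}$ and curvature bounded away from zero by Lemma \ref{curvaturez3}, while $W_{n-n_0}$ is very flat with $\max|W_{n-n_0}|\leq 2\mu^{-(n-n_0)}$ by \eqref{wn}, any tangency of type $n_0$ forces $z^{(3)}_y\leq \max W_{n-n_0}$.

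The key step is to evaluate $z^{(3)}_y$ on the upper line. Reusing the identity from the proof of Lemma \ref{tangencylowerbound}, namely
\[
z^{(2)}_y = C(\lambda^\theta\mu)^n + \mu^n(a - a_n(t)) + O\bigl((\lambda^{2\theta}\mu)^n\bigr),
\]
substitution gives $z^{(2)}_y = C(\lambda^\theta\mu)^n + T/\mu^{n_0/2} + O((\lambda^{2\theta}\mu)^n)$. By the choice of $\theta$ in \eqref{thetacond}, both $(\lambda^\theta\mu)^n$ and $(\lambda^{2\theta}\mu)^n$ are $o(\mu^{-n_0/2})$ (a short computation using $\mu^{n_0}\approx(\lambda^{2\theta}\mu^3)^n$ from Lemma \ref{n0}), so for $T$ large the dominant term is $T/\mu^{n_0/2}$. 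Applying the quadratic relation $z^{(3)}_y = Q(z^{(2)}_y)^2 + O(\mu^{-n})$ coming from \eqref{FNexpr}, one obtains
\[
z^{(3)}_y = \frac{QT^2}{\mu^{n_0}}\bigl(1 + o(1)\bigr).
\]
Comparison with $\max W_{n-n_0}\leq 2\mu^{-(n-n_0)}$ turns the assumption of an $n_0$-tangency into the necessary condition
\[
QT^2\,\mu^{n - 2n_0}\leq 2 + o(1).
\]

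To conclude, it remains to show that $\mu^{n-2n_0}\to\infty$, so that the inequality fails for every fixed $T>0$ and $n$ large. By Lemma \ref{n0}, $n_0 = \alpha n + O(1)$ with $\alpha = \log(\lambda^{2\theta}\mu^3)/\log\mu$. The lower bound on $\theta$ in \eqref{thetacond1}, $\theta > (4/3)\log\mu_{\max}/\log(1/\lambda_{\max})$, is equivalent to $\lambda^{2\theta}\mu^{8/3}<1$, hence $\alpha<1/3$ uniformly in $(t,a)\in\mathcal{B}_n$; consequently $n-2n_0\geq n/3 + O(1)$ and $\mu^{n-2n_0}\to\infty$ geometrically. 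Choosing $T$ large enough (in fact any fixed $T>0$ works once $n$ is large) violates the tangency inequality and yields the desired contradiction.

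The main technical issue I anticipate is carefully justifying the uniform bound $\alpha<1/2$ from the choice of $\theta$; everything else is a direct adaptation of the computation carried out in Lemma \ref{tangencylowerbound}, with the roles of the two boundary lines being complementary—on the lower line the cancellation between $C(\lambda^\theta\mu)^n$ and $\mu^n(a-a_n(t))$ drives $z^{(3)}_y$ below $W_{n-n_0}$, while on the upper line the additional $T$-contribution drives $z^{(3)}_y$ above $\max W_{n-n_0}$.
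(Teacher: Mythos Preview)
Your argument is correct and follows the same route as the paper: compute $z^{(2)}_y$ on the upper boundary line via Lemma~\ref{ymaxymin}, pass to $z^{(3)}_y$ through the quadratic relation coming from \eqref{FNexpr}, and compare with the height of $W_{n-n_0}$ to rule out a tangency. The paper's proof simply bounds $z_{3,y}>QT^2/\mu^{n_0}$ and concludes directly; your version instead writes the tangency constraint explicitly as $z^{(3)}_y\le 2/\mu^{n-n_0}$ and therefore carries the extra step of checking $n-2n_0\to\infty$, which you obtain cleanly from $\alpha<1/3$ via the lower bound $\theta>\theta_0$ in \eqref{thetacond1}. This additional verification is correct (indeed $\theta>\theta_0$ gives $\lambda^{2\theta}\mu^{8/3}<1$, hence $\lambda^{2\theta}\mu^3<\mu^{1/3}$ and $\alpha<1/3$), and it makes transparent the comparison that the paper's terse formulation leaves implicit.
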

\begin{proof}
As in the proof of the previous lemma, By Lemma \ref{hnbounds},
 \begin{eqnarray*}
z_{y}^{(3)}(t,a)= QH^2\left[1+O\left(H\right)\right]+z_{y}(t,a)=QH^2\left[1+O\left(H\right)\right]+a.
    \end{eqnarray*}
where $H=C\left(\lambda^{\theta}\mu\right)^n +\left(\mu^n+O(n)\right)\left(a-a_n(t)\right)=\frac{T}{\mu^{n_0^*/2}}\left(1+O\left(\left(\lambda^{\theta}\mu\right)^n\mu^{n_0^*/2}\right)\right)$.
Hence, by \eqref{thetacond}, and Lemma \ref{n0}
\begin{eqnarray}\label{eq:upperboundarystrip}
z_{y}^{(3)}\mu^{n_0}>\frac{QT^2}{2\mu^{n_0}}\mu^{n_0}+O\left(\frac{1}{\mu^{n}}\mu^{n_0}\right)=\frac{QT^2}{2}+O\left(\frac{1}{\mu^{n-n_0}}\right).
\end{eqnarray}
For $T$ large enough, this last quantity cannot be close to $2$, so in particular one cannot have a $n_0$ tangency. The proof is complete.
\end{proof}
\subsection{Curves of secondary tangencies with uniform size}

In the next proposition we prove the existence of curves of secondary tangencies which are graphs of functions defined over the full domain $[-t_0,t_0]$, see Figure \ref{Fig6}.

\begin{prop}\label{angle} 
There exists a smooth function $b_{n,n_0}:[-t_0,t_0]\to\mathbb R$ such that for all $t\in [-t_0,t_0]$ the parameter point $(t,b_{n,n_0}(t))$ corresponds to a map with a secondary tangency of type $n_0$. Moreover, the graph of $b_{n,n_0}$ is contained in $\mathcal{B}_n$.
\end{prop}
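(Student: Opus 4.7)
The plan is a continuation argument combining the local implicit-function-theorem construction of Lemma \ref{bnlocal} with the boundary-avoidance Lemmas \ref{tangencylowerbound}--\ref{tangencyupperbound} and the uniqueness statement of Lemma \ref{tangencyunicity}. First, by Proposition \ref{newtangency}, for $n$ large there is at least one parameter $(t_n, a_n(t_n)) \in \mathcal{B}_n$ corresponding to a secondary tangency of type $n_0$ (note that $a_n$ itself lies in $\mathcal{B}_n$ for $\epsilon$ small). Lemma \ref{bnlocal} then produces a local $\mathcal{C}^2$ curve $b_{n,n_0}$ of type-$n_0$ tangencies through this point, whose graph stays in $\mathcal{B}_n$.

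I would then let $(t_-, t_+) \subset [-t_0, t_0]$ be the maximal interval on which the local curve extends smoothly with graph in $\mathcal{B}_n$, and aim to show $(t_-, t_+) = (-t_0, t_0)$. The slope estimate $|db_{n,n_0}/dt| = O\!\left(n \mu^{-(n+1)}\right)$ from Lemma \ref{bnlocal} gives a uniform derivative bound, so $b_{n,n_0}(t)$ admits a limit $b_\infty$ as $t \to t_+^-$, and $(t_+, b_\infty) \in \overline{\mathcal{B}_n}$. By continuity of the tangency equation (the zero set of the function $\Psi$ introduced in the proof of Lemma \ref{bnlocal}), the limit point is itself a type-$n_0$ tangency. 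But Lemmas \ref{tangencylowerbound} and \ref{tangencyupperbound} forbid type-$n_0$ tangencies on the top and bottom boundaries of $\mathcal{B}_n$ (for the calibrated values $\epsilon$ small and $T$ large). Therefore $(t_+, b_\infty)$ lies in the interior of $\mathcal{B}_n$, and Lemma \ref{bnlocal} reapplies to extend $b_{n,n_0}$ smoothly past $t_+$---contradicting maximality. The symmetric argument at $t_-$ yields $(t_-, t_+) = (-t_0, t_0)$.

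Uniqueness (so that the continuation is single-valued) follows from the first inequality in Lemma \ref{tangencyunicity}: on each vertical slice $\{t\} \times \mathbb{R}$, the quantity $z_{3,y} - W_{n-n_0}$ is strictly increasing in $a$, so at most one $a$-value per $t$ can give a type-$n_0$ tangency inside $\mathcal{B}_n$. The main obstacle is vertical confinement inside $\mathcal{B}_n$ throughout the continuation; this is precisely what the specific choices of $E = C - \epsilon \mu^{-n_0/2}$ (lower boundary) and $T/\mu^{n+n_0/2}$ (upper boundary) are calibrated for, via the height computations $z_{3,y}\mu^{n_0} \simeq Q\epsilon^2 (\lambda^\theta \mu)^{2n}$ and $z_{3,y}\mu^{n_0} > Q T^2$ established in the proofs of Lemmas \ref{tangencylowerbound} and \ref{tangencyupperbound}. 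Once vertical escape is ruled out, the continuation proceeds freely across the full parameter interval $[-t_0, t_0]$.
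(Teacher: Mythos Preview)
Your proposal is correct and follows essentially the same continuation argument as the paper: start from a tangency produced by Proposition \ref{newtangency}, extend locally via Lemma \ref{bnlocal}, take the maximal interval of definition, and use Lemmas \ref{tangencylowerbound}--\ref{tangencyupperbound} together with Lemma \ref{tangencyunicity} to force the limit point to lie in the interior of $\mathcal{B}_n$, yielding a contradiction to maximality. The only cosmetic difference is that you invoke the uniform slope bound to guarantee existence of the limit, whereas the paper phrases this as closedness of the tangency set plus uniqueness of the limit via Lemma \ref{tangencyunicity}; both arguments are equivalent here.
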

\begin{proof} 
 Let $(t_{n_0}, a_{n_0})\in\mathcal{B}_n$ be a parameter corresponding to a map with a secondary tangency of type $n_0$. By Lemma \ref{bnlocal} there exist $\epsilon>0$ and a smooth curve $$b_{n,n_0}:[t_{n_0}-\epsilon,  t_{n_0}+\epsilon]\mapsto\mathbb R$$ such that, for all $t\in[t_{n_0}-\epsilon, t_{n_0}+\epsilon]$ the parameter $(t,b_{n,n_0}(t))$ corresponds to a map with a secondary tangency of type $n_0$. We want now to prove that this curve does not stop. Suppose it does. Let $(t_-,t_+)$ be the maximal interval in $[-t_0,t_0]$ to which this function extends smoothly such that its graph is a locus of secondary tangency of type $n_0$.
 Suppose that $[t_-,t_+]\subset (-t_0,t_0)$.
 Consider a sequence $t_n\in(t_-,t_+)$ of secondary tangencies of type $n_0$ that converges to a boundary point in $[t_-,t_+]$. Since the set of tangencies is a close set, all limit points are again a tangency. Because of Lemma \ref{tangencyunicity} one cannot have more than one limit point, indeed one can have only one tangency point while moving in the $a$ direction. By lemmas \ref{tangencylowerbound} and \ref{tangencyupperbound} this unique limit point cannot be on the boundary lines that define $\mathcal B_n$. Such a limit point is then contained in $\mathcal B_n$. One can then apply again Proposition \ref{bnlocal} to this point and build a smooth curve of maps with secondary tangencies of type $n_0$. This new courve $b'_{n,n_0}$ coincides with the initial one $b_{n,n_0}$, because of the uniqueness assured by the implicit function theorem used to create the local curve in Proposition \ref{bnlocal}. This contradicts the maximality of the interval  $(t_-,t_+)$.
In particular the function extends on the all domain $[-t_0,t_0]$ without exiting  $\mathcal B_n$.
\end{proof}
In the following lemma we prove that the estimate for the angle between the curve $b_{n,n_0}$ and $a_n$, see Lemma \ref{bnlocal}, persists in a neighborhood of $a_n$ of width $O\left(1/\mu^{2n}\right)$.
 \begin{lem}\label{bnlocalinAn}
For all $t\in [ -t_{0},  t_{0}]$, with $( b_{n,n_0}( t)-a_n(t))=O\left(1/\mu^{2n}\right)$,   
$$\frac{db_{n,n_0}}{dt}=\frac{da_n}{dt}+{n \lambda^{\theta n}}C\left[\frac{\theta }{\lambda} \frac{\partial \lambda}{\partial t}+\frac{1}{\mu}\frac{\partial\mu}{\partial t}\right]+O\left(\lambda^{\theta n}\right).$$
   \end{lem}
   \begin{proof}
  Observe that, by \eqref{thetacond}, \eqref{m5an} holds for all parameters $(t,a)$ with $a=a_n+O\left(1/\mu^{2n}\right)$. As a consequence \eqref{dm6yan} and \eqref{dm6ydan} hold as well and this implies that \eqref{psi12} and \eqref{psi13} also hold. In particular \eqref{dbminusda} holds for all parameters $(t,a)$ with $a=a_n+O\left(1/\mu^{2n}\right)$. The proof is complete.
   \end{proof}
In the next lemma we prove that there exists a neighborhood of the curve of secondary tangencies built above such that the initial family restricted to this neighborhood is an unfolding of the secondary tangency.  
\begin{lem}\label{lem:unfoldingsinneigh}
Let $b_{n,n_0}:[-t_0,t_0]\to\mathbb R$ be a curve of secondary tangencies. There exists a neighborhood $U$ of $b_{n,n_0}$ such that $F_{|U}$ can be reparametrized to become an unfolding.
\end{lem}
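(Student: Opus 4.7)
The strategy is to introduce a new parameter $\tilde a$ that measures the signed vertical height of the secondary critical value $z^{(3)}$ above the pullback stable manifold $W_{n-n_0}$. By construction the curve $b_{n,n_0}$ coincides with the zero locus of $\tilde a$, so condition (P1) of Definition \ref{unfolding} becomes automatic, and (P2) reduces to showing that $(t,a)\mapsto(t,\tilde a)$ is a diffeomorphism on a thin strip around $b_{n,n_0}$.

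More concretely, let $w_{n-n_0}(x,t,a)$ denote the graph of $W_{n-n_0}$ over the $x$-axis near the tangency; it is $\Cq$ in all three arguments by Lemma \ref{partialwn}. On a tubular neighborhood $U$ of $b_{n,n_0}$ define
$$
\Phi(t,a)=\bigl(t,\;\tilde a(t,a)\bigr),\qquad \tilde a(t,a)=z^{(3)}_y(t,a)-w_{n-n_0}\bigl(z^{(3)}_x(t,a),t,a\bigr).
$$
Differentiating and combining the first inequality of Lemma \ref{tangencyunicity} with the smallness of $\partial w_{n-n_0}/\partial x=O((\lambda/\mu)^{n-n_0})$ from (\ref{dwn}) and the bound on $\partial z^{(3)}_x/\partial a$ read off from (\ref{dm6xan}), one obtains $\partial\tilde a/\partial a>0$ uniformly on $U$, dominated by the leading $(\lambda^{\theta}\mu^{2})^{n}$ term of $\partial z^{(3)}_y/\partial a$ in Proposition \ref{speed}. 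Hence $\Phi$ is a $\Cq$ diffeomorphism onto its image provided $U$ is chosen as a sufficiently thin strip around $b_{n,n_0}$.

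In the coordinates $(\tilde t,\tilde a)=(t,\tilde a)$, the restricted family $F|_U$ acquires the required unfolding structure. The secondary critical point $c^{(\mathrm{new})}(\tilde t,\tilde a)$ is constructed by an implicit function argument analogous to Lemma \ref{functionc}, applied to the long return $F^{3N+(1+\theta)n}$ near the point $m_1\in W^u_{\mathrm{loc}}(c)$ singled out in the proof of Proposition \ref{speed}; the corresponding critical value is exactly $z^{(3)}(\tilde t,\tilde a)$. By the very definition of $\tilde a$, the vertical height of $z^{(3)}$ above the secondary stable manifold $W_{n-n_0}\subset W^s(p)$ equals $\tilde a$, which yields (P1), while $\partial\tilde a/\partial\tilde a=1\neq 0$ yields (P2). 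Conditions (F1)--(F5) transfer almost verbatim from $F$: the saddle $p$, its eigenvalues $\mu,\lambda$, the non-resonance and eigenvalue conditions ($f3$),($f2$), and the transverse intersection $q_2$ are all inherited unchanged; $\partial\mu/\partial\tilde t=\partial\mu/\partial t>0$ by Remark \ref{rem:dmudtpositive}; and the new tangency $q_{1,n,n_0}$ is non-degenerate by Lemma \ref{curvature}.

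The main technical obstacle is not the diffeomorphism argument itself but the verification that $F^{3N+(1+\theta)n}$, written in coordinates centered at $m_1$ and $z^{(3)}$, genuinely fits a Henon-like Taylor expansion of the type (\ref{NstepsTaylorformula}) with quadratic coefficient bounded away from zero, so that an honest secondary critical curve $\Gamma^{(\mathrm{new})}$ exists. This is precisely where the eigenvalue bound ($F3$) $\lambda_{\mathrm{max}}\mu_{\mathrm{max}}^{3}<1$ is used: it guarantees, through Lemmas \ref{shape}, \ref{curvaturez3} and \ref{curvature}, that the curvature of $W^u_{\mathrm{loc}}(z^{(3)})$ dominates the almost-flat slopes of the pullbacks $W_{n-n_0}$ of the stable manifold, preserving the non-degeneracy of the secondary tangency under reparametrization.
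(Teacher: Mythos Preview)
Your argument is correct and follows essentially the same route as the paper: the key point in both is that the $a$-derivative of the height of $z^{(3)}$ above $W_{n-n_0}$ is nonzero, which the paper records as $\Psi_{1,3}\neq 0$ (from the implicit-function computation in Lemma~\ref{bnlocal}) and you recompute as $\partial\tilde a/\partial a>0$; this is exactly what furnishes the reparametrization and condition~(P2). The paper's proof is organized as a terse checklist of $(f1)$--$(f8)$, $(F1)$--$(F5)$, $(P1)$--$(P2)$, explicitly addressing $(f7)$ and $(f8)$ for the secondary tangency via the observation that $f^{n-n_0}(q_{1,n,n_0})\to q_2$ and $m_1\in W^u_{\mathrm{loc}}(0)$ near $q_3$, whereas you package the reparametrization more explicitly and absorb those structural checks into ``transfer almost verbatim''; both are valid, and your discussion of the H\'enon-like Taylor structure and the role of Lemma~\ref{curvature} for non-degeneracy is a welcome elaboration of what the paper leaves implicit.
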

\begin{proof}
Let $U$ be a neighborhood of $b_{n,n_0}$ which is strictly contained in $\mathcal{B}_n$. Observe that the new family is a restriction of our original family and the secondary tangency curve $b_{n,n_0}$ describes homoclinic tangencies associated to the original saddle point. The transversal intersection $q_2$ it is still present in the new family. As a consequence, conditions $(f1), (f2), (f3), (f5), (f6), (F1), (F2), (F3), (F5)$ are automatically satisfied. Condition $(f4)$ is a consequence of 
Lemma \ref{curvature}.
By construction, condition $(f6)$ implies $(f7)$ for the secondary tangency $q_{1,n,n_0}$.
\\
 Use the notation from Proposition \ref{speed} and observe that $m_1\in W^u_{\text{loc}}(0)$ close to $q_3$ such that $F^{3N+\theta n+n+n_0}(m_1)=q_{1,n,n_0}$. This proves $(f8)$ for the secondary tangency $q_{1,n,n_0}$.
 \\
For proving $(F4)$ use the $\Cd$ function $b_{n,n_0}$ and observe that the maps on the graph of this curve, $b_{n,n_0}$, have a non degenerate homoclinic tangency.
\\
Observe that $(P1)$ follows by the fact that $b_{n,n_0}$ is the curve of non-degenerate tangencies, see Lemma \ref{curvature} and $(P2)$ from Lemma \ref{tangencyunicity}.
\end{proof}

\section{Newhouse phenomenon}

In this section, we identify parameters corresponding to maps that have infinitely many sinks of arbitrarily high period, a phenomenon referred to as the Newhouse phenomenon. Although this result itself is not new, our approach is constructive in nature and departs fundamentally from earlier methods, which in turn enables us to describe the dynamics on the closure of these sinks establishing a novel and unexpected property of unfoldings. We start by proving the existence of sinks in two-dimensional unfoldings. 
\subsection{Cascades of sinks}
In this section we are going to prove that, for a specific open set of parameters, the corresponding maps have a sink of high period. 
Choose $\epsilon_0>0$ and define, for $n$ large enough,
$$
\mathcal{A}_n=\left\{(t,a)\in [-t_0,t_0]\times [-a_0,a_0] \left|\right. |a-a_n(t)|\leq\frac{\epsilon_0}{\mu(t,a_n(t))^{2n}}\right\}.
$$
The strip $\mathcal{A}_n$ is built to contain the parameters having a sink. The size has to be chosen carefully. The following remark describes the error obtained in the change of the eigenvalues while changing the parameters within $\mathcal{A}_n$.
\begin{rem}\label{muzeroothersforan}
As in Remark \ref{muzeroothers}, for all $(t,\tilde a)\in\mathcal A_n$,
$$
\left(\frac{\mu(t,a_n(t))}{\mu( t,\tilde a)}\right)^n=1+O\left(\frac{n}{\mu(t,a_n(t))^{2n}}\right), \left(\frac{\lambda(t,a_n(t))}{\lambda(\tilde t,\tilde a)}\right)^n=1+O\left(\frac{n}{\mu(t,a_n(t))^{2n}}\right).
$$
\end{rem}

In the following we prove, for a properly chosen $\epsilon_0$, that for all parameters $(t,a)\in \mathcal{A}_n$, $F_{t,a}$ has a sink of period $n+N$ which we call {\it primary sink}. The method used appears already in \cite{ BP, Ro}. Namely, we find an invariant box in the phase space and we prove that the $F_{t,a}^{n+N}$ contracts this box. As a consequence, we get a sink. 

\bigskip

For $n$ large enough, take $(t,a)\in \mathcal{A}_n$ and denote by $c_n(t)=c_{t,a_n(t)}$ and by $z_n(t)=z_{t,a_n(t)}=(z_{n,x}(t),z_{n,y}(t))$. When the choice of $t$ is clear, we just use the notation $c_n$ and $z_n$. For $\delta>0$ we define the box that is going to contain the sink,  
$$
B^n_{\delta}(t,a)=\left\{(x,y)| |x-z_{n,x}(t)|\leq\frac{1}{3},|y-z_{n,y}(t)|\leq\frac{\delta}{\mu(t, a_n(t))^{2n}}\right\}.
$$
 When the choice of $(t,a)$ is clear we just use the notation $B^n_{\delta}$. In the next lemma we prove that $B^n_{\delta}$ returns into itself, see Figure \ref{Fig3}. Let $\tilde Q=\max Q_{x,y}$ where $Q_{x,y}$ is as in Lemma \ref{Nderivatives}. 
\begin{figure}[h]
\centering
\includegraphics[width=0.9\textwidth]{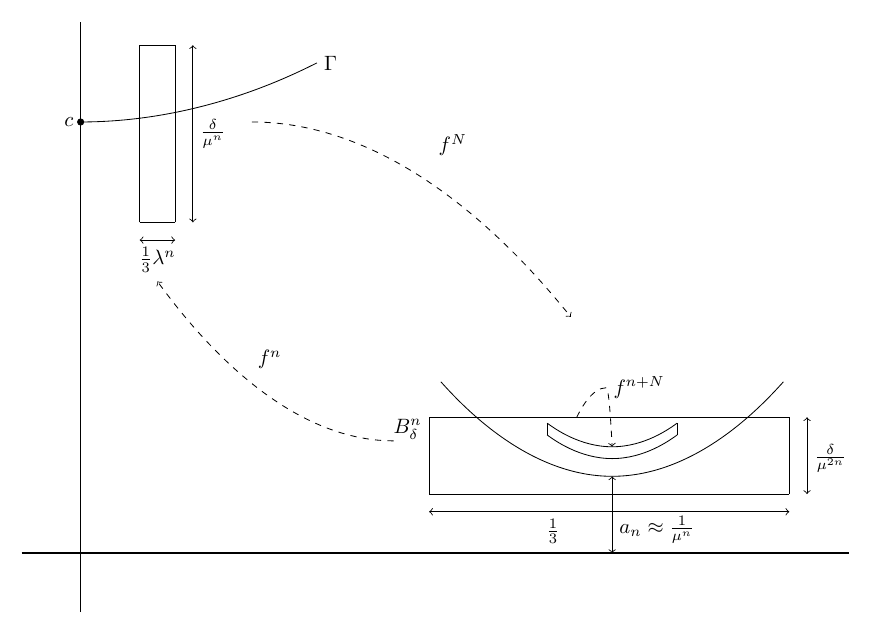}
\caption{Invariant Region}
\label{Fig3}
\end{figure}
\begin{lem}\label{periodicpoint}
Choose $\delta=\frac{1}{4\tilde Q}, \epsilon_0=\frac{1}{32\tilde Q}$. Then, for $n$ large enough and $(t,a)\in\mathcal A_n$,
$$
F^{n+N}_{t,a}\left(B^n_{\delta}\right)\subset B^n_{\delta}.
$$
\end{lem}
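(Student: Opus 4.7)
The plan is to decompose $F^{n+N}_{t,a}=F^{N}_{t,a}\circ F^{n}_{t,a}$ and estimate each factor separately. The first factor $F^n_{t,a}$ acts linearly on the linearization box $[-2,2]^2$, which contains $B^n_\delta$ for large $n$; it expands vertically by $\mu^n$ and contracts horizontally by $\lambda^n$. The second factor $F^N_{t,a}$ is analyzed via Lemma \ref{Nderivatives} at the primary critical point $c=(0,1)\in\Gamma_{t,a}$ (using Remark \ref{c0=1}), where the Taylor expansion of the $y$-component in coordinates centered at $z_{t,a}=(z_x(t,a),a)$ has no linear term in $\Delta y$. The scale $\delta/\mu_0^{2n}$ of the box is designed exactly so that after $F^n$ the vertical displacement becomes $O(\delta/\mu_0^n)$, and then the quadratic tangency of $F^N$ squares this to $O(\delta^2/\mu_0^{2n})$, which fits inside the original box provided $\tilde Q\delta^2<\delta$.

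Concretely, for $(x,y)\in B^n_\delta$ I write $y=z_{n,y}(t)+\eta$ with $|\eta|\le\delta/\mu_0^{2n}$, where $\mu_0=\mu(t,a_n(t))$. Because $F^n_{t,a_n(t)}(z_n(t))\in\Gamma$ by the definition of $a_n$ (Lemma \ref{Imtang}), Lemma \ref{functionc} gives $\mu_0^n z_{n,y}(t)=c(\lambda_0^n z_{n,x}(t),t,a_n(t))=1+O(\lambda_0^n)$. Passing from $\mu_0$ to $\mu=\mu(t,a)$ costs a factor $1+O(n/\mu_0^{2n})$ by Remark \ref{muzeroothersforan}, so the second coordinate of $F^n_{t,a}(x,y)$ equals $1+\Delta y$ with
\[
\Delta y=\mu^n\eta+O(\lambda_0^n)+O(n/\mu_0^{2n}),\qquad |\Delta y|\le \frac{\delta}{\mu_0^n}(1+o(1)),
\]
while the first coordinate is $\Delta x=\lambda^n x=O(\lambda^n)$. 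Applying Lemma \ref{Nderivatives} at $(0,1)$ to $(\Delta x,\Delta y)$ yields, in coordinates centered at $z_{t,a}$,
\[
y\text{-image}-a=C_{0,1}\Delta x+Q_{0,1}(\Delta y)^2+O\bigl((\Delta x)^2+\Delta x\Delta y+(\Delta y)^3\bigr)
=\tilde Q\,\frac{\delta^2}{\mu_0^{2n}}+O(\lambda^n).
\]
Since $z_{n,y}(t)=a_n(t)$ and $|a-a_n(t)|\le \epsilon_0/\mu_0^{2n}$, this gives
\[
|y\text{-image}-z_{n,y}(t)|\le\frac{\tilde Q\delta^2+\epsilon_0}{\mu_0^{2n}}+O(\lambda^n)
=\frac{\delta/4+\delta/8}{\mu_0^{2n}}+O(\lambda^n)=\frac{3\delta}{8\mu_0^{2n}}+O(\lambda^n),
\]
which is below $\delta/\mu_0^{2n}$ for large $n$ because $\lambda\mu^2<1$ by (F3). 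For the first coordinate, the image centered at $z_x(t,a)$ is $A_{0,1}\Delta x+B_{0,1}\Delta y+O(\cdot)=O(1/\mu_0^n)$, and replacing the center by $z_{n,x}(t)=z_x(t,a_n(t))$ costs only $O(\epsilon_0/\mu_0^{2n})$; both are dwarfed by $1/3$ for large $n$.

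The only real subtlety is the careful bookkeeping between the eigenvalues $\mu,\lambda$ at $(t,a)$ and their reference values $\mu_0,\lambda_0$ at $(t,a_n(t))$, controlled by Remark \ref{muzeroothersforan}, together with the tight algebraic inequality $\tilde Q\delta^2+\epsilon_0<\delta$ built into the choices $\delta=1/(4\tilde Q)$, $\epsilon_0=1/(32\tilde Q)$. Once these are in hand, the inclusion $F^{n+N}_{t,a}(B^n_\delta)\subset B^n_\delta$ follows by combining the vertical and horizontal estimates above.
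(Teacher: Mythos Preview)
Your proof is correct and follows essentially the same approach as the paper: decompose $F^{n+N}=F^N\circ F^n$, use linearity of $F^n$ together with $\mu_0^n z_{n,y}=1+O(\lambda_0^n)$ to bound the intermediate displacement from $c_{t,a}$ by $\delta/\mu_0^n(1+o(1))$, and then invoke Lemma~\ref{Nderivatives} to square this via the quadratic tangency. The only cosmetic difference is that the paper absorbs the shift $|a-a_n(t)|\le\epsilon_0/\mu_0^{2n}$ into an ``extended'' Taylor expansion in $\Delta a$, whereas you handle it explicitly by re-centering from $z_{t,a}$ to $z_n$; the resulting inequality $\tilde Q\delta^2+\epsilon_0=\tfrac{3\delta}{8}<\delta$ is identical.
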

\begin{proof}
Fix $(t,a)\in\mathcal A_n$. For $n$ large enough, we write $F^{n+N}_{t,a}$ in coordinates centered at $z_n$, namely $F^{n+N}_{t,a}\left(\Delta x_{z_n},\Delta y_{z_n}\right)=\left(\Delta \tilde x_{z_n},\Delta \tilde y_{z_n}\right)$. Let $\left(\Delta x_{z_n},\Delta y_{z_n}\right)$ be such that if $z_n+\left(\Delta x_{z_n},\Delta y_{z_n}\right)\in B^n_{\delta}$ and $\Delta a=(a-a_n(t))$, then
\begin{equation}\label{deltas}
|\Delta x_{z_n}|\leq\frac{1}{3}, |\Delta y_{z_n}|\leq \frac{\delta}{\mu(t, a_n(t))^{2n}} \text{ and } |\Delta a|\leq \frac{\epsilon_0}{\mu(t, a_n(t))^{2n}}.
\end{equation}
Denote by $\left(\Delta x_{c_{t,a}},\Delta y_{c_{t,a}}\right)=F^n_{t,a}\left(\Delta x_{z_n},\Delta y_{z_n}\right)-c_{t,a}$. Using that $F^{n}_{t,a}$ is linear on $B^n_{\delta}$, see (\ref{Flinear}), we get
\begin{eqnarray*}\label{deltaxprime}
|\Delta x_{c_{t,a}}|\leq 2\lambda(t,a)^n,
\end{eqnarray*}
and 
\begin{eqnarray*}\label{deltayprime}
|\Delta y_{c_{t,a}}|&\leq &\mu(t, a)^{n}\left|\Delta y_{z_n}\right|+\left|\mu(t, a)^{n}z_{n,y}-c(0,t,a)\right|\\&\leq&\mu(t, a)^{n}\left|\Delta y_{z_n}\right|+\left|\left(\left[1+O\left(\frac{n}{\mu(t, a_n(t))^{2n}}\right)\right]\mu(t, a_n(t))^{n}z_{n,y}\right)-1\right|,
\end{eqnarray*}
where we used Remark \ref{muzeroothersforan}. Since $F^n_{t,a_n(t)}\left(z_n\right)\in\Gamma_{t,a_n(t)}$, then $$\mu(t, a_n(t))^{n}z_{n,y}=1+O\left(\lambda(t,a_n(t))^n\right).$$ Hence, by (\ref{deltas}) and by Remark \ref{muzeroothersforan} we get,
\begin{eqnarray*}\label{deltayprime}
|\Delta y_{c_{t,a}}|
&\leq &\mu(t, a)^{n}\left|\Delta y_{z_n}\right|+O\left(\lambda(t, a_n(t))^n\right)+O\left(\frac{n}{\mu(t, a_n(t))^{2n}}\right)\\&\leq &\frac{\delta}{\mu(t, a_n(t))^{n}}\left[1+O\left(\frac{n}{\mu(t, a_n(t))^{2n}}\right)\right]+O\left(\frac{n}{\mu(t, a_n(t))^{2n}}\right).
\end{eqnarray*}
By Lemma \ref{Nderivatives} (center $F^{N}_{t,a}$ in coordinates around $c_{t,a}$) extended to include also the Taylor expansion in $\Delta a$   
we get, for $n$ large enough
\begin{eqnarray*}
|\Delta\tilde x_{z_n}|&=&O\left(\Delta x_{c_{t,a}}\right)+O\left(\Delta y_{c_{t,a}}\right)+O\left(\Delta a\right)\leq\frac{1}{3},
\end{eqnarray*}
and 
\begin{eqnarray*}
|\Delta\tilde y_{z_n}|&\leq &O\left(\lambda(t,a)^n \right)+Q_{x,y}\left|\Delta y_{c_{t,a}}\right|^2+O\left(\frac{1}{\mu(t, a_n(t))^{3n}}\right)+|\Delta a|\\
&\leq &\frac{\tilde Q \delta^2}{\mu(t, a_n(t))^{2n}}+O\left(\frac{n}{\mu(t, a_n(t))^{3n}}\right)+\frac{\epsilon_0}{\mu(t, a_n(t))^{2n}},
\end{eqnarray*}
where we also used $(F3)$ and Remark \ref{muzeroothersforan}.
By our choice of $\epsilon_0$ and $\delta$, for $n$ large enough, the lemma follows.
\end{proof}

We fix $\epsilon_0,$ and $\delta$ such that Lemma \ref{periodicpoint} holds. 
We are now ready to prove the existence of a sink. This is achieved in the next proposition by proving that $F^{n+N}$ contracts the box $B_{\delta}^n$.
\begin{prop}\label{sink}
For $n$ large enough and for all $(t,a)\in \mathcal A_n$, $ B^n_{\delta}(t)$ has a unique periodic point which is a sink of period $n+N$. 
\end{prop}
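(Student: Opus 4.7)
The plan is to combine the forward invariance $F^{n+N}_{t,a}(B^n_\delta)\subset B^n_\delta$ established in Lemma~\ref{periodicpoint} with a uniform derivative estimate showing that $F^{n+N}_{t,a}$ is a strict contraction on $B^n_\delta$ in a well-chosen adapted norm. Brouwer's theorem applied to $F^{n+N}_{t,a}:B^n_\delta\to B^n_\delta$ then yields a periodic point $p$ of period $n+N$; uniqueness and the sink character follow from the contraction and the associated eigenvalue bounds.

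For the derivative estimate, I use that $F^n_{t,a}$ is linear on $B^n_\delta$, so $DF^n_{t,a}=\mathrm{diag}(\lambda^n,\mu^n)$; and that by the computation already carried out in Lemma~\ref{periodicpoint}, for every $(x,y)\in B^n_\delta$ the image $F^n_{t,a}(x,y)=c_{t,a}+(\Delta x',\Delta y')$ satisfies $|\Delta x'|=O(\lambda^n)$ and $|\Delta y'|\leq \delta/\mu^n+O(n/\mu^{2n})$. Because $c_{t,a}\in\Gamma_{t,a}$, differentiating the Taylor expansion of Lemma~\ref{Nderivatives} centered at $c_{t,a}$ gives
\[
\frac{\partial Y_{t,a}}{\partial y}\bigl(c_{t,a}+(\Delta x',\Delta y')\bigr)=2Q_{c_{t,a}}\Delta y'+O(|\Delta x'|)+O(|\Delta y'|^2),
\]
so that
\[
DF^{n+N}_{t,a}(x,y)=\begin{pmatrix} O(\lambda^n) & O(\mu^n) \\ O(\lambda^n) & 2Q_{c_{t,a}}\Delta y'\,\mu^n+o(1) \end{pmatrix}.
\]
The calibration $\delta=1/(4\tilde Q)$ forces the $(2,2)$ entry to stay bounded in modulus by $2\tilde Q\delta+o(1)=\tfrac{1}{2}+o(1)$, while $|\det DF^{n+N}_{t,a}|=O((\lambda\mu)^n)=o(1)$ by $(F3)$. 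Consequently both eigenvalues of $DF^{n+N}_{t,a}$ have modulus bounded by some $\rho_0<1$, uniformly over $(x,y)\in B^n_\delta$ and $(t,a)\in\mathcal A_n$.

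To turn this spectral bound into a genuine contraction, I equip $\mathbb R^2$ with the adapted norm $\|(u,v)\|_n=|u|+\mu^{2n}|v|$. Plugging the Jacobian bounds above into the mean value theorem gives an operator norm dominated by
\[
\max\bigl\{O(\lambda^n\mu^{2n}),\ \tfrac{1}{2}+O(\mu^{-n})\bigr\},
\]
which is $\leq\rho_1<1$ for $n$ large, using $\lambda\mu^2<1$ (a consequence of $(F3)$). Hence $F^{n+N}_{t,a}$ is a $\rho_1$-contraction on the convex set $B^n_\delta$ in $\|\cdot\|_n$, so the Brouwer fixed point is the unique periodic point of $F^{n+N}_{t,a}$ in $B^n_\delta$; the eigenvalue bound $\rho_0<1$ at that fixed point identifies it as a sink of $F_{t,a}$ of period $n+N$.

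The main obstacle is the $(2,2)$ entry of $DF^{n+N}_{t,a}$: it is the only entry of the Jacobian that is not automatically exponentially small in $n$, and the requirement that it remain below $1$ in modulus is precisely what pins down the choice of $\delta$ against $\tilde Q$ already made in Lemma~\ref{periodicpoint}. Keeping all estimates uniform over $\mathcal A_n$ via Remark~\ref{muzeroothersforan} is additional bookkeeping but does not alter the strategy.
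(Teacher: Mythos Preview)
Your proof is correct and follows essentially the same approach as the paper: compute $DF^{n+N}_{t,a}$ on $B^n_\delta$ as the composition of the linear part $\mathrm{diag}(\lambda^n,\mu^n)$ with $DF^N_{t,a}$ near $c_{t,a}$, observe that the only non-negligible entry is the $(2,2)$ entry $\approx 2Q\delta\le\tfrac12$, and conclude contraction. The only cosmetic difference is the final step: the paper dominates $(|\Delta x_k|,|\Delta y_k|)$ by iterates of a positive $2\times2$ matrix with trace $\tfrac34+O(\lambda^n)$ and determinant $O((\lambda\mu)^n)$, whereas you pass through the weighted norm $\|(u,v)\|_n=|u|+\mu^{2n}|v|$ and read off the operator norm directly; these two devices are equivalent (your weight is essentially a Perron--Frobenius eigenvector for the dominating matrix).
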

\begin{proof}
Because $F_{t,a}^n$ is linear on $B_{\delta}^n$, the image $F_{t,a}^n\left(B_{\delta}^n\right)$ is contained in a neighborhood of $c_{t,a}$ of diameter smaller than  $\delta\mu^{-n}\left(1+O\left({n}\mu^{-n}\right)\right)
$. From this and by differentiating (\ref{NstepsTaylorformula}) with respect to $\Delta x$ and $\Delta y$ we get
\begin{eqnarray*}
DF_{t,a}^N&=&\left(\begin{matrix}
O(1) &O(1) \\
O(1) & 2{\delta\mu^{-n} Q_{x,y}}\left(1+O\left({n}{\mu^{-n}}\right)\right)\\
\end{matrix}\right).
\end{eqnarray*}
Note that $N$ is fixed. Using again that $F_{t,a}^n$ is linear on $B_{\delta}^n$ we obtain that 
\begin{eqnarray*}
DF_{t,a}^{n+N}&=&\left(\begin{matrix}
O(1) &O(1) \\
O(1) &{2\delta }{\mu^{-n}}Q_{x,y}\left(1+O\left({n}{\mu^{-n}}\right)\right)
\end{matrix}\right)\left(\begin{matrix}
\lambda^n &0 \\
0& \mu^n
\end{matrix}\right)\\
&=&\left(\begin{matrix}
O\left(\lambda^n\right) & O\left(\mu^n\right) \\
O\left(\lambda^n\right)& 2\delta Q_{x,y}\left(1+O\left({n}{\mu^{-n}}\right)\right)
\end{matrix}\right).
\end{eqnarray*}
Let $D=\left(\begin{matrix}
O\left(\lambda^n\right) & O\left(\mu^n\right) \\
O\left(\lambda^n\right)& \frac{3}{4}
\end{matrix}\right)$ be a positive matrix and $\left(DF_{t,a}^{n+N}\right)^k(\Delta x,\Delta y)=(\Delta x_k,\Delta y_k)$, then by the choice of $\delta$
\begin{eqnarray*}
\left(\begin{matrix}
|\Delta x_{k+1}| \\
|\Delta y_{k+1}| 
\end{matrix}\right)\leq D\left(\begin{matrix}
|\Delta x_{k}| \\
|\Delta y_{k}| 
\end{matrix}\right).
\end{eqnarray*}
Observe that $\text{tr}(D)=\frac{3}{4}+O\left(\lambda^n\right)$ and $\text{det}(D)=O(\left(\lambda\mu\right)^n)$. As consequence, for $n$ large enough, $|\Delta x_{k}|, |\Delta y_{k}|\to 0$ exponentially fast. Hence, the periodic point in $B^n_{\delta}$ is a sink whose basin of attraction contains $B^n_{\delta}$.
\end{proof}

\subsection{Newhouse phenomenon: infinitely many sinks}\label{sec:Newhouse}
We are now ready to set up an induction procedure to prove the existence of maps having infinitely many sinks. This procedure uses ``Newhouse boxes". In the first generation, the Newhouse boxes are essentially rectangles in $\mathcal A_n$ whose boundary are defined by the curves of secondary tangencies, see Figure \ref{Fig6}. The family restricted to the Newhouse boxes of first generation have one sink and it is an unfolding of a new homoclinic tangency. The propositions and the lemmas proved in the previous sections apply then to these families creating Newhouse boxes of second generation. As a consequence, the family restricted to the Newhouse boxes of second generation have two sinks and it is an unfolding of a new homoclinic tangency. We proceed by induction.

Let $(t_{n,n_0},a_{n,n_0})$ be the parameters at the intersection point $ a_n\cap b_{n,n_0}$, see Proposition \ref{angle}, and let $$f_{n,n_0}=F_{t_{n,n_0},a_{n,n_0}}.$$ Recall that $b_{n,n_0}$ is the graph of a function  $b=b_{n,n_0}:[-t_{0},t_{0}]\mapsto\R$.  Consider its restriction to the interval 
$[t^{-}_{n,n_0}, t^{+}_{n,n_0}]$ where $t^{-}_{n,n_0}$ and $t^{+}_{n,n_0}$ are the two intersection points of the curve $b$ with the boundary of the strip $\mathcal A_n$, see Figure \ref{Fig6}. The existence of these intersection points is guaranteed by Lemma \ref{bnlocalinAn}. The domains,
$$
\mathcal {P}_{n,n_0}=\left\{(t,a)\left|\right.  t\in[t^{-}_{n,n_0}, t^{+}_{n,n_0}], |a-a_n(t)|\leq\frac{\epsilon_0}{\mu(t,a_n(t))^{2n}}\right\},
$$
are called the Newhouse boxes of first generation. 
\bigskip

The construction in the previous sections started with a map $f:\mathcal M\to \mathcal M$ with a strong homoclinic tangency and an unfolding $F:\mathcal P\times \mathcal M\to \mathcal M$. The following inductive construction will repeat the discussion of the previous sections starting with the map $f_{n,n_0}:\mathcal M\to \mathcal M$ and an unfolding $F:\mathcal P_{n,n_0}\times \mathcal M\to \mathcal M$ which is the restriction of the original family.
\begin{lem}\label{disjP}
The domains $\mathcal {P}_{n,n_0}$ are pairwise disjoint for $n$ large enough and the diameter goes to zero.
\end{lem}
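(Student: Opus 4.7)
The lemma's two claims---pairwise disjointness and diameters tending to zero---both reduce to separating the $a$- and $t$-scales appearing in the definition of $\mathcal{P}_{n,n_0}$. I would first dispose of the case $n\ne n'$. Since $\mathcal{P}_{n,n_0}\subset\mathcal{A}_n$, it suffices to show $\mathcal{A}_n\cap\mathcal{A}_{n'}=\emptyset$ for $n\ne n'$ large. Lemma \ref{Imtang} together with Remark \ref{c0=1} gives $a_n(t)=\mu(t,0)^{-n}(1+O(\lambda^n))$, so at each $t$ the curves $a_n$ and $a_{n'}$ are vertically separated by an amount of order $\mu^{-\min(n,n')}$, which dominates the sum of the $a$-half-widths $\epsilon_0(\mu^{-2n}+\mu^{-2n'})$ for $n,n'$ large (possibly after restricting the parameter domain slightly so that $\mu_{\min}^2>\mu_{\max}$, automatic by continuity for $t_0,a_0$ small).

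For the diameter and for disjointness at fixed $n$ with $n_0\ne n_0'$, I would first locate the crossing $\tilde t^{*}_{n,n_0}\in[t^{-}_{n,n_0},t^{+}_{n,n_0}]$ where $b_{n,n_0}$ meets $a_n$; existence and uniqueness follow from the second inequality in Lemma \ref{tangencyunicity}, which asserts that $z_y^{(3)}$ is monotone along $a_n$. The slope estimate of Lemma \ref{bnlocal}, $db/dt-da_n/dt= v\,n\lambda^{\theta n}+O(\lambda^{\theta n})$ with $v>0$, then yields
\begin{equation*}
|t^{\pm}_{n,n_0}-\tilde t^{*}_{n,n_0}|\;\lesssim\;\frac{\epsilon_0}{v\,n\,(\lambda^\theta\mu^2)^n}.
\end{equation*}
Combined with the $a$-half-width $\epsilon_0/\mu^{2n}$, this establishes the diameter claim provided $\lambda^\theta\mu^2>1$---which is immediate from \eqref{thetacond}, since $\lambda^{2\theta}\mu^3>1$ implies $\lambda^\theta\mu^{3/2}>1$, and multiplying by $\mu^{1/2}>1$ gives $\lambda^\theta\mu^2>\mu^{1/2}>1$.

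Finally, for two distinct types $n_0\ne n_0'$ at fixed $n$, the $a$-strip is common, so I must compare the gap $|\tilde t^{*}_{n,n_0}-\tilde t^{*}_{n,n_0'}|$ to the interval widths just obtained. At $\tilde t^{*}_{n,n_0}$, $z_y^{(3)}$ equals the value of $W_{n-n_0}$ at the tangency, of order $\mu^{-(n-n_0)}$; and by Lemma \ref{tangencyunicity}, $z_y^{(3)}$ varies along $a_n$ at rate $\sim n(\lambda^\theta\mu)^{2n}$. Shifting the type by one requires a value change of order $\mu^{-(n-n_0)}(\mu-1)$, producing a $t$-gap of order $\mu^{n_0-n}/(n(\lambda^\theta\mu)^{2n})$. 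Invoking Lemma \ref{n0} (or more precisely \eqref{alphaofuse}), $\mu^{n_0-n}$ and $(\lambda^\theta\mu)^{2n}$ are of the same order, so the gap is of order $1/n$; this dominates the interval width $1/(n(\lambda^\theta\mu^2)^n)$ by the divergent factor $(\lambda^\theta\mu^2)^n$. Hence the $t$-intervals, and therefore the boxes, are disjoint for $n$ large.

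The main technical obstacle is the bookkeeping of the several exponential scales $\mu^n$, $\lambda^{\theta n}$, $(\lambda^\theta\mu)^{2n}$, $\mu^{n_0}$, and the recognition that both the shrinkage of $t$-widths and the separation of consecutive $n_0$-crossings are ultimately governed by the single inequality $\lambda^\theta\mu^2>1$. Once that inequality is extracted from \eqref{thetacond}, every quantitative ingredient needed is already recorded in Lemmas \ref{Imtang}, \ref{tangencyunicity}, \ref{bnlocal}, and \ref{n0}, and the remaining arithmetic is routine.
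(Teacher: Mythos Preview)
Your argument is correct and follows essentially the same route as the paper: the paper also reduces to (i) disjointness of the $\mathcal{A}_n$, (ii) the horizontal size estimate $|t^{+}_{n,n_0}-t^{-}_{n,n_0}|\asymp 1/\bigl(n(\lambda^{\theta}\mu^{2})^{n}\bigr)$ obtained from the angle formula in Lemma~\ref{bnlocal}, (iii) the $1/n$ separation between consecutive $n_0$-crossings on $a_n$ (the paper cites the proof of Proposition~\ref{newtangency}, you rederive it from Lemma~\ref{tangencyunicity} and \eqref{alphaofuse}, which is the same computation), and (iv) the inequality $\lambda^{\theta}\mu^{2}>1$ extracted from \eqref{thetacond}. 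One minor remark: your parenthetical ``$\mu_{\min}^{2}>\mu_{\max}$'' is unnecessary---at fixed $t$ the gap $|a_n(t)-a_{n+1}(t)|\asymp\mu(t)^{-n}$ already dominates the half-widths $\epsilon_0\mu(t)^{-2n}$ for large $n$ without any uniformity condition across the parameter domain.
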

\begin{proof}
By Proposition \ref{angle} the curve $ b_{n,n_0} $ is the graph of the function $b$ and by the angle estimate in Lemma \ref{bnlocalinAn}, there exists a uniform constant $K>0$ such that
\begin{equation}\label{horizsize}
\frac{1}{K}\frac{1}{n\left(\lambda^{\theta}\mu^2\right)^n}\leq \left| \left\{t\left|\right. (t,b(t))\in\mathcal A_n\right\}\right|\leq K\frac{1}{n\left(\lambda^{\theta}\mu^2\right)^n}.
\end{equation}
Moreover the proof of Proposition \ref{newtangency} gives $\text{dist}\left(f_{n,n_0},f_{n,n_0+1}\right)$ is proportional to ${1}/{n}$. The disjointness follows from this estimates, the fact that $\lambda^{\theta}\mu^2\geq \left(\lambda^{2\theta}\mu^3\right)^{1/2}>1$, see (\ref{thetacond}) and the fact that $\mathcal A_n$ are pairwise disjoint.
\end{proof}
The next proposition ensures that the family restricted to $\mathcal P_{n,n_0}$ is again an unfoldings. This allows an inductive procedure.
\begin{prop}\label{induction}  
For $n$ large enough, the map $f_{n,n_0}$ has a strong homoclinic tangency and the restriction $F:\mathcal P_{n,n_0}\times \mathcal M\to\mathcal M$ can be reparametrized to become an unfoldings. Moreover each map in $\mathcal P_{n,n_0}$ has a sink of period $n+N$.
\end{prop}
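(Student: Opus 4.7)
The plan is to verify the three assertions in turn, leveraging the results already established in the previous sections. First I would check that $f_{n,n_0}$ satisfies the conditions $(f1)$--$(f8)$ of Definition \ref{stronghomtang}. Conditions $(f1)$ and $(f2)$ are immediate from $(F2)$ and $(F3)$ evaluated at $(t_{n,n_0},a_{n,n_0})$, and $(f3)$ persists because the $\Cq$ non-resonance condition is open on the parameter domain. The non-degenerate tangency required in $(f4)$ is $q_{1,n,n_0}$ itself, whose non-degeneracy is precisely the content of Lemma \ref{curvature}. Condition $(f5)$ is inherited from $(F5)$: the transversal intersection $q_2(t_{n,n_0},a_{n,n_0})$ of the ambient family remains a transversal intersection for $f_{n,n_0}$.

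The topological conditions $(f6)$--$(f8)$ are where I would spend most of the effort. For $(f6)$, the arc $[p,q_2]^u$ and the accumulation property of $W^u_{\mathrm{loc},n}(q_2)$ on it only involve the dynamics near $q_2$ and the saddle, both of which are unchanged for $f_{n,n_0}$. For $(f7)$, I would note that $q_{1,n,n_0}\in W^u_{\mathrm{loc}}(z^{(3)})\subset W^u(p)$; forward iterates of $W^u_{\mathrm{loc}}(q_{1,n,n_0})$ thus live on $W^u(p)$, and an application of the $\lambda$-lemma along the transversal intersection $q_2$ yields accumulation on $[p,q_2]^u$. For $(f8)$, an explicit backward integer $N'$ is produced by tracing the loop structure of Section~\ref{subsection:combinatorics}: $q_{1,n,n_0}\in F^N(W^u_{\mathrm{loc}}(z^{(2)})) = F^{2N+n}(W^u_{\mathrm{loc}}(z^{(1)})) = F^{3N+n+\theta n}(W^u_{\mathrm{loc}}(z))$, and $z$ itself is on $W^u(p)$ reachable from $[p,q_2]^u$. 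The integer $M'$ carries over unchanged from the ambient family.

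Next I would invoke Lemma \ref{lem:unfoldingsinneigh} to conclude that the restriction $F:\mathcal{P}_{n,n_0}\times\mathcal{M}\to\mathcal{M}$ is (up to a smooth reparametrization) an unfolding of $f_{n,n_0}$. For this it suffices to verify that $\mathcal{P}_{n,n_0}$ is contained in the neighborhood $U$ of the secondary tangency curve $b_{n,n_0}$ produced by that lemma. The horizontal extent of $\mathcal{P}_{n,n_0}$ is controlled by \eqref{horizsize} of Lemma \ref{disjP}, and the vertical width $\epsilon_0/\mu^{2n}$ is much smaller than the thickness of $U$ for $n$ large, since the slope estimate of Lemma \ref{bnlocal} keeps $b_{n,n_0}$ well inside $\mathcal{B}_n$.

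Finally, the sink assertion follows from Proposition \ref{sink}: by construction $\mathcal{P}_{n,n_0}\subset \mathcal{A}_n$ because the vertical bound on $|a-a_n(t)|$ matches exactly and the horizontal interval is a subinterval of $[-t_0,t_0]$. Hence for every $(t,a)\in\mathcal{P}_{n,n_0}$ there is a unique periodic point in $B^n_\delta(t,a)$, which is a sink of period $n+N$. \textbf{Main obstacle.} The principal difficulty is the verification of $(f6)$--$(f8)$ for the secondary tangency, where one must track how the stable preimages $W_{n-n_0}$ of $W^s_{\mathrm{loc}}(q_2)$ interact with $W^u(z^{(3)})$ and how the accumulation structure of the original unstable leg is inherited by the newly created tangency; the quantitative control of $z^{(3)}$ and its local unstable manifold provided by Lemmas \ref{hnbounds}, \ref{curvaturez3}, and \ref{curvature} is exactly what makes this verification possible.
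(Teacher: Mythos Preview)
Your proposal is correct and follows essentially the same approach as the paper: the paper's proof of this proposition is the single line ``The proof is given in Lemma~\ref{lem:unfoldingsinneigh}'', and the proof of that lemma verifies $(f1)$--$(f8)$, $(F1)$--$(F5)$, $(P1)$--$(P2)$ in exactly the manner you outline (with $(f7)$ deduced from $f^{n-n_0}(q_{1,n,n_0})\to q_2$ together with $(f6)$, and $(f8)$ from the fact that the preimage $m_1$ of the tangency lies in $W^u_{\mathrm{loc}}(0)$ near $q_3$). Your explicit handling of the sink assertion via $\mathcal{P}_{n,n_0}\subset\mathcal{A}_n$ and Proposition~\ref{sink} fills in the one step the paper leaves implicit.
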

\begin{proof}
The proof is given in Lemma \ref{lem:unfoldingsinneigh} and Proposition \ref{sink}.
\end{proof}
Inductively we are going to construct parameters with multiple sinks of higher and higher periods and a strong homoclinic tangency.
Let $\mathfrak N^1$ be the set of labels of the Newhouse boxes of first generation, i.e. all possible pairs $(n,n_0)$ for which there exists a Newhouse box $\mathcal P_{n,n_0}$. As inductive hypothesis assume that there exist sets $\mathfrak N^k\subset \N^2$ such that, each  $\mathfrak N^k$ is the set of labels of the Newhouse boxes of generation $k$. Moreover the natural projections $\mathfrak N^1\leftarrow\mathfrak N^2\leftarrow\dots\leftarrow\mathfrak N^g $ with $\mathfrak N^k\in\N^2$ are countable to $1$ and correspond to inclusion of Newhouse boxes of successive generations. They satisfy the following inductive hypothesis.

 For $\underline n=\left\{(n^{(k)},n_0^{(k)})\right\}_{k=1}^g$, there exist sets $\mathcal P^k_{n^{(k)},n_0^{(k)}}\subset\mathcal P$, where $n^{(k)}$ labels the sink and  $n_0^{(k)}$ labels the secondary tangency, such that
\begin{itemize}
\item $\text{diam}\left(\mathcal P^k_{n^{(k)},n_0^{(k)}}\right)\leq\frac{1}{k}$,
\item $\mathcal P^{k+1}_{n^{(k+1)},n_0^{(k+1)}}\subset \mathcal P^k_{n^{(k)},n_0^{(k)}}$ for $k=1,\dots,g-1$,
\item there exists a map $f^k_{n^{(k)},n_0^{(k)}}\in\mathcal P^k_{n^{(k)},n_0^{(k)}}$ which has a strong homoclinic tangency,
\item the restriction $F:{\mathcal P^k_{n^{(k)},n_0^{(k)}}}\times \mathcal M\to\mathcal  M$ can be reparametrized to become and unfoldings of $f^k_{n^{(k)},n_0^{(k)}}$,
\item every map in $\mathcal P^k_{n^{(k)},n_0^{(k)}}$ has at least $k$ sinks of different periods.
\end{itemize}
By induction, using Proposition \ref{induction},  we get an infinite sequence of sets $\mathfrak N^k$. 
\begin{figure}
\centering
\includegraphics[width=0.73\textwidth]{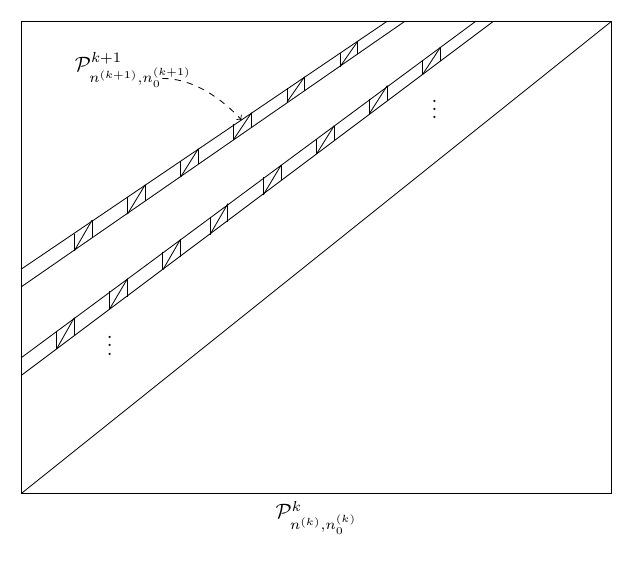}
\caption{Newhouse boxes}
\label{Fig7}
\end{figure}
\begin{defin}
The sets $\mathcal P^g_{n^{(g)},n_0^{(g)}}$ are called Newhouse boxes of generation $g$, see Figure \ref{Fig7}, and 
$$NH=\bigcap_g\bigcup_{\mathfrak N^g}\mathcal P^g_{n^{(g)},n_0^{(g)}}.$$ 
\end{defin}

The set $NH$, consisting of parameters for which the corresponding maps have infinitely many sinks, accumulates on the curve of the original tangency. The inductive construction of these parameters implies that the set $NH$ accumulates on all tangency curves given by $b_{n,n_0}$. We have the following lemma.

\begin{lem}\label{pacc}
\begin{equation*}
\overline{NH}\supset [-t_0,t_0]\times\left\{0\right\}.
\end{equation*}
In particular,
\begin{equation}\label{NHinter}
\overline{NH}\cap \mathcal {P}_{n,n_0}\supset\text{\rm graph}(b_{n,n_0})\cap \mathcal {P}_{n,n_0}.
\end{equation}
\end{lem}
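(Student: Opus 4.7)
The plan is a nested-selection argument driven by the inductive construction of Newhouse boxes. The central observation is that Proposition \ref{induction} makes the construction self-similar: each first-generation box $\mathcal{P}_{n,n_0}$ supports (after reparametrization) an unfolding of $f_{n,n_0}$ in which $b_{n,n_0}$ plays the role that $\{a=0\}$ plays in the original family. Consequently, all results of Sections 3--5 can be invoked inside $\mathcal{P}_{n,n_0}$, producing second-generation tangency curves $b^{(2)}_{m,m_0}$ and Newhouse boxes $\mathcal{P}^{2}_{m,m_0}$, with diameters tending to zero by the inductive hypothesis.

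I would first establish (\ref{NHinter}). Fix $p=(t^{*},b_{n,n_0}(t^{*}))$ and $\varepsilon>0$. The analog of Proposition \ref{angle} applied to the sub-unfolding shows that the curves $b^{(2)}_{m,m_0}$ accumulate uniformly on $b_{n,n_0}$ as $m\to\infty$; the analog of Lemma \ref{disjP} shows that, for fixed large $m$ and varying $m_0$, the intersection points $a^{(2)}_{m}\cap b^{(2)}_{m,m_0}$ are spaced at horizontal distance of order $1/m$. Choosing $m$ large and $m_0$ appropriately, I can place a second-generation Newhouse box $\mathcal{P}^{2}_{m,m_0}$ within $\varepsilon/2$ of $p$. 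Iterating Proposition \ref{induction} inside $\mathcal{P}^{2}_{m,m_0}$, I select $\mathcal{P}^{3}\subset\mathcal{P}^{2}_{m,m_0}$, then $\mathcal{P}^{4}\subset\mathcal{P}^{3}$, and so on. This yields a nested sequence of compact sets with diameters tending to zero, whose intersection is a single point $p^{*}\in NH$ satisfying $|p^{*}-p|<\varepsilon$.

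The first assertion then follows quickly. Given $(t^{*},0)$ and $\varepsilon>0$, note that $a_{n}(t)\sim\mu^{-n}$ (from (\ref{munan})) and the strip $\mathcal{B}_{n}$ has vertical width of order $\lambda^{\theta n}+\mu^{-n-n_{0}/2}$, so for $n$ large $\mathcal{B}_{n}\subset\{|a|<\varepsilon/2\}$. Lemma \ref{disjP} provides first-generation boxes $\mathcal{P}_{n,n_0}$ separated by horizontal distance of order $1/n$, so for $n$ sufficiently large I select $n_{0}$ such that $\mathcal{P}_{n,n_0}$ contains a point with $t$-coordinate within $\varepsilon/2$ of $t^{*}$. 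Applying (\ref{NHinter}) to that point produces an element of $NH$ within $\varepsilon$ of $(t^{*},0)$.

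The main obstacle I anticipate is ensuring that the inductive construction really supplies second-generation Newhouse boxes densely enough near $b_{n,n_0}$. This requires verifying that the analogs of Propositions \ref{newtangency} and \ref{angle} and of Lemma \ref{disjP} hold on the sub-unfolding with constants that do not degenerate as the generation grows, i.e., that the hypothesis $(F3)$, the non-resonance condition, and the uniform choice of $\theta$ all survive the inductive step. These uniformity statements are packaged into the inductive hypothesis preceding the definition of generation-$g$ Newhouse boxes; the delicate part is the bookkeeping that keeps the implicit constants under control as one passes from generation to generation.
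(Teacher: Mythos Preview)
Your argument is correct and rests on the same mechanism as the paper: self-similarity of the construction via Proposition~\ref{induction}, accumulation of tangency points supplied by Proposition~\ref{newtangency}, and the resulting nested Newhouse boxes with shrinking diameter. The paper's proof is essentially a three-sentence compression of what you wrote: it shows directly that first-generation boxes accumulate on $(t,0)$ and then invokes in one line that ``by construction, each box in $\mathfrak{N}_1$ contains points of $NH$,'' which is exactly the nested-selection argument you spelled out. The only organizational difference is that the paper proves the first assertion first and derives (\ref{NHinter}) as its self-similar analogue inside $\mathcal{P}_{n,n_0}$ (hence the phrase ``In particular''), whereas you prove (\ref{NHinter}) first and specialize it to get the first assertion; both orders are valid. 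Your closing worry about uniformity of constants is already handled by the inductive hypotheses listed just before the definition of generation-$g$ Newhouse boxes, so no additional work is needed there.
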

\begin{proof}
Given $(t,0)$, by Proposition \ref{newtangency}, for every $n$ large enough, $a_n$ has a secondary tangency at $(t_n,a_n(t_n))$ with $|t-t_n|=O\left(\frac{1}{n}\right)$. Hence, there exists a sequence of Newhouse boxes in $\mathfrak{N_1}$ accumulating at $(t,0)$. By construction, each box in $\mathfrak{N_1}$ contains points of $NH$. The lemma follows.
\end{proof}
Given any family $F$ of diffeomorphisms, we define the Newhouse set $NH_F$ as the set of parameters having infinitely many sinks.
The upper Minkowski dimension is denoted by $MD$.

%

\begin{theo}\label{Newhousepoints}
Let $F:\mathcal P\times\mathcal M\to\mathcal  M$ be an unfolding then 
\begin{itemize}
\item[-] $NH\subset NH_F$, every map in $NH$ has infinitely many sinks,
\item[-] $NH$ is homeomorphic to $\R\setminus\Q$,
\item[-]$MD(NH)\geq\frac{2}{3}$. 

\end{itemize}
\end{theo}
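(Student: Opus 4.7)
The plan is to establish the three statements in order, all three following from the inductive Newhouse-box construction set up above and the shrinking-diameter property $\mathrm{diam}(\mathcal{P}^k_{\underline n}) \to 0$ built into the inductive hypothesis.

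For the inclusion $NH \subset NH_F$, fix any $p \in NH$. Then $p$ lies in an infinite descending chain $\mathcal{P}^1_{\underline n^{(1)}} \supset \mathcal{P}^2_{\underline n^{(2)}} \supset \cdots$ of Newhouse boxes. The inductive hypothesis recalled just before Proposition~\ref{induction} guarantees that every map in $\mathcal{P}^k_{\underline n^{(k)}}$ has at least $k$ sinks of pairwise distinct periods; at each inductive step the newly produced sink has strictly larger period. Since $p \in \mathcal{P}^k_{\underline n^{(k)}}$ for every $k$, the map $F_p$ has sinks of arbitrarily many distinct periods, hence infinitely many sinks, so $p \in NH_F$.

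For the homeomorphism with $\R\setminus\Q$ I would build an explicit bijection with the Baire space $\N^\N$. Every box $\mathcal{P}^g_{\underline n}$ contains countably infinitely many children of generation $g+1$: the restriction of the family to $\mathcal{P}^g_{\underline n}$ is itself an unfolding (Proposition~\ref{induction}), so the whole generation-one construction applies inside and produces at least one admissible child box for every sufficiently large $n$. Nestedness combined with $\mathrm{diam}(\mathcal{P}^g_{\underline n})\to 0$ implies that every infinite compatible address sequence $(\underline n^{(g)})_g$ determines a unique point of $NH$, and every point of $NH$ is so obtained. Pairwise disjointness at each generation (Lemma~\ref{disjP} and its inductive analogue) makes the traces of $\mathcal{P}^g_{\underline n}$ on $NH$ a clopen partition, so the address map is a homeomorphism from $NH$ onto $\N^\N$, which is well known to be homeomorphic to $\R\setminus\Q$.

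For the dimension lower bound I would estimate $N(\epsilon)$ from below using only the first-generation boxes at a well-chosen scale. Fix $n$ large and set $\epsilon = 1/\mu_{\max}^{2n}$. Inside the strip $\mathcal{A}_n$, the width estimate \eqref{horizsize} and the $1/n$-spacing from the proof of Lemma~\ref{disjP} give of order $n$ pairwise disjoint boxes $\mathcal{P}_{n,n_0}$, each of horizontal width $w_n$ of order $1/(n(\lambda_{\max}^\theta \mu_{\max}^2)^n)$ and vertical size of order $\epsilon$. By Proposition~\ref{induction} the restriction of the family to each $\mathcal{P}_{n,n_0}$ is itself an unfolding, and the inductive application of the generation-one construction scatters descendants across essentially the entire horizontal extent of each first-generation box; at scale $\epsilon$ this gives on the order of $w_n/\epsilon = 1/(n\lambda_{\max}^{\theta n})$ pairwise $\epsilon$-separated $NH$-points per box. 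Summing over the $n$ first-generation boxes in $\mathcal{A}_n$ yields $N(\epsilon) \geq C\,\lambda_{\max}^{-\theta n}$ for a uniform $C>0$, so
\[
MD(NH) \geq \limsup_{n\to\infty}\frac{\log\lambda_{\max}^{-\theta n}}{\log\mu_{\max}^{2n}} = \frac{\theta\log(1/\lambda_{\max})}{2\log\mu_{\max}}.
\]
The condition $\theta > \tfrac{4}{3}\log\mu_{\max}/\log(1/\lambda_{\max})$ from \eqref{thetacond1} then forces the right-hand side to be at least $2/3$.

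The main obstacle will be the horizontal-spread claim inside a single first-generation box $\mathcal{P}_{n,n_0}$ used in the dimension estimate: namely that at scale $\epsilon = 1/\mu_{\max}^{2n}$ the $NH$-points fill the box densely enough to contribute a factor of order $w_n/\epsilon$. This is a self-similar statement about the Cantor structure of $NH$; the natural route is to track how the generation-two sub-boxes are distributed across $\mathcal{P}_{n,n_0}$ using the slope estimate \eqref{eq:angle1} of Lemma~\ref{bnlocal} and the uniform eigenvalue inequality $\lambda^\theta\mu^2 > 1$ from \eqref{thetacond}, which together should yield the required horizontal scattering.
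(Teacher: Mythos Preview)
Your arguments for the first two items match the paper's: the infinitely-many-sinks claim is immediate from the inductive hypothesis, and the $\R\setminus\Q$ claim follows from the nested countable-branching structure via the standard characterization of the Baire space.

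For the dimension bound you take a different path from the paper. You propose to sum over the $\sim n$ first-generation boxes in $\mathcal A_n$ and, within each, to establish horizontal $\epsilon$-spread of $NH$ by tracking the second-generation self-similar structure---a claim you rightly flag as the main obstacle. The paper avoids this entirely: it works with a \emph{single} first-generation box $\mathcal P_{n,n_0}$ near the point $(t^*,0)$ where $\log(1/\lambda)/\log\mu$ is maximal, and invokes Lemma~\ref{pacc}, specifically \eqref{NHinter}, which says that $\overline{NH}\cap\mathcal P_{n,n_0}$ already contains the tangency curve $\text{graph}(b_{n,n_0})$. That curve has horizontal extent $\asymp 1/(n(\lambda^\theta\mu^2)^n)$ by \eqref{horizsize}, so covering it with $\epsilon$-balls (with $\epsilon\asymp\mu^{-2n}$, the vertical size of the box) already forces $\asymp 1/(n\lambda^{\theta n})$ balls; the polynomial factor $n$ is harmless in the $\limsup$. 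Thus the horizontal spread you need is a corollary of the density statement already established in Lemma~\ref{pacc}, and no second-generation tracking is required. Your route could plausibly be completed, but it would amount to reproving that density by hand at scale $\epsilon$; the paper's route is immediate once \eqref{NHinter} is available. A minor further point: mixing $\lambda_{\max}$ and $\mu_{\max}$ in your final ratio requires some care since the two extrema need not be attained at the same parameter; the paper's localization at a single $(t^*,0)$ sidesteps this bookkeeping.
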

\begin{proof} The inductive construction, using  Proposition \ref{induction}, implies that all maps in $NH$ have infinitely many sinks. From  Lemma \ref{disjP} we know that 
$$
\bigcup_{\mathfrak N^g}\mathcal P^g_{n^{(g)},n_0^{(g)}},
$$
consists of countably many disjoint boxes. Each box $P^g_{n^{(g)},n_0^{(g)}}$ contains countably many pairwise disjoint boxes of the next generation. Hence, the nested intersection $NH$ is homeomorphic to $\R\setminus\Q$, see \cite{Gaal}. 

For the last property, let $(t^*,0)$ such that ${\log{\lambda(t^*,0)^{-1}}}/{\log\mu(t^*,0)}$ is the maximum of ${\log{\lambda(t,0)^{-1}}}/{\log\mu(t,0)}$. Consider a sequence of first generation Newhouse boxes $\mathcal {P}_{n,n_0}\in\mathfrak{N_1}$ accumulating at $(t^*,0)$. This is possible because of Lemma \ref{pacc}. Choose $\epsilon>0$ and let $n$ be maximal such that $\epsilon\leq {\epsilon_0}/{\mu(t^*,a^*)^{2n}}$. 
 Because (\ref{horizsize}), (\ref{NHinter}) and  the fact that the vertical size of $\mathcal {P}_{n,n_0}$ is ${\epsilon_0}/{\mu(t^*,a^*)^{2n}}$, we need at least ${K}/{\lambda(t^*,a^*)^{\theta n}}$ balls of radius $\epsilon$ to cover $\overline{NH}\cap \mathcal {P}_{n,n_0}$. As consequence $$MD(NH)\geq \frac{\theta}{2}\max_{NH}\left(\frac{\log\frac{1}{\lambda}}{\log\mu}\right),$$
 and $MD(NH)\ge{2}/{3}$, 
  where we used  (\ref{thetacond1}).
\end{proof}
\begin{rem}
Observe that the estimate for the upper Minkowski dimension is not sharp.  Other dimension estimates for maps with infinitely many sinks were obtained in \cite{BDS, TY, W}. 
\end{rem}
%

\section{Dynamical Partitions, Directed Graphs and Strips}
In this section, we introduce the objects and the induction step (see Proposition \ref{Prop:inductionstep}) that will allow to build later the main player of this work: the lamination and the invariant Cantor set.

\subsection{Dynamical Partitions}
We give now the definition of a dynamical partition associated to a map. The reader can refer to Figure \ref{dynamicalpartition}. The definition models the combinatorics as described in Subsection \ref{subsection:combinatorics}.

\begin{figure}
\centering
\includegraphics[width=0.6\textwidth]{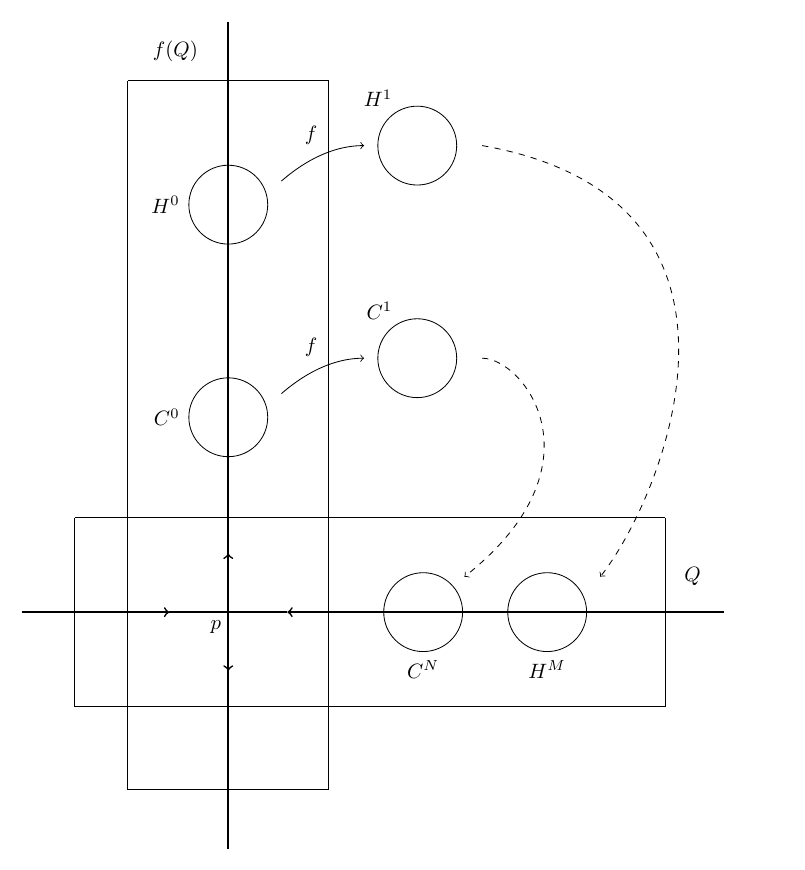}
\caption{$(1,N,M)$ dynamical partition of $f$}
\label{dynamicalpartition}
\end{figure}
Recall the rescaling done in Subsection \ref{RescaledFamilies} while introducing our unfoldings and let $Q$ be the domain of linearization. We denote by $W^u_{\text{loc}}(p)$ the connected component of $W^u(p)\cap Q$ and by $W^s_{\text{loc}}(p)$ the connected component of $W^s(p)\cap Q$ both containing $p$.
\begin{defin}\label{def:dynamicalpartition}
Let $\mathcal  M$ be a $m$-dimensional manifold. A map $f:\mathcal M\to\mathcal  M$ has a $(1,N,M)$ dynamical partition 
$$
\mathfrak{D}=\left\{Q, C^0,C^1,\dots, C^{N-1}, H^0,H^1,\dots,H^{M-1}\right\},
$$
where $N$ and $M$ are positive natural numbers, $Q, C^i, Hj$ are open sets for all $i\in\left\{0,\dots,N-1\right\}$ and $j\in\left\{0,\dots,M-1\right\}$ if the following holds.
\begin{enumerate}
    \item The elements of $\mathfrak{D}$ are pairwise disjoints.
    \item The map $f$ has a saddle point $p\in Q$.
    \item The $C$-sets satisfy the followings:
    \begin{itemize}
        \item $C^0\subset f(Q)\setminus Q$,
        \item $f:C^k\to C^{k+1}$, for all $k\in\left\{0,1,\dots, N-2\right\}$ is a diffeomorphism,
        \item $f:C^{N-1}\to f\left(C^{N-1}\right)=C^N$ is a diffeomorphism,
        \item $C^{N}\subset Q\setminus f(Q)$,
        \item $C^0\cap f\left(W^u_{\text{loc}}(p)\right)\neq\emptyset$,
     \item $C^N\cap W^s_{\text{loc}}(p)\neq\emptyset$.
    \end{itemize}
     \item The $H$-sets satisfy the followings:
    \begin{itemize}
        \item $H^0\subset f(Q)\setminus Q$,
        \item $f:H^k\to H^{k+1}$, for all $k\in\left\{0,1,\dots, M-2\right\}$ is a diffeomorphism,
        \item $f:H^{M-1}\to f\left(H^{M-1}\right)=H^M$ is a diffeomorphism,
        \item $H^{M}\subset Q\setminus f(Q)$,
        \item $H^0\cap f\left(W^u_{\text{loc}}(p)\right)\neq\emptyset$,
     \item $H^{M}\cap W^s_{\text{loc}}(p)\neq\emptyset$,
     \item $f^M\left(H_0\cap f\left(W^u_{\text{loc}}(p)\right)\right)$ intersects transversally $W^u_{\text{loc}}(p)$ in the point $q_2$.
    \end{itemize}
\end{enumerate}
\end{defin}
\begin{defin}
    Let $\mathcal M$ be a $m$-dimensional manifold, $f:\mathcal M\to \mathcal M$ a map and let $\mathfrak{D}$ be the $(1,N,M)$ dynamical partition of $f$. We define the \it{mesh} of $\mathfrak{D}$ as
    $$\text{mesh}\left(\mathfrak{D}\right)=\max_{U\in\mathfrak{D}}\left(\text{diam(U)}\right), $$
    where $\text{diam(U)}$ is the diameter of $U$. Moreover we denote by $\mathcal{D}$ the union of elements in $\mathfrak{D}$. Namely,
    $$\mathcal{D}=\bigcup_{U\in\mathfrak{D}}U.$$
   
\end{defin}

\subsection{Directed Graphs}
Fixed two positive natural numbers, $N$ and $M$, we define the $(1,N,M)$ directed graph.
The reader can refer to Figure \ref{directedgraph}.  As before this models the combinatorics as described in Subsection \ref{subsection:combinatorics}.

\begin{defin}
   Given two positive natural numbers, $N$ and $M$, a directed graph $X$ is called the $(1,N,M)$ directed graph if it has $1+N+M$ vertices, $q,c^0,\dots,c^{N-1}, h^0,\dots,h{M-1}$ and unique edges:
   \begin{eqnarray*}
        \overrightarrow{qq}&&\\ \overrightarrow{qh^0}& \overrightarrow{h^kh^{k+1}} \text{ for all }k\in\left\{1,\dots,M-2\right\} & \overrightarrow{h^{M-1}q}
        \\\overrightarrow{qc^0}& \overrightarrow{c^kc^{k+1}} \text{ for all }k\in\left\{1,\dots,N-2\right\} & \overrightarrow{c^{N-1}q}
        \end{eqnarray*}
        The sub graph formed by $q$ is called the $q$-loop and is denote by $\lambda^q$.\\
        The sub graph formed by $q,h^0,\dots, h^{M-1}$ is called the $h$-loop and is denote by $\lambda^h$.\\
        The sub graph formed by $q,c0,\dots, c^{M-1}$ is called the $c$-loop and is denote by $\lambda^c$.\\
        The shortest path from the vertex $v_1$ to the vertex $v_2$ is denoted by $ \overrightarrow{v_1v_2}$.
\end{defin}

\begin{rem}
    Observe that $q$ has three ingoing and outgoing edges. All other vertices have exactly one ingoing and one outgoing edge. In particular each vertex other than $q$ has a unique predecessor and a unique successor.
\end{rem}
   Let $X$ be the $(1,N,M)$ directed graph. A loop $\lambda$ is a path in $X$ which starts and ends in $q$. The loops $\lambda^q$, $\lambda^h$ and $\lambda^c$ are called the prime loops of $X$, respectively $q$-loop, $h$-loop and $c$-loop. Observe that a loop $\lambda$ is a concatenation of prime loops $\gamma_i$, namely,
   $$
   \lambda=\gamma_u\ast\cdots\ast\gamma_1.
   $$
The winding of $\lambda$ is the word
$$
\omega(\lambda)\in\left\{q,h,c\right\}^n,
$$
such that $\gamma_i=\lambda_{\omega(\lambda)(i)}$ for all $i\in\left\{1,\dots,u\right\}$.

\begin{figure}
\centering
\includegraphics[width=0.6\textwidth]{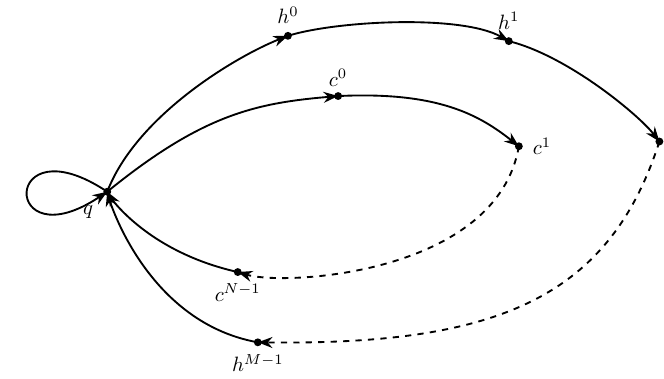}
\caption{$(1,N,M)$ directed graph}
\label{directedgraph}
\end{figure}
\paragraph{Graph Morphisms.} Given $N,M,\tilde N,\tilde M$ positive natural numbers, consider the $(1,N,M)$ directed graph $X$, the $(1,\tilde N,\tilde M)$ directed graph $\tilde X$ and assume there is a graph morphism
$$w:\tilde X\mapsto X.$$
Observe that, because $q$ is the only vertex in $X$ which has an edge from itself to itself then $w(\tilde q)=q$ and that the image of every loop in $\tilde X$ is a loop in $X$. In particular, the graph morphism $w$ is completely determined by the windings of the primary loops, 
$$
\underline\omega(w)=\left(\omega\left(w(\lambda_{\tilde q})\right),\omega\left(w(\lambda_{\tilde h})\right), \omega\left(w(\lambda_{\tilde c})\right)\right).$$
\paragraph{Projections.}

In the following lemma we introduce the {\it projection} of $D$ onto $X$. 
\begin{lem}
Let $f$ be a map. Consider $\mathfrak{D}$ the $(1,N,M)$ dynamical partition of $f$ and $X$ the $(1,N,M)$ directed graph $X$. There exists a map $\pi:\mathcal{D}\mapsto X$ satisfying the following properties:
\begin{enumerate}
    \item for all vertices $v\in X$, $\pi^{-1}(v)\in\mathfrak{D}$,
    \item if $x\in \mathcal{D}$ and $f(x)\in \mathcal{D}$ $\implies$ there exists an edge $\overrightarrow{\pi(x)\pi\left(f(x)\right)}$,
    \item if there is an edge $\overrightarrow{v_1v_2}$ $\implies$ there exists $x\in\pi^{-1}(v_1)$ such that $f(x)\in\pi^{-1}(v_2)$.
 
\end{enumerate}
The map $\pi$ is called the projection of $\mathcal{D}$ on $X$. 
\end{lem}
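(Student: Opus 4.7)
The plan is to define $\pi$ as the natural projection sending each element of $\mathfrak{D}$ to its corresponding vertex of $X$: set $\pi(x)=q$ if $x\in Q$, $\pi(x)=c^i$ if $x\in C^i$, and $\pi(x)=h^j$ if $x\in H^j$. Because the elements of $\mathfrak{D}$ are pairwise disjoint, this is single-valued and well-defined on $\mathcal{D}$. Property (1) is then tautological: $\pi^{-1}(q)=Q$, $\pi^{-1}(c^i)=C^i$, $\pi^{-1}(h^j)=H^j$, each of which is a member of $\mathfrak{D}$.

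For property (2), I would argue by case analysis on the element of $\mathfrak{D}$ containing $x$. If $x\in C^k$ with $k\le N-2$, the defining property $f(C^k)=C^{k+1}$ forces $f(x)\in C^{k+1}$, yielding the edge $\overrightarrow{c^k c^{k+1}}$; if $x\in C^{N-1}$ then $f(x)\in Q$ since $f(C^{N-1})\subset Q\setminus f(Q)$, giving $\overrightarrow{c^{N-1}q}$. The $H$-chain is treated symmetrically. The remaining case $x\in Q$ needs a small argument to rule out that $f(x)$ lies in $C^k$ or $H^k$ for some $k\ge 1$: using that $f$ is a diffeomorphism, if $f(x)\in C^k=f(C^{k-1})$ with $k\ge 1$, then injectivity gives $x\in C^{k-1}$, contradicting disjointness with $Q$. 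Hence $f(x)\in Q\cup C^0\cup H^0$, producing one of the three edges $\overrightarrow{qq}$, $\overrightarrow{qc^0}$, or $\overrightarrow{qh^0}$.

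For property (3), each edge is realized by an explicit point. The self-loop $\overrightarrow{qq}$ is witnessed by the fixed saddle $p\in Q$. The edges $\overrightarrow{qc^0}$ and $\overrightarrow{qh^0}$ exist because $C^0,H^0\subset f(Q)$, so each contains $f(x)$ for some $x\in Q$. The chain edges $\overrightarrow{c^k c^{k+1}}$ and $\overrightarrow{h^k h^{k+1}}$ follow from the diffeomorphism property $f(C^k)=C^{k+1}$ and $f(H^k)=H^{k+1}$. Finally, $\overrightarrow{c^{N-1}q}$ and $\overrightarrow{h^{M-1}q}$ are realized by the inclusions $f(C^{N-1}),f(H^{M-1})\subset Q$ from Definition~\ref{def:dynamicalpartition}.

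The main obstacle is not computational but conceptual: the verification that the combinatorial structure of $X$ matches exactly the dynamical transitions allowed by $\mathfrak{D}$. The only genuinely non-tautological step is the injectivity argument that rules out a point in $Q$ landing in the middle of a $C$- or $H$-chain; I would state it as a single lemma early in the proof and then invoke it in each relevant case, so that the rest of the verification reduces to reading off the transitions listed in Definition~\ref{def:dynamicalpartition}.
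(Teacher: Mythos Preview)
Your proposal is correct and is exactly the natural argument; the paper in fact states this lemma without proof, treating it as an immediate consequence of the definitions of the $(1,N,M)$ dynamical partition and the $(1,N,M)$ directed graph. Your case analysis, including the injectivity observation ruling out $Q\to C^k$ or $Q\to H^k$ for $k\ge 1$, is the straightforward verification the authors leave implicit.
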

\subsection{Strips}
We introduce strips in the plane that will enable us later to construct, inductively, the object of primary interest to us: the lamination. Observe that the strip are an abstract generalization of the $\mathcal B_n$ strips built earlier to contain the curves of secondary tangencies. 
We start by defining $r$-tangency strips, with $r$ a positive number; see Figure \ref{strips}.

\begin{defin}
    Let $r>0$, $r\in\mathbb N$ and let $F$ be an unfolding. An $r$-tangency strip $S$ is an open region in $[-t_0,t_0]\times[-a_0,a_0]$ satisfying the following:
    \begin{itemize}
        \item $S$ is bounded by the graph of two functions $s_l,s_u:[-t_0,t_0]\to\mathbb R$ such that, for all $t\in[-t_0,t_0]$,
        \begin{eqnarray*}
            \left|s_l(t)-s_u(t)\right|\leq C\rho^{r},
        \end{eqnarray*}
        where $0<\rho<1$, $\rho<\max_t\left\{\frac{1}{\mu(t)}\right\}$, and $C$ is a positive constant.
        \item $S$ contains $r$ curves $\left\{b_k\right\}_{k=1}^r$, which are graph of functions $b_k:[-t_0,t_0]\mapsto\mathbb R$ for all $k$, such that the following holds: 
        \begin{itemize}
            \item[(i)] $\forall$ $k\in\left\{1,\dots,r\right\}$ and for all $t\in[-t_0,t_0]$, $F_{t,b_k(t)}$ is a map with a strong homoclinic tangency,
            \item[(ii)] $\forall$ $k\in\left\{1,\dots,r\right\}$ there exists a neighborhood of $b_k$, $U_k$, such that $F_{|U_k}$ is an unfolding of $F_{t,b_k(t)}$,
            \item[(iii)] $\forall$ $k,k'\in\left\{1,\dots,r\right\}$, $U_k\cap U_{k'}=\emptyset$,
            \item[(iv)] $\forall$ $k\in\left\{1,\dots,r\right\}$ there exist positive numbers $N_k,M_k$ such that, for all $f\in U_k$, the map $f$ has a $(1,N_k,M_k)$ dynamical partition $\mathfrak{D}_k(f)$ which depends continuously on the parameters.

        \end{itemize}
    \end{itemize}
    For all $k\in\left\{1,\dots,r\right\}$, the pair $\left\{b_k,U_k\right\}$ is called a $(1,N_k,M_k)$ descendant of $S$.
\end{defin}
\begin{rem}\label{rem:renormaps}
Let $f\in S$, then $f$ has a $(1,N,M)$ dynamical partition. We identify $f$ with its restriction to this dynamical partition. If in particular $f\in U_k$ for some $k$, then it has a finer dynamical partition $(1,N_k,M_k)$ which describes the dynamics on a smaller scale.  For this reason we call the restriction of $f$ to the finer dynamical partition the renormalization of $f$. 
\end{rem}

\begin{figure}
\centering
\includegraphics[width=1\textwidth]{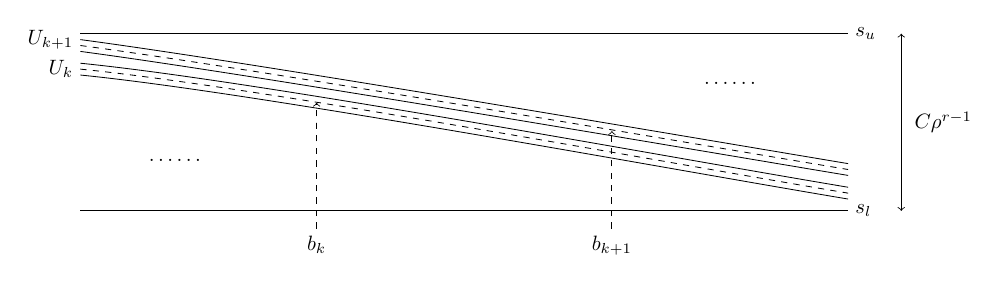}
\caption{A $r$-tangency strip}
\label{strips}
\end{figure}
\subsection{The induction step}
In this subsection we present the induction step that will allow later to build the lamination and the invariant Cantor set.

\begin{prop}\label{Prop:inductionstep}
    Let $S$ be a $r$-tangency strip and let $\left\{b,U\right\}$ be a $(1,N,M)$ descendant of $S$. Then there exist $C>0$ and $n\in\mathbb N$, $n>>r$ large enough and a countable sequence of regions $\left\{S_j\right\}_{j=1}^{\infty}\subset U\subset S$ satisfying the following:
    \begin{itemize}
        \item[(i)] $\forall j$, $S_j$ is a $(\sqrt{n+j})$-tangency strip,
        \item[(ii)]$\forall j,j'$ with $j\neq j'$, $S_j\cap S_j'=\emptyset$.    \end{itemize}
        Moreover, for all $j$ and for all $k\in\left\{1,\dots,\sqrt{n+j}\right\}$ let $\left\{b_k^{(j)}, U_k^{(j)} \right\}$ be a $\left(1, N_k^{(j)}, M_k^{(j)}\right)$ descendant of $S_j$. Then the following holds:
        \begin{itemize}
            \item[(iii)] $\forall t\in\left[-t_0,t_0\right]$,
            \begin{eqnarray*}
               \left|b_k^{(j)}(t)-b(t)\right|\leq\frac{C}{\mu(t)^{n+j}},
            \end{eqnarray*}
            \item[(iv)]$\forall t\in\left[-t_0,t_0\right]$,
            \begin{equation}\label{eq:angleall}
\left|\frac{db_k^{(j)}}{dt}(t)-\frac{db}{dt}(t)\right|\leq C\frac{n+j}{\left(\lambda(t)^{\theta}\mu(t)^2\right)^{n+j}},  \end{equation}           
\item[(v)] $\forall f\in U_k^{(j)}\subset U$, let $\mathfrak{D}^{(j)}_k(f)$ be the $\left(1, N_k^{(j)}, M_k^{(j)}\right)$ dynamical partition of $f$ and let $\mathfrak{D}(f)$ be the $\left(1, N, M\right)$ dynamical partition of $f$. Then $\mathfrak{D}^{(j)}_k(f)$ is a refinement of $\mathfrak{D}(f)$ and 
            \begin{eqnarray*}
                \text{mesh}\left(\mathfrak{D}^{(j)}_k(f)\right)<\frac{1}{2}\cdot\text{mesh}\left(\mathfrak{D}(f)\right).
            \end{eqnarray*}
        \end{itemize}
        Furthermore, if $X$ is the $\left(1,N,M\right)$ directed graph, $X_k^{(j)}$ is the $\left(1,N_k^{(j)}, M_k^{(j)}\right)$ directed graph, $i:\mathcal{D}_k^{(j)}\mapsto \mathcal{D}$ is the inclusion, $\pi:\mathcal{D}\mapsto X$ is the projection of $\mathcal{D}$ on $X$ and $\pi_k^{j}:\mathcal{D}_k^{(j)}\mapsto X_k^{(j)}$ is the projection of $\mathcal{D}_k^{(j)}$ on $X_k^{(j)}$, there exists a unique graph morphism $w_k^{(j)}:  X_k^{(j)}\mapsto X$ such that 
        \begin{eqnarray*}
            \pi\circ i_k^{(j)}=w_k^{(j)}\circ\pi_k^{(j)}.
        \end{eqnarray*}
        
      \end{prop}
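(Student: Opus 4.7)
The strategy is to replay the entire construction of Sections~4 and 5 inside the neighborhood $U$ of the descendant curve $b$. By hypothesis $F_{|U}$ can be reparametrized to an unfolding of the strong homoclinic tangency carried by $b$, so the machinery developed for the ambient family $F$ applies verbatim to $F_{|U}$. This yields, for each sufficiently large $m$, a critical-value curve $a_m^{(U)}$ (Lemma~\ref{Imtang}), a strip $\mathcal{B}_m^{(U)}$ around $a_m^{(U)}$ of the form \eqref{eq:Bn}, and a family of global curves $b_{m,n_0}^{(U)}$ of secondary tangencies inside $\mathcal{B}_m^{(U)}$ (Proposition~\ref{angle}). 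We then set $S_j := \mathcal{B}_{n+j}^{(U)}$ for $j\geq 1$, choosing $n$ large enough (that is $n\gg r$) so that $S_1 \subset U$ and the thickness bound in the definition of a tangency strip is met with the prescribed $\rho$ and $\alpha$.

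For (i) we combine Proposition~\ref{angle} inside $F_{|U}$ with Lemma~\ref{lem:unfoldingsinneigh}: each secondary tangency curve $b_{n+j,n_0}^{(U)}$ sitting inside $S_j$ admits a neighborhood $U_{n_0}$ on which $F$ is again an unfolding of a strong homoclinic tangency, and every $f\in U_{n_0}$ inherits a $(1,N_{n_0},M_{n_0})$ dynamical partition whose $C$- and $H$-pieces correspond to the extended return cycle of length $3N+(1+\theta)(n+j)+n_0$ analysed in Subsection~\ref{subsection:combinatorics}. The count $n+j$ of such curves inside $S_j$ comes from the admissible range of the type $n_0$ inside $\mathcal{B}_{n+j}^{(U)}$, controlled by Lemmas~\ref{tangencylowerbound} and \ref{tangencyupperbound}, which for $n\gg r$ contains at least $n+j$ admissible values after choosing the parametrization. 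Pairwise disjointness (ii) follows because the strips $\mathcal{B}_m^{(U)}$ for distinct $m$ are pairwise disjoint, as seen from \eqref{eq:Bn} combined with Remark~\ref{muzeroothers}.

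For the quantitative statements, (iii) is immediate from \eqref{eq:Bn}: any point of $\mathcal{B}_{n+j}^{(U)}$ is at vertical distance comparable to $1/\mu(t)^{n+j}$ from $b$, since the two boundary functions of the strip scale like $\lambda^{\theta(n+j)}$ and $1/\mu^{n+j+n_0/2}$ respectively. Estimate (iv) is exactly Lemma~\ref{bnlocal} applied inside $U$ with $n$ replaced by $n+j$, after subtracting the slope of $b$; the dominant contribution to the difference of the slopes is $\tfrac{n+j}{\mu^{n+j+1}}\partial_t\mu$. For (v), the refined partition is obtained by slicing the old $H$- and $C$-pieces according to the further return cycle of length $3N+(1+\theta)(n+j)+n_0$; since $F$ is linear on the box of linearization and expands vertically by $\mu^{n+j}$, the diameter of every new element is at most $C/\mu^{n+j}$, which is less than $\mathrm{mesh}(\mathfrak{D}(f))/k$ once $n$ is large. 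Refinement is built in, because by construction each first return to a new element is a concatenation of first returns to old elements.

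It remains to produce the graph morphism $w: X_k^{(j)}\to X$. Each primary loop of $X_k^{(j)}$ represents a first return of the refined partition to the saddle neighborhood $Q$; by the refinement just proved, such a first return is itself a concatenation of first returns of the coarser partition, hence a loop in $X$. Reading off this prime-loop decomposition yields the triple $\underline{w}(w)=(\omega(w(\lambda^{\tilde q})),\omega(w(\lambda^{\tilde h})),\omega(w(\lambda^{\tilde c})))$, which by the remark following the definition of graph morphisms determines $w$ uniquely, and the identity $\pi\circ i_k^{(j)}=w\circ\pi_k^{(j)}$ then follows by inspection on vertices. The main obstacle is bookkeeping: one must check that the intermediate iterates of the new cycle traverse the old $C$- and $H$-pieces in exactly the order dictated by the geometry of the orbit of $z^{(1)},z^{(2)},z^{(3)}$ tracked in Subsection~\ref{subsection:combinatorics}, and do not escape $\mathcal{D}$. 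In particular the new $h$-loop in $X_k^{(j)}$ must be shown to map to the concatenation of old prime loops predicted by the secondary return construction, which is the same careful orbit-tracking that underlies the derivative estimates in Proposition~\ref{speed}.
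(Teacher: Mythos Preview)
Your approach is essentially the same as the paper's: set $S_j=\mathcal{B}_{n+j}$ in the reparametrized unfolding $F_{|U}$, invoke Sections~4--5 (Proposition~\ref{angle} and Lemma~\ref{lem:unfoldingsinneigh}) to populate each $S_j$ with secondary tangency curves and their unfolding neighborhoods, and read off (iii)--(iv) from \eqref{munan} and the slope estimates \eqref{eq:angle1}--\eqref{eq:angle2} in Lemma~\ref{bnlocal}. You supply more detail than the paper on (v) and the graph morphism, where the paper simply writes ``consequence of the construction'' and defers the explicit winding to Remark~\ref{rem:nextloops}; your orbit-tracking discussion is exactly what that remark records. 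One minor misattribution: the count of $n+j$ tangency curves in $S_j$ comes from the $O(1/n)$ spacing in Proposition~\ref{newtangency}, not from Lemmas~\ref{tangencylowerbound}--\ref{tangencyupperbound}, which only confine the curves to the interior of the strip.
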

\begin{proof}
Without loss of generality, we can assume that the $r$-strip $S$ is bounded by straight lines. In particular, we can make a reparametrization  to ensure that the curve $b$ corresponds to a straight line in the $t$-direction.
For all $j$, consider the strips $\mathcal B_{n+j}$ as defined in \eqref{eq:Bn} and take $S_j=\mathcal B_{n+j}$. Observe that for $n$ large enough $S_j\subset U\subset  S$ and by construction $S_j\cap S_{j'}=\emptyset$ for all $j\neq j'$. Moreover Section $4$ and Section $5$ ensure also that $\mathcal B_{n+j}$ contains $\sqrt{n+j}$ secondary tangency curves. The descendants are the pairs $\left\{b_{k}^{(j)}, U_{k}^{(j)}\right\}$ with $k\in\left\{1,\dots,\sqrt{n+j}\right\}$ where $b_{k}^{(j)}$ is the curve of secondary tangencies as constructed in Proposition \ref{angle}  and $U_{k}^{(j)}$ is a neighborhood of $b_{k}^{(j)}$ as built in Lemma \ref{lem:unfoldingsinneigh}. In particular, the family $F_{|U_{k}^{(j)}}$ is an unfolding of the homoclinic tangency $b_{k}^{(j)}$. Given a descendent of $S_j$, $\left\{b_{k}^{(j)}, U_{k}^{(j)}\right\}$, observe that  $a_{n+j}, b_{k}^{(j)}\subset S_j=\mathcal B_{n+j}$ and the width of $\mathcal B_{n+j}$ is smaller than the distance between $a_n$ and the previous tangency $b$ that is indeed of then order of $1/\mu^{n+j}$, see \eqref{munan}. This proves $(iii)$. Condition $(iv)$ on the derivative comes from \eqref{eq:angle2}. 
The final step is to construct the corresponding dynamical partitions. Let $f\in b_k^{(j)}$. In particular there is a point $m_1\in W^u_{\text{loc}}(p)$ and a time $s$ such that at $f^{s+M}(m_1)$ there is a tangency of $ W^s(p)\cap  W^u_{\text{loc}}(p)$, see Section $4$. Let $Q_k^{(j)}\subset Q$ such that 
$$
\overline{Q_k^{(j)}}\cap\left\{m_1,f(m_1),\dots,f^{s+M}(m_1)\right\}=\emptyset.
$$
Let $\overline{\ell}_k^{(j)}$ be minimal such that 
$$
f^{-\overline{\ell}_k^{(j)}+1}(q_2), f^{-\overline{\ell}_k^{(j)}+1}(m_1)\in\overline{Q_k^{(j)}},
$$
and let $\underline{\ell}_k^{(j)}$ be minimal such that 
$$
f^{\underline{\ell}_k^{(j)}}(f^M(q_2)), f^{\underline{\ell}_k^{(j)}}(f^{s+M}(m_1)). 
$$
Let $(C^0)^{(j)}_k$ be a small enough neighborhood of $f^{-\overline{\ell}_k^{(j)}+1}(m_1)$ and let $(H^0)^{(j)}_k$ be a small enough neighborhood of $f^{-\overline{\ell}_k^{(j)}+1}(q_2)$ such that 
\begin{eqnarray*}
\mathfrak D_k^{(j)}(f)&=&\left\{\overline Q_k^{(j)}\right\}\cup\\&&\left\{\overline {f^i\left((C^0)^{(j)}_k \right)}\left|\right.i\in\left\{0,\dots, \overline{\ell}_k^{(j)}+s+M+\underline{\ell}_k^{(j)}-1\right\}\right\}\cup\\&&\left\{\overline {f^i\left((H^0)^{(j)}_k \right)}\left|\right.i\in\left\{0,\dots, \overline{\ell}_k^{(j)}+M+\underline{\ell}_k^{(j)}-1\right\}\right\}
\end{eqnarray*}
forms a pairwise disjoint collection,
            \begin{eqnarray*}
                \text{mesh}\left(\mathfrak{D}^{(j)}_k(f)\right)<\frac{1}{3}\cdot\text{mesh}\left(\mathfrak{D}(f)\right).
            \end{eqnarray*}
            and 
            \begin{equation}\label{eq:disttoq2}
            \text{dist}\left({f^{\overline{\ell}_k^{(j)}+s}\left((C^0)^{(j)}_k \right)},q_2\right)<\lambda^{n_{0,k}^{(j)}/2},
            \end{equation}
            where $n_{0,k}^{(j)}$ is defined in Subsection \ref{subsection:combinatorics}.
Observe that the construction of $\mathfrak{D}^{(j)}_k(f)$ can be made continuous along the curve $b_k^{(j)}$. Moreover by restricting the neighborhood $U_k^{(j)}$ we can also make the construction of $\mathfrak{D}^{(j)}_k(f)$ varying continuously over the neighborhood still satisfying condition \eqref{eq:disttoq2}. In particular, $\mathfrak{D}^{(j)}_k(f)$ is a dynamical partition with $N^{(j)}_k=\overline{\ell}_k^{(j)}+s+\underline{\ell}_k^{(j)}-1$ and $M^{(j)}_k=\overline{\ell}_k^{(j)}+M+\underline{\ell}_k^{(j)}-1$.
This proves condition $(v)$ and the last statement is consequence of this construction.
\end{proof}
  \begin{rem}\label{rem:nextloops}
    Observe that, using the notation as in the previous proposition and by using the construction in  Subsection \ref{subsection:combinatorics}, there exist $n_{k}^{(j)},\underline{\ell}_k^{(j)}, \overline{\ell}_k^{(j)},n_{0,k}^{(j)}$ such that 
    \begin{eqnarray*}
        \underline\omega\left(w_k^{(j)}\right)=\left(q, {q}^{\underline{\ell}_k^{(j)}}{h}{q}^{n_{0,k}^{(j)}}{c}{q}^{n_{k}^{(j)}}{c}{q}^{\theta n_k^{(j)}}{c}{q}^{\overline{\ell}_k^{(j)}},  {q}^{\underline{\ell}_k^{(j)}}{h}{q}^{\overline{\ell}_k^{(j)}}\right).
    \end{eqnarray*}
\end{rem}
\section{Finite Collet-Eckmann Condition}

In this section we describe the induction step needed to prove later the existence of maps with a Collet-Eckmann point. More in detail, we prove the existence of maps having a point satisfying the finite Collet-Eckmann condition. Roughly speaking a point $x$ is a $T$-Collet-Eckmann (T-CE)  point if there are vectors $v$ in the tangent space at $x$ such that the derivative of the map along the orbit of $x$ in the direction of $v$ grows at the least exponentially fast up to time $T$.   Below is the formal definition.

As before, without lose of generality, we consider a diffeomorphism $f:\mathcal M\to\mathcal M$ where the manifold $\mathcal M$ is an open set in $\mathbb R^2$.
Let $\varphi>0$. Denote by $\text{T}_{x}\mathcal M$ the tangent space at $x$, and by $$K_{\varphi}(x)=\left\{(v_1,v_2)\in \text{T}_{x}\mathcal M\left|\right. |v_2| \tan\varphi> |v_1| \right\},$$ the cone at $x$ of angle ${\varphi}$.

\begin{defin}
Given constants  $0<C<1$, $\rho>1$, we say that  $x\in\mathcal M$ is a $T$-Collet-Eckmann (T-CE) point if the following holds. For all vectors $v\in K_{\varphi}(x)$, 

$$
\left| Df^{s}_xv\right|\geq C\rho^s \left| v\right| \text{, \hspace{.49cm}        }\forall  \text{        } 0\leq s\leq T.
$$
\end{defin}

From now on we are going to assume an extra condition on $\theta$, namely we ask that,
\begin{eqnarray}\label{thetacond2}
\lambda^{\theta}\mu^{1+\alpha}>  \lambda^{\alpha},
\end{eqnarray}
where $\alpha$ is defined in  \eqref{alphaofuse}. By \eqref{thetacond1} and \eqref{alphaofuse}, there exists $\theta\in(\theta_0,\theta_1)$ satisfying \eqref{thetacond2}, see Appendix. Take constants $\varphi>0$, $0<C<1$ and $\rho>1$ such that,
 \begin{eqnarray}\label{eq:varphicond}
 &&\text{ (o) } \cos\varphi >\frac{\rho}{\mu}\nonumber \\ \\&& \text{ (oo) } 1<\rho<\lambda^{\theta}\mu^2<\mu\nonumber .
   \end{eqnarray}
   
We introduce now some notation. 
 Let $(x,y)$ be a point and let $\Gamma$ be a curve which is a graph over the $x$-axes. If there is a point $(x,y_0)\in\Gamma$ then denote by $\text{d}_{\text{v}}\left(x, \Gamma\right)=|y-y_0|$ the vertical distance between a point $x$ and the curve $\Gamma$. The angle between two vectors is denoted by $\angle$.
 
 \bigskip
 
  Let $S$ be a tangency strip and let $\left\{b,U\right\}$ be a $(1,N,M)$ descendant of $S$. For all $f\in U$, let $\mathfrak{D}(f)$ be the $\left(1, N, M\right)$ dynamical partition of $f$. Given $T\in\left\{1, \dots,N-1\right\}$, one can define a vector field $\mathfrak {X}^{N-T}$ on $C^{N-T}$ as 
 $$
 Df^T(x)v(x)=e_1 \text{ for all } x\in C^{N-T}.
 $$   
 Observe that, since $f^T:C^{N-T}\to C^N$ is a diffeomorphism, then $\mathfrak {X}\left(f^T\right)$ is well defined.

 We are now ready to present the induction step needed to prove the existence of laminations of maps with a Collet–Eckmann point. We use the notation introduced above.
    
\begin{prop}\label{Prop:inductionstepCE}
    Let $S$ be a $r$-tangency strip and let $\left\{b,U\right\}$ be a $(1,N,M)$ descendant of $S$.     Suppose that for all $f\in U$, there exist a positive constant $K$ and $T\in\left\{1, \dots,N-1\right\}$ such that the $T$-Collet-Eckmann Condition holds,
    \begin{itemize}
    \item[$\text{IH}_Q:$] $f_{|Q}$ is a linear map, i.e. 
    $$
    f_{|Q}=\left(\begin{matrix}\lambda& 0\\0&\mu
    \end{matrix}\right).
    $$
     \item[$\text{IH}_{H^0}:$] The family  $
  \left\{  f^{i}_{|H^0}\right\}_{i=0}^{M}
    $
    is a compact $\Cd$-family.
     \item[$\text{IH}_{C^0}:$]  \begin{itemize}
     \item[(i)] There exists a curve $\Gamma$ in $C^0$ which is a graph over the $x$-axis and 
     $$
     Df_x^{N-T}e_2\in\text{span}\left(\mathfrak X^{N-T}\right)\iff x\in\Gamma.
     $$
      \item[(ii)] For all $x\in C^0$,  
           $$
     \frac{1}{K}{{d}_\text{v}}\left(x, \Gamma\right)\leq\angle\left(Df_x^{N-T}e_2, \mathfrak X^{N-T}\right)\leq K d_{\text{v}}\left(x,\Gamma\right).     
          $$
      \item[(iii)]     The family  $
  \left\{  f^{i}_{|C^0}\right\}_{i=0}^{N-T}
    $
    is a compact $\Cd$-family.     
          \end{itemize}
  \item[$\text{IH}_{C^{N-T}}:$]  \begin{itemize}
     \item[(i)] For all $x\in C^{N-T}$ and for all $v\in K_{\varphi}(x)\subset \text{T}_x$,
     $$ 
    \left |Df_x^s v\right|\geq C\rho^s\left|v\right|, \text{            }\forall \text{  } 0\leq s\leq T.
        $$
     
           \item[(ii)] For all $x\in C^{N-T}$ and for all $v\in K_{\varphi}(x)\subset \text{T}_x$, 
           $$
     Df_x^{T}v \in K_{\varphi}\left(f^T(x)\right)\subset \text{T}_{f^T(x)}.         
     $$
      \item[(iii)]     The family  $
  \left\{  f^{i}_{|C^{N-T}}\right\}_{i=0}^{T}
    $
    is a compact $\Cd$-family.     
          \end{itemize}
    
             \end{itemize}
     Then, for every $\tilde r>r$, there exists a $\tilde r$-tangency strip $\tilde S\subset U\subset S$. Each $(1,\tilde N,\tilde M)$ descendant $\left\{\tilde b,\tilde U\right\}$, satisfies the following. For all $f\in \tilde U$, there exist a positive constant $\tilde K$ and $\tilde T\in\left\{1, \dots,\tilde N-1\right\}$, $\tilde T>2T$, with $$\tilde C^{\tilde N-\tilde T}\subset C^{ N- T}$$such that the $\tilde T$-Collet-Eckmann Condition holds. Namely,
    \begin{itemize}
    \item[$\text{IH}_{\tilde Q}:$] $f_{| \tilde Q}$ is a linear map, i.e. 
    $$
    f_{|\tilde Q}=\left(\begin{matrix}\lambda& 0\\0&\mu
    \end{matrix}\right).
    $$
     \item[$\text{IH}_{\tilde H^0}:$] The family  $
  \left\{  f^{i}_{|\tilde H^0}\right\}_{i=0}^{\tilde M}
    $
    is a compact $\Cd$-family.
     \item[$\text{IH}_{\tilde C^0}:$]  \begin{itemize}
     \item[(i)] There exists a curve $\tilde \Gamma$ in $\tilde C^0$ which is a graph over the $x$-axis and 
     $$
     Df^{\tilde N-\tilde T}(x)e_2\in\text{span}\left({\mathfrak X}^{\tilde N-\tilde T} \right)\iff x\in\tilde \Gamma.
     $$
      \item[(ii)] For all $x\in \tilde C^0$,  
           $$
     \frac{1}{\tilde K}{\text{d}_{v}}\left(x, \tilde \Gamma\right)\leq\angle\left(D f_x^{\tilde N-\tilde T}e_2, {\mathfrak X}^{\tilde N-\tilde T}\right)\leq \tilde K d_{\text{v}}\left(x,\tilde \Gamma\right).     
          $$
      \item[(iii)]     The family  $
  \left\{  f^{i}_{|\tilde C^0}\right\}_{i=0}^{\tilde N-\tilde T}
    $
    is a compact $\Cd$-family.     
          \end{itemize}
  \item[$\text{IH}_{\tilde C^{\tilde N-\tilde T}}:$]
    \begin{itemize}
     \item[(i)] For all $x\in \tilde C^{\tilde N-\tilde T}$ and for all $v\in K_{\varphi}(x)\subset \text{T}_x$,
     $$ 
    \left |D f_x^s v\right|\geq C\rho^s\left|v\right| \text{  }\forall \text{  } 0\leq s\leq \tilde T.
        $$
     
           \item[(ii)] For all $x\in \tilde C^{\tilde N-\tilde T}$ and for all $v\in K_{\varphi}(x)\subset \text{T}_x$, 
           $$
     D f_x^{\tilde T}v \in K_{\varphi}\left(f^{\tilde T}(x)\right)\subset \text{T}_{f^{\tilde T}(x)}.         
     $$
      \item[(iii)]     The family  $
  \left\{   f^{i}_{|\tilde C^{\tilde N-\tilde T}}\right\}_{i=0}^{\tilde T}
    $
    is a compact $\Cd$-family.     
          \end{itemize}
    
             \end{itemize}        
    
     \end{prop}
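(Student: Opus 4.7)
The plan is to apply Proposition~\ref{Prop:inductionstep} to $\{b,U\}$, which produces a countable family of tangency strips $\{S_j\}_{j\ge 1}\subset U$ each with a finite set of descendants. I take $\tilde S=S_j$ for $j$ large enough that all error terms below are negligible, and fix any descendant $\{\tilde b,\tilde U\}=\{b_k^{(j)},U_k^{(j)}\}$. The refined dynamical partition $\tilde{\mathfrak D}(f)$ is the one whose windings are described by Remark~\ref{rem:nextloops}: the new $c$-loop is $q^{\underline\ell}\,c\,q^{\theta n}\,c\,q^{n}\,c\,q^{n_0}\,h\,q^{\overline\ell}$ and the new $h$-loop is $q^{\underline\ell}\,h\,q^{\overline\ell}$. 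I choose $\tilde T$ so that $\tilde C^{\tilde N-\tilde T}$ coincides with the occurrence of the old $C^{N-T}$ inside the \emph{third} $c$-sub-pass of the new $c$-loop; then $\tilde T$ counts the iterates needed to close the loop, namely the $T$ remaining steps of that $c$-pass, the $n_0$ linear steps in $Q$, the $M$-step $h$-sub-pass, and the final $\overline\ell$ linear steps.

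Three of the four conclusions are essentially bookkeeping. $\text{IH}_{\tilde Q}$ is immediate since $\tilde Q\subset Q$ and $f$ is linear on $Q$ by $\text{IH}_Q$. For $\text{IH}_{\tilde H^0}$, decompose the new $h$-loop into $\underline\ell$ linear iterates in $Q$, one old $h$-loop pass (compact $\Cd$-family by $\text{IH}_{H^0}$), and $\overline\ell$ more linear iterates; compact $\Cd$-families are closed under composition. For $\text{IH}_{\tilde C^0}$, I define $\tilde\Gamma$ as the preimage of $\Gamma$ under the smooth diffeomorphism $f^{\tilde N-\tilde T}_{|\tilde C^0}$, which factors as linear pieces and two old $c$-passes terminating at position $C^{N-T}$ of the third pass. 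Then $Df^{\tilde N-\tilde T}e_2\in\mathrm{span}(\mathfrak X(f^{\tilde N-\tilde T}))$ exactly on $\tilde\Gamma$, and the chain rule converts $\text{IH}_{C^0}(ii)$ into the required two-sided comparison of the angle with the vertical distance to $\tilde\Gamma$, with all bounded-distortion factors absorbed into $\tilde K$. Compactness of $\{f^i_{|\tilde C^0}\}$ follows again by composition of compact $\Cd$-families.

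The main obstacle is $\text{IH}_{\tilde C^{\tilde N-\tilde T}}$: propagating Collet--Eckmann expansion and cone invariance through the entire tail of the new $c$-loop. On the first $T$ iterates (the remainder of the third $c$-pass), $\text{IH}_{C^{N-T}}(i)$--(iii) apply verbatim since $\tilde C^{\tilde N-\tilde T}\subset C^{N-T}$, delivering $|Df^s v|\ge C\rho^s|v|$ and cone preservation with the \emph{same} constants $C,\rho,\varphi$. On each subsequent linear segment in $Q$ (total length $n_0+\overline\ell$), condition~\eqref{eq:varphicond}(o), $\cos\varphi>\rho/\mu$, guarantees that $\mathrm{diag}(\lambda,\mu)$ sends $K_\varphi$ strictly into itself and expands every vector in $K_\varphi$ by at least $\mu\cos\varphi>\rho$ per step; condition~\eqref{eq:varphicond}(oo) keeps the resulting rate consistent with the induction. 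Across the $M$-step $h$-sub-pass, the compact $\Cd$-family hypothesis $\text{IH}_{H^0}$ combined with the transversal homoclinic geometry at $q_2$ keeps the tangent direction close to $e_2$ and yields bounded distortion, so cone invariance and $\rho$-expansion both persist after absorbing a uniform multiplicative constant into $C$. Concatenating all segments gives $|Df^s v|\ge C\rho^s|v|$ for $0\le s\le\tilde T$ and $Df^{\tilde T}K_\varphi\subset K_\varphi$, and the compactness of $\{f^i_{|\tilde C^{\tilde N-\tilde T}}\}_{i=0}^{\tilde T}$ is inherited from $\text{IH}_{C^{N-T}}(iii)$, $\text{IH}_{H^0}$, and the linear factors. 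Taking $j$ sufficiently large makes the parameter-variation errors across $\tilde U$ arbitrarily small, closing the induction step.
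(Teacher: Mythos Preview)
Your approach differs from the paper's in a key structural choice: you anchor $\tilde C^{\tilde N-\tilde T}$ at the $C^{N-T}$ position of the \emph{third} $c$-sub-pass, whereas the paper anchors it at the \emph{second} $c$-sub-pass. The paper's $\tilde T = T + n + (N-T) + T + n_0 + M + \overline\ell$ therefore includes an additional block (their Steps~2--5) in which the orbit passes through the third fold, the tangent vector is driven nearly horizontal with angle only $(\lambda^\theta\mu)^n$ from $e_1$, and one must appeal to $\text{IH}_{C^0}(ii)$, Lemma~\ref{ymaxymin}, and the extra arithmetic condition~\eqref{thetacond2} to recover the cone during the $n_0$ linear steps. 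Your shorter $\tilde T = T + n_0 + M + \overline\ell$ bypasses this fold-recovery entirely, and the expansion estimates you outline do close up: the same constants $C,\rho,\varphi$ propagate, the accumulated factor $(\mu/\rho)^{n_0}$ absorbs the bounded losses from the $h$-pass and the $\sin(\beta/2)$ factor, and no absorption into $C$ is in fact needed (nor allowed, since $C$ is fixed across all levels). If carried out carefully, your choice yields a genuinely simpler argument that does not require~\eqref{thetacond2}.

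That said, there is a real error in your write-up. You assert that the $M$-step $h$-pass ``keeps the tangent direction close to $e_2$'', and you then treat both linear segments ($n_0$ and $\overline\ell$) uniformly under the cone hypothesis $v\in K_\varphi$. This is false: transversality at $q_2$ only guarantees $\angle(Df^M_{q_2}e_2, e_1)>\beta$ for some fixed $\beta>0$, \emph{not} that the image lies in the narrow cone $K_\varphi$. Consequently your per-step expansion bound $\mu\cos\varphi$ is unjustified on the $\overline\ell$ segment; the correct factor there is $\mu\sin(\beta/2)$, and the cone $K_\varphi$ is only restored \emph{after} those $\overline\ell$ steps, provided $(\lambda/\mu)^{\overline\ell}\cot(\beta/2)<\tan\varphi$ (exactly as in the paper's Step~7). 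Once you separate the two linear segments and handle the post-$h$ segment with the $\beta$-angle instead of the $\varphi$-cone, your argument goes through. Your treatment of $\text{IH}_{\tilde C^0}$ is also too glib --- $\tilde\Gamma$ is not literally the preimage of $\Gamma$ --- but here the paper is comparably terse, and the required transversality follows from the non-degenerate fold structure.
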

\begin{proof}
 Let $\tilde S=S_j\subset U\subset S$ be a tangency strip and let $\left\{\tilde b,\tilde U\right\}=\left\{b_{k}^{(j)}, U_{k}^{(j)}\right\}$ be a  $\left(1,\tilde N,\tilde M\right)=\left(1,N_k^{(j)}, M_k^{(j)}\right)$ descendant of $\tilde S$ as in Proposition \ref{Prop:inductionstep}. In each step of the construction we might have to increase the $j$. Consider the orbit from $\tilde C^0$ to $\tilde C^{\tilde N}$. By construction there exist exactly three moments, $s_1$, $s_2$, $s_3$ smaller than $\tilde N$ such that $\tilde C^{s_i}\subset C^{N-T}$. Using the notation from  Remark \ref{rem:nextloops} we get 
 \begin{eqnarray*}
 s_1&=&\overline{\ell}_k^{(j)}+N-T,\\
 s_2&=&s_1+T+\theta n_k^{(j)}+N-T,\\
  s_3&=&s_2+T+n_k^{(j)}+N-T.
 \end{eqnarray*}
 Define $\tilde T$ such that $\tilde N-\tilde T=s_2$ corresponds to the second passage\footnote{The second passage is an essential choice. Using the third passage will not lead to a Cantor set, see Lemma \ref{lem:omegaO2} and its proof. Using the first passage will not give enough expansion to ensure the CE condition. } trough $C^{N-T}$, see Figure \ref{Passage}.
 \begin{figure}
\centering
\includegraphics[width=0.9\textwidth]{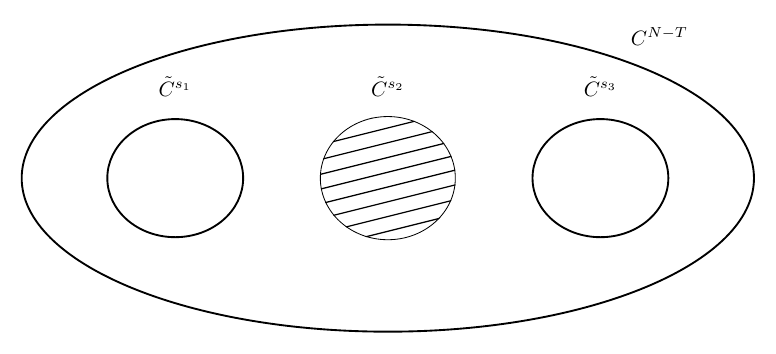}
\caption{Passages in $C^{N-T}$}
\label{Passage}
\end{figure}
 In particular,
 $$\tilde T= T+n_k^{(j)}+N-T+T+n_{0,k}^{(j)}+M+\underline{\ell}_k^{(j)}.$$ 
 For all $\tilde f\in\tilde U$, let $\tilde K$ be a positive constant describing the bounds as required in $\text{IH}_{\tilde H^0}$,  $\text{IH}_{\tilde C^0}$-(iii) and $\text{IH}_{\tilde C^{\tilde N-\tilde T}}$-(iii). The existence of such a constant is guaranteed by the fact that this conditions consider only a finite number of iterates of the original diffeomorphism.  Observe that $\text{IH}_{\tilde Q}$ is trivially satisfies. Let $\tilde\Gamma$, be the curve associated to the critical point $\tilde c$, then  $\text{IH}_{\tilde C^0}$-(i) and  $\text{IH}_{\tilde C^0}$-(ii) are satisfied by construction.

 The rest of the proof is devoted to prove  $\text{IH}_{\tilde C^{\tilde N-\tilde T}}$-(i) and   $\text{IH}_{\tilde C^{\tilde N-\tilde T}}$-(ii).  This will be achieved by steps. Each step corresponds to a term in the definition of $\tilde T$. In order to lighten the notation, we denote $\tilde C=\tilde C^{\tilde N-\tilde T}$. Let $x\in\tilde C$ and let $v\in K_{\varphi}(x)\subset \text{T}_x$. Observe that $\tilde C\subset C^{N-T}$.
 \begin{itemize}
\item[{\bf Step 1:}] We begin by proving  $\text{IH}_{\tilde C^{\tilde N-\tilde T}}$-(i) from time $0$ up to time $T$. By $\text{IH}_{ C^{ N-T}}$-(i), we get,

$$
  \text{IH}_{\tilde C}\text{-(i)P1:    }  \left |D f_x^s v\right|\geq C\rho^s\left|v\right| \text{  }\forall \text{  } 0\leq s\leq  T.
     $$

                Moreover, by $\text{IH}_{ C^{N-T}}$-(ii) 

 $$   D f_x^{T}v \in K_{\varphi}\left(f^T(x)\right)\subset \text{T}_{f^{ T}(x)}.    
 $$ 
    
       In particular,  if we denote by 
       $$ v^{(1)}=Df^{T}_xv=\left(\begin{matrix}
       v^{(1)}_1\\v^{(1)}_2
       \end{matrix}\right),$$
         it follows that,
         
         \begin{equation}\label{eq:v1}
         v^{(1)}_2>\cos\varphi \left|v^{(1)}\right|
         \text{ and } \frac{v^{(1)}_1}{v^{(1)}_2}<\tan\varphi.
         \end{equation}
       
        \item[{\bf Step 2:}] Now we prove  $\text{IH}_{\tilde C^{\tilde N-\tilde T}}$-(i) from time $T+1$ up to time $T+n_k^{(j)}$. Indeed, by $\text{IH}_{Q}$, $\text{IH}_{\tilde C}$-(i)P1 and \eqref{eq:v1}, for all $1\leq s\leq n$, we get,

           \begin{eqnarray*}
     \text{IH}_{\tilde C}\text{-(i)P2:   }     \left|Df^{T+s}_xv\right|&=&\left|\left(\begin{matrix}\lambda^s&0\\0&\mu^s\end{matrix}\right)
          \left(\begin{matrix}v^{(1)}_1\\v^{(1)}_2\end{matrix}\right)          \right|>\mu^sv^{(1)}_2\geq\mu^s\cos\varphi\left|v^{(1)}\right| \\
          &=&\mu^s\cos\varphi \left|Df^T_xv\right|\geq\mu^s\cos\varphi C\rho^T\left|v\right| \geq  C\rho^{T+s}\left|v\right|.    
                 \end{eqnarray*}   
               
                 Observe that in the last inequality of $\text{IH}_{\tilde C}$-(i)P2 we used that 
               $$
               \mu^s\cos\varphi \geq \rho^{s},\text{    }\forall 1\leq s\leq n.     
               $$
               This follow from the condition imposed in \eqref{eq:varphicond}-(o). 
               In particular, $\text{IH}_{\tilde C}$-(i)P2 proves 
               \begin{eqnarray}\label{eq:IHC(i)Step2Final}
                 \left|Df^{T+n_k^{(j)}}_xv\right|\geq\mu^{n_k^{(j)}}\cos\varphi C\rho^T\left|v\right|.         
           \end{eqnarray}
           Denote by 
       $$ v^{(2)}=Df^{T+n_k^{(j)}}_xv=\left(\begin{matrix}
       v^{(2)}_1\\v^{(2)}_2
       \end{matrix}\right).$$
         By $\text{IH}_{Q}$ and by \eqref{eq:v1}, 
         \begin{equation}\label{eq:v2}
         \frac{v^{(2)}_1}{v^{(2)}_2}=\left(\frac{\lambda}{\mu}\right)^n  \frac{v^{(1)}_1}{v^{(1)}_2}<\left(\frac{\lambda}{\mu}\right)^n \tan\varphi.
         \end{equation}

     \item[{\bf Step 3:}]  Now we prove  $\text{IH}_{\tilde C^{\tilde N-\tilde T}}$-(i) from time $T+n_k^{(j)}+1$ up to time $T+n_k^{(j)}+N-T$. Indeed, by $\text{IH}_{C^0}$-(iii) and \eqref{eq:IHC(i)Step2Final}, for all $1\leq s\leq N-T$, we get,

  \begin{eqnarray*}
  \text{IH}_{\tilde C}\text{-(i)P3:  }   \left|Df^{T+n_k^{(j)}+s}_xv\right|\geq \frac{1}{K} \left|Df^{T+n_k^{(j)}}_xv\right|>\frac{1}{K}\mu^{n_k^{(j)}}\cos\varphi\cdot C\rho^T\left|v\right| \geq  C\rho^{T+n_k^{(j)}+s}\left|v\right|.    
                 \end{eqnarray*}

               Observe that in the last inequality of $\text{IH}_{\tilde C}$-(i)P3 we used that 
               $$
              \frac{1}{K} \left(\frac{\mu}{\rho}\right)^{n_k^{(j)}}\cos\varphi \geq \rho^{s},\text{    }\forall 1\leq s\leq N-T     
               $$
               which is equivalent to ask 
                $$
              \frac{1}{K} \left(\frac{\mu}{\rho}\right)^{n_k^{(j)}}\cos\varphi \geq \rho^{N-T}.     
               $$
               
       This follow from the condition imposed in \eqref{eq:varphicond}-(oo) and taking $n_k^{(j)}$ large enough (which is equivalent to take $j$ large enough). 
               In particular, $\text{IH}_{\tilde C}$-(i)P3 proves 
               \begin{eqnarray}\label{eq:IHC(i)Step3Final}
                 \left|Df^{T+n_k^{(j)}+N-T}_xv\right|\geq\frac{C}{K}\cos\varphi\cdot\mu^{n_k^{(j)}}\ \rho^T\left|v\right|.         
           \end{eqnarray}

   Moreover by $\text{IH}_{C^0}$-(ii) and Lemma  \ref{ymaxymin},
      \begin{equation*}\label{eqinL1}
      \angle\left(Df_{f^{T+n_k^{(j)}}(x)}^{N-T}e_2, \mathfrak X^{N-T}\right)\geq\frac{1}{K}{\text{d}_\text{v}}\left(f^{T+n_k^{(j)}}(x), \Gamma\right)\geq\frac{1}{K^2}\left(\lambda^{\theta}\mu\right)^{n_k^{(j)}},
      \end{equation*}
     and by \eqref{eq:v2},
       \begin{equation*}\label{eqinL2}
      \tan\left(v^{(2)},e_2\right)\leq\left(\frac{\lambda}{\mu}\right)^n\tan\varphi.
      \end{equation*}         
      The last two inequalities together with $\text{IH}_{C^0}$-(iii), gives that, for $n_k^{(j)}$ large enough,
      \begin{equation}\label{IHtildeCP3}
      \angle\left(Df_x^{T+n_k^{(j)}+N-T}v, \mathfrak X^{N-T}\right)\geq\frac{1}{2K^2}\left(\lambda^{\theta}\mu\right)^{n_k^{(j)}}.
      \end{equation}

           \item[{\bf Step 4:}] 
         Now we prove  $\text{IH}_{\tilde C^{\tilde N-\tilde T}}$-(i) from time $T+n_k^{(j)}+N-T+1$ up to time $T+n_k^{(j)}+N-T+T$. Indeed, by $\text{IH}_{C^{N-T}}$-(iii) and \eqref{eq:IHC(i)Step3Final}, for all $1\leq s\leq T$, we get,

  \begin{eqnarray*}
 \text{IH}_{\tilde C}\text{-(i)P4:  }     \left|Df^{T+n_k^{(j)}+N-T+s}_xv\right|&\geq& \frac{1}{K} \left|Df^{T+n_k^{(j)}+N-T}_xv\right|\\&\geq& \frac{C}{K^2}\cos\varphi \cdot\mu^{n_k^{(j)}}\rho^T\left|v\right| \geq  C\rho^{T+n_k^{(j)}+N-T+s}\left|v\right|.    
                 \end{eqnarray*}

               Observe that in the last inequality of $\text{IH}_{\tilde C}$-(i)P4 we used that 
               $$
              \frac{1}{K^2}\cos\varphi \left(\frac{\mu}{\rho}\right)^{n_k^{(j)}}\rho^T\geq \rho^{N+s},\text{    }\forall 1\leq s\leq T     
               $$
               which is equivalent to ask 
                $$
              \frac{1}{K^2} \cos\varphi\left(\frac{\mu}{\rho}\right)^{n_k^{(j)}} \geq \rho^{N}.     
               $$
               
       This holds for $n_k^{(j)}$ large enough from the condition imposed in \eqref{eq:varphicond}-(oo). 
               In particular, $\text{IH}_{\tilde C}$-(i)P4 proves 
               \begin{eqnarray}\label{eq:IHC(i)Step4Final}
                 \left|Df^{T+n_k^{(j)}+N}_xv\right|\geq\frac{C}{K^2}\cos\varphi\cdot\mu^{n_k^{(j)}}\ \rho^T\left|v\right|.         
           \end{eqnarray}

Equation \eqref{IHtildeCP3}, $\text{IH}_{C}$-(iii) and the definition of the vector field $\mathfrak X$ give that, for $n_k^{(j)}$ large enough, 
  
             \begin{equation}\label{IHtildeCP4}
      \angle\left(Df_x^{T+n_k^{(j)}+N}v, e_1\right)\geq\frac{1}{2K^3}\left(\lambda^{\theta}\mu\right)^{n_k^{(j)}}.
      \end{equation}
    In particular, if we denote by 
       $$ v^{(4)}=Df^{T+n_k^{(j)}+N}_xv=\left(\begin{matrix}
       v^{(4)}_1\\v^{(4)}_2
       \end{matrix}\right), $$
         then, by \eqref{IHtildeCP4}, 
         \begin{equation}\label{eq:v4}
         \frac{v^{(4)}_2}{\left|v^{(4)}\right|}\geq\sin \frac{1}{2K^3}\left(\lambda^{\theta}\mu\right)^{n_k^{(j)}}\geq \frac{1}{4K^3}\left(\lambda^{\theta}\mu\right)^{n_k^{(j)}}
         \end{equation}   
         
\item[{\bf Step 5:}]  Now we prove  $\text{IH}_{\tilde C^{\tilde N-\tilde T}}$-(i) from time $T+n_k^{(j)}+N+1$ up to time $T+n_k^{(j)}+N+n_{0,k}^{(j)}$. Indeed, by $\text{IH}_{Q}$, \eqref{eq:v4} and \eqref{eq:IHC(i)Step4Final}, for all $1\leq s\leq n_{0,k}^{(j)}$, we get,

  \begin{eqnarray*}
 \text{IH}_{\tilde C}\text{-(i)P5:  }   \left|Df^{T+n_k^{(j)}+N+s}_xv\right|&\geq& \mu^s v^{(4)}_2\geq \mu^s\frac{1}{4K^3}\left(\lambda^{\theta}\mu\right)^{n_k^{(j)}}\left|Df^{T+n_k^{(j)}+N}_xv\right|\\&\geq &\frac{C}{4K^5}\cos\varphi \cdot\mu^{s+n_k^{(j)}}\rho^T\left(\lambda^{\theta}\mu\right)^{n_k^{(j)}}\left|v\right|\\ &\geq&  C\rho^{T+n_k^{(j)}+N+s}\left|v\right|.    
                 \end{eqnarray*}

               Observe that in the last inequality of $\text{IH}_{\tilde C}$-(i)P5 we used that 
               $$
              \frac{1}{4K^5}\cos\varphi \left(\frac{\mu}{\rho}\right)^{n_k^{(j)}+s}\left(\lambda^{\theta}\mu\right)^{n_k^{(j)}}\geq \rho^{N},\text{    }\forall 1\leq s\leq n_{0,k}^{(j)}     
               $$
               which is equivalent to ask 
                $$
             \frac{1}{4K^5}\cos\varphi \left(\frac{\mu}{\rho}\right)\left(\lambda^{\theta}\mu^2\right)^{n_k^{(j)}}\geq \rho^{n_k^{(j)}+N}.
               $$
               
       This follow from the condition imposed in \eqref{eq:varphicond}-(oo) and taking $n_k^{(j)}$ large enough. 
               In particular, $\text{IH}_{\tilde C}$-(i)P5 proves 
               \begin{eqnarray}\label{eq:IHC(i)Step5Final}
                 \left|Df^{T+n_k^{(j)}+N+n_{0,k}^{(j)}}_xv\right|\geq\frac{C}{4K^5}\cos\varphi\cdot \mu^{n_{0,k}^{(j)}}\rho^T\left(\lambda^{\theta}\mu^2\right)^{n_k^{(j)}}\left|v\right|.         
           \end{eqnarray}
           Moreover, by \eqref{IHtildeCP4}, 
             $$
      \tan\angle\left(Df_x^{T+n_k^{(j)}+N}v, e_2\right)\leq\frac{2K^3}{\left(\lambda^{\theta}\mu\right)^{n_k^{(j)}}},
      $$
      which together with \eqref{thetacond2} and by taking $n_k^{(j)}$ large enough imply that, 
      
     \begin{equation}\label{IHtildeCP5}
      \tan\angle\left(Df_x^{T+n_k^{(j)}+N+n_{0,k}^{(j)}}v, e_2\right)\leq\left(\frac{\lambda}{\mu}\right)^{n_{0,k}^{(j)}}\frac{2K^3}{\left(\lambda^{\theta}\mu\right)^{n_k^{(j)}}}<<1.
       \end{equation} 
      \item[{\bf Step 6:}]  We start proving  $\text{IH}_{\tilde C^{\tilde N-\tilde T}}$-(i) from time $T+n_k^{(j)}+N+n_{0,k}^{(j)}+1$ up to time $T+n_k^{(j)}+N+n_{0,k}^{(j)}+M$. Indeed, by $\text{IH}_{H}$ and \eqref{eq:IHC(i)Step5Final}, for all $1\leq s\leq M$, we get,
                   
  \begin{eqnarray*}
 \text{IH}_{\tilde C}\text{-(i)P6:   }  \left|Df^{T+n_k^{(j)}+N+n_{0,k}^{(j)}+s}_xv\right|&\geq& \frac{1}{K}  \left|Df^{T+n_k^{(j)}+N+n_{0,k}^{(j)}}_xv\right|\\&\geq& \frac{C}{4K^6}\cos\varphi\cdot \mu^{n_{0,k}^{(j)}}\rho^T\left(\lambda^{\theta}\mu^2\right)^{n_k^{(j)}}\left|v\right| \\&\geq&  C\rho^{T+n_k^{(j)}+N+n_{0,k}^{(j)}+s}\left|v\right|.    
                 \end{eqnarray*}

               Observe that in the last inequality of $\text{IH}_{\tilde C}$-(i)P6 we used that 
               $$
              \frac{1}{4K^6}\cos\varphi \left(\frac{\mu}{\rho}\right)^{n_k^{(j)}+n_{0,k}^{(j)}}\left(\lambda^{\theta}\mu\right)^{n_k^{(j)}}\geq \rho^{N+s},\text{    }\forall 1\leq s\leq M     
               $$
               which is equivalent to ask 
                $$
             \frac{1}{4K^6}\cos\varphi \left(\frac{\mu}{\rho}\right)^{n_{0,k}^{(j)}}\left(\frac{\lambda^{\theta}\mu^2}{\rho}\right)^{n_k^{(j)}}\geq \rho^{N+M}.
               $$
               
       This follow from the condition imposed in \eqref{eq:varphicond}-(oo) and taking $n_k^{(j)}$ large enough. 
               In particular, $\text{IH}_{\tilde C}$-(i)P6 proves 
               \begin{eqnarray}\label{eq:IHC(i)Step6Final}
                 \left|Df^{T+n_k^{(j)}+N+n_{0,k}^{(j)}+M}_xv\right|\geq\frac{C}{4K^6}\cos\varphi\cdot\mu^{n_{0,k}^{(j)}}\rho^T\left(\lambda^{\theta}\mu^2\right)^{n_k^{(j)}}\left|v\right|.         
           \end{eqnarray}
  Observe that, there exists $\beta>0$ such that 
  \begin{equation*}\label{eq·step6angle1}
\angle\left(Df^M_{q_2}e_2,e_1\right)>\beta
 \end{equation*}
   and by \eqref{eq:disttoq2},
         \begin{equation*}\label{eq·step6angle2}
         \text{dist}\left(f^{T+n_k^{(j)}+N+n_{0,k}^{(j)}}(x),q_2\right)<<1.
   \end{equation*}
    The previous two inequalities, \eqref{IHtildeCP5} together with $\text{IH}_{H}$ gives 
     \begin{equation*}\label{eq:step6angle}
\angle\left(Df^{T+n_k^{(j)}+N+n_{0,k}^{(j)}+M}_{x}v,e_1\right)>\frac{\beta}{2}.
\end{equation*}
In particular, if we denote by 
       $$ v^{(6)}=Df^{T+n_k^{(j)}+N+n_{0,k}^{(j)}+M}_xv=\left(\begin{matrix}
       v^{(6)}_1\\v^{(6)}_2
       \end{matrix}\right),$$   
    it follows that
       \begin{equation}\label{IHtildeCP6}
      \frac{v^{(6)}_2}{\left|v^{(6)}\right|}\geq\sin \frac{\beta}{2}.          
       \end{equation}
       
         \item[{\bf Step 7:}]  Now we prove $\text{IH}_{\tilde C^{\tilde N-\tilde T}}$-(i) from time $T+n_k^{(j)}+N+n_{0,k}^{(j)}+M+1$ up to time $\tilde T=T+n_k^{(j)}+N+n_{0,k}^{(j)}+M+\underline\ell_k^{(j)}$. Indeed, by $\text{IH}_{Q}$, \eqref{IHtildeCP6} and \eqref{eq:IHC(i)Step6Final}, for all $1\leq s\leq \underline\ell_k^{(j)}$, we get,

  \begin{eqnarray*}
   \text{IH}_{\tilde C}\text{-(i)P7:  }  \left|Df^{T+n_k^{(j)}+N+n_{0,k}^{(j)}+M+s}_xv\right|&\geq& \mu^s\sin\frac{\beta}{2}  \left|Df^{T+n_k^{(j)}+N+n_{0,k}^{(j)}+M}_xv\right|\\&\geq& \frac{C}{4K^6}\cos\varphi\cdot\sin\frac{\beta}{2}\mu^{n_{0,k}^{(j)}+s}\rho^T\left(\lambda^{\theta}\mu^2\right)^{n_k^{(j)}}\left|v\right| \\&\geq&  C\rho^{T+n_k^{(j)}+N+n_{0,k}^{(j)}+M+s}\left|v\right|.    
                 \end{eqnarray*}

               Observe that in the last inequality of $\text{IH}_{\tilde C}$-(i)P7 we used that 
         
                $$
             \frac{1}{4K^6}\cos\varphi\cdot \sin\frac{\beta}{2}\left(\frac{\mu}{\rho}\right)^{n_{0,k}^{(j)}+s}\left(\frac{\lambda^{\theta}\mu^2}{\rho}\right)^{n_k^{(j)}}\geq \rho^{N+M}.
               $$
               
       This follow from the condition imposed in \eqref{eq:varphicond}-(oo) and taking $n_k^{(j)}$ large enough. 
              Denote by 
       $$ v^{(7)}=Df^{T+n_k^{(j)}+N+n_{0,k}^{(j)}+M+\underline\ell_k^{(j)}}_xv=\left(\begin{matrix}
       v^{(7)}_1\\v^{(7)}_2
       \end{matrix}\right),$$   
       then, by $\text{IH}_{Q}$ and by \eqref{IHtildeCP6}, we get 
       
       $$
        \frac{v^{(7)}_1}{v^{(7)}_2} = \left(\frac{\lambda}{\mu}\right)^{\underline\ell_k^{(j)}} \frac{v^{(6)}_1}{v^{(6)}_2}\leq  \left(\frac{\lambda}{\mu}\right)^{\underline\ell_k^{(j)}} \frac{1}{\sin\beta/2}<\tan\varphi,       $$        
    with the last inequality achieved by taking $j$ large enough which implies that ${\underline\ell_k^{(j)}}$ is large enough. 
       
       \end{itemize} 
       In conclusion, the inequality above proves  $\text{IH}_{\tilde C}$-(ii) 
    and $\text{IH}_{\tilde C}$-(i) is achieved by putting together $\text{IH}_{\tilde C}$-(i)P1, $\text{IH}_{\tilde C}$-(i)P2, $\text{IH}_{\tilde C}$-(i)P3, $\text{IH}_{\tilde C}$-(i)P4, $\text{IH}_{\tilde C}$-(i)P5, $\text{IH}_{\tilde C}$-(i)P6 and $\text{IH}_{\tilde C}$-(i)P7.  \end{proof}
   \section{The lamination and the invariant Cantor set}
In this section we build a lamination in parameter space whose maps have an invariant Cantor set. 
\subsection{The lamination}
Consider the $1$-tangency strip $S$ given by $S=[-t_0,t_0]\times[-a_0,a_0]$ with tangency line $b=\left\{a=0\right\}$. Take $U=S$. In Subsection \ref{familyofunfoldings}, we defined a family $F$ which is an unfolding of the tangency $b$ and each map $f\in S$ has a $(1,N_1,M_1)$ dynamical partition with $N_1=N$ and $M_1=M$, where $N$ and $M$ are defined in Definition \ref{stronghomtang}, property $(f8)$.

Denote by $$\mathfrak{S}_1=\left\{S\right\},$$ and by $$\mathfrak{S}_2=\left\{S_{j,2}\right\}_{j=1}^{\infty},$$ where each $S_{j,2}$ is a $n_2+j$-tangency strip contained in $S$ whose existence is guaranteed by Proposition \ref{Prop:inductionstep}. By applying Proposition \ref{Prop:inductionstep} inductively, for all generations $g\geq 3$ one can define a sequence of collections of tangency strips
$$\mathfrak{S}_g=\left\{S_{j,g}\right\}_{j=1}^{\infty},$$ satisfying the following properties:
\begin{itemize}
    \item $\forall j,j'$ with $j\neq j'$, $S_{j,g}\cap S_{j',g}=\emptyset$, 
         \item $\forall S_{j,g}\in\mathfrak{S}_g$ there exists a unique $S_{i,g-1}\in\mathfrak{S}_{g-1}$ with $S_{j,g}\subset S_{i,g-1}$,
          \item $\forall j,$ $S_{j,g}$ is bounded by the graph of two functions $s_{j,g}^+,s_{j,g}^-:[-t_0,t_0]\to\mathbb R$ such that,
        \begin{eqnarray*}
             \left\lVert  s_{j,g}^+-s_{j,g}^-\right\rVert_0\leq\frac{1}{2^g},
        \end{eqnarray*} 
        and if $S_{j,g}\subset S_{i,g-1}$, then 
         \begin{eqnarray*}
           \left\lVert \frac{ds_{j,g}^{\pm}}{dt}-\frac{ds_{i,g-1}^{\pm}}{dt}\right\rVert_0\leq\frac{1}{2^g}.
        \end{eqnarray*} 
\end{itemize}
This last property relies on \eqref{eq:angleall}.
Observe that $\cup\mathfrak{S}_g\subset\cup\mathfrak{S}_{g-1}$.
We are now ready to define the lamination 
\begin{eqnarray}\label{eq:lamination}
\mathfrak{L}=\bigcap_{g=1}^{\infty}\bigcup_{j=1}^{\infty} S_{j,g}\subset [-t_0,t_0]\times [0,a_0].
\end{eqnarray}
The connected components of $\mathfrak{L}$ are called leafs of the lamination. Namely, a leaf of the lamination has the form 
\begin{eqnarray}\label{eq:leavelamination}
\ell=\bigcap_{g=1}^{\infty} S_{g},
\end{eqnarray}
where, for all $g$, $S_{g}\in\mathfrak{S}_g$ and $S_g\subset S_{g-1}$.
Observe that $\mathfrak{L}$ is a $\Cuno$ lamination, i.e. each leaf is the graph of a $\Cuno$ function, $\lim_{q\to\infty}s^+_{g}$ where the graph of $s^+_{g}$ is the upper boundary of $S_g$.

\bigskip

The following proposition proves Theorem A-(v). The proof is a consequence of the construction of the lamination and of the fact that the strips of the same generation are pairwise disjoint. 
\begin{prop}\label{Prop:longleavesandtransversalCantorset}
Every leaf of $\mathfrak L$ is the graph of a smooth function $\ell:[-t_0,t_0]\mapsto [-a_0,a_0]$. Morever if $t\in [-t_0,t_0]$, the closure of the set $L_t=\left\{(t,a)\in\mathfrak L\right\}$ is a Cantor set. 
\end{prop}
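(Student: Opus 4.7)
The plan is to unwind the inductive construction of the lamination so that each leaf becomes the $C^1$ limit of a Cauchy sequence of descendant tangency curves, and the vertical slice $L_t$ becomes a nested intersection of unions of disjoint open intervals whose widths decay geometrically and whose branching is countably infinite at every level. Both properties will be read off from the two quantitative conclusions of Proposition~\ref{Prop:inductionstep}: the strip width bound $|s_l-s_u|\leq C\rho^{r(1+\alpha)-1}$ (hidden in the definition of an $r$-tangency strip) and the slope estimate \eqref{eq:angleall}.

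To show that a leaf $\ell=\bigcap_g S_g$ is the graph of a smooth function on $[-t_0,t_0]$, I would fix, for each $g$, a descendant tangency curve $b_g:[-t_0,t_0]\to\R$ with $b_{g+1}\subset U_g\subset S_g$; each $b_g$ is smooth (it comes from the implicit function theorem in Lemma~\ref{bnlocal}). The width estimate for the $r_g$-tangency strip $S_g$ forces $\|b_{g+1}-b_g\|_{C^0}\leq C\rho^{r_g(1+\alpha)-1}$, so $\{b_g\}$ is Cauchy in $C^0$ and converges uniformly to a function $\ell$. For $C^1$-convergence I would invoke \eqref{eq:angleall}, which yields
\[
\left|\tfrac{d b_{g+1}}{dt}-\tfrac{d b_g}{dt}\right|\leq \tilde C\,\frac{r_{g+1}}{\mu^{r_{g+1}+1}}\,\frac{\partial\mu}{\partial t},
\]
a summable sequence because $r_g\to\infty$ and $\mu>1$. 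Hence $b_g\to\ell$ in $C^1$, and the limit must coincide with the leaf since every $(t,\ell(t))$ lies in every $S_g$. The same scheme, applied at higher order (all the derivatives of the implicit equations that produce the $b_g$ are uniformly bounded on the relevant compact family), upgrades the convergence to $C^r$ for whatever finite regularity the original family allows.

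For the Cantor property of $\overline{L_t}$, I would fix $t\in[-t_0,t_0]$ and consider the vertical cross-section $I_{j,g}(t)=\{a:(t,a)\in S_{j,g}\}$, which is an open interval (possibly empty). Since the strips in $\mathfrak{S}_g$ are pairwise disjoint, $\bigcup_j I_{j,g}(t)$ is a disjoint union of open intervals, and by definition $L_t=\bigcap_g\bigcup_j I_{j,g}(t)$. Compactness of $\overline{L_t}$ is immediate. For total disconnectedness, the strip width estimate ensures $\sup_j |I_{j,g}(t)|\leq C\rho^{r_g(1+\alpha)-1}\to 0$ as $g\to\infty$, so $\overline{L_t}$ contains no nondegenerate interval. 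For perfectness, Proposition~\ref{Prop:inductionstep} provides that every $S_{j,g}$ contains countably many pairwise disjoint $S_{j',g+1}$'s, so any point of $L_t$ is approached by points of $L_t$ lying in distinct sibling strips at arbitrarily high generation. A compact, totally disconnected, perfect subset of $\R$ is a Cantor set, and the conclusion follows.

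The main obstacle I anticipate is a bookkeeping point: one must verify that the slope correction in \eqref{eq:angleall} is summable \emph{along an actual leaf}, not just from one fixed descendant to its immediate children. Because the index $r_g$ jumps from $r_g$ to $r_g+j\geq n_g+1$ at each refinement, and $n_g$ itself grows with $g$, the bound decays super-geometrically along any leaf, which keeps the $C^1$ telescoping sum finite and uniform in the choice of branch. A second, softer, obstacle is checking that no strip $S_{j,g}$ is ``barren''; this is precisely the content of the induction step, which produces infinitely many descendants at each stage, and guarantees both the branching needed for perfectness and the existence of leaves through every $S_{j,g}$.
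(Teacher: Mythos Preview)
Your proposal is correct and follows essentially the same approach as the paper: the paper's own proof is a terse two-line reference to the construction, Proposition~\ref{Prop:inductionstep}, and Proposition~\ref{angle}, while you spell out the Cauchy-sequence argument (via the strip width bound and the slope estimate~\eqref{eq:angleall}) for the graph property and the standard compact/totally disconnected/perfect verification for the Cantor property of $\overline{L_t}$. One minor point: the globality of each $b_g$ on $[-t_0,t_0]$ comes from Proposition~\ref{angle} rather than the local Lemma~\ref{bnlocal}, and the paper only claims $C^1$ leaves (your higher-regularity remark is plausible but not established in the paper).
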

\begin{rem}\label{rem:NMlarge}
As in the proof of Proposition \ref{Prop:inductionstepCE} by taking all $n_g^{(j)}$ large enough we may also assume that, for all $g$, $N_{g+1},M_{g+1}\ge 32 \max\{N_g, M_g\}$ and $N_g, M_g\ge 3$.
\end{rem}
The following Remark proves Theorem A-(viii)
\begin{rem}\label{rem:NewhouseInstability}
Observe that, by construction, given an open neighborhood of $f\in\mathfrak{L}$, it contains in parameter space a curve $b$ of secondary tangencies. Hence, by Remark \ref{NHinter}, such open neighborhood contains maps with infinitely many sinks.
\end{rem}
\subsection{The invariant Cantor set}
Given a leaf $\ell=\cap S_{g}$ of the lamination $\mathfrak L$ and a map $f\in\ell$, one can associate a sequence $\left\{\left(b_g, U_g,\mathfrak{D}_g\right)\right\}_{g=1}^{\infty}$ and a sequence of numbers $\left\{\left(1, N_g,M_g\right)\right\}_{g=1}^{\infty}$ such that, for all $g$, $(b_g,U_g)$ is the $(1, N_g, M_g)$ descendant of $S_{g-1}$ and $\mathfrak{D}_g$ is the $(1, N_g, M_g)$ dynamical partition of $f$. 
The sequence $\left\{\left(1, N_g,M_g\right)\right\}_{g=1}^{\infty}$ is called the type of $\ell$.

Moreover for each $g$, $\mathfrak{D}_g$ is a refinement of $\mathfrak{D}_{g-1}$, with mesh going to zero when $g$ goes to infinity. We can now define the set associated to $f$
$$
A(f)=\bigcap_{g=1}^{\infty} \mathcal{D}_g=\bigcap_{g=1}^{\infty}\bigcup_{D\in\mathfrak{D}_g} {D}
$$
where $\mathcal{D}_g$ is the union of the elements of $\mathfrak{D}_g$. This set is going to play a fundamental role. Let us begin by studying its properties. Following Proposition \ref{Prop:inductionstep}, for all $g$, let $X_g$ be the $(1, N_g,M_g)$ directed graph, let $w_g:X_{g+1}\to X_g$ be the projection and
$w_{g'g}: X_g\to X_{g'}$ be 
$$
w_{g'g}=w_{g'}\circ \cdots w_{g-1}\circ w_g.
$$ 
\subsection{Properties of the set $A$}
In this subsection we collect properties of the set $A$. 
\begin{lem}\label{lem:ACantorset}
Let $\ell$ be a leaf of the lamination $\mathfrak{L}$ and let $f\in\ell$. Then the set $A(f)$ is a Cantor set.
\end{lem}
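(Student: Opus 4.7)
The goal is to verify the three defining properties of a Cantor set for the nested intersection $A(f)=\bigcap_{g=1}^{\infty}\mathcal{D}_g$: non-empty compactness, total disconnectedness, and perfection.

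First I would pass to closures and verify compactness and non-emptiness. Each partition element $D_g$ is bounded, so its closure $\overline{D_g}$ is compact, and $\overline{\mathcal{D}_g}=\bigcup_{D_g\in\mathfrak{D}_g}\overline{D_g}$ is a finite union of compact sets. The refinement property (Proposition \ref{Prop:inductionstep}(v)) gives $\overline{\mathcal{D}_{g+1}}\subseteq\overline{\mathcal{D}_g}$, so the sequence is nested and decreasing. The finite intersection property for compact sets then yields that $A(f)$ is non-empty and compact. A small check shows that, because the mesh tends to zero and consecutive partitions are proper refinements, nothing on the boundary is added to the intersection in an essential way.

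Next, total disconnectedness follows immediately from the mesh estimate $\mathrm{mesh}(\mathfrak{D}_{g+1})<(1/k)\,\mathrm{mesh}(\mathfrak{D}_g)$ of Proposition \ref{Prop:inductionstep}(v), which sends $\mathrm{mesh}(\mathfrak{D}_g)\to 0$, combined with the pairwise disjointness of elements of $\mathfrak{D}_g$ in Definition \ref{def:dynamicalpartition}. Given $x\ne y$ in $A(f)$, take $g$ so that $\mathrm{mesh}(\mathfrak{D}_g)<\mathrm{dist}(x,y)$; then $x$ and $y$ lie in distinct, disjoint cells of $\mathfrak{D}_g$, which provides a separation of $A(f)$ in two relatively clopen pieces containing them, so they lie in distinct connected components.

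The main step, and the genuine obstacle, is perfection. I would show that every cell of $\mathfrak{D}_g$ contains at least two distinct cells of $\mathfrak{D}_{g+1}$, so that the refinement is genuinely branching at every level. For this I would use the graph morphism $w_g:X_{g+1}\to X_g$ supplied by Proposition \ref{Prop:inductionstep} together with the explicit winding described in Remark \ref{rem:nextloops}: $\omega(\tilde q)=(q)$, $\omega(\tilde h)=(q^{\underline\ell},h,q^{\overline\ell})$ and $\omega(\tilde c)=(q^{\underline\ell},c,q^{\theta n},c,q^n,c,q^{n_0},h,q^{\overline\ell})$. Reading off the preimages under $w_g$, every vertex $v\in X_g$ has at least two preimages: the vertex $q$ has many, from every $q$-letter in the windings; each vertex $c^k$ has three preimages, from the three occurrences of $c$ in $\omega(\tilde c)$; each vertex $h^k$ has two preimages, one from $\omega(\tilde c)$ and one from $\omega(\tilde h)$. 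The compatibility $\pi_g\circ i=w_g\circ \pi_{g+1}$ translates this into the geometric statement that each cell of $\mathfrak{D}_g$ contains at least two disjoint cells of $\mathfrak{D}_{g+1}$. Iterating, each cell of $\mathfrak{D}_g$ contains at least two disjoint compact subsets of $A(f)$ (each arising from a further nested intersection that is non-empty by the compactness argument above). Hence, given $x\in A(f)$ and $\epsilon>0$, pick $g$ with $\mathrm{mesh}(\mathfrak{D}_g)<\epsilon$, choose a sibling sub-cell of the $\mathfrak{D}_{g+1}$-cell containing $x$, and select any point of $A(f)$ inside it: this produces a point of $A(f)$ within $\epsilon$ of $x$ and distinct from $x$, proving that $A(f)$ is perfect.

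The delicate point in this plan is exactly the perfection argument: one must confirm that every vertex of $X_g$, including the ``thin'' ones $C^0$ and $H^0$ that might naively branch only once, has at least two preimages under $w_g$. This is a purely combinatorial check on the windings of Remark \ref{rem:nextloops}, but it is the piece of the proof that genuinely uses the two-branch nature of the renormalization introduced in the paper.
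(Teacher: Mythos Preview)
Your proposal is correct and follows essentially the same approach as the paper: compactness from nested compact sets, total disconnectedness from $\mathrm{mesh}(\mathfrak{D}_g)\to 0$, and perfection from the observation (via Remark \ref{rem:nextloops}) that every cell of $\mathfrak{D}_g$ contains at least two disjoint cells of $\mathfrak{D}_{g+1}$. The paper simply asserts the last point by citing the remark, whereas you spell out the preimage count under $w_g$ explicitly; your worry about $C^0$ and $H^0$ is unfounded, since your own count already gives them three and two preimages respectively.
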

\begin{proof}
The set $A(f)$ is compact because it is the intersection of a nested sequence of compact sets and it is totally disconnected because the $\text{mesh}(\mathfrak{D}_g)$ goes to zero when $g$ goes to infinity. It is left to prove that $A(f)$ is perfect. First observe that, by Remark \ref{rem:nextloops}, each set  $D_g\in\mathfrak{D}_g$ contains at least two sets $D_{g+1}, D'_{g+1}\in\mathfrak{D}_{g+1}$, see Figure \ref{Passage}.

Now take $x\in A(f)$, then for every $g$ there exists $D_g\in\mathfrak{D}_g$ such that $x\in D_g$. Fix $\epsilon>0$, since $\text{mesh}(\mathfrak{D}_g)$ goes to zero, there exists $g$ with $\text{diam}(D_g)<\epsilon$. Let $D_{g+1}\in\mathfrak{D}_{g+1}$ with $x\in D_{g+1}\subset D_g$. By the previous observation, there exists $D'_{g+1}\in\mathfrak{D}_{g+1}$ such that $D'_{g+1}\subset D_g$ and $D_{g+1}\cap D'_{g+1}=\emptyset$. Take $y\in A(f)\cap D'_{g+1}$, then $y\neq x$ and $\text{dist}(x,y)\leq\text{dist}(D_g)\leq\epsilon$. The proof is now complete.
\end{proof}

In the following lemma we describe the possible limit behavior of points in $A$. In particular, it implies that the set $A$ is non-hyperbolic.

We denote by $\omega(x)$, the $\omega$-limit set of the point $x$.
Recall that $q_2$ is the point with a transversal homoclinic intersection. Let
$$
O_2=\left\{f^t(q_2)\left|\right.t\in\mathbb Z\right\}\cup \left\{p\right\}.
$$
The proof of the following lemmas relies on the directed graphs introduced before. We will need the following notation.
Denote the corresponding prime loops in $X_g$ respectively by $\lambda ^q_g$, $\lambda ^h_g$, and $\lambda ^c_g$. We will use frequently the following observation: if a loop in $X_g$ consists purely of $h-$ and $q-$loops of $X_g$ then
 $w_{g'g}(\lambda)$ consists purely of $h-$ and 
 $q-$loops of $X_{g'}$.

\begin{lem}\label{lem:Adicotomy}
Let $\ell$ be a leaf of the lamination $\mathfrak{L}$ and let $f\in\ell$. Each $x\in A(f)$ satisfies one of the following:
\begin{itemize}
    \item $\omega(x)=\left\{p\right\}$,
    \item $\omega(x)=O_2$,
    \item $\omega(x)=A(f)$.
\end{itemize} 
\end{lem}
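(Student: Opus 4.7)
The plan is to classify $\omega(x)$ using the walk, at each generation $g$, that the orbit of $x$ traces in the graph $X_g$. Since $x\in A(f)\subseteq\mathcal D_g$ for every $g$ and $A(f)$ is forward invariant, the forward orbit of $x$ decomposes, at every return to $Q_g$, into a concatenation of prime loops and thereby yields a word $\sigma_g(x)\in\{q,h,c\}^{\mathbb N}$; write $C_g(x),H_g(x)\in\mathbb N\cup\{\infty\}$ for the numbers of $c$- and $h$-symbols in $\sigma_g(x)$. The observation quoted just before the lemma, together with Remark~\ref{rem:nextloops}, shows that the graph morphism $w_g\colon X_{g+1}\to X_g$ satisfies $w_g(\lambda^q_{g+1})=q$ and that $w_g(\lambda^h_{g+1})$ is a word in the alphabet $\{q,h\}$, while $w_g(\lambda^c_{g+1})$ contains exactly three $c$-symbols and one $h$-symbol of $X_g$. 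Consequently
\[
C_g(x)=3\,C_{g+1}(x),\qquad H_g(x)=C_{g+1}(x)+H_{g+1}(x),
\]
so $C_g(x)=\infty$ at some level iff at every level; and whenever $C_g(x)$ is finite at every level, the same equivalence holds for $H_g(x)$. This produces a clean trichotomy.

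\emph{Case 1: $C_g(x)=\infty$ for every $g$.} A $c$-loop of $X_g$ visits every $C^k_g$, and each $c$-loop of $X_{g+1}$ additionally produces an $h$-loop and several $q$-loops of $X_g$, so every piece of $\mathfrak D_g$ is entered by the orbit infinitely often, for every $g$. Since $\mathrm{mesh}(\mathfrak D_g)\to 0$, any $y\in A(f)$ lies in partition pieces of arbitrarily small diameter that are visited infinitely often, giving $y\in\omega(x)$. Combined with $\omega(x)\subseteq\overline{\mathrm{Orb}^+(x)}\subseteq A(f)$, this yields $\omega(x)=A(f)$.

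\emph{Case 2: $C_g(x)<\infty$ for every $g$.} If also $H_g(x)<\infty$ for every $g$, then $\sigma_1(x)$ is eventually $qqq\dots$, so the orbit stays in $Q_1$ from some time on; the linear normal form on $Q_1$ forces such an orbit to lie on $W^s_{\text{loc}}(p)$, hence $f^n(x)\to p$ and $\omega(x)=\{p\}$. Otherwise $H_g(x)=\infty$ for every $g$, and for each $g$ there is $T_g$ with $f^n(x)\in Q_g\cup\bigcup_k H^k_g$ for every $n\ge T_g$. The inductive construction of Proposition~\ref{Prop:inductionstep}, combined with the fact that $w_g(\lambda^h_{g+1})$ uses only $q$- and $h$-loops, forces each $H^k_g$ to shrink as $g\to\infty$ onto a single point of the orbit of $q_2$ while $Q_g$ shrinks to $\{p\}$. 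Hence $\bigcap_g\bigl(Q_g\cup\bigcup_k H^k_g\bigr)=O_2$, so every limit point of the orbit lies in $O_2$ and $\omega(x)\subseteq O_2$; conversely, each $f^k(q_2)$ and $p$ is approached because the corresponding pieces are visited infinitely often, giving $\omega(x)\supseteq O_2$.

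The main obstacle is the geometric identification $\bigcap_g(Q_g\cup\bigcup_k H^k_g)=O_2$ used in the last subcase. One must verify that, along the inductive construction producing the partitions $\mathfrak D_g$, the $h$-pieces at every generation keep tracking the same transversal homoclinic orbit --- that is, no drift occurs under the reparametrisations of Proposition~\ref{Prop:inductionstep} --- and that the mesh decay collapses each $H^k_g$ precisely onto $f^k(q_2)$. Everything else reduces to the combinatorial bookkeeping of the graph morphisms and the hyperbolicity at the saddle $p$.
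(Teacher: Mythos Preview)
Your proof is correct, and the trichotomy via the loop-counts $C_g(x),H_g(x)$ is a genuinely different decomposition from the paper's. The paper works with the \emph{position} of $x$ in the inverse limit: it examines the finite path $\overrightarrow{x_g q_g}$ from $x_g=\pi_g(x)$ to $q_g$ and asks whether, for infinitely many $g$, the projection $w_{g-1}(\overrightarrow{x_g q_g})$ contains the $c$-loop of $X_{g-1}$; if so $\omega(x)=A(f)$, otherwise it proves $\omega(x)\subset \bigcap_g\bigl(Q_g\cup\bigcup_k H^k_g\bigr)=O_2$. You record instead the full forward itinerary as a loop-word and split on whether the number of $c$- (and then $h$-) symbols is infinite. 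This is a dynamical rather than positional criterion, and it yields a direct three-way split: the paper's second case lumps together the outcomes $\omega(x)=\{p\}$ and $\omega(x)=O_2$ (both genuinely occur, e.g.\ $\omega(q_2)=\{p\}$), so one must afterwards observe that $\{p\}$ and $O_2$ are the only nonempty closed $f$-invariant subsets of $O_2$; your Case~2a/2b distinction handles this explicitly and more transparently. Both arguments rest on the same geometric identity $\bigcap_g\bigl(Q_g\cup\bigcup_k H^k_g\bigr)=O_2$, which you correctly flag as the point needing care, and which the paper does prove. One small wrinkle worth tightening: the equalities $C_g=3C_{g+1}$ and $H_g=C_{g+1}+H_{g+1}$ hold only up to a bounded additive correction coming from the initial segment of the orbit (the part before the first return to $Q_{g+1}$, projected to level $g$); this does not affect your finite-versus-infinite dichotomy, but the relations as written are not literally exact.
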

\begin{proof}
Let $x\in A(f)$, $x\neq p$, in particular $x=\bigcap_{g}D_g$ where, for all $g$, $D_g\in\mathfrak{D}_g$ and $D_g\subset D_{g+1}$. Fix $g$ and denote by $x_g=\pi_g(D_g)\in X_g$ where $\pi_g$ is the projection of $D_g$ on $X_g$. Observe that $w_{g-1}(x_{g})=x_{g-1}$. We claim that, if for infinitely many $g$,  
\begin{equation}\label{eq:infinitelymanylambdag}
w_{g-1} \left(\overrightarrow{x_gq_g}\right)\supset \lambda^c_{g-1} 
\end{equation}
 then $\omega(x)=A(f)$. 
 Indeed, take an open set $U\subset A(f)$ and let $g_0$ be such that there exists $\tilde D_{g_0}\in\mathfrak{D}_{g_0}$ with $\tilde D_{g_0}\subset U$ and  $$
 w_{g_0+1}\left(\overrightarrow{x_{g_0+2}q_{g_0+2}}\right)\supset \lambda^c_{g_0+1}.
 $$   
 Observe, 
 $$
 w_{g_0 }(\lambda^c_{g_0+1})=X_{g_0}.
 $$
  Hence, 
   there exists $t$ such that $f^t\left(D_{g_0+2}\right)\subset \tilde D_{g_0}$ and in particular $f^t(x)\in \tilde D_{g_0}\subset U $. In particular we showed that under if \eqref{eq:infinitelymanylambdag} holds for infinitely many $g$ then $\omega(x)=A(f)$.

Now consider the opposite situation when there exists $g_0\in \mathbb{N}$ such that
$
w_{g-1} \left(\overrightarrow{x_gq_g}\right)\subset X_{g-1}$ does not contain $\lambda^c_{g-1} $ for every $g\ge g_0$. We will prove in this case that
$$
\omega(x)=O_2.
$$
The proof relies on the following claim. Let $g_0\le g'$. There exists $t_{g'}\ge 0$ such that the point $y=f^{t_{g'}}(x)$ has the property that for every $g>g'$,
$$
w_{g'g}(\overrightarrow{y_{g }q_{g }})=
\lambda_{g'}^q\cup \lambda_{g'}^h.
$$
In other words,
the path $w_{g'g}(\overrightarrow{y_{g }q_{g }})$ is a concatenation of multiple times the $q-$loop   and $h-$loop of $X_{g'}$. The proof of this Claim is by induction. First observe
$$
w_{g' }(\overrightarrow{x_{g'+1 }q_{g'+1 }})=
\lambda\gamma,
$$
where $\gamma=w_{g'}(\overrightarrow{x_{g'+1 }y_{g'+1 }})\subset \lambda^c_{g'}$ is a path of length $t_{g'}$ and $y=f^{t_{g'}}(x)$. Moreover,
$\lambda$ is a loop which is a concatenation of 
multiple $\lambda_{g'}^q$ and one $\lambda_{g'}^h$.

 Suppose the Claim is proved for $g>g'$. Observe,
 $$
 w_{g  }(\overrightarrow{y_{g +1 }q_{g +1 }})=
 \overrightarrow{y_{g  }q_{g  }}\lambda,
 $$
where $\lambda$ is a loop which is a concatenation of 
multiple $\lambda_{g }^q$ and one $\lambda_{g}^h$. This is true because  $w_{g  }(\overrightarrow{y_{g +1 }q_{g +1 }})\subset w_{g  }(\overrightarrow{x_{g +1 }q_{g +1 }})$ which cannot contain the $c-$loop of $X_g$, we are in the second case. 
The induction  assumption implies that
 $$
w_{g'g}(\overrightarrow{y_{g }q_{g }})=\lambda_{g'}^q\cup \lambda_{g'}^h
 $$
Moreover,  using that projections of $q-$ and $h-$loops are concatenations of $q-$ and $h-$loops, we get
$$
w_{g'g}( \lambda)=\lambda_{g'}^q\cup \lambda_{g'}^h.
 $$
This finishes the proof of the Claim.

The Claim implies that for all $g'\ge g_0$,
$$
 \text{Orbit}(f^{t_{g'}}(x))\subset   Q_{g'}\cup\bigcup_{j=1}^{M_{g'}-1}H_{g'}^j .
 $$
Hence,
 
 $$
 \omega(x)\subset \bigcap_{g\ge g_0}\left(Q_g\cup\bigcup_{j=1}^{M_g-1}H_g^j\right)
 $$
 where for all $g$ and for all $j$, $Q_g$ and $H_g^j$ are the sets in $\mathfrak{D}_g$ as described in Definition \ref{def:dynamicalpartition}. 
In order to complete the proof one should establish that 
$$
\bigcap_{g}\left(Q_g\cup\bigcup_{j=1}^{M_g-1}H_g^j\right)=O_2. 
$$
One inclusion is trivial, since each $h$-loop contains $q_2$. Let us concentrate on the other one and for this purpose let us take a point $$
x\in\bigcap_{g}\left(Q_g\cup\bigcup_{j=1}^{M_g-1}H_g^j\right).
$$  
If $x=p$ then $x\in O_2$. Assume $x\neq p$, then from a certain moment on  
$$
x\in \bigcap_{g\geq \tilde g}\bigcup_{j=1}^{M_g-1}H_g^j.
$$  
Here we are using the fact that if $x_g\neq q_g$, then the same holds for all next points in the sequence defining the point $x$. Say, for $g \ge \tilde{g}$
$$
x\in H_g^{j_x}.
$$
 In particular, there exists a number $t_g\in\mathbb Z$ such that $$f^{t_g}(q_2)\in  H_g^{j_x}.$$   
 Because, $f^t(q_2)\to p$  when $|t|\to \infty$ we have that $t_g$ is bounded. 
  Observe also that $f^{t_g}(q_2)$ is close to $x$ because the diameter of $H_g^{j_x}$  goes to zero when $g$ goes to infinity. It follows that $f^t(q_2)=x$ for some $t\in \mathbb{Z}$. 
Hence, $x\in O_2$ 
and the proof is complete.
\end{proof}
A natural question at this point is: does a point with dense orbit exist? The answer is yes. The following lemma also confirms Theorem A-(ii).
\begin{lem}\label{lem:Adense}
Let $\ell$ be a leaf of the lamination $\mathfrak{L}$ and let $f\in\ell$. There exists a point $c\in A(f)$ such that  $\omega(c)=A(f).$
\end{lem}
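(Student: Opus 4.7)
The plan is to exhibit a point $c\in A(f)$ whose forward orbit hits every element of every dynamical partition $\mathfrak D_g$, using the combinatorial criterion uncovered in the proof of Lemma \ref{lem:Adicotomy}. Recall that the first case of that proof establishes the following sufficient condition: if
\[
w_{g-1}\!\left(\overrightarrow{x_g q_g}\right)\supset \lambda^c_{g-1} \quad \text{for infinitely many } g,
\]
then $\omega(x)=A(f)$. So it suffices to construct $c\in A(f)$ realizing this condition at every generation.

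The key structural input is Remark \ref{rem:nextloops}, which computes explicitly the windings of the prime loops of $X_{g+1}$ under the graph morphism $w_g$. In particular the $c$-loop $\lambda^c_{g+1}$ projects to a word of the form $q^{\underline\ell}\, c\, q^{\theta n}\, c\, q^{n}\, c\, q^{n_0}\, h\, q^{\overline\ell}$, which contains the $c$-loop $\lambda^c_g$ as a subword. This is the combinatorial engine that propagates the criterion from one scale to the next.

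The construction is then inductive. At generation $g=1$, pick any element $D_1\in\mathfrak D_1$ lying on the $c$-loop of $X_1$, say $D_1=C^0_1$. Assuming $D_1\supset\cdots\supset D_g$ have been chosen with $D_j\in\mathfrak D_j$ mapped by $\pi_j$ to a vertex of $\lambda^c_j$, I will choose $D_{g+1}\in\mathfrak D_{g+1}$ with $D_{g+1}\subset D_g$ and $\pi_{g+1}(D_{g+1})\in \lambda^c_{g+1}$. The existence of such a refinement is exactly what the projection formula of Remark \ref{rem:nextloops} delivers: since $w_g(\lambda^c_{g+1})$ passes through every $c$-vertex of $X_g$, at least one $c$-vertex of $X_{g+1}$ projects to $\pi_g(D_g)$; the corresponding element of $\mathfrak D_{g+1}$ sits inside $D_g$ by the commutativity of inclusion and projection stated at the end of Proposition \ref{Prop:inductionstep}. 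Since $\mathrm{mesh}(\mathfrak D_g)\to 0$ (Proposition \ref{Prop:inductionstep}(v)), the nested intersection
\[
c=\bigcap_{g\ge 1} D_g
\]
is a single point of $A(f)$.

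To finish, I verify that this $c$ satisfies the criterion. For each $g$, $\pi_g(c)\in\lambda^c_g\setminus\{q_g\}$; the forward symbolic path $\overrightarrow{c_g q_g}$ is therefore a suffix of $\lambda^c_g$ whose image under $w_{g-1}$, by Remark \ref{rem:nextloops}, still contains at least one full copy of $\lambda^c_{g-1}$. Hence the sufficient condition holds for every $g$, and $\omega(c)=A(f)$. The main obstacle is purely combinatorial: matching the refinement inclusion $D_{g+1}\subset D_g$ with the correct choice of $c$-vertex at level $g+1$. Once one unpacks $\pi\circ i^{(j)}_k = w\circ\pi^{(j)}_k$ and the explicit word in Remark \ref{rem:nextloops}, this bookkeeping is automatic; there is no new dynamical estimate required beyond what has been set up in Proposition \ref{Prop:inductionstep}.
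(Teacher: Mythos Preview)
Your approach is essentially the same as the paper's: both use the criterion from Lemma \ref{lem:Adicotomy}, invoke Remark \ref{rem:nextloops}, and construct $c$ by choosing a nested sequence of elements on the $c$-loops. However, your verification has a gap.

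In the inductive step you say ``at least one $c$-vertex of $X_{g+1}$ projects to $\pi_g(D_g)$'' and then pick \emph{any} such vertex. But by Remark \ref{rem:nextloops} the word $w_{g-1}(\lambda^c_g)$ is $q^{\underline\ell}\,c\,q^{\theta n}\,c\,q^{n}\,c\,q^{n_0}\,h\,q^{\overline\ell}$, so there are three preimages of $c_{g-1}$ on $\lambda^c_g$. If you happen to pick the \emph{last} one, then $w_{g-1}(\overrightarrow{c_g q_g})$ is only $\overrightarrow{c_{g-1}q_{g-1}}$ followed by $q^{n_0}\,h\,q^{\overline\ell}$, which contains no full copy of $\lambda^c_{g-1}$. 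If this happens at every level, the criterion fails at every $g$ and you cannot conclude $\omega(c)=A(f)$. The paper avoids this by explicitly taking the \emph{first} vertex $c_g$ on $\lambda^c_g$ with $w_{g-1}(c_g)=c_{g-1}$; then $w_{g-1}(\overrightarrow{c_g q_g})=\overrightarrow{c_{g-1}q_{g-1}}\circ\lambda$ where the remaining loop $\lambda$ still contains two further copies of $\lambda^c_{g-1}$. Adding this specification fixes your argument completely.
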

\begin{proof}
Chose $C_1\in\mathfrak{D}_1$ such that $c_1=\pi_1(C_1)\in\lambda^{c}_1$ where $\pi_1$ is the projection of $\mathcal{D}_1$ on $X_1$. We want to prove that for all $g$ there is an element $c_g$ in the $c$-loop of $X_g$ such that 
$$
w_{g-1} \left(\overrightarrow{c_gq_g}\right)\supset \lambda^c_{g-1} $$
and $w_{g-1}(c_g)=c_{g-1}$.

 Suppose we have constructed $c_1,\dots,c_{g-1}$. Take the first vertex $c_g$ in the $c$-loop of $X_g$ with the property that $w_{g-1}(c_g)=c_{g-1}$. Observe that $$w_{g-1}\left(\overrightarrow{c_gq_g}\right)=\lambda\overrightarrow{c_{g-1}q_{g-1}}$$ where $\lambda$ is a loop and, by Remark \ref{rem:nextloops} $\lambda$ contains all primary loops of $X_{g-1}$. In particular $
w_{g-1} \left(\overrightarrow{c_gq_g}\right)\supset \lambda^c_{g-1} $. 

For all $g$, consider the set of $C_g$ defined by $\pi_g(C_g)=c_g$, where $\pi_g$ is the projection of $\mathcal D_g$ on $X_g$. Our selected point is 
$$
c=\bigcap_g C_g\in A(f).
$$
Observe that $c$ satisfies \eqref{eq:infinitelymanylambdag} for all $g$. Hence, $\omega(c)=A(f)$.
\end{proof}
 The following lemma confirms Theorem A-(iii).
 \begin{lem}\label{lem:omegaO2}
Let $\ell$ be a leaf of the lamination $\mathfrak{L}$ and let $f\in\ell$. There exists a point $b\in A(f)$ such that  $\omega(b)=O_2.$
\end{lem}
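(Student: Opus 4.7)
The plan is to mirror the construction carried out for Lemma \ref{lem:Adense}, but now routing the inverse-limit vertex sequence through the $h$-loops of the graphs $X_g$ rather than through their $c$-loops. More precisely, I build a compatible sequence $\{b_g\}_{g\ge 1}$, with $b_g\in X_g$ lying on the $h$-loop of $X_g$ and satisfying $w_{g-1}(b_g)=b_{g-1}$ for every $g\ge 2$. The existence of such a sequence is exactly analogous to the existence argument in the proof of Lemma \ref{lem:Adense}: by Remark \ref{rem:nextloops}, the winding of the $h$-loop of $X_g$ down to $X_{g-1}$ is of the form $q^{\underline{\ell}_g}\,h\,q^{\overline{\ell}_g}$, so that each $h$-vertex of $X_{g-1}$ is hit exactly once during a traversal of the $h$-loop at scale $g$, giving us the required preimage of $b_{g-1}$. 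Setting $b=\bigcap_{g\ge 1} B_g$, where $B_g\in\mathfrak{D}_g$ is the unique element with $\pi_g(B_g)=b_g$, then produces a well-defined point of $A(f)$, since $\mathrm{mesh}(\mathfrak{D}_g)\to 0$.

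The first step is to verify $\omega(b)\subset O_2$. By construction the whole path $\overrightarrow{b_g q_g}$ runs through the $h$-loop of $X_g$, so its image $w_{g-1}(\overrightarrow{b_g q_g})$ is a sub-path of $q^{\underline{\ell}_g}\,h\,q^{\overline{\ell}_g}$ and in particular does not contain $\lambda^c_{g-1}$. Thus $b$ falls into the second case of the dichotomy analyzed in the proof of Lemma \ref{lem:Adicotomy}, which directly yields $\omega(b)\subset O_2$.

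The harder inclusion $\omega(b)\supset O_2$ requires spending the remaining combinatorial freedom in the choice of $\{b_g\}$ to guarantee that the orbit of $b$ re-enters the $h$-chain at every scale $g$ infinitely often, rather than only once through the initial excursion. Concretely, at each inductive step I enforce an additional $h$-loop at scale $g$ into the symbolic code of the orbit, while preserving the property that the path at scale $g-1$ still avoids $\lambda^c_{g-1}$; this is automatic because any $h$-loop of $X_g$ projects to $q^{\underline{\ell}_g}\,h\,q^{\overline{\ell}_g}$, a word free of $c$-loops. Every complete traversal of $\lambda^h_g$ forces the orbit to visit each of $H_g(0),\dots,H_g(M_g-1)$ once, and since the sets $\{H_g(j)\}$ across scales nest down to neighborhoods of successive points of the orbit of $q_2$, infinitely many $h$-traversals at every scale force the orbit of $b$ to accumulate on every $f^t(q_2)$. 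Accumulation on $p$ follows from the intermediate $q$-loops together with $\bigcap_g Q_g=\{p\}$, giving $\omega(b)\supset O_2$ and therefore equality.

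The main obstacle is precisely this second inclusion: a naive choice $b_g\in\lambda^h_g$ provides only the initial $h$-traversal at each scale, and under refinement this can degenerate into a trajectory that at some coarse scale $g_0$ performs only $q$-loops from some time on, yielding $\omega(b)=\{p\}$ instead of $O_2$. Overcoming this requires a diagonal construction in which, at the $g$-th stage of the inverse limit, one reserves enough combinatorial room to insert an extra $h$-excursion at scale $g$ consistent with the code already fixed at coarser scales, much as Lemma \ref{lem:Adense} arranges the nested $c$-loops. A careful bookkeeping of these insertions produces the desired $b$.
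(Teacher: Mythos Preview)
Your construction has a genuine gap exactly at the place you flag, and the fix you sketch cannot be carried out. Once you commit to $b_g\in\lambda^h_g$ with $w_{g-1}(b_g)=b_{g-1}$, there is \emph{no} remaining combinatorial freedom: as you yourself observe, the winding $\underline{w}(\lambda^h_g)=q^{\underline{\ell}_g}hq^{\overline{\ell}_g}$ hits each $h$-vertex of $X_{g-1}$ exactly once, so $b_g$ is uniquely determined by $b_{g-1}$. With this unique choice the tail $\overrightarrow{b_gq_g}$ projects to $\overrightarrow{b_{g-1}q_{g-1}}$ followed only by $q^{\overline{\ell}_g}$---no new $h$-loop appears. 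Iterating, at any fixed coarse scale $g_0$ the orbit of $b$ traverses $\lambda^h_{g_0}$ exactly once and thereafter visits only $q_{g_0}$; hence $\omega(b)=\{p\}$, not $O_2$. The ``diagonal construction'' you propose would require choosing $b_g$ differently at some levels, but every vertex of $\lambda^h_g$ projects either to $q_{g-1}$ or to a vertex of $\lambda^h_{g-1}$, so staying on the $h$-loops gives you no room to insert anything.

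The paper's proof resolves this by placing $x_g$ on the $c$-loop of $X_g$, not the $h$-loop. The winding of $\lambda^c_g$ is $q^{\underline{\ell}}cq^{\theta n}cq^{n}cq^{n_0}hq^{\overline{\ell}}$, and the paper chooses $x_g$ in the \emph{last} copy of $c$, at the position matching $x_{g-1}$. Then $w_{g-1}(\overrightarrow{x_gq_g})=\overrightarrow{x_{g-1}q_{g-1}}\,\lambda$ with $\lambda$ a concatenation of $q$-loops and exactly one $h$-loop. Each step of refinement therefore contributes one additional $h$-traversal at every coarser level, so after projection the orbit at any fixed $g_0$ contains infinitely many $h$-loops, forcing $\omega(x)\cap H_{g_0}(j)\neq\emptyset$ for every $j$. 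At the same time, because the tail past $x_{g-1}$ contains only $q$'s and $h$'s (no full $c$-loop), the argument from the second case of Lemma~\ref{lem:Adicotomy} applies after a finite time shift and gives $\omega(x)\subset O_2$. The key idea you are missing is that the $h$-excursions must be manufactured by the $c$-loop's winding, not by the $h$-loop itself.
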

\begin{proof} Observe, for every $x_{g-1}\in \lambda^c_{g-1}\subset X_{g-1}$ there exists a unique $x_{g }\in \lambda^c_{g }\subset X_{g }$ such that
$$
w_{g-1}(\overrightarrow{x_gq_g})=
\lambda\overrightarrow{x_{g-1}q_{g-1}},
$$
where $\lambda$ is a concatenation of  multiple $\lambda^q_{g-1}$ and only one $\lambda^h_{g-1}$.

Choose $x_1\in \lambda^c_1$, $x_1\ne q_1$, and use the above observation to construct the sequence $x_g$, $g\ge 1$. Let $D_g\in \mathfrak{D}_g$ such that
$$
x_g=\pi_g(D_g) 
$$
and $x=\bigcap_g D_g$.

Given $g_0$, let $t_{g_0}$ be such that the point $y=f^{t_{g_0}}(x)$ satisfies 
$$
w_{g_0}(\overrightarrow{y_{ g_0+1}q_{ g_0+1}})=
 \lambda,
$$
where $\lambda$ is a concatenation of  multiple $\lambda^q_{g_0}$ and only one $\lambda^h_{g_0}$.
Now we claim that for all $g_0<g$
$$
w_{g_0g}(\overrightarrow{y_gq_g})=\lambda,
$$
where $\lambda$ is a concatenation of  multiple $\lambda^q_{g_0}$ and   $\lambda^h_{g_0}$.

The proof of the Claim is by induction. It holds for $g=g_0+1$. By construction we have
$$
w_g(\overrightarrow{y_{g+1}q_{g+1}})=
\lambda\overrightarrow{y_gq_g},
$$
where $\lambda$ is a concatenation of  multiple $\lambda^q_{g }$ and  one  $\lambda^h_{g }$. Then
$$
w_{g_0g}(\overrightarrow{y_{g+1}q_{g+1}})=
w_{g_0g}(\lambda)w_{g_0g}(\overrightarrow{y_{g }q_{g }}).
$$
The induction hypothesis implies that $w_{g_0g}(\overrightarrow{y_{g }q_{g }})$  is a concatenation of  multiple $\lambda^q_{g_0 }$ and    $\lambda^h_{g_0 }$. As used before we have 
$w_{g_0g}(\lambda)$ is a concatenation of  multiple $\lambda^q_{g_0 }$ and    $\lambda^h_{g_0 }$. This finishes the proof of the Claim.

The Claim implies for all $g\ge 1$
$$
\omega(x)\subset Q_g\cup\bigcup_{j=1}^{M_g-1}H_g^j
$$
and
$$
\omega(x)\cap Q_g\ne \emptyset,
$$
and for all $j\le M_g-1$
$$
\omega(x)\cap H_g^j\ne \emptyset.
$$
This means that $\omega(x)=O_2$ where we use that
$$
\bigcap_{g}\left(Q_g\cup\bigcup_{j=1}^{M_g-1}H_g^j\right)=O_2. 
$$
\end{proof}

In the next proposition we prove that $A$ is an invariant set.
\begin{prop}
    Let $\ell$ be a leaf of the lamination $\mathfrak{L}$ and let $f\in\ell$. Then the map $f:A(f)\mapsto A(f)$ is an homeomorphism.   
\end{prop}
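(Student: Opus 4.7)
The plan is to deduce the statement from the combinatorial structure provided by the dynamical partitions $\mathfrak{D}_g$ and the associated directed graphs $X_g$. Since $f$ is a $\Cinf$ diffeomorphism of $\mathcal M$ and $A(f)$ is compact by Lemma~\ref{lem:ACantorset}, the map $f|_{A(f)}$ is automatically continuous and injective, and its inverse, being the restriction of $f^{-1}$, is continuous as well. Consequently everything reduces to the set-invariance $f(A(f))=A(f)$.

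For forward invariance $f(A(f))\subseteq A(f)$, I would fix $x\in A(f)$ and, at each scale $g$, let $D_g\in\mathfrak{D}_g$ be the element containing $x$, with vertex $v_g=\pi_g(D_g)\in X_g$ (so $D_{g+1}\subset D_g$ by refinement). Inspection of the graph $X_g$ shows that every vertex other than $q_g$ has a unique outgoing edge, so when $v_g\in\{c_g^k,h_g^k\}$ the axioms of the dynamical partition force $f(D_g)\subset\pi_g^{-1}(v_g')\subset\mathcal D_g$, where $v_g'$ is the target of that edge, yielding $f(x)\in\mathcal D_g$ at once. The only non-trivial case is $v_g=q_g$. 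Since the $\tilde q$-loop projects to the single edge $\overrightarrow{qq}$ (Remark~\ref{rem:nextloops}), the nested inclusions $Q_{g+1}\subset Q_g$ hold across scales. Either $x\in\bigcap_gQ_g$, in which case $\mathrm{mesh}(\mathfrak{D}_g)\to0$ and $p\in Q_g$ force $x=p$ and $f(x)=p\in A(f)$ trivially, or there is a smallest $g_0$ with $v_{g_0}\neq q_{g_0}$; at that scale the previous argument gives $f(x)\in\mathcal D_{g_0}$, and the refinement $\mathcal D_{g_0}\subset\mathcal D_g$ propagates $f(x)\in\mathcal D_g$ to all coarser $g<g_0$ while finer scales $g>g_0$ are handled directly.

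Backward invariance $f^{-1}(A(f))\subseteq A(f)$ is the dual argument: every vertex other than $q_g$ has a unique incoming edge, so for $y\in A(f)$ with $\pi_g(y)\neq q_g$ the preimage $f^{-1}(y)$ sits in a single element of $\mathfrak{D}_g$; the case $\pi_g(y)=q_g$ is handled by the same dichotomy---either $y=p$, or $y$ leaves $Q_{g_0}$ at some finer scale $g_0$, reducing matters to the first case and then propagating by refinement. Combining the two inclusions yields $f(A(f))=A(f)$, which together with the opening observation completes the proof.

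No individual step is technically deep, but the place that demands care is the $q$-vertex, whose multiplicity of incident edges could a priori let orbits leak out of $\mathcal D_g$ under $f^{\pm1}$. The observation that prevents this leakage is that permanent residence in the nested $Q_g$'s forces the point to be the fixed saddle $p$, which is a consequence of $\mathrm{mesh}(\mathfrak{D}_g)\to0$ from Proposition~\ref{Prop:inductionstep}(v). Once this dichotomy is noted, the remainder is a routine bookkeeping of the graph morphisms $w_g$ and of the projections $\pi_g$ provided by the previous section.
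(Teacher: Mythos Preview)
Your argument is correct, but the paper takes a much shorter route. Rather than tracking the image and preimage of an arbitrary point through the refinement structure of the $\mathfrak D_g$'s, the paper simply invokes Lemma~\ref{lem:Adense}: there exists $c\in A(f)$ with $\omega(c)=A(f)$. Since $\omega$-limit sets of a homeomorphism are always fully invariant, $f(A(f))=f(\omega(c))=\omega(c)=A(f)$ in one line, and the homeomorphism property follows from compactness exactly as you observe.

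What you gain with your direct approach is independence from Lemma~\ref{lem:Adense}: you show invariance from the partition axioms alone, so the argument would survive even if no dense orbit had yet been established. The cost is the case analysis at the $q$-vertex and the bookkeeping across scales. The paper's approach trades that bookkeeping for the earlier investment in constructing the dense orbit, which it needs anyway for Theorem~A(ii). Both arguments ultimately rest on the same combinatorial fact you isolate---that permanent residence in the nested $Q_g$'s forces the point to be $p$---since this is also what drives the proof of Lemma~\ref{lem:Adicotomy} on which Lemma~\ref{lem:Adense} depends.
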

\begin{proof}
    
    Since $f$ is injective, its restriction to $A$ is injective as well. We only need to prove that $f(A(f))=A(f)$. Observe that for all $x$, $f(\omega(x))=\omega(x)$. In particular, when $x=c$, $f(\omega(c))=\omega(c)$ and by Lemma \ref{lem:Adense} we obtain $f(A(f))=A(f)$. The proof is complete. 
\end{proof}
\section{Topological Equivalence along  the Leaves} 
The aim of this section is to prove that two maps which are in the same leaf of the lamination  are conjugate on their respective Cantor sets, see Theorem \ref{Th:conjonleaves}. In order to do that, we need to build a topological model for the invariant Cantor sets.  

\subsection{Inverse limits}
Take a leaf $\ell$ of the lamination $\mathfrak{L}$ as constructed in \eqref{eq:lamination} and \eqref{eq:leavelamination}. By the construction following Proposition \ref{Prop:inductionstep} we know that for all $g$ there is a $(1,N_g,M_g)$-directed graph and a graph morphism  $w_g:X_{g+1}\mapsto X_g$. 
We can then define the inverse limit as
$$
X=\underleftarrow{\lim}X_g
$$
with the corresponding topology. A sequence $\left\{\underline x_n\right\}_{n\in\mathbb{N}}=\left\{\left(x_{g,n}\right)\right\}_{n\in\mathbb{N}}\subset X$ converges to a point $\underline x=\left(x_g\right)\in X$, namely  
\begin{eqnarray*}
   \left\{\underline x_n\right\}_{n\in\mathbb{N}}\mapsto \underline x\iff\forall g\text{ }\exists n_0\in\mathbb N\text{ s.t }\forall n\geq n_0\text{, }x_{g,n}=x_g.
\end{eqnarray*}
In particular $X$ is a Cantor set. 
\begin{lem}\label{lem:allprevareqs}
Let $\underline{x}\in X$, $\underline x=(x_g)$. If there exists $g_0$ such that $x_{g_0}=q_{g_0}$ then for all $g\leq g_0$, $x_g=q_g$. 
\end{lem}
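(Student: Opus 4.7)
The plan is to proceed by backward induction on $g$, starting at $g=g_0$ and decreasing down to $g=1$. The base case $g=g_0$ is exactly the hypothesis. For the inductive step, assuming $x_g=q_g$ for some $g\leq g_0$, the goal is to show $x_{g-1}=q_{g-1}$.

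The key input is the self-loop characterization of the $q$-vertex observed in the discussion preceding the lemma: in any $(1,N,M)$-directed graph, $q$ is the unique vertex admitting an edge from itself to itself. Consequently, any graph morphism $w:\tilde X\to X$ between two such directed graphs must satisfy $w(\tilde q)=q$, since the image of $\overrightarrow{\tilde q\tilde q}$ must be an edge from $w(\tilde q)$ to itself, and only $q$ carries such an edge. Applying this to the morphism $w_{g-1}:X_g\to X_{g-1}$ yields $w_{g-1}(q_g)=q_{g-1}$.

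By the defining property of the inverse limit $X=\underleftarrow{\lim}X_g$, the coordinates of $\underline{x}=(x_g)\in X$ satisfy $w_{g-1}(x_g)=x_{g-1}$. Combining this with the induction hypothesis $x_g=q_g$ and the previous identity gives
$$x_{g-1}=w_{g-1}(x_g)=w_{g-1}(q_g)=q_{g-1},$$
closing the induction.

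There is essentially no obstacle in this argument; it is a direct consequence of the fact that graph morphisms must preserve the vertex with a self-loop. The only point that deserves a brief verification is that the maps $w_g$ produced by Proposition \ref{Prop:inductionstep}, which govern the inverse system, are indeed graph morphisms in the sense defined in the paragraph on graph morphisms, so that the self-loop argument applies to each $w_{g-1}$ in the induction.
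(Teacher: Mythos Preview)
Your proof is correct and follows essentially the same approach as the paper: both use the compatibility $w_{g-1}(x_g)=x_{g-1}$ from the inverse limit together with $w_{g-1}(q_g)=q_{g-1}$ to descend from $g_0$ to all smaller $g$. You simply make the self-loop justification for $w_{g-1}(q_g)=q_{g-1}$ more explicit than the paper does.
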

\begin{proof}
    Since for all $g$, $w_{g-1}(q_g)=q_{g-1}$, then $x_{g_0-1}=w_{g_0-1}(x_{g_0})=w_{g_0-1}(q_{g_0})=q_{g_0-1}$. The same can be repeated for all $g\leq g_0$.
\end{proof}
In the following proposition we prove the existence of an homeomorphism on the inverse limit defined by the directed graphs. 
\begin{prop}
There exists a unique homeomorphism $\phi:X\to X$ with the following property.  For all sequences $\underline x=(x_g)\in X$,  if $\phi(\underline x)=\underline y=(y_g)\in X$ then $\overrightarrow{x_{g}y_{g}}$ is an edge in $X_g$ for all $g$. 
\end{prop}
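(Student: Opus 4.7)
The plan is to build $\phi$ coordinate-by-coordinate, exploiting the fact that every vertex of $X_g$ other than $q_g$ has a unique outgoing edge. Given $\underline x=(x_g)\in X$, if $\underline x=\underline q$ set $\phi(\underline q):=\underline q$ (valid via the self-loop $\overrightarrow{q_g q_g}$). Otherwise, Lemma~\ref{lem:allprevareqs} guarantees that $\{g:x_g\ne q_g\}=[g_0,\infty)$ for a unique minimal $g_0$; for $g\ge g_0$ let $y_g$ be the unique vertex with $\overrightarrow{x_g y_g}$ an edge of $X_g$, and for $g<g_0$ set $y_g:=w_g(y_{g+1})$. Compatibility $w_g(y_{g+1})=y_g$ for $g\ge g_0$ follows since the graph morphism $w_g$ sends $\overrightarrow{x_{g+1} y_{g+1}}$ to an edge starting at $w_g(x_{g+1})=x_g$, which must equal the unique outgoing edge $\overrightarrow{x_g y_g}$; therefore $\underline y\in X$ and the defining edge condition holds at every level.

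Uniqueness of $\phi$ at a point $\underline x\ne\underline q$ is immediate: for $g\ge g_0$, $y_g$ is fixed by uniqueness of the outgoing edge at $x_g$, and for $g<g_0$, $y_g=w_g(y_{g+1})$ is fixed by the inverse-limit structure. At $\underline q$ uniqueness is more delicate: any candidate $\underline y$ satisfies $y_g\in\{q_g,h^0_g,c^0_g\}$ at every level, and by Remark~\ref{rem:nextloops} the winding of each non-trivial prime loop of $X_{g+1}$ begins with $q^{\underline{\ell}}$ with $\underline{\ell}\ge 1$, so $w_g$ sends each of $q_{g+1},h^0_{g+1},c^0_{g+1}$ to $q_g$; iterating this forces $y_g=q_g$ at every level, i.e.\ $\underline y=\underline q$.

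Continuity of $\phi$ at $\underline x\ne\underline q$ is clear: the $G$-th coordinate $y_G$ depends only on $(x_g)_{g\le\max(G,g_0)}$, hence is locally constant in the inverse-limit topology. At $\underline q$ I would argue as follows: for $\underline x_n\to\underline q$ the minimal level $g_n:=g_0(\underline x_n)$ diverges, and the image edge $\overrightarrow{q_{g_n-1}\,w_{g_n-1}(y_{g_n,n})}$ lies in $X_{g_n-1}$, so $y_{g_n-1,n}\in\{q_{g_n-1},h^0_{g_n-1},c^0_{g_n-1}\}$; a further pass through $w_{g_n-2}$ via the $q^{\underline{\ell}}$-prefix yields $y_{g_n-2,n}=q_{g_n-2}$, and hence $y_{G,n}=q_G$ for all $n$ with $g_n\ge G+2$. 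The inverse $\phi^{-1}$ is built by the symmetric procedure using the unique \emph{ingoing} edge at each non-$q_g$ vertex together with the $q^{\overline{\ell}}$-suffix of the windings; the same analysis shows it is well-defined and continuous, so $\phi$ is a homeomorphism. The main obstacle will be the careful bookkeeping of the windings near $\underline q$; everywhere else the assignment is essentially forced by the unique outgoing/ingoing edge property.
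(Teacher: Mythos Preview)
Your proposal is correct and follows essentially the same approach as the paper: build $\phi$ coordinatewise via the unique outgoing edge at non-$q$ vertices, handle $\underline q$ separately, and verify the bijection and continuity by case analysis using the fact that $q_{g+1},c^0_{g+1},h^0_{g+1}\in w_g^{-1}(q_g)$ (which is exactly your $q^{\underline\ell}$-prefix observation). The only notable differences are stylistic: you construct $\phi^{-1}$ directly via the unique \emph{ingoing} edge and the $q^{\overline\ell}$-suffix, whereas the paper proves injectivity and surjectivity separately; and your recursive assignment $y_g:=w_g(y_{g+1})$ for $g<g_0$ is in fact slightly more careful than the paper's flat assignment $y_g:=q_g$ (which tacitly assumes $w_{g_0-1}(y_{g_0})=q_{g_0-1}$, a point your version sidesteps).
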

\begin{proof}
    Let $\underline x=(x_g)\in X$. By Lemma \ref{lem:allprevareqs}, either $\underline x=\underline q=(q_g)$ or there exists $g_0$ such that, for all $g\geq g_0$, $x_g\neq q_g$. For the case when $\underline x=\underline q$, define $\phi(\underline x)=\underline q$. In the other case define $\phi(\underline x)=\underline y=(y_g)$ where $y_g=q_g$ for all $g<g_0$ and $y_g$ is the vertex in the unique outgoing edge starting in $x_g$. Since $w_g(x_{g+1})=x_g$ and since $x_g$ has a unique outgoing edge, if $w_g(\overrightarrow{x_{g+1}y_{g+1}})=\overrightarrow{x_{g}z}$, then $\overrightarrow{x_{g}z}=\overrightarrow{x_{g}y_g}$ and $w_g(y_{g+1})=y_g$. Hence $\phi(\underline x)=\underline y=(y_g)\in X$.
    
    It is left to prove that $\phi$ is an homeomorphism. We start proving that if $\underline{x}\neq\underline{q}$ then $\phi(\underline{x})\neq \underline{q}$. If not, $\phi(\underline{x})=\underline y=(y_g)= \underline{q}=(q_g)$ and for all $g$, $y_g=q_g$. Since $\underline{x}\neq\underline{q}$, there exists $g_0$ such that, for all $g\geq g_0$ $x_g\neq q_g$. In particular, for all $g\geq g_0$, $x_g$ either is the predecessor of $q_g$ in the $c$-loop or $x_g$ is the predecessor of $q_g$ in the $h$-loop. Denote this loop by $\lambda_g$. Take $g\geq g_0+1$, then there exists $z_g\in\lambda_g\setminus q_g$ such that $\overrightarrow{z_gq_g}\subset w_{g-1}^{-1}(q_{g-1})\cap\lambda_g$. In particular we can choose  $z_g$ with the property that the length of $\overrightarrow{z_gq_g}$ is $\underline\ell_g$ which is larger than $2$. Hence, the predecessor of $q_g$ in $\lambda_g$ is contained in this path, i.e. $x_g$ is in $\overrightarrow{z_gq_g}$. This implies that $w_{g-1}(x_g)=q_{g-1}=x_{g-1}$. Contradiction. We proved that if $\underline{x}\neq\underline{q}$, then $\phi(\underline{x})\neq\underline{q}$. 
    
 Now we are ready to prove that the map $\phi$ is injective. Let $\underline{x}\neq \underline{\tilde x}$ and suppose that $\phi(\underline{x})=\phi(\underline{\tilde x})$. By Lemma \ref{lem:allprevareqs}, there exists $g_0$ such that for all $g\geq g_0$, $x_g\neq\tilde{x}_g$ and by assumption $\phi(\underline{x})=\phi(\underline{\tilde x})$, $y_g=\tilde{y}_g$ for all $g$. Since $x_g$ and $\tilde{x}_g$ are predecessors of $y_g$ and $\tilde{y}_g$, respectively, and since they are the same, the only possibility is that $y_g=\tilde{y}_g=q_g$ for all $g\geq q_0$. By Lemma \ref{lem:allprevareqs} $y_g=\tilde{y}_g=q_g$ for all $g$ and $\phi(\underline{x})=\phi(\underline{\tilde x})=\underline q$. By the previous observation, this leads to the contradiction $\underline{x}=\underline{\tilde x}=\underline q$. The injectivity is then proved. 

    In order to prove that $\phi$ is a surjection, let $\underline {y}=(y_g)\in X $. If $\underline {y}=\underline {q}$, then $\phi(\underline {q})=\underline {y}$. Suppose $\underline {y}\neq \underline {q}$. By Lemma \ref{lem:allprevareqs}, there exists $g_0$ such that, for all $g\geq g_0$, $y_g\neq q_g$ and $y_g$ has a unique predecessor $x_g$. Let $w_g(\overrightarrow{x_{g+1}y_{g+1}})=\overrightarrow{z_gy_g}$. Then, by the uniqueness of the predecessor, $z_g=x_g$ and $w_g(x_{g+1})=x_g$. Hence, the sequence $\underline x=(x_g)\in X$ and $\phi(\underline x)=\underline y$. 

    We are now going to prove that $\phi$ is continuous. 

    Take a sequence $\underline{x}_n\in X$ converging to an element $\underline x\in X$. 

    Case 1. If $\underline x\neq\underline q$ then, by Lemma \ref{lem:allprevareqs} there exists $g_0$ such that for all $g\geq g_0$ $x_g\neq q_g$ and $\phi(x_g)=y_g$ is the unique successor of $x_g$. Moreover, since $\underline{x}_n\to\underline{x}$, there exists $n_0$ such that, for all $n\geq n_0$, $x_{g,n}=x_g$. This says that for all $n\geq n_0$, $y_g$ is also the successor of $x_{n,g}$ and $\phi(x_{g,n})=y_{g,n}=y_g$. Hence, $\phi(\underline{x}_n)\to\underline{y}=\phi(\underline{x})$.

    Case 2. If $\underline x=\underline q$, then for all $g$, $x_g=q_g$. Choose $g$. Since $\underline{x}_n\to\underline{x}$, there exists $n_0$ such that, for all $n\geq n_0$, $x_{g+1,n}=q_{g+1}=x_{g+1}$. Let $y_{g+1,n}$ be the successor of $x_{g+1,n}=q_{g+1}$. Then $y_{g+1,n}\in Q_{g+1}\bigcup C_{g+1}^1\bigcup H_{g+1}^1\subset w_g^{-1}(q_g)$. This implies that $y_{g,n}=w_{g+1}(y_{g+1,n})=q_g$. Hence, for all $n\geq n_0$, $y_{g,n}=q_g$. In conlcusion $\phi(\underline{x}_n)\to\underline{q}$.
    
   The construction shows that the homeomorphism is unique.  
\end{proof}
As for the directed graphs, if for all $g$, $\mathfrak{D}_g$ is the $(1, N_g, M_g)$ dynamical partition of a map $f\in\ell$, $\mathcal{D}_g$ is the union of the elements of $\mathfrak{D}_g$ and $i_g:\mathcal{D}_{g+1}\to\mathcal{D}_g$ is the inclusion, we can define the inverse limit as
$$
\mathcal{D}=\underleftarrow{\lim}\mathcal{D}_g.
$$
\subsection{Diagram}
In this section we build a diagram as in Figure \ref{Fig:diagramstructure}. This is the the core of the proof of Theorem \ref{Th:conjonleaves}.
\begin{figure}[h]
\centering
\includegraphics[width=0.9\textwidth]{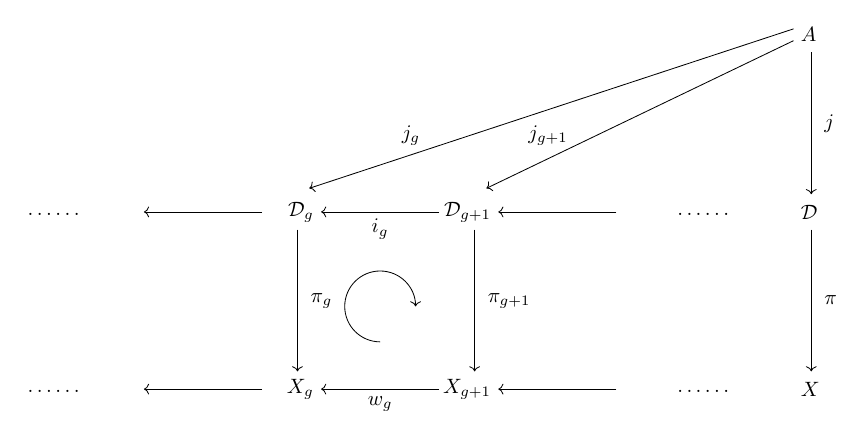}
\caption{Diagram Structure}
\label{Fig:diagramstructure}
\end{figure}
Observe that, for all $g$, $\pi_{g+1}\circ w_g=i_g\circ\pi_g$.
Moreover, for all $g$, let $j_g:A\to\mathcal{D}_g$ be the inclusion and denote the corresponding limit by $j:A\to\mathcal D$. By construction the map $j$ is an homeomorphism. Similarly define the projection $\pi:\mathcal D\to X$.
\begin{lem}
The map $\pi:\mathcal D\to X$ is an homeomorphism. 
\end{lem}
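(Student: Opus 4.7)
The plan is to establish that $\pi$ is a continuous bijection between a compact space and a Hausdorff space, from which the homeomorphism property follows automatically. Three ingredients drive the argument: the commutative diagram $\pi_{g+1}\circ w_g = i_g\circ\pi_g$ noted just above the statement; the fact that $\mathrm{mesh}(\mathfrak{D}_g)\to 0$ guaranteed by property (v) of Proposition \ref{Prop:inductionstep}; and the compactness of $A(f)\cong \mathcal{D}$ together with the discrete, hence Hausdorff, nature of each finite graph $X_g$.

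First, I would verify well-definedness and continuity. The commutativity relation makes $(\pi_g)$ a morphism of inverse systems from $(\mathcal{D}_g,i_g)$ to $(X_g,w_g)$, so the universal property of the inverse limit induces a continuous $\pi:\mathcal{D}\to X$. Concretely, identifying $\mathcal{D}$ with $A(f)$ via the homeomorphism $j$, one may describe $\pi$ as sending a point $x$ to the sequence $(v_g)$ where $v_g$ is the vertex of $X_g$ corresponding to the unique cell of $\mathfrak{D}_g$ containing $x$. Continuity is transparent, since a basic cylinder $\{(u_h)\in X:u_g=v_g\}$ pulls back to $\pi_g^{-1}(v_g)\cap A(f)$, which is relatively open in $A(f)$ because $\pi_g^{-1}(v_g)$ is the open cell $D_g\in\mathfrak{D}_g$.

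Next, I would establish bijectivity. Injectivity is immediate from $\mathrm{mesh}(\mathfrak{D}_g)\to 0$: two points with identical image lie in a common cell of arbitrarily small diameter, hence coincide. For surjectivity, given $(v_g)\in X$, the compatibility $w_g(v_{g+1})=v_g$ together with the commutative diagram forces $D_{g+1}\subset D_g$, where $D_g=\pi_g^{-1}(v_g)$. Picking $x_g\in D_g$ yields a Cauchy sequence whose limit $x$ lies in $\bigcap_g\overline{D_g}$; the key verification is that $x\in A(f)$, i.e.\ that the limit does not escape to a boundary point belonging to no cell of some generation. This uses the strict nesting property of the refinements built in Proposition \ref{Prop:inductionstep}, which ensures that a closure of a cell at generation $g+1$ sits inside the open parent cell at generation $g$. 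Once this is in hand, $\pi(x)=(v_g)$ and $\pi$ is surjective.

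Since $A(f)\cong\mathcal{D}$ is a Cantor set by Lemma \ref{lem:ACantorset}, hence compact, and $X$ is a compact Hausdorff space as an inverse limit of finite discrete spaces, the continuous bijection $\pi$ is automatically a homeomorphism. The only delicate step is the surjectivity argument, specifically controlling the location of the Cauchy limit — this is precisely where the combinatorial control of the dynamical partitions provided by Proposition \ref{Prop:inductionstep} is essential, while the rest of the argument is formal inverse-limit bookkeeping.
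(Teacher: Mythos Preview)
Your proof is correct and follows essentially the same approach as the paper: continuity by construction of the inverse-limit map, compactness of $\mathcal{D}$ plus Hausdorffness of $X$ to reduce to bijectivity, injectivity from vanishing mesh, and surjectivity via the nested cells $U_g=\pi_g^{-1}(v_g)$. The only difference is that you are more explicit about why $\bigcap_g U_g$ is nonempty (invoking strict nesting of closures inside open parents), whereas the paper simply asserts $a\in\bigcap U_g$ without further comment.
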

\begin{proof}
Observe that $\pi$ is continuous by construction and since $\mathcal D$ is compact it is enough to show that $\pi$ is injective and surjective. 

Let $\underline{x}=(x_g)\in X$ and for all $g$, let $\pi^{-1}_g(x_g)=U_g\subset\mathcal D_g$. Since for all $g$, $\pi_{g+1}\circ w_g=i_g\circ\pi_g$, it follows that $\dots\subset U_{g+1}\subset U_g$. Let ${a}\in\bigcap U_g$. Then $a\in\mathcal D$ and $\pi(a)=\underline x$. This proves that $\pi$ is a surjective map. Moreover, since $\text{mesh}(U_g)$ tends to zero, then $\left\{a\right\}=\bigcap U_g$. Hence $\pi^{-1}(\underline x)=(a)$. The proof is complete.
\end{proof}
We are now ready to define an homeomorphism $h:A\to X$ as 
$$h=\pi\circ j.$$
\begin{lem}\label{conjugationdiagram}
The following diagram commutes, 
\begin{center}

\begin{tikzpicture}
\coordinate [label=left :$A$]  (x_3) at (-1,1);
\draw[black,  thin, ->]  (-1,1) --  (0.7,1);
\coordinate [label=above :$f$]  (x_3) at (-0.2,1);

\coordinate [label=right :$A$]  (x_3) at (0.7,1);

\draw[black,  thin, ->]  (-1.3,0.7) --  (-1.3,-1);
\coordinate [label=below :$X$]  (x_3) at (-1.3,-1);

\draw[black,  thin, ->]  (1,0.7) --  (1,-1);
\coordinate [label=above :$\phi$]  (x_3) at (-0.2,-1.3);
\coordinate [label=below :$X$]  (x_3) at (1,-1);
\draw[black,  thin, ->]  (-1,-1.3) --  (0.7,-1.3);
\coordinate [label=right :$h$]  (x_3) at (1,0);
\coordinate [label=left :$h$]  (x_3) at (-1.3,0);
\end{tikzpicture}
\end{center}

Namely $h\circ f=\phi\circ h$.
\end{lem}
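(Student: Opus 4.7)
The plan is to verify $h\circ f=\phi\circ h$ pointwise on $A(f)$. Fix $x\in A(f)$, set $y=f(x)\in A(f)$, and write $h(x)=\underline{x}=(x_g)$ and $h(y)=\underline{y}=(y_g)$, with $x_g=\pi_g(x)$ and $y_g=\pi_g(y)$. Property~(2) of the projection $\pi_g$ immediately gives a directed edge $\overrightarrow{x_g y_g}$ in $X_g$ for every $g$. The task reduces to showing $\underline{y}=\phi(\underline{x})$ as elements of the inverse limit $X$.

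I would first dispose of the case $\underline{x}=\underline{q}$. Here $x\in\bigcap_g Q_g$; since $\text{mesh}(\mathfrak{D}_g)\to 0$ and $p\in Q_g$ for every $g$, this intersection is $\{p\}$. Hence $x=p$, $y=f(p)=p$, and $\underline{y}=\underline{q}=\phi(\underline{q})$ by the explicit definition of $\phi$ on $\underline{q}$.

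In the remaining case $\underline{x}\neq\underline{q}$, Lemma~\ref{lem:allprevareqs} supplies a minimal $g_0$ with $x_g\neq q_g$ for every $g\geq g_0$. For any such $g$ the vertex $x_g$ has a unique outgoing edge in $X_g$; its target is simultaneously $y_g$ (by the edge property above) and $\phi(\underline{x})_g$ (by construction of $\phi$ at non-$q$ vertices), so $y_g=\phi(\underline{x})_g$. For the levels $g<g_0$, both $(y_g)$ and $(\phi(\underline{x})_g)$ lie in $X=\underleftarrow{\lim}X_g$ and therefore satisfy the compatibility relation $w_g((\cdot)_{g+1})=(\cdot)_g$. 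Since they already coincide at $g=g_0$, downward induction on $g$ via $w_g$ propagates the equality to every $g<g_0$, and we conclude $\underline{y}=\phi(\underline{x})$.

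The argument is essentially a bookkeeping exercise once the pieces are set up: the edge property of each $\pi_g$ forces the dynamics to trace successors in $X_g$, and the inverse-limit compatibility forces agreement at all lower levels once it is established at one. The only subtle point I anticipate is the well-definedness of $\phi$ at the transition $g=g_0-1$ (where one passes from the unique-successor prescription to the $q$-prescription); this is already taken care of by the proposition in which $\phi$ was constructed, so it does not need to be re-examined here.
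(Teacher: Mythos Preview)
Your proof is correct and follows essentially the same approach as the paper's: distinguish the fixed-point case from the generic case, and in the latter use the uniqueness of the outgoing edge at vertices $x_g\neq q_g$ to identify $h(f(x))_g$ with $\phi(h(x))_g$ for $g\ge g_0$. Your treatment is in fact slightly more complete than the paper's, since you explicitly justify agreement at levels $g<g_0$ via the inverse-limit compatibility, whereas the paper leaves this implicit.
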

\begin{proof}
Let $a\in A$. If $a=p$, then $h(f(p))=h(p)=q$ and $\phi(h(p))=\phi(q)=q$.  Assume $a\neq p$, $a=\bigcap_g U_g$ where for all $g$, $U_g$ is a set in $\mathcal D_g$. Let $x_g=\pi_g(U_g)$. By definition $h(a)=\underline{x}=(x_g)$. Since $a\neq p$, there exists $g_0$ such that, for all $g\geq g_0$, $U_g\neq Q_g$ and $x_g\neq q_g$. In particular, $x_g$ has a unique successor $y_g$. Let $V_g=\pi^{-1}(y_g)$. Since $\overrightarrow{x_gy_g}$ is an edge, $f(U_g)\subset V_g$. Hence $\phi(h(a))=\phi(\underline x)=(y_g)=f(h(a))$.
\end{proof}

The following theorem proves Theorem A-(vi). 
\begin{theo}\label{Th:conjonleaves}
     If $f$ and $\tilde {f}$ are in the same leaf $\ell$ of the lamination $\mathfrak{L}$ then they are conjugate on their respective Cantor sets.  
\end{theo}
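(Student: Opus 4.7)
The plan is to use the inverse limit $X$ as a universal topological model that depends only on combinatorial data shared by all maps on a common leaf. First, I would observe that by the inductive construction following Proposition~\ref{Prop:inductionstep}, the combinatorial type of a leaf is intrinsic to the leaf: for any $f\in\ell$ the sequence $\{(1,N_g,M_g)\}_{g\ge 1}$ of directed graphs $X_g$ and of graph morphisms $w_g:X_{g+1}\to X_g$ is determined by which descendant $\{b_g,U_g\}$ is selected at each generation, and this selection is the same for every $f\in\ell$ because the leaf is exactly the nested intersection of the strips $S_g$ and their distinguished descendants. Hence the inverse limit
$$X=\underleftarrow{\lim}\,X_g$$
and the shift homeomorphism $\phi:X\to X$ are the same object for $f$ and $\tilde f$.

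Next, for each of $f$ and $\tilde f$ I would apply the diagram construction of the previous subsection to obtain homeomorphisms
$$h_f:A(f)\to X\quad\text{and}\quad h_{\tilde f}:A(\tilde f)\to X,$$
each one defined as $h=\pi\circ j$, where $j:A\to\mathcal D$ is the homeomorphism to the inverse limit of the dynamical partitions and $\pi:\mathcal D\to X$ is the projection to the inverse limit of the graphs. By Lemma~\ref{conjugationdiagram}, each of these semi-conjugates the return dynamics to $\phi$, that is
$$h_f\circ f=\phi\circ h_f\quad\text{and}\quad h_{\tilde f}\circ\tilde f=\phi\circ h_{\tilde f}.$$

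Finally, I would set
$$H=h_{\tilde f}^{-1}\circ h_f:A(f)\to A(\tilde f).$$
As a composition of homeomorphisms this is a homeomorphism, and chasing the two intertwining relations yields
$$H\circ f=h_{\tilde f}^{-1}\circ h_f\circ f=h_{\tilde f}^{-1}\circ\phi\circ h_f=\tilde f\circ h_{\tilde f}^{-1}\circ h_f=\tilde f\circ H,$$
which is precisely the conjugacy claimed by the theorem.

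The routine part is the diagram chase at the end; the real content has already been absorbed into the preceding sections, namely that $\pi:\mathcal D\to X$ is a homeomorphism (which uses that the mesh of $\mathfrak{D}_g$ shrinks to zero) and that the combinatorial morphisms $w_g$ depend only on the leaf, not on the individual map. The main obstacle one has to address carefully is therefore the leaf-invariance of the combinatorics: one must verify that, although the dynamical partitions $\mathfrak{D}_g(f)$ and $\mathfrak{D}_g(\tilde f)$ live in phase spaces of different maps, the projections $\pi_g$ produce the same directed graph $X_g$ and that the inclusion-induced morphisms agree with the combinatorial $w_g$ fixed by the leaf. This is exactly the content of the uniqueness statement at the end of Proposition~\ref{Prop:inductionstep}, so the argument reduces to invoking that uniqueness for both $f$ and $\tilde f$ simultaneously.
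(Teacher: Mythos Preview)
Your proposal is correct and follows essentially the same approach as the paper: both maps on a common leaf yield the same directed graphs $X_g$ and morphisms $w_g$, hence the same inverse limit $(X,\phi)$, and Lemma~\ref{conjugationdiagram} conjugates each of $A(f)$ and $A(\tilde f)$ to $(X,\phi)$, from which the desired conjugacy follows by composition. The paper's proof is terser but identical in substance.
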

\begin{proof} The dynamical partitions of both maps $f$ and $\tilde f$ give rise to the same   $(1,N_g,M_g)$-directed graph and graph morphism  $w_g:X_{g+1}\mapsto X_g$, for all $g\ge 1$. According to Lemma \ref{conjugationdiagram} both $A(f)$ and $A(\tilde f)$ are conjugate to the corresponding $X$. 
\end{proof}

\begin{rem} Probably, the Cantor set of maps on different leaves are not conjugated. 
    
\end{rem}
\subsection{Codimension-1 Stability}\label{Subsec:renormalization}
In this section, following the Remark \ref{rem:renormaps}, we prove that the leaves of the lamination constructed above are the loci of infinitely renormalizable maps in the following sense. Let $\ell=\cap S_g$ be a leaf of $\mathfrak L$. Recall, from the proof of Lemma \ref{bnlocal}, there exists a smooth function 
$$m_1:S_g\to\mathbb R^2,$$
such that 
\begin{itemize}
    \item $m_1\in W^u_{\text{loc}}(p)\cap C_g^0$,
    \item $m_7=F^{3N_g+(1+\theta_g)n_g+n_{0,g}}(m_1)\in W^u(p)
    $
    \end{itemize}
    and $W^u(p)$ has horizontal tangency at $m_7$.
    \begin{defin}
Let $\ell=\cap S_g$ be a leaf of type $\left\{\left(1, N_g,M_g\right)\right\}_{g=1}^{\infty}$. A map $f\in S_g$ is renormalizable of type $(1,N_{g+1}, M_{g+1})$ if $m_7(f)\in H_g^0(f)$.
        Moreover, $f$ is infinitely renormalizable of type $\left\{\left(1, N_g,M_g\right)\right\}_{g=1}^{\infty}$ if it is renormalizable of type $(1,N_{g+1}, M_{g+1})$ for all $g\geq 1$. 
        \end{defin}

\begin{defin} Let $f$ be an infinitely renormalizable map  of type $\left\{\left(1, N_g,M_g\right)\right\}_{g=1}^{\infty}$. The Cantor set $A(f)$ associated to  $f$ is codimension-$k$ stable if there exists a codimension-$k$ manifold $W$ in parameter space containing  $f $ such that  every map in $W$ is infinitely renormalizable  of type $\left\{\left(1, N_g,M_g\right)\right\}_{g=1}^{\infty}$ and there does not exists a manifold of codimension-$k-1$ with the same property. The manifold $W$ is called the manifold of stability.
        \end{defin}

The following theorem confirms Theorem A-(vii).
        
        \begin{theo}\label{Theo:Acod1stable}
            Let $\ell$ be a leaf of type $\left\{\left(1, N_g,M_g\right)\right\}_{g=1}^{\infty}$. Every  map $f\in \ell$ is infinitely renormalizable of type $\left\{\left(1, N_g,M_g\right)\right\}_{g=1}^{\infty}$    and $A(f)$ is codimension-$1$ stable.  Moreover, $\ell$ is the manifold of stability.       \end{theo}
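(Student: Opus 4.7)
The plan is to establish the three claims of the theorem in turn, leveraging the inductive construction of the lamination as the main engine.

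First, I would show that every $f\in\ell$ is infinitely renormalizable of type $\{(1,N_g,M_g)\}_{g\ge 1}$. By \eqref{eq:lamination} and \eqref{eq:leavelamination}, the leaf $\ell$ is a nested intersection $\bigcap_g S_g$ of tangency strips with $\ell\subset U_g$ for the $(1,N_g,M_g)$-descendant $\{b_g,U_g\}$ of $S_{g-1}$. By Proposition \ref{Prop:inductionstep}(v), every $f\in U_g$ admits a $(1,N_g,M_g)$-dynamical partition $\mathfrak{D}_g(f)$ refining $\mathfrak{D}_{g-1}(f)$. The construction of the next-generation strip $S_g\subset U_{g-1}$ in Section 5 was carried out precisely so that $S_g\subset\mathcal B_{n_g+(\cdot)}$, where $\mathcal B_{\cdot}$ is the tangency strip defined in \eqref{eq:Bn}; inspection of the combinatorics in Remark \ref{rem:nextloops} shows that this forces $m_7(f)\in H_{g-1}(0)(f)$ for every $f\in U_g$, which is exactly the renormalization condition. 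Iterating over all $g$ gives infinite renormalizability of the prescribed type.

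Second, the codimension-$1$ manifold realizing the stability is $\ell$ itself. By Proposition \ref{Prop:longleavesandtransversalCantorset}, $\ell$ is the graph of a smooth function $[-t_0,t_0]\to[-a_0,a_0]$, hence is a smooth codimension-$1$ submanifold of the parameter space $\mathcal P$. The previous paragraph shows that every map in $\ell$ is infinitely renormalizable of the type $\{(1,N_g,M_g)\}$, which gives codimension-$1$ stability.

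It remains to show that no codimension-$0$ manifold (i.e., no open set) in $\mathcal P$ consists entirely of maps that are infinitely renormalizable of the given type; this is the core step. Suppose by contradiction that some open $V\subset \mathcal P$ has this property. Then $V\subset \bigcap_g S_g$, and for each fixed $t$ with a nontrivial vertical slice $\{t\}\times I\subset V$ we would have $I\subset L_t$, contradicting the fact that $L_t$ is totally disconnected as asserted in Proposition \ref{Prop:longleavesandtransversalCantorset}. Indeed, at each generation $g$ the family $\mathfrak{S}_g$ consists of countably many pairwise disjoint strips $S_{j,g}$ whose vertical widths shrink at least like $\mu^{-(n+j)(1+\alpha)}$ by Proposition \ref{Prop:inductionstep}(iii), and the disjoint gaps between successive $S_{j,g}$ within any enclosing descendant tube $U_{g-1}$ form a dense open set; therefore no vertical interval can survive all generations. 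This rules out any codimension-$0$ stability manifold and shows that $\ell$ itself is the manifold of stability.

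The main obstacle is the last step: converting the combinatorial picture (disjoint shrinking strips produced by Proposition \ref{Prop:inductionstep}) into a clean statement that the fiberwise intersection cannot contain an open interval. The geometric estimates on strip widths and on the separation between successive tangency curves $b_k^{(j)}$ from \eqref{eq:angleall} and the proof of Lemma \ref{disjP} should suffice, but some care is needed to verify that the gaps at each generation are large relative to the enclosing tube, so that the Cantor structure of $L_t$ is genuinely transverse along $\ell$.
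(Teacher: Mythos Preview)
Your first two parts track the paper closely: infinite renormalizability of $f\in\ell$ follows from the construction (the paper says simply ``Clearly if $f\in\ell$, then $f$ is infinitely renormalizable''), and $\ell$ being a smooth codimension-$1$ curve comes from Proposition~\ref{Prop:longleavesandtransversalCantorset}.

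The third part is where you and the paper diverge, and your version has a gap. You assert ``Then $V\subset\bigcap_g S_g$'' and then argue via the Cantor structure of $L_t$. Two issues. First, the inclusion $V\subset\bigcap_g S_g$ is exactly what needs proving: the definition of renormalizability is stated for $f\in S_g$, but the function $m_1$ (and hence the condition $m_7(f)\in H_g(0)(f)$) extends smoothly past $\partial S_g$ via the implicit function theorem from Lemma~\ref{bnlocal}, so one must rule out that the condition continues to hold just outside the strip. You do not address this. Second, once you have $V\subset\bigcap_g S_g=\ell$, the detour through $L_t$ is superfluous and slightly confused: $\ell$ is a \emph{single} leaf, so $\ell\cap(\{t\}\times\mathbb R)$ is a point, not the full Cantor transversal $L_t$; the contradiction is simply that a curve has empty interior.

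The paper closes the gap differently and more directly: it takes $\tilde f\in\partial S_g$ and invokes the explicit boundary estimates \eqref{eq:lowerboundarystrip} and \eqref{eq:upperboundarystrip} from Lemmas~\ref{tangencylowerbound} and~\ref{tangencyupperbound} to show $m_7(\tilde f)\notin H_g(0)(\tilde f)$, i.e.\ $\tilde f$ is not renormalizable. Since $\partial S_g\to\ell$ as $g\to\infty$, every neighbourhood of $f\in\ell$ meets some $\partial S_g$ and hence contains non-renormalizable maps. This is the substantive analytic input your argument is missing; the strip-width and separation estimates you cite (Proposition~\ref{Prop:inductionstep}(iii), Lemma~\ref{disjP}) control the geometry of the lamination but do not by themselves show that the renormalizability condition fails at $\partial S_g$.
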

\begin{proof}
    Clearly if $f\in\ell=\cap S_g$, then $f$ is infinitely renormalizable of type $\left\{\left(1, N_g,M_g\right)\right\}_{g=1}^{\infty}$.
    Consider a map   $\tilde{f}\in  \partial S_{g}$. Observe that, by the construction in Proposition \ref{Prop:inductionstep}, $S_g$ is a strip of the form $\mathcal B_n$. In particular, Lemma \ref{tangencylowerbound} and Lemma \ref{tangencyupperbound} apply to $S_g$. By \eqref{eq:lowerboundarystrip} and \eqref{eq:upperboundarystrip} we have that $m_7(\tilde{f})\notin H_g^0(\tilde{f})$. Hence,    $\tilde{f}$ is not renormalizable. In particular, every neighborhood of $f$ contains maps which are not renormalizable of type $\left\{\left(1, N_g,M_g\right)\right\}_{g=1}^{\infty}$ because $\partial S_g\to \ell$ when $g\to \infty$ .
    The map $f\in \ell$ is codimension-$1$ stable.    
    \end{proof}

\section{Invariant Measures}
In this section we prove that each set $A$ has a unique invariant probability measure.  In particular, the following theorem confirms Theorem A-(iv).
\begin{theo}\label{Theo:uniquemeasure}
 For each function $f\in\ell$, the set $A(f)$ has a unique invariant probability measure. This invariant measure is the point mass on the fixed point in $A(f)$ .   
\end{theo}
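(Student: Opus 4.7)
The plan is to use Kac's return-time lemma together with the Markov-like structure of the dynamical partition $\mathfrak{D}_g$ to force any ergodic invariant probability measure $\mu$ on $A(f)$ to assign full mass to $Q_g$ for every $g$. Since $\mathrm{mesh}(\mathfrak D_g)\to 0$ and $p\in Q_g$ for all $g$, the nested sequence satisfies $\bigcap_g Q_g=\{p\}$, so this will immediately give $\mu=\delta_p$; the ergodic decomposition theorem then reduces the general case to the ergodic one.

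First I would fix an ergodic invariant $\mu$ and set $a_g=\mu(Q_g)$, $b_g=\mu(C_g(0))$, $c_g=\mu(H_g(0))$. Because $f:C_g(k)\to C_g(k+1)$ is a diffeomorphism and the directed graph $X_g$ has only the $q$-edge entering $c^{0}_g$, invariance of $\mu$ and the projection $\pi:\mathcal D\to X$ yield $\mu(C_g(k))=b_g$ for every $k\in\{0,\dots,N_g-1\}$ and likewise $\mu(H_g(k))=c_g$; summing gives the mass balance
\[
a_g+N_g b_g+M_g c_g=1. \qquad (\dagger)
\]

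Suppose first that $a_g>0$. From the edges out of $q_g$ in $X_g$, every $x\in Q_g\cap A(f)$ satisfies $f(x)\in Q_g\cup C_g(0)\cup H_g(0)$, and the first return time $\tau$ to $Q_g$ takes exactly the three values $1$, $N_g$, $M_g$ according to which of these sets contains $f(x)$, since the intermediate pieces $C_g(k),H_g(k)$ with $k\ge 1$ are disjoint from $Q_g$. Using that the only pre-images in $A(f)$ of $C_g(0)$ and $H_g(0)$ lie in $Q_g$, invariance of $\mu$ gives $\mu(Q_g\cap f^{-1}C_g(0))=b_g$, $\mu(Q_g\cap f^{-1}H_g(0))=c_g$, and therefore $\mu(Q_g\cap f^{-1}Q_g)=a_g-b_g-c_g$, so
\[
\int_{Q_g}\tau\,d\mu=(a_g-b_g-c_g)+N_g b_g+M_g c_g=1-b_g-c_g
\]
after invoking $(\dagger)$. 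Ergodicity of $\mu$ together with $a_g>0$ let me apply Kac's lemma, forcing this integral to equal $\mu(A(f))=1$; hence $b_g=c_g=0$ and $a_g=1$.

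The alternative $a_g=0$ would be ruled out using Remark \ref{rem:nextloops}: the projected words of both the $c$- and $h$-loops of $X_{g+1}$ begin with the letter $q$, so $C_{g+1}(0)\cup H_{g+1}(0)\subset Q_g$. Thus $a_g=0$ would yield $b_{g+1}=c_{g+1}=0$, and $(\dagger)$ at level $g+1$ would force $a_{g+1}=1$; but the refinement property together with the uniqueness of the element of $\mathfrak D_g$ containing $p$ gives $Q_{g+1}\subset Q_g$, contradicting $\mu(Q_{g+1})=1>0=\mu(Q_g)$. Combining both cases, $a_g=1$ for every $g$, and continuity of $\mu$ along $Q_g\downarrow\{p\}$ delivers $\mu(\{p\})=1$. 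The main subtlety I expect to have to verify carefully is precisely the Markov-type preimage identity $f^{-1}(C_g(0))\cap A(f)\subset Q_g$ (and its counterpart for $H_g(0)$), since this is what makes $\tau$ honestly three-valued on $Q_g\cap A(f)$; it is encoded in the directed-graph projection $\pi:\mathcal D\to X$ but should be made explicit as a short lemma before the Kac computation.
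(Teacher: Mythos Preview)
Your Kac computation contains an off-by-one error in the return times, and once corrected the argument collapses to a tautology. From the definition of the dynamical partition, if $x\in Q_g$ and $f(x)\in C_g(0)$ then $f^{k}(x)\in C_g(k-1)$ for $k=1,\dots,N_g$ and $f^{N_g+1}(x)\in f(C_g(N_g-1))\subset Q_g$; thus the first return time along the $c$-branch is $N_g+1$, not $N_g$, and likewise $M_g+1$ along the $h$-branch. With the correct values,
\[
\int_{Q_g}\tau\,d\mu=(a_g-b_g-c_g)+(N_g+1)b_g+(M_g+1)c_g=a_g+N_g b_g+M_g c_g=1,
\]
which is precisely your identity $(\dagger)$. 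So Kac's lemma reproduces the mass balance and yields no constraint on $b_g,c_g$; the conclusion $b_g=c_g=0$ does not follow. This is not an accident: for a Rokhlin tower over $Q_g$, Kac's formula and the total-mass identity are the same equation, so no single-level argument of this type can force uniqueness.

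The missing ingredient is the relation \emph{between} levels. The paper obtains uniqueness by pushing measures forward along $w_{g*}:\mathfrak M_{g+1}\to\mathfrak M_g$ and showing, via the explicit windings of Remark~\ref{rem:nextloops}, that the induced map on the probability simplex contracts toward $\underline{\delta}^q_g$ (because the $c$- and $h$-loops of $X_{g+1}$ spend all but a bounded number of their $N_{g+1}$, $M_{g+1}$ steps inside $Q_g$). Equivalently, in your variables one has $a_g\ge a_{g+1}+(N_{g+1}-3N_g-M_g)b_{g+1}+(M_{g+1}-M_g)c_{g+1}$, which together with $(\dagger)$ at level $g+1$ forces $a_g\to 1$. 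Your second paragraph already contains the seed of this inter-level comparison; what is needed is to quantify it rather than appeal to Kac at a fixed level.
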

\begin{cor}\label{Cor:AHyperbolic}
    The set $A$ is not hyperbolic.
\end{cor}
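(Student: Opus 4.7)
The plan is to derive non-hyperbolicity from the uniqueness of the invariant measure established in Theorem \ref{Theo:uniquemeasure}. Assume, toward a contradiction, that $A=A(f)$ is a (uniformly) hyperbolic invariant set for $f$. Then every standard structural result for hyperbolic sets applies; the key consequence I will exploit is Smale's spectral decomposition, which says that a hyperbolic set that is topologically transitive is a basic set, and in particular contains a dense subset of periodic points.

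First I would invoke Lemma \ref{lem:Adense} to obtain a point $c\in A$ with $\omega(c)=A$, so that $f|_A$ is topologically transitive. Combined with the standing assumption of hyperbolicity, this makes $A$ a basic set in the sense of Smale, and the Birkhoff--Smale/shadowing machinery then guarantees that the set $\mathrm{Per}(f|_A)$ of periodic points of $f$ in $A$ is dense in $A$. Since $A$ is a Cantor set by Lemma \ref{lem:ACantorset}, it is in particular infinite, so $\mathrm{Per}(f|_A)$ is infinite and contains periodic orbits $O\neq\{p\}$.

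Each periodic orbit $O\subset A$ carries its equidistribution measure $\mu_O$, which is an $f$-invariant probability measure supported on $O\subset A$. Whenever $O\neq\{p\}$, the measure $\mu_O$ is distinct from the Dirac mass $\delta_p$, because $\mathrm{supp}(\mu_O)=O$ while $\mathrm{supp}(\delta_p)=\{p\}$. This contradicts Theorem \ref{Theo:uniquemeasure}, which asserts that $\delta_p$ is the \emph{unique} $f$-invariant probability measure on $A$. Therefore the hyperbolicity assumption must fail, and $A$ is not hyperbolic.

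The only point requiring care is the invocation of the spectral decomposition and density of periodic orbits; both are classical consequences of uniform hyperbolicity together with topological transitivity, so no additional estimate is needed beyond what is already in the paper. No step in this argument is a real obstacle, as the theorem is essentially a formal corollary of the uniqueness of the invariant measure together with the richness (Cantor set plus dense orbit) of $A$.
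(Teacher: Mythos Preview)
Your argument is essentially the paper's: assume $A$ is hyperbolic and contradict Theorem~\ref{Theo:uniquemeasure}. The paper's main line is ``hyperbolic $\Rightarrow$ conjugate to an irreducible subshift of finite type $\Rightarrow$ continuum of ergodic measures''; it then adds parenthetically ``another reason is that $A$ has only one periodic orbit'', which is precisely your route through density of periodic points and the resulting multitude of periodic-orbit measures.

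One technical point you should tighten (the paper is equally informal here): the step you attribute to Smale's spectral decomposition---that a compact, transitive, hyperbolic invariant set is automatically a basic set with dense periodic points---actually requires \emph{local maximality}, which is neither assumed nor verified for $A$. Without local maximality the implication fails: there exist transitive hyperbolic Cantor subshifts that are uniquely ergodic with unique measure the Dirac mass on a fixed point and contain no other periodic orbit. The paper's ``hyperbolic $\Rightarrow$ SFT'' step (Bowen) hides the same hypothesis. So your argument matches the paper's level of rigor, but the sentence invoking spectral decomposition is not literally correct for an arbitrary compact hyperbolic set.
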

\begin{proof}
If $A$ is hyperbolic, then $A$ is equivalent to a subshift of finite type. Since $A$ has a dense orbit, this subset of finite type, is irreducible, i.e. it has a continuum of ergodic invariant measures. This contradicts Theorem \ref{Theo:uniquemeasure}. (Another reason is that $A$ has only one periodic orbit.) 
\end{proof}
The proof of Theorem \ref{Theo:uniquemeasure} requires some preparation. Let $X$ be the $(1,N,M)$ directed graph.  We denote by $\delta^q$ the measure on $X$ which assigns unit mass to the vertex of the $q$-loop, $\lambda^q$. Similarly, we denote by $\delta^c$ the measure on $X$ which assigns unit mass to each vertex of the $c$-loop, $\lambda^c$ and by $\delta^h$ the measure which assigns unit mass to each vertex of the $h$-loop, $\lambda^h$. 
In follows that $\delta^q$ is a probability measure, $\delta^c(X)=N$ and $\delta^h(X)=M$.

Consider the span of the measures defined above, $$\mathcal M=\text{span}\left\{\delta^q,\delta^c,\delta^h\right\}.$$
Observe that $\mathcal M$ consists of invariant measures in the sense that, if $\mu\in\mathcal M$, then for all $x\in X\setminus\{q, c_0, h_0\}$, 
$$
\mu(x)=\sum_{\overrightarrow{yx}}\mu(y). 
$$
Similarly, for all $g$, if $X_g$ is the $(1,N_g,M_g)$ directed graph, we denote the span of the measures $\delta_g^q, \delta_g^c, \delta_g^h$ by $\mathcal M_g.$ In particular, $\left\{\delta_g^q,\delta_g^c,\delta_g^h\right\}$ is called the standard basis for $\mathcal M_g$.

Moreover the map $w_g:X_{q+1}\to X_g$, induces, by pushing forward the measure, a linear map 
$$
w_{ g* }:\mathfrak{M}_{g+1}\to\mathfrak{M}_{g}.
$$
As in Remark \ref{rem:nextloops}, the winding of $w_g$ is given by 
  \begin{eqnarray*}
        \underline \omega\left(w_g\right)=\left(q, {q}^{\underline{\ell}_g}{c}{q}^{\theta n_g}{c}{q}^{n_{g}}{c}{q}^{n_{0,g}}{h}{q}^{\overline{\ell}_g},  {q}^{\underline{\ell}_g}{h}{q}^{\overline{\ell}_g}\right).
    \end{eqnarray*}
Then the matrix for $w_{g*}$, in the standard basis of $\mathfrak{M}_g$ and $\mathfrak{M}_{g+1}$ can be easily read off from $ \underline \omega\left(w_g\right)$ and it is 
$$
 W_g=\left(\begin{matrix}
    1 &\underline{\ell}_g+\theta n_g+n_g+n_{0,g}+\overline{\ell}_g&\underline{\ell}_g+\overline{\ell}_g\\
    0&3&0\\
    0&1&1
\end{matrix}\right).
$$
The simplex $\mathcal{P}_g\subset \mathfrak{M}_g$ consisting of probability measures is
$$
\mathcal{P}_g=\{\mu=x\delta^q_g+y\delta^c_g+z\delta^h_g| x,y,z\ge 0 \text{ and }\mu(X_g)=1\}.
$$
Observe,
$$
 W_g: \mathcal{P}_{g+1 } \to  \mathcal{P}_{g   }. 
$$
The inverse limit defines  probability measures on $X=\underleftarrow{\lim} X_g$,
$$
\mathcal{P}=\underleftarrow{\lim} \mathcal{P}_g.
$$
Let $\mathcal{I}$ be the set of invariant probability measures of $f:X\to X$. If $\mu\in \mathcal{I}$ then the restriction of the measure to the $\sigma$-algebra of $X_g$ is denoted by
$$
\mu_g=w_{g,\infty *}(\mu)\in  \mathcal{P}_g.
$$
Observe, $w_{g*}(\mu_{g+1})=\mu_g$. Hence, $\mu=(\mu_g)\in \mathcal{P}$. By construction, the map
\begin{equation}\label{eq:invariantprobmeasures}
\mathcal{I}\ni \mu\mapsto (\mu_g)\in \mathcal{P}
\end{equation}
is a bijection. For more details on this construction see \cite{GM}.

\bigskip

Denote the vertices of $\mathcal{P}_g$ by
$$
\underline{\delta}^c_g=\frac{1}{N_g}{\delta}^c_g,\text{     } \underline{\delta}^h_g=\frac{1}{M_g}{\delta}^h_g, \text{     }\underline{\delta}^q_g= \delta ^q_g.
$$
Observe,
$$
\mathcal{P}_g=\text{hull}\{\underline{\delta}^c_g, \underline{\delta}^h_g , \underline{\delta}^q_g \}\subset \mathfrak{M}_g.$$ 
Define the norm $|\mu |$ of  $\mu\in\mathfrak{M}_g$ by
$$
|\mu |=\max\{|x|, |y|, |z|\},
$$
where
$$
\mu=x\underline{\delta}^c_g+y\underline{\delta}^h_g +z \underline{\delta}^q_g.
$$
Observe, if $\mu=x\underline{\delta}^c_g+y\underline{\delta}^h_g +z \underline{\delta}^q_g$ then
$$
\mu(\lambda^q_g)=\frac{x}{N_g}+\frac{y}{M_g}+z.
$$
$$
\mu(\lambda^c_g\setminus \lambda^q_g)=x\cdot \frac{N_g-1}{N_g} .
$$
$$
\mu(\lambda^h_g\setminus \lambda^q_g)=y\cdot \frac{M_g-1}{M_g} .
$$

The map $
 W_g: \mathcal{P}_{g+1 } \to  \mathcal{P}_{g   }
$
contracts towards $\underline{\delta}^q_g$. Namely,

\begin{lem}\label{contractionofmesurelem}If  $N_{g+1},M_{g+1}\ge 32 \max\{N_g, M_g\}$ and $N_g, M_g\ge 3$
 then for all $\mu\in  \mathcal{P}_{g+1 }$
$$
|W_g\mu-\underline{\delta}^q_g |\le \frac12 |\mu -\underline{\delta}^q_{g+1 }|.
$$
 
\end{lem}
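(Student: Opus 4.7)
The plan is to reduce the contraction statement to a purely arithmetic bound on the two non-$q$ coordinates of $\mu$ in the normalized basis. First I would write $\mu\in\mathcal P_{g+1}$ as $\mu=x\underline\delta^c_{g+1}+y\underline\delta^h_{g+1}+z\underline\delta^q_{g+1}$ with $x,y,z\ge 0$ and $x+y+z=1$, and compute the images of the normalized basis vectors under $W_g$ using the matrix given just before the lemma. A direct calculation (based on the identities $W_g\delta^c_{g+1}=3\delta^c_g+\delta^h_g+L_c\delta^q_g$ and $W_g\delta^h_{g+1}=\delta^h_g+L_h\delta^q_g$ with $L_c=\underline\ell_g+\theta_g n_g+n_g+n_{0,g}+\overline\ell_g$ and $L_h=\underline\ell_g+\overline\ell_g$) yields
\begin{equation*}
W_g\mu=x'\underline\delta^c_g+y'\underline\delta^h_g+z'\underline\delta^q_g,
\end{equation*}
where
\begin{equation*}
x'=\frac{3xN_g}{N_{g+1}},\qquad y'=\frac{xM_g}{N_{g+1}}+\frac{yM_g}{M_{g+1}},\qquad z'=\frac{xL_c}{N_{g+1}}+\frac{yL_h}{M_{g+1}}+z.
\end{equation*}
Because $W_g$ preserves the property of being a probability measure, one also has $x'+y'+z'=1$ (this in fact encodes the combinatorial relations $N_{g+1}=3N_g+M_g+L_c$ and $M_{g+1}=M_g+L_h$).

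The second step is the key simplification of the norm. Since $x,y\ge 0$ and $z=1-x-y\in[0,1]$, one has $|\mu-\underline\delta^q_{g+1}|=\max\{x,y,1-z\}=\max\{x,y,x+y\}=x+y$, and the analogous identity $|W_g\mu-\underline\delta^q_g|=x'+y'$ holds. Thus the claimed contraction reduces to
\begin{equation*}
x'+y'=\frac{x(3N_g+M_g)}{N_{g+1}}+\frac{yM_g}{M_{g+1}}\;\le\;\tfrac12(x+y).
\end{equation*}

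Finally, applying the hypothesis $N_{g+1},M_{g+1}\ge 32\max\{N_g,M_g\}$ and the trivial bound $3N_g+M_g\le 4\max\{N_g,M_g\}$ gives
\begin{equation*}
\frac{3N_g+M_g}{N_{g+1}}\le\frac{4\max\{N_g,M_g\}}{32\max\{N_g,M_g\}}=\frac18,\qquad \frac{M_g}{M_{g+1}}\le\frac{1}{32},
\end{equation*}
so $x'+y'\le \tfrac{x}{8}+\tfrac{y}{32}\le\tfrac{x+y}{8}\le\tfrac{x+y}{2}$, which is the required inequality (with room to spare, giving in fact contraction rate $\tfrac18$). The hypothesis $N_g,M_g\ge 3$ plays no role beyond ensuring the graphs $X_g$ are actually of the form prescribed in the earlier set-up. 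There is no real obstacle in this proof; the only subtlety is noticing the coordinate collapse $|\,\cdot\,-\underline\delta^q|=x+y$ that converts the max norm into the single linear expression that the matrix clearly contracts.
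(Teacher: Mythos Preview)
Your proof is correct and in fact a bit cleaner than the paper's. The paper argues by reducing to the vertices of the simplex: since $W_g\underline\delta^q_{g+1}=\underline\delta^q_g$, it suffices (via the triangle inequality for the max norm) to check $|W_g\underline\delta^c_{g+1}-\underline\delta^q_g|\le\tfrac12$ and $|W_g\underline\delta^h_{g+1}-\underline\delta^q_g|\le\tfrac12$. It then estimates the mass that $W_g\underline\delta^c_{g+1}$ assigns to $\lambda^q_g$, $\lambda^c_g\setminus\lambda^q_g$, $\lambda^h_g\setminus\lambda^q_g$ and converts these masses back to the normalized coordinates using the relations $x=\tfrac{N_g}{N_g-1}\mu(\lambda^c_g\setminus\lambda^q_g)$, $y=\tfrac{M_g}{M_g-1}\mu(\lambda^h_g\setminus\lambda^q_g)$; the factors $\tfrac{N_g}{N_g-1}$, $\tfrac{M_g}{M_g-1}$ are where the hypothesis $N_g,M_g\ge3$ is actually invoked, to bound them by $\tfrac32$.

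Your route bypasses this detour: by passing directly to the normalized coordinates and observing the collapse $|\mu-\underline\delta^q|=x+y$, you turn the max-norm inequality into the single linear estimate $x'+y'\le\tfrac12(x+y)$, which follows immediately from the matrix entries and the size hypothesis on $N_{g+1},M_{g+1}$. This gives the sharper contraction factor $\tfrac18$ and, as you note, renders the assumption $N_g,M_g\ge3$ unnecessary for the lemma itself. The paper's approach has the minor advantage of being phrased in terms of the masses of the loops (which is closer to the dynamical picture), but your computation is more direct and yields a stronger conclusion.
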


\begin{proof} Observe, $W_g\underline{\delta}^q_{g+1}=\underline{\delta}^q_g$. Hence,
it suffices to prove the estimate for $\mu=\underline{\delta}^c_g$ and $\mu= \underline{\delta}^h_g$. We will only present the estimate for $\mu=\underline{\delta}^c_g$, the other is similar. Observe,
$$
W_g\underline{\delta}^c_{g+1}(\lambda^q_g)=\frac{\underline{\ell}_g+\theta n_g+n_g+n_{0,g}+\overline{\ell}_g}{N_{g+1}}.
$$
Because,
$$
N_{g+1}=\underline{\ell}_g+N_g+\theta n_g+N_g+ n_g+N_g+n_{0,g}+M_g+\overline{\ell}_g
$$
we get
$$
W_g\underline{\delta}^c_{g+1}(\lambda^q_g)=\frac{ N_{g+1}-3N_g-M_g}{N_{g+1}}\ge \frac{7}{8}
$$
when $N_{g+1}\ge 32 \max\{N_g, M_g\}$. Hence,
$$
W_g\underline{\delta}^c_{g+1}(\lambda^c_g\setminus\lambda^q_g), W_g\underline{\delta}^c_{g+1}(\lambda^h_g\setminus\lambda^q_g)\le \frac{1}{8}.
$$
Use the coordinates of
$$
 W_g\underline{\delta}^c_{g+1}=x\underline{\delta}^c_g+y\underline{\delta}^h_g +z\underline{\delta}^q_g. 
 $$
 Recall, $x,y,z\ge 0$ and $x+y+z=1$.
 Then
 $$
 x=\frac{N_g}{N_g-1} W_g\underline{\delta}^c_{g+1}(\lambda^c_g\setminus\lambda^q_g)\le \frac{3}{16},
 $$
 when $N_g\ge 3$. Moreover,
 $$
 y=\frac{M_g}{M_g-1} W_g\underline{\delta}^c_{g+1}(\lambda^h_g\setminus\lambda^q_g)\le \frac{3}{16},
 $$
 when $M_g\ge 3$. Because $x+y+z=1$ we get
 $$
 z\ge \frac58.
 $$
The coordinates of 
$
 W_g\underline{\delta}^c_{g+1}-\underline{\delta}^q_g 
$ 
are $(x,y,z-1)$. Hence,
$$
| W_g\underline{\delta}^c_{g+1}-\underline{\delta}^q_g|\le \frac12. 
$$  
\end{proof}

\noindent 
{\it Proof of Theorem \ref{Theo:uniquemeasure}:} Observe that the conditions in Lemma \ref{contractionofmesurelem} are guaranteed by Remark \ref{rem:NMlarge}. Let $\mu\in \mathcal I$ be an invariant probability measure of $f$. In particular,  $\mu$ corresponds to $(\mu_g)\in\mathcal P$, see \eqref{eq:invariantprobmeasures}. We will show that $\mu_g=\underline{\delta}^q_g $, for all $g\ge 1$. The previous lemma implies
that for all $s\ge 1$
$$
|\mu_g-\underline{\delta}^q_g|\le \frac{1}{2^s}|\mu_{g+s}-\underline{\delta}^q_{g+s}|\le \frac{1}{2^s}.
$$
Hence, $\mu_g=\underline{\delta}^q_g $, for all $g\ge 1$. This means that the only invariant measure of $f:A\to A$ is the unit mass on its fixed point. \qed

 \section{Collet-Eckmann Lamination and Newhouse points}
 In the previous sections we observed many different phenomena. In this section we show that all these phenomena occur in one and the same lamination. In particular there exists a lamination with a stable non-hyperbolic Cantor set which contains a Collet-Eckmann point with a dense orbit. Moreover each leave has also a Newhouse point. At this Newhouse points there is an intriguing coexistence of stable and unstable dynamics. This section confirms Theorem B and Theorem C. 
 
 Let us start by giving the formal definition of a Collet-Eckmann point.  
 \begin{defin}\label{def:CE}
Let $f:\mathcal M\to \mathcal M$ be a map and let $x\in \mathcal M$. We say that $x$ is a  Collet-Eckmann (CE) point if there exists a vector $v$, in the tangent space at $x$, such that 
$$
\left| Df^{s}_xv\right|\geq C\rho^s \left| v\right| \text{, \hspace{.49cm}        }\forall  \text{        } s\geq 0
$$
where $C$ and $\rho$ are two constants, $C>0$ and $\rho>1$.
\end{defin}
The following Theorem confirms Theorem B. 
\begin{theo}\label{Theo:CE}
There exists a sub-lamination $\mathfrak{L}_{CE}\subset \mathfrak{L}$ satisfying the following property. For each function $f\in\ell\subset \mathfrak{L}_{CE}$, the invariant non-hyperbolic Cantor set $A(f)$ contains a Collet-Eckmann point $c$ such that  $\omega(c)=A(f)$. Moreover, every leaf of $\mathfrak{L}_{CE}$ is the graph of a smooth function $\ell:[-t_0,t_0]\mapsto [-a_0,a_0]$ and if $t\in [-t_0,t_0]$, the closure of the set $L^{CE}_t=\left\{(t,a)\in\mathfrak L_{CE}\right\}$ is a Cantor set.
\end{theo}
\begin{proof}The proof goes by induction on the generations $g$ defining the lamination. The induction step is contained in Proposition \ref{Prop:inductionstepCE}. Using the notation in that proposition we prove the base of induction, i.e., we prove that the conditions are verified at generation $g=1$. 

Consider the $1$-tangency strip $S$ given by $S=[-t_0,t_0]\times[-a_0,a_0]$ with tangency line $b=\left\{a=0\right\}$. Take $U=S$. In Subsection \ref{familyofunfoldings}, we defined a family $F$ which is an unfolding of the tangency $b$ and each map $f\in S$ has a $(1,N_1,M_1)$ dynamical partition with $N_1=N$ and $M_1=M$, where $N$ and $M$ are defined in Definition \ref{stronghomtang}, property $(f8)$. Define $T=0$ and for all $ f\in U$ let $K$ be a positive constant assuring that $\text{IH}_{H^0}$,  $\text{IH}_{C^0}$-(iii) and $\text{IH}_{C^{N}}$-(iii) hold. The existence of such a constant is guaranteed by the initial conditions on the family. 
 Observe that $\text{IH}_{Q}$, $\text{IH}_{H^0}$, $\text{IH}_{C^0}$-(i) and $\text{IH}_{C^0}$-(ii) are consequence of the definition of our family, while $\text{IH}_{C^N}$ is guaranteed since $T=0$. As for $\text{IH}_{C^0}$-(ii), the lower bound comes from the fact that $b$ is a non-degenerate tangency, while the upper bound is a consequence of the fact that the family is made of $\Cd$ diffeomorphisms. This concludes the base of induction. As said above, the induction step is Proposition \ref{Prop:inductionstepCE}. 
 We can now draw our conclusions. Let 
 $$
 c=\bigcap_gC^{N_g-T_g}\in A(f).
 $$
 Since $T_g$ goes to infinity, $c$ is a Collet-Eckmann point. The proof of the density is the same as the proof of Lemma \ref{lem:Adense}.
  \end{proof}

Even more surprisingly, there exists a sub-lamination of $\mathfrak{L}_{CE}$ whose leaves contains a map with infinitely many sinks of higher and higher order accumulating at the orbit of the Collet-Eckmann point. The following Theorem confirms Theorem C. 
\begin{theo}\label{Theo:CE+N}
There exists a sub-lamination $\mathfrak{L}_{N}\subset\mathfrak{L}_{CE}\subset \mathfrak{L}$ satisfying the following property. Each leaf $\ell\subset \mathfrak{L}_{N}$ contains a map $f_{\ell}$ with infinitely many attracting periodic points, $p_g$ of arbitrarily high  period, such that,
$$
\bigcap_n\overline{\bigcup_{g\geq n}\text{Orb}(p_g)}=A(f_{\ell}).
$$ 
Moreover, every leaf of $\mathfrak{L}_{N}$ is the graph of a smooth function $\ell:[-t_0,t_0]\mapsto [-a_0,a_0]$ and if $t\in [-t_0,t_0]$, the closure of the set $L^{N}_t=\left\{(t,a)\in\mathfrak L_{N}\right\}$ is a Cantor set. 
  \end{theo}
\begin{proof} The theorem is a consequence of Theorem \ref{Theo:CE} and Theorem \ref{Newhousepoints}. More in details, using the terminology from Subsection \ref{sec:Newhouse}, given a point $(t,a)\in NH$ there exists a sequence of Newhouse boxes such that 
$$
\left\{(t,a)\right\}=\bigcap_{g\geq 1}\mathcal P^g_{n^{(g)},n_0^{(g)}}.
$$
Associated to each Newhouse box $\mathcal P^g_{n^{(g)},n_0^{(g)}}$ there is a secondary tangency curve which is the graph of $b^g_{n^{(g)},n_0^{(g)}}$. By taking $n^{(g)}$ large enough we can assure that these curves converge in $\Cuno$ to a leaf $\ell_{(t,a)}$ which is also the graph of a $\Cuno$ function over $ [-t_0,t_0]$. In particular $(t,a)\in\ell_{(t,a)}$.
Let 
$$
\mathfrak L_N=\bigcup_{(t,a)\in NH}\ell_{(t,a)}.
$$
By taking $n^{(g)}$ large enough we can assure that $\mathfrak{L}_{N}\subset\mathfrak{L}_{CE}$.

Let $p_g$ be the periodic sink of generation $g$. By construction the orbit of $p_g$ is contained in the $c$- and $q$-loop of the dynamical partition $\mathfrak D_g$. Hence, the sinks accumulate at the Cantor set $A$. \end{proof}
\section{Appendix}
We prove that there exists a $\theta$ satisfying \eqref{thetacond2}.
\begin{lem}
There exists $\theta\in\left(\theta_0,\theta_1\right)$ such that 
$$
\lambda^{\theta}\mu^{1+\alpha}>  \lambda^{\alpha}.
$$
\end{lem}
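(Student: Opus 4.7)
My plan is to reduce the claimed inequality to an explicit algebraic condition on $\theta$ and the ratio $r=\log\mu/\log(1/\lambda)$, and then check that the admissible interval $(\theta_0,\theta_1)$ already meets this condition once the parameter window is made small enough.

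First I would substitute the explicit formula for $\alpha$ coming from Lemma~\ref{n0} and Remark~\ref{rem:thetalpha}, namely
\[
\alpha=\frac{\log(\lambda^{2\theta}\mu^{3})}{\log\mu}=3-2\theta\,\frac{\log(1/\lambda)}{\log\mu},
\]
into $\lambda^{\theta}\mu^{1+\alpha}>\lambda^{\alpha}$. Taking logarithms (using $\log\lambda<0<\log\mu$) and dividing by $\log\mu$, the inequality is equivalent to
\[
1+\alpha+\frac{\alpha-\theta}{r}>0,\qquad r=\frac{\log\mu}{\log(1/\lambda)}.
\]
Substituting $\alpha=3-2\theta/r$ and clearing denominators, this collapses to the single clean inequality
\[
\theta<\Phi(r):=\frac{r(4r+3)}{3r+2}.
\]
This is the only real computation in the argument.

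Next I would compare $\Phi(r)$ with the bounds $\theta_0=\tfrac{4}{3}r_{\max}$ and $\theta_1=\tfrac{3}{2}r_{\min}$ from \eqref{thetacond1}. A direct check gives
\[
\Phi(r)-\frac{4r}{3}=\frac{r}{3(3r+2)}>0,\qquad
\Phi(r)-\frac{3r}{2}=-\frac{r^{2}}{2(3r+2)}<0,
\]
so $\tfrac{4}{3}r<\Phi(r)<\tfrac{3}{2}r$ for every $r>0$. By shrinking the parameter window $[-t_0,t_0]\times[-a_0,a_0]$, the continuous function $r(t,a)$ oscillates as little as we like around a fixed value $r_\star$, so we may arrange that $r_{\max},r_{\min}$ and all intermediate values of $r$ lie in an interval on which $\tfrac{4}{3}r_{\max}<\Phi(r)<\tfrac{3}{2}r_{\min}$ uniformly.

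Finally, choosing any $\theta$ in the non-empty open interval $(\theta_0,\inf_{(t,a)}\Phi(r(t,a)))$ places $\theta$ inside $(\theta_0,\theta_1)$ and guarantees $\theta<\Phi(r)$, hence \eqref{thetacond2}, at every parameter. The only mildly delicate step is ensuring that the inequality is uniform in $(t,a)$, but this is automatic once the parameter domain is restricted, since $\Phi$ is continuous and strictly between the two bounds.
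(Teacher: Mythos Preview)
Your argument is correct and follows essentially the same route as the paper: take logarithms, substitute the explicit expression for $\alpha$, and verify that the resulting inequality in $\theta$ holds at (and hence just above) the lower endpoint $\theta_0$. Your presentation is somewhat cleaner---you package the condition as $\theta<\Phi(r)$ and check the two-sided comparison $\tfrac{4}{3}r<\Phi(r)<\tfrac{3}{2}r$ directly, whereas the paper carries the logarithms through by hand and also keeps an extra (harmless, since $\log K>0$) constant $K$ from \eqref{alphaofuse}---but the underlying idea and the decisive computation are the same.
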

\begin{proof}
By \eqref{alphaofuse1}, 
$$
\alpha=2\theta\frac{\log \lambda}{\log\mu}+3.
$$
We need to prove that 
$$
\alpha\left(\log\mu-\log\lambda\right)+\theta\log\lambda+\log\mu>0.
$$
We substitute the value of $\alpha$ and we get 
$$
-3\log\lambda+4\log\mu>\theta\left(2\frac{(\log\lambda)^2}{\log\mu}-3\log\lambda\right).
$$ 
The inequality is satisfied if one substitute the value of $\theta_0=-4\log\mu/3\log\lambda$. Hence, for a value $\theta$ larger then $\theta_0$ and sufficiently close to it, the inequality is still satisfied. The lemma is then proved. 
\end{proof}

\newpage

 \end{document}